\newtheorem{theorem}{Theorem}
\newtheorem{lemma}[theorem]{Lemma}
\newtheorem{definition}[theorem]{Definition}
\newtheorem{proposition}[theorem]{Proposition}
\newtheorem{corollary}[theorem]{Corollary}
\newtheorem{remark}[theorem]{Remark}
\numberwithin{theorem}{section}
\numberwithin{equation}{section}
\newcommand{\mint}{- \mskip-19,5mu \int}
\def\N{\mathbb{N}}
\def\R{\mathbb{R}}
\renewcommand{\d}{\mathrm{d}}
\newcommand{\dx}{\mathrm{d}x}
\newcommand{\dt}{\mathrm{d}t}
\newcommand{\ds}{\mathrm{d}s}
\renewcommand{\epsilon}{\varepsilon}
\DeclareMathOperator{\Div}{div}
\DeclareMathOperator{\loc}{loc}
\renewcommand{\epsilon}{\varepsilon}
\newcommand{\eps}{\varepsilon}
\renewcommand{\rho}{\varrho}
\def\eqn#1$$#2$${\begin{equation}\label#1#2\end{equation}}
\newcommand{\pl}{\partial}
\newcommand{\dsty}{\displaystyle}
\newcommand{\al}{\alpha}
\newcommand{\lm}{\lambda}
\newcommand{\varep}{\varepsilon}
\newcommand{\z}{\zeta}
\newcommand{\bq}{\boldsymbol{\mathfrak q}}
\newcommand{\bom}{\boldsymbol{\mathfrak m}}
\let\TeXchi\chi
\newbox\chibox
\chibox \hbox{\raise\dp0 \box 0 }
\def\chi{\copy\chibox}
\newcommand\sfm{{\boldsymbol{\mathsf m}}}
\newcommand\sfq{{\boldsymbol{\mathsf q}}}
\newcommand\sfk{{\boldsymbol{\mathsf k}}}
\newcommand\sfb{{\boldsymbol{\mathsf b}}}
\newcommand\sfB{{\boldsymbol{\mathsf B}}}
\newcommand\sfL{{\boldsymbol{\mathsf L}}}
\newcommand\sfI{{\boldsymbol{\mathsf I}}}
\newcommand\sfY{{\boldsymbol{\mathsf Y}}}
\newcommand\sfM{{\boldsymbol{\mathsf M}}}
\newcommand\sfA{{\boldsymbol{\mathsf A}}}
\newcommand\sfE{{\boldsymbol{\mathsf E}}}
\newcommand\sff{{\boldsymbol{\mathsf f}}}
\newcommand\sfK{{\boldsymbol{\mathsf K}}}
\newcommand\sfF{{\boldsymbol{\mathsf F}}}
\newcommand\sfR{{\boldsymbol{\mathsf R}}}
\newcommand\sfT{{\boldsymbol{\mathsf T}}}
\newcommand\sfJ{{\boldsymbol{\mathsf J}}}
\newcommand\sfr{{\boldsymbol{\mathsf r}}}
\newcommand\sfG{{\boldsymbol{\mathsf G}}}
\newcommand\sfH{{\boldsymbol{\mathsf H}}}
\def\Xint#1{\mathchoice
    {\XXint\displaystyle\textstyle{#1}}%
    {\XXint\textstyle\scriptstyle{#1}}%
    {\XXint\scriptstyle\scriptscriptstyle{#1}}%
    {\XXint\scriptscriptstyle\scriptscriptstyle{#1}}%
    \!\int}
\def\XXint#1#2#3{\setbox0=\hbox{$#1{#2#3}{\int}$}
    \vcenter{\hbox{$#2#3$}}\kern-0.5\wd0}
\def\bint{\Xint-}
\def\dashint{\Xint{\raise4pt\hbox to7pt{\hrulefill}}}
\def\Xiint#1{\mathchoice
    {\XXiint\displaystyle\textstyle{#1}}%
    {\XXiint\textstyle\scriptstyle{#1}}%
    {\XXiint\scriptstyle\scriptscriptstyle{#1}}%
    {\XXiint\scriptscriptstyle\scriptscriptstyle{#1}}%
    \!\iint}
\def\XXiint#1#2#3{\setbox0=\hbox{$#1{#2#3}{\iint}$}
    \vcenter{\hbox{$#2#3$}}\kern-0.5\wd0}
\def\biint{\Xiint{-\!-}}
\subjclass[2020]{35B65, 35K67, 35K40, 35K55}
\keywords{Sub-critical $p$-parabolic systems, boundedness, higher integrability, gradient estimates}
\begin{document}
\title[Regularity for Sub-Critical Parabolic Systems]{Regularity theory for sub-critical $p$-parabolic systems with measurable coefficients}

\date{\today}

\author[V. B\"ogelein]{Verena B\"{o}gelein}
\address{Verena B\"ogelein\\
Fachbereich Mathematik, Universit\"at Salzburg\\
Hellbrunner Str. 34, 5020 Salzburg, Austria}
\email{verena.boegelein@plus.ac.at}

\author[F. Duzaar]{Frank Duzaar}
\address{Frank Duzaar\\
Fachbereich Mathematik, Universit\"at Salzburg\\
Hellbrunner Str. 34, 5020 Salzburg, Austria}
\email{frankjohannes.duzaar@plus.ac.at}

\author[U. Gianazza]{Ugo Gianazza}
\address{Ugo Gianazza\\
Dipartimento di Matematica ``F. Casorati",
Universit\`a di Pavia\\ 
via Ferrata 5, 27100 Pavia, Italy}
\email{ugogia04@unipv.it}

\author[N. Liao]{Naian Liao}
\address{Naian Liao\\
Fachbereich Mathematik, Universit\"at Salzburg\\
Hellbrunner Str. 34, 5020 Salzburg, Austria}
\email{naian.liao@plus.ac.at}

\begin{abstract}
A quantitative regularity theory is developed for weak solutions to the parabolic system
$$
\partial_t u-\mathrm{div}\,\sfA(x,t,Du)=0
\quad\text{in }E_T\subset \R^N\times\R,
$$
which features the $p$-Laplacian with measurable coefficients. We focus on the sub-critical range $1<p\le \tfrac{2N}{N+2}$
and obtain two main results. \emph{Local boundedness:} starting from an $L^\sfr$-control of $u$ with $\sfr>\frac{N(2-p)}{p}$, we derive sharp, scale-invariant $L^\infty$-estimates. \emph{Higher integrability of the gradient:} $|Du|$ self-improves from $L^p_{\rm loc}$ to $L^{p(1+\varepsilon)}_{\mathrm{loc}}$ for some $\varepsilon>0$ depending only on the data. The same results still hold given proper source terms.

\end{abstract}

\maketitle


\tableofcontents

\section{Introduction and results}
Gehring’s lemma on higher integrability exemplifies a fundamental self-improving phenomenon in nonlinear analysis: a reverse Hölder inequality leads to an automatic gain in integrability, thereby upgrading $L^p$-bounds to
$L^{p+\epsilon}$-regularity. Such effects are closely related to other quantitative self-improvement principles, including those arising from $p$-Poincaré inequalities on metric measure spaces, 
$p$-capacity conditions, and $p$-thickness assumptions. In each of these settings, one observes a characteristic openness of the underlying regularity condition: the validity of an estimate at an endpoint exponent automatically implies its persistence in a neighborhood of that exponent.

In the parabolic setting, this phenomenon governs the higher integrability of spatial gradients of weak solutions to degenerate parabolic systems, a cornerstone for partial regularity and fine quantitative estimates. Since the breakthrough of Kinnunen–Lewis \cite{Kinnunen-Lewis:1} in 2000, higher integrability for gradients of weak solutions to parabolic $p$-Laplace systems has been established in the \emph{super-critical} regime $p>\tfrac{2N}{N+2}$.
The complementary \emph{sub-critical} regime $1<p\le\tfrac{2N}{N+2}$ has remained completely open until now. 
This article closes that gap by developing a quantitative regularity theory for weak solutions $u\colon E_T\to\mathbb{R}^k$ to
\begin{equation}
    \label{eq-par-gen}
  \partial_t u - \operatorname{div}\,\sfA(x,t,u,Du)
  \;=\; \sff - \operatorname{div}\!\big(|\sfF|^{p-2}\sfF\big)
  \quad\text{in}\> E_T,
\end{equation}
with $E_T\equiv E\times(0,T]$, $E$ open in $\R^N$, and source terms $\sfF\in L^{qp}_{\mathrm{loc}}(E_T,\mathbb{R}^{kN})$ and $\sff\in L^{qp'}_{\mathrm{loc}}(E_T,\mathbb{R}^k)$, in the range
$$
  1<p\le p_c:=\frac{2N}{N+2},\qquad q>\frac{N+p}{p}.
$$
The vector field $\sfA\colon E_T\times\mathbb{R}^k\times\mathbb{R}^{kN}\to\mathbb{R}^{kN}$ is Carathéodory -- measurable in $(x,t)$, continuous in $(u,\xi)$ -- 
and satisfies the following: for structural constants $0<C_o\le C_1$, a fixed parameter $\mu\in [0,1]$, a.e. $(x,t)\in E_T$ and all $(u,\xi)\in\mathbb{R}^k\times\mathbb{R}^{kN}$,
\begin{equation}\label{growth-a*}
\left\{
\begin{array}{c}
	\sfA(x,t,u,\xi)\cdot\xi\ge C_o \big(\mu^2 +|\xi|^2\big)^\frac{p-2}{2}|\xi|^2\, ,\\[6pt]
	| \sfA(x,t,u,\xi)|\le C_1\big(\mu^2 +|\xi|^2\big)^\frac{p-2}{2}|\xi|\, ,\\[6pt]
    \displaystyle \sum_{\alpha =1}^N
    \sfA_\alpha(x,t,u,\xi)
    \cdot u\, \xi_\alpha \cdot u\ge 0,\\
\end{array}
\right.
\end{equation}
where $\sfA=\big(\sfA_1,\dots ,\sfA_N\big)$
and $\xi=\big(\xi_1,\dots,\xi_N\big)$
with $\sfA_\alpha,\xi_\alpha\in \R^k$.
The quantity $\mu$ measures the nondegeneracy
($\mu=0$ corresponds to the degenerate case). The first two inequalities in \eqref{growth-a*} are the standard $p$-structure conditions -- coercivity and $p$-growth -- in the gradient variable $\xi$. The third line encodes a quasi-diagonal (directional) monotonicity of the nonlinearity along $u$. The directional monotonicity assumption in \eqref{growth-a*} goes back, in the context of $L^{\infty}$-estimates for elliptic systems, to Meier~\cite{Meier}. See \cite[Appendix~A]{BDGL-25} for its significance. In the scalar case, the directional monotonicity condition is redundant: 
it follows directly from the coercivity inequality in~\eqref{growth-a*}.

Two principal conclusions are obtained. First, local boundedness of solutions in the sub-critical regime can be estimated by the $L^{\sfr}$-integrability of solutions for some suitable $\sfr>2$ and the size of the source terms $\sfF$ \& $\sff$. 
We postpone the rigorous definition of solutions and notation to \S~\ref{SS:2}, and proceed with the statement of the first main result. 

\begin{theorem}\label{thm:sup-quant}
Let $p\in\bigl(1,\tfrac{2N}{N+2}\bigr]$, $q>\tfrac{N+p}{p}$, and $\sfr>2$ be such that
\begin{equation}\label{Eq:lm>0}
\boldsymbol\lambda_{\sfr}:=N(p-2)+p\sfr>0.
\end{equation}
Assume that $u\in L^{\sfr}_{\mathrm{loc}}(E_T,\mathbb{R}^k)$ is a weak solution in $E_T$ to \eqref{eq-par-gen} under \eqref{growth-a*}, with $\sfF\in L^{qp}_{\mathrm{loc}}(E_T,\mathbb{R}^{kN})$ and $\sff\in L^{qp'}_{\mathrm{loc}}(E_T,\mathbb{R}^k)$. Then, we have
$$
|u|\in L^\infty_{\mathrm{loc}}(E_T).
$$
Moreover, there exists a constant $C=C(N,p,C_o,C_1,q,\sfr)$ such that for any cylinder $Q_{\varrho,\theta}\equiv B_\rho(x_o)\times(t_o-\theta, t_o]$ in $ E_T$ and any $\tau\in\bigl(0,1\bigr)$, 
\begin{align*}
    \sup_{Q_{\tau\varrho,\tau\theta}}
    |u|
    &\le
    C
    \bigg[
    \frac{1}{(1-\tau)^{N+p}}
     \Big(\frac{\varrho^p}{\theta}\Big)^\frac{N}{p}
    \biint_{Q_{\varrho,\theta}} |u|^{\sfr}
    \,\dx\dt
    \bigg]^\frac{p}{\boldsymbol\lambda_\sfr}\\
     &
    \phantom{\le\,}
    +C 
     \bigg[
    \Big(\frac{\theta}{\varrho^p}\Big)^{q-\frac{N}{p}}
    \biint_{Q_{\varrho,\theta}} |\varrho \,\sfF|^{qp}\,\dx\dt
    \bigg]^\frac{p}{\boldsymbol\lambda_{2q}}
   \\
    &\phantom{\le\,}+
    C \bigg[
    \Big(\frac{\theta}{\rho^p}\Big)^{p'(q-\frac{N}{p})}
    \biint_{Q_{\varrho,\theta}}|\rho^p \,\sff|^{qp'}\,\dx\dt
    \bigg]^\frac{p}{p'\boldsymbol\lambda_{q}}
    +
    C\Big(\frac{\theta}{\varrho^p}\Big)^\frac{1}{2-p}
    +
    C\mu\rho.
\end{align*}
\end{theorem}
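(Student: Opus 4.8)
The plan is to run a De Giorgi--type level-set iteration on non-intrinsic space-time cylinders, carrying along the average $\biint(|u|-k)_+^{\sfr}$ of truncations of $|u|$ as the quantity to be iterated, and then to fix the truncation level at the end by an algebraic optimization. Three ingredients are needed: an energy (Caccioppoli) inequality for truncations of $|u|$; the parabolic Gagliardo--Nirenberg inequality; and the fast geometric convergence lemma for numerical sequences. The feature distinguishing the sub-critical regime is that a bare $L^{2}$-datum does not suffice to close the iteration, so one must interpolate against the stronger $L^{\sfr}$-information; the hypothesis $\boldsymbol\lambda_{\sfr}>0$ is exactly what renders the resulting recursion self-improving, and the exponent $p/\boldsymbol\lambda_{\sfr}$ in the conclusion is the reciprocal of that gain.

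\emph{Step 1 (energy estimate).} For a level $k>0$ and a cutoff $\zeta=\zeta(x,t)$ I would test the weak formulation of \eqref{eq-par-gen} with the Meier--type function $\varphi=\zeta^{p}\,(|u|^{m}-k^{m})_{+}\,|u|^{m-2}u$ for a suitable power $m$ linked to $\sfr$ (the device goes back to \cite{Meier}; see also \cite[Appendix~A]{BDGL-25}). The parabolic term becomes $\partial_t$ of a nonnegative quantity equal to a constant times $\zeta^{p}(|u|^{m}-k^{m})_{+}^{2}$, which after integration in time furnishes the $\sup_t$ term together with a $\theta^{-1}$-weighted lower-order term from $\partial_t\zeta$. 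The principal part splits into three pieces: the coercive main term, which by the first line of \eqref{growth-a*} bounds $\iint\zeta^{p}(\mu^{2}+|Du|^{2})^{\frac{p-2}{2}}|Du|^{2}\,\chi_{\{|u|>k\}}\dxt$ from below; the sign-indefinite term $\sum_{\alpha}\sfA_\alpha\cdot u\,(D_\alpha u)\cdot u\,\chi_{\{|u|>k\}}$ (times a harmless nonnegative weight), which is $\ge 0$ \emph{precisely by the directional monotonicity} in the third line of \eqref{growth-a*} -- this is the one place where the system structure, as opposed to a single equation, must be handled with care; and the $|D\zeta|$-term, absorbed by Young's inequality against $|D\zeta|^{p}$ times a truncation integral. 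The source terms $\sff$ and $\sfF$ are absorbed by Young's inequality with exponents $p'$ and $p$, producing the two lower-order families attached to them. Finally, since $p<2$, one passes from $(\mu^{2}+|Du|^{2})^{\frac{p-2}{2}}|Du|^{2}$ to $c_{p}\,|D(|u|-k)_{+}|^{p}$ at the cost of an additive $\mu$-term, the ancestor of the contribution $C\mu\varrho$.

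\emph{Step 2 (recursion).} Applying the parabolic Sobolev inequality to $\zeta(|u|-k)_{+}$ bounds a space-time $L^{p\frac{N+2}{N}}$-norm by a power of the $\sup_tL^{2}$-norm times $\iint\zeta^{p}|D(|u|-k)_{+}|^{p}\dxt$. Interpolating the outcome against $u\in L^{\sfr}_{\mathrm{loc}}$, estimating the superlevel sets $\{|u|>k_n\}$ by Chebyshev, and choosing nested cylinders $Q_n\downarrow Q_{\tau\varrho,\tau\theta}$ with levels $k_n\uparrow k$, one arrives at a recursive inequality
\begin{equation*}
Y_{n+1}\;\le\; C\,b^{\,n}\,Y_n^{1+\alpha},\qquad Y_n:=\biint_{Q_n}\bigl(|u|-k_n\bigr)_+^{\sfr}\dxt ,
\end{equation*}
where $b>1$, the constant $C$ now also incorporates the scaling factors $(1-\tau)^{-(N+p)}$ and a power of $\varrho^{p}/\theta$ tracked through Steps 1--2, and -- crucially -- the gain $\alpha=\alpha(N,p,\sfr)$ is strictly positive if and only if $\boldsymbol\lambda_{\sfr}>0$ (indeed $\alpha$ is proportional to $\boldsymbol\lambda_{\sfr}$). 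This is the step I expect to be the main obstacle: apportioning integrability between the $L^{\infty}_tL^{2}_x$ and $L^{p}_tW^{1,p}_x$ information of Step 1 and the $L^{\sfr}$-datum so that the recursion genuinely gains is exactly where the naive De Giorgi closure fails in the sub-critical range, and it is here that the precise exponents of the theorem get pinned down.

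\emph{Step 3 (stopping and optimization).} By the fast geometric convergence lemma, $Y_n\to 0$ -- hence $|u|\le k$ a.e.\ on $Q_{\tau\varrho,\tau\theta}$ -- provided $Y_0$ lies below the explicit threshold $C^{-1/\alpha}b^{-1/\alpha^{2}}$. Since $Y_0\le\biint_{Q_{\varrho,\theta}}|u|^{\sfr}\dxt$, it then suffices to take $k$ equal to the largest among: the value turning that threshold condition into an equality, whose solution is $C\bigl[(1-\tau)^{-(N+p)}(\varrho^{p}/\theta)^{N/p}\biint_{Q_{\varrho,\theta}}|u|^{\sfr}\dxt\bigr]^{p/\boldsymbol\lambda_{\sfr}}$; the analogous values generated by the $\sfF$- and $\sff$-induced lower-order terms, which by the same algebra yield the terms with exponents $p/\boldsymbol\lambda_{2q}$ and $p/(p'\boldsymbol\lambda_{q})$ (the shifted indices $2q$ and $q$ reflecting that, in the parabolic scaling, $\sfF$ carries the weight of $Du$ and $\sff$ that of $\partial_t u$); the ``floor'' $C(\theta/\varrho^{p})^{1/(2-p)}$, i.e.\ the intrinsic scale $\lambda^{2-p}\varrho^{p}\sim\theta$ below which the iteration is fed vacuously; and $C\mu\varrho$ from Step 1. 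This gives the displayed estimate, and a standard covering argument upgrades the pointwise bound to $|u|\in L^{\infty}_{\mathrm{loc}}(E_T)$.
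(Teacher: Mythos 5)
Your overall template (De Giorgi level-set iteration, Gagliardo--Nirenberg, fast geometric convergence, algebraic choice of the stopping level $\sfk$) does coincide with the quantitative half of the paper's argument, and your Step~3 observation that the floor $C(\theta/\varrho^p)^{1/(2-p)}$ and $C\mu\varrho$ must be appended is correct in spirit. However, there are two genuine gaps.

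\textbf{First, you omit the qualitative--to--quantitative two-step structure, and consequently you cannot control the source terms.} When one tests the system with a truncation-type function (whether your Meier variant $\zeta^p(|u|^m-k^m)_+|u|^{m-2}u$ or the paper's $u(|u|-\sfk)_+^{p'}\zeta^p$), the contribution of $\sff$ produces a term of the shape $\iint|\sff|\,|u|^{p'+1}\mathbf 1_{\{|u|>\sfk\}}\,\zeta^p\,\dx\dt$, and the contribution of $\sfF$ produces $\iint|\sfF|^p|u|^{p'}\mathbf 1_{\{|u|>\sfk\}}\,\zeta^p\,\dx\dt$. These carry the factor $|u|^{p'+1}$ (resp.\ $|u|^{p'}$), which is \emph{not} a truncation and cannot be folded into your $Y_n$ by Chebyshev. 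The paper resolves this by first establishing qualitative local boundedness in a separate lemma via a Moser-type iteration (whose energy estimate, Lemma~\ref{lem:energy-est}, requires a bounded weight $\Phi$ and is therefore \emph{not} applicable to the truncation weight $\Phi(s)=(s-\sfk)_+^{p'}$); it then carries $\|u\|_{\infty,Q_o}$ explicitly through the De Giorgi recursion via the quantities $\sfI_4,\sfI_5$; and only at the very end does it dispose of the $\|u\|_\infty$-dependence by a Young-inequality interpolation of the form $\frac16\|u\|_{\infty,Q_o}+C[\cdots]$ on each entry of the maximum defining $\sfk$, followed by the iteration Lemma~\ref{lem:tech-classical}. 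Your Step~3 skips that absorption entirely, so the $\|u\|_\infty$-factors hidden in the $\sfF$- and $\sff$-terms never disappear and the displayed estimate does not follow. Relatedly, the paper must split into the two regimes $\frac{N(2-p)}{p}<\sfr<p'+2$ and $\sfr\ge p'+2$, because the iteration variable is $(|u|-\sfk_n)_+^{p'+2}$ in the first case and $(|u|-\sfk_n)_+^{\sfr}$ in the second; your choice of a single iteration variable $(|u|-k_n)_+^{\sfr}$ for all $\sfr$ does not match the energy estimate, which produces the exponent $p'+2$ from the time term regardless.

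\textbf{Second, your heuristic for where $\boldsymbol\lambda_{\sfr}>0$ enters is incorrect.} You claim the gain $\alpha$ in $Y_{n+1}\le Cb^nY_n^{1+\alpha}$ is proportional to $\boldsymbol\lambda_{\sfr}$. In the paper the gain is
\[
\alpha=\frac{p}{N}\,\frac{\sfm}{\sfq}\Big(1-\frac{N+p}{qp}\Big)
\quad\text{(case $\sfr<p'+2$)},
\qquad
\alpha=\frac{p}{N}\,\frac{1}{p'}\Big(1-\frac{N+p}{qp}\Big)
\quad\text{(case $\sfr\ge p'+2$)},
\]
which is positive because $q>\frac{N+p}{p}$ and has nothing to do with $\boldsymbol\lambda_{\sfr}$. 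The condition $\boldsymbol\lambda_{\sfr}>0$ instead enters twice: it guarantees $\sfq>\sfm$ so that the H\"older step between the Sobolev exponent and the iteration exponent goes the right way, and it makes the final algebraic equation for $\sfk$ (the one that turns the fast-convergence threshold into an explicit expression) solvable, producing the exponent $\frac{p}{\boldsymbol\lambda_{\sfr}}$. That exponent is therefore the inverse of the homogeneity in $\sfk$, not the inverse of $\alpha$; the distinction matters because it correctly explains the degeneration recorded in Remark~\ref{Rmk:stability-1} (the constant blows up as $\boldsymbol\lambda_{\sfr}\to0$ even though $\alpha$ does not).
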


\begin{remark}\label{Rmk:stability-1}
\upshape
The constant $C(N,p,C_o,C_1,q,\sfr)\to\infty$ either as $p\to1$ or as $q\to\frac{N+p}p$, and it stays stable as $p\uparrow \frac{2N}{N+2}$ for any fixed $\sfr>2$.
The situation becomes more delicate as $\sfr\to\frac{N(2-p)}p$, that is, as $\boldsymbol\lambda_\sfr\to0$.
Indeed, on the one hand, when $\boldsymbol\lambda_\sfr\to0$, the iteration in the proof of Lemma~\ref{lm:sup-est-qual} does not converge, and one cannot conclude -- by this argument -- that $u$ is bounded, even in a purely qualitative sense. On the other hand, under the assumption $u\in L^\sfr$, the final part of the proof of Theorem~\ref{thm:sup-quant} starts from a qualitative
$L^\infty$-bound and turns it into a quantitative one.
For technical reasons, two different ranges of $\sfr$
must be considered, namely either $2\le\frac{N(2-p)}p<\sfr<p'+2$ or $p'+2\le\sfr$.
If we restrict our attention to the former case, the proof shows that the constant $C(N,p,C_o,C_1,q,\sfr)\to\infty$
as $\sfr\to\frac{N(2-p)}p$.
This phenomenon is related to the difficulty of obtaining a quantitative $L^\infty$-bound when $p=p_c$, as mentioned below in Remark~\ref{Rmk:1:3}.
\end{remark}

The set $\{N,\,p,\,C_o,\,C_1,\,q,\,\sfr\}$ is referred to as the \emph{structural data}.  The intrinsic geometry pertinent to system \eqref{eq-par-gen} will be reflected in the $L^{\infty}$-estimate upon choosing the size of $Q_{\rho,\theta}$.
This will be crucial for our second result, that is, the gradient of solutions enjoys a Gehring–type self-improvement.

\begin{theorem}\label{thm:higherint}
Let $N\ge2$, $p\in\big(1,\tfrac{2N}{N+2}\big]$, $q>\tfrac{N+p}{p}$, $\mu=0$,
and let $\sfr>2$ be such that \eqref{Eq:lm>0} is satisfied.
Moreover, assume in the vector-valued case ($k>1$) that the vector field $\sfA$ is independent of $u$, that is, $\sfA=\sfA(x,t,\xi)$. 
Then, there exists $\varepsilon_o=\varepsilon_o(N,p,C_o,C_1,q,\sfr)\in(0,1]$
with the following property.  
If $\sfF\in L^{qp}_{\mathrm{loc}}(E_T,\R^{kN})$, $\sff\in L^{qp'}_{\mathrm{loc}}(E_T,\R^{kN})$, 
and $u$ is a weak solution in $E_T$ to \eqref{eq-par-gen} under \eqref{growth-a*} satisfying 
$u\in L^{\sfr}_{\mathrm{loc}}(E_T,\R^k)$, then
\[
|Du|\in L^{p(1+\varepsilon_o)}_{\mathrm{loc}}(E_T).
\]
Moreover, for every $\varepsilon\in(0,\varepsilon_o]$ and every cylinder 
$
Q_{8R}\equiv B_{8R}(x_o)\times (t_o-(8R)^{2},\,t_o+(8R)^{2})$ in  
$E_T$,
the following quantitative higher–integrability estimate holds:
\begin{align*}
	\biint_{Q_{R}} &
	|Du|^{p(1+\varepsilon)} \,\dx\dt \nonumber\\
    &\le
	C\, \sfH(8R)^{p\varepsilon\frac{\sfr}2}
	\biint_{Q_{2R}}
	|Du|^{p} \,\dx\dt +
	C\bigg[\biint_{Q_{2R}}  
	\Big(|\sfF|^{qp} +|R\,\sff|^{qp'}\Big)\,\dx\dt \bigg]^{\frac{1+\varepsilon}{q}},
\end{align*}
where
\begin{equation*}
    \sfH(8R):=
    1
    + \bigg[\biint_{Q_{8R}} 
 	\frac{|u-(u)_{8R}|^{\sfr}}{(8R)^{\sfr}} \,\dx\dt \bigg]^{\frac{2}{\boldsymbol\lambda_\sfr}}
 	+ \bigg[\biint_{Q_{8R}} 
 	\big(|\sfF|^{qp}+|R\,\sff|^{qp'}\big)\,\dx\dt  
    \bigg]^{\frac{\sfr}{q\boldsymbol\lambda_\sfr}},
\end{equation*}
and $(u)_{8R}$ denotes the average of $u$ on $Q_{8R}$, and $C$ depends only on $N,p,C_o,C_1$, $q,\sfr$.
\end{theorem}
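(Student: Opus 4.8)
The plan is to run the classical Kinnunen–Lewis strategy for parabolic higher integrability, but to substitute the $L^\infty$–bound of Theorem \ref{thm:sup-quant} at the point where one would ordinarily use structural control of $\sup|u|$. The starting point is a parabolic Caccioppoli (reverse Hölder) inequality on intrinsic cylinders: on a cylinder $Q_{\rho}^{\lambda}$ whose time-scaling is adapted to the quantity $\lambda$ with $\biint_{Q_\rho^\lambda}|Du|^p \sim \lambda^p$ (the intrinsic coupling), one derives
\[
\biint_{Q_{\rho/2}^{\lambda}} |Du|^p \,\dx\dt
\le
C \biggl(\biint_{Q_\rho^{\lambda}} |Du|^{p\vartheta}\,\dx\dt\biggr)^{\!1/\vartheta}
+ C\,\mathcal{S}(\rho),
\]
for some $\vartheta\in(0,1)$, where $\mathcal{S}(\rho)$ collects the source terms $|\sfF|^{qp}$, $|R\sff|^{qp'}$. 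In the super-critical regime this estimate is self-contained, but here, because of the sub-critical scaling, the time-localization in the Caccioppoli inequality produces a term of the form $\rho^{-p}\biint |u-(u)_\rho|^p$ (or a higher power), which does \emph{not} absorb into the gradient term under Sobolev–Poincaré when $p\le p_c$. This is exactly where Theorem \ref{thm:sup-quant} enters: applied on a slightly larger cylinder, it bounds $\sup|u-(u)|$ — and hence the offending zero-order term — by a power of $\sfH(8R)$ times the $L^p$–norm of $Du$, which is how the factor $\sfH(8R)^{p\varepsilon\sfr/2}$ will ultimately appear. The directional monotonicity in \eqref{growth-a*} (or $u$-independence of $\sfA$ when $k>1$) is what makes the Caccioppoli inequality for $|u-(u)|$ legitimate in the system case.

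Next I would set up the stopping-time / Vitali argument on a fixed cylinder $Q_{2R}$. One works with the maximal-type super-level sets of $|Du|^p$ (with respect to parabolic cylinders), selects at each point a cylinder realizing the intrinsic coupling $\biint|Du|^p\sim\lambda^p$, extracts a Vitali subfamily, and on each selected cylinder one has two regimes: the "$p$-intrinsic'' one (where the $Du$-term dominates) and the "sources'' one. On the intrinsic cylinders the reverse Hölder inequality above, combined with the $L^\infty$-bound just discussed, yields a genuine reverse Hölder inequality with a gain; summing over the Vitali family and using a standard interpolation/Fubini argument on the distribution function of $|Du|^p$ gives the higher integrability with exponent $p(1+\varepsilon_o)$. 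The exponent $\varepsilon_o$ is whatever the Gehring-type iteration allows and depends only on the data; the quantitative estimate on $Q_R$ follows by tracking constants through the covering, with $\sfH(8R)$ absorbing all the normalization factors coming from the intrinsic scaling and from the application of Theorem \ref{thm:sup-quant}.

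Two technical points deserve care. First, the intrinsic cylinders: in the sub-critical range the time scale associated to $\lambda$ is $\rho^p\lambda^{2-p}$ with $2-p\ge \tfrac{4}{N+2}>0$, so small $\lambda$ forces very long cylinders, and one must check that the selected cylinders stay inside $Q_{8R}$ — this is why the statement is formulated with the generous dilation factor $8$ and why $N\ge 2$ is assumed (to keep $\boldsymbol\lambda_\sfr>0$ compatible with $\sfr$ close to $2$). Second, and this is where I expect the main obstacle, the feedback loop: the $L^\infty$-estimate involves $\biint|u-(u)|^\sfr$ on the larger cylinder, and one must verify that when this is fed back into the Caccioppoli inequality on the stopping-time cylinders the resulting power of the gradient is still strictly less than $1$ (so that Gehring's lemma applies) and that the $\sfr$-dependence is the clean $\varepsilon\sfr/2$ exponent claimed. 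Making this bookkeeping uniform as $\boldsymbol\lambda_\sfr\to 0$ is delicate — exactly the degeneration flagged in Remark \ref{Rmk:stability-1} — but for fixed $\sfr>2$ with $\boldsymbol\lambda_\sfr>0$ it goes through, and the constant $C$ and the gain $\varepsilon_o$ depend only on $N,p,C_o,C_1,q,\sfr$ as asserted.
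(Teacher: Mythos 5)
Your overall framework is correct and matches the paper's strategy in its broad strokes: an intrinsic reverse H\"older inequality, a stopping-time/Vitali covering argument, and a Gehring/Fubini iteration on level sets, with Theorem~\ref{thm:sup-quant} supplying the $L^\infty$-control that absorbs the zero-order term which, for $p\le p_c$, cannot be handled by Sobolev--Poincar\'e alone. However, there is a genuine gap in how the cylinders are selected, and it is precisely the point you flag as ``the feedback loop'' but do not resolve.

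You propose to select, at each Lebesgue point of the super-level set, a cylinder realizing the coupling $\biint_{Q_\rho^\lambda}|Du|^p\sim\lambda^p$, as in the classical Kinnunen--Lewis scheme. But the application of Theorem~\ref{thm:sup-quant} on that cylinder only yields the sharp bound $\sup|u-(u)|\lesssim \lambda^{\frac{p-2}{2}}\rho\cdot\lambda$ (i.e.\ Lemma~\ref{lem:intr-sup}) when the cylinder is \emph{sub-intrinsic for $u$}, that is, when $\biint |u-(u)|^\sfr/\rho^\sfr\lesssim\lambda^{p\sfr/2}$ (condition~\eqref{sub-intr}). A cylinder chosen purely by a $Du$-stopping criterion need not satisfy this, and without it the sup-estimate from Theorem~\ref{thm:sup-quant} introduces uncontrolled powers of $\lambda$ that destroy the reverse H\"older gain. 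The paper's resolution is the \emph{mixed} intrinsic scaling of \S\ref{sec:cylinders}--\ref{SS:selection-radii}: first the function $\rho\mapsto\lambda_{z_o;\rho}$ is built (Lemma~\ref{lem:crazy-cylinders}) as a decreasing envelope of a stopping parameter for the $u$-oscillation, so that $Q_s^{(\lambda_{z_o;\rho})}(z_o)$ is sub-intrinsic for $u$ at all scales $s\ge\rho$; only afterwards is the stopping \emph{radius} $\rho_{z_o}$ chosen via a $Du$-plus-source level condition~\eqref{=lambda-new}; finally, Lemma~\ref{lem:sub-intr-2} closes the loop by proving $\lambda_{z_o;\rho_{z_o}}\le C\lambda$, so that both hypotheses~\eqref{sub-intr-rh} and~\eqref{super-intr-rh} of Proposition~\ref{prop:RevH} hold on the selected cylinder. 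Without this two-parameter construction the argument does not run.

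Two smaller inaccuracies: the paper's intrinsic cylinders $Q_\rho^{(\lambda)}$ have time height fixed at $\rho^2$ and spatial radius $\lambda^{\frac{p-2}{2}}\rho$, so small $\lambda$ makes them \emph{wide}, not ``long'' (your stated time scale $\rho^p\lambda^{2-p}$ does not match the paper's parametrization); and the $u$-independence of $\sfA$ for $k>1$ is required not to legitimize the Caccioppoli inequality for $|u-(u)|$ but to ensure that $u-(u)_{2\rho}^{(\lambda)}$ is again a solution, so that Theorem~\ref{thm:sup-quant} may be applied to it (cf.\ the remark preceding Lemma~\ref{lem:intr-sup}).
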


\begin{remark}
\upshape In Theorem~\ref{thm:higherint} we assume $\mu=0$ mainly for simplicity, but a similar result could be proved also for $\mu\in(0,1]$.    
\end{remark}

\begin{remark}\label{Rmk:stability-2}
\upshape Since the proof of Theorem~\ref{thm:higherint} relies heavily on Theorem~\ref{thm:sup-quant}, the stability of $\varepsilon_o$ is strongly affected by the asymptotic behavior of the constant $C$, as discussed in Remark~\ref{Rmk:stability-1}. In particular, tracing the dependence of the constants and parameters on the structural data in the proof of Theorem~\ref{thm:higherint} shows that $\varepsilon_o\to0$ not only as either $p\to1$ or $q\to\frac{N+p}p$, but also as $\sfr\to\frac{N(2-p)}p$. Note for fixed $\sfr>2$, $\epsilon_o$ remains positive as $p\uparrow \frac{2N}{N+2}$.
\end{remark}

\begin{remark}\label{Rmk:1:3}\upshape
In the borderline case $p=p_c=\frac{2N}{N+2}$, the situation becomes particularly delicate. 
For the scalar parabolic $p$-Laplace equation ($k=1$), it has been shown in~\cite{CHSS} that weak solutions are locally bounded in a qualitative sense, although a quantitative $L^{\infty}$-estimate in terms of the $L^2$-norm is not known so far. 
This result implies that, in the model case, the assumption $u\in L^{\sfr}_{\mathrm{loc}}(E_T)$ for some $\sfr>2$ is automatically fulfilled, and can therefore be omitted in Theorem~\ref{thm:higherint}. It is furthermore plausible that the argument extends to more general parabolic $p$-Laplace systems of similar structure as in \eqref{eq-par-gen}, although such an extension has not yet been established rigorously.
\end{remark}

\subsection{Novelty and significance}
The phenomenon known as \emph{higher integrability} -- that is, the automatic gain of summability for the gradient beyond what is prescribed by the natural energy space -- has long been recognized as a cornerstone in the transition from quantitative \emph{a priori} control to qualitative regularity.  
Its conceptual roots trace back to Gehring’s fundamental work~\cite{Gehring}, and subsequent developments by Meyers and Elcrat~\cite{Meyers-Elcrat} extended the principle to general elliptic systems.  
After later refinements resolving technical issues concerning localization and supports~\cite{Stred,Giaq-Mod}, the method became a standard element of modern regularity theory (see also~\cite[Ch.~V, Thm.~2.1]{Giaquinta:book}, \cite[\S~6.4]{Giusti:book}).  
In the parabolic setting, a first coherent treatment was provided by Giaquinta and Struwe~\cite{Giaquinta-Struwe}, who handled quasi-linear systems with measurable coefficients.  
A decisive step forward was achieved by Kinnunen and Lewis~\cite{Kinnunen-Lewis:1}, who proved higher integrability for wide classes of parabolic $p$-Laplace-type systems under the \emph{super-critical} condition $p>\tfrac{2N}{N+2}$.

In contrast, in the \emph{sub-critical}  regime $1<p\le\tfrac{2N}{N+2}$, solutions can display an untamed analytic behavior.  
In fact, the natural scaling and compactness structures underlying the parabolic $p$-Laplacian deteriorate, and a comprehensive regularity theory has remained out of reach; see, for instance,~\cite{DiBe} and the discussion in~\cite[Appendices~A--B]{DBGV-book}.  The present work identifies a natural class of weak solutions to parabolic $p$-Laplace systems in the sub-critical regime and establishes a unified regularity theory for them that parallels the one in the super-critical range.
Precisely, we establish a quantitative reverse Hölder framework that yields
\[
|Du|\in L^{p(1+\varepsilon)}_{\mathrm{loc}},
\]
for some $\varepsilon>0$ depending solely on the structural data, and complement it by a sharp local $L^\infty$-estimate for $u$ within the sub-critical regime.

To the best of our knowledge, the first $L^\infty$-estimates for parabolic systems of $p$–Laplace-type with a general structure comparable to that considered here were established in \cite{Giorgi-OLeary}. This extended earlier work of O'Leary~\cite{OLeary}, which was restricted to the simpler scalar case. In particular, the directional monotonicity assumption \eqref{growth-a*}$_3$ appears already in these contributions. 
Although the authors pursue a two–step argument reminiscent of ours, their analysis does not yield the explicit and fully quantitative bounds obtained in Theorem~\ref{thm:sup-quant}; instead, they conclude only a qualitative boundedness statement.

Regarding the higher integrability of $Du$ for parabolic systems of $p$–Laplace-type, the structure $\sfA=\sfA(x,t,\xi)$ has a tradition -- see, for instance, \cite{Kinnunen-Lewis:1}. From a technical point of view, the absence of $u$-dependence ensures that if $u$ is a solution, then so is $u+$constant, a property required in Lemma~\ref{lem:intr-sup} and in several subsequent arguments that depend critically upon it. 
It remains unclear whether this restriction reflects an intrinsic structural necessity or merely a limitation of the present method.

\subsection{Sharpness of the result}
Now, we demonstrate some examples that test the sharpness of our results -- boundedness and higher integrability. Consider the prototype, scalar equation
\begin{equation}\label{P-lapl}
\partial_t u - \Div\big(|\nabla u|^{p-2}\nabla u\big)=0
\end{equation}
in \(\R^N\times\R\). 
We first discuss {\bf boundedness}.
Unbounded weak solutions occur when \(1<p<p_c:=\frac{2N}{N+2}\). 
In particular, the function
\begin{equation}\label{Ex:1}
\begin{array}{c}
{\dsty u(x,t)=\left[|\lm|\left(\frac{p}{2-p}\right)^{p-1}\right]^{\frac1{2-p}}
\frac{(T-t)_+^{\frac{1}{2-p}}}{|x|^{\frac{p}{2-p}}}},\\
{}\\
{\dsty 1<p<\frac{2N}{N+2},\qquad
\lm=N(p-2)+p}
\end{array}
\end{equation}
is a nonnegative local weak solution to \eqref{P-lapl}.
This solution is finite for \(t<T\) and \(x\neq0\), but blows up as \(x\to0\).  
This example shows that the threshold \(p>p_c\) is almost optimal for boundedness.

A second counterexample to boundedness for $1<p<\tfrac{2N}{N+2}$ can be constructed by seeking solutions to~\eqref{P-lapl} of the self--similar form
\begin{equation}\label{self-sim-II}
    u(x,t)= g\big(|x|(T-t)_+^{\beta}\big),
\end{equation}
where $T\in\mathbb{R}$ and the exponent $\beta\in\mathbb{R}$ is to be determined.
Here $g$ denotes a positive, radially symmetric, monotonically decreasing function, to be computed below.
If we assume $g$ to be regular and smooth, it is straightforward to verify that $g=g(r)$ must satisfy the ordinary differential equation
\begin{equation*}
 \frac1p r g'+\frac{N-1}{r}(-g')^{p-1}-(p-1)(-g')^{p-2}g''=0.
\end{equation*}
Here the sign in front of $g'$ reflects the fact that $g'\le 0$, and $\beta=-\frac1p$. The equation can be re-written as
\begin{equation}\label{Eq:radial-hom}
    \frac{r}{p}
    -\frac{N-1}{r}(-g')^{p-2}
    +(p-1)(-g')^{p-3}g''=0,
\end{equation}
and the substitution $\displaystyle y:=(-g')^{p-2}$ transforms \eqref{Eq:radial-hom} into the first–order linear ordinary differential equation
\begin{equation}\label{eq-for-y}
    y'
    =\frac{p-2}{p-1}\bigg[\frac{r}{p}-\frac{N-1}{r}y\bigg],
\end{equation}
which admits an explicit solution. Since $1<p<\tfrac{2N}{N+2}$, a straightforward integration of \eqref{eq-for-y} yields
\[
y = r^2\left[C\, r^{\frac{|\lambda|}{p-1}} + \frac{2-p}{p|\lambda|}\right],
\]
where $\lambda$ is as above, and $C>0$ is an arbitrary constant, taken positive for simplicity of presentation. Reverting to $g'$, we obtain
\[
g'(r)
=-\,r^{-\frac{2}{2-p}}
\left[
C\,r^{\frac{|\lambda|}{p-1}}
+\frac{2-p}{p|\lambda|}
\right]^{-\frac{1}{2-p}}.
\]
This expression coincides with the formula given in \cite[\S~1,~(1.9)]{bidaut}, 
within the broader analysis of explicit and semi-explicit solutions to the parabolic $p$-Laplacian in the singular range $1<p<2$. We refrain from detailing the explicit computation of $g$; for our purposes it suffices to observe that
\begin{equation*}
    g(r)
    \simeq 
    \bigg[\Big(\frac{p}{2-p}\Big)^{p-1}|\lambda|\bigg]^{\frac{1}{2-p}} 
    r^{-\frac{p}{2-p}}
    \qquad\text{as}\>\> r\downarrow 0,
\end{equation*}
which in turn implies
\begin{equation*}
    u(x,t)
    \simeq 
    \bigg[\Big(\frac{p}{2-p}\Big)^{p-1}|\lambda|\bigg]^{\frac{1}{2-p}}
    \left[\frac{(T-t)_+}{|x|^{p}}\right]^{\frac{1}{2-p}}
    \qquad\text{as}\>\> \frac{|x|}{(T-t)_+^{1/p}}\downarrow 0.
\end{equation*}
Notice that the behavior of $u$ near the origin coincides exactly with that of~\eqref{Ex:1}. 
The restriction $1<p<\tfrac{2N}{N+2}$ ensures that both $u$ and its spatial gradient possess 
the integrability required for the natural weak formulation of the problem, 
that is, $u$ belongs to the energy class associated with parabolic $p$-Laplace equation, and therefore represents a genuine weak solution to~\eqref{P-lapl} in the sense of 
Definition~\ref{def:weak_solution}.
 
As explained in Remark~\ref{Rmk:1:3}, when $p=\frac{2N}{N+2}$ and for the scalar equation ($k=1$), solutions are locally bounded. However, it is unclear whether a quantitative estimate as stated in Theorem~\ref{thm:sup-quant} can be achieved, or there exists a sequence of solutions that are uniformly bounded in the function space \eqref{func-space} but whose limit turns out to be essentially unbounded. Needless to say, one would like to understand the vector-valued case ($k>1$) as well.

We now turn to the question of {\bf higher integrability}. As pointed out in Remark~\ref{Rmk:1:3}, our approach actually shows, at least in the scalar equation case, that $\nabla u$ admits a higher integrability estimate that is uniform for all $p\ge\frac{2N}{N+2}$. However, in general one cannot expect such a uniform higher integrality for $p\le\frac{2N}{N+2}$ with the mere notion of solution. 
Indeed, a direct computation shows that the solution \eqref{Ex:1} satisfies
\[
\nabla u\in L^{p+\varepsilon}_{\mathrm{loc}}(\R^N\times\R)
\quad\text{for all }\varepsilon\in(0,\varepsilon_p),
\qquad 
\varepsilon_p\equiv-\frac{N(p-2)+2p}{2}>0,
\]
and that \(\varepsilon_p\downarrow0\) as \(p\uparrow p_c\).

This example also hints that the issue of higher integrability is intimately connected to the issue of boundedness.
To illustrate this point, 
suppose that $p\in(1,\frac{2N}{N+2}]$ and $u$ is a non-negative local weak solution to \eqref{P-lapl} satisfying $\nabla u\in L^{p+\varepsilon}_{\loc}(E_T)$ for some {\it a priori} determined $\varepsilon$ with $\varepsilon\in[\varepsilon_*,1]$ for some fixed $\varepsilon_*>0$ independent of $p$.
 Then, by the parabolic embedding theorem, we have $u\in L^{\sfr}_{\loc}(E_T)$ for $\sfr=(p+\varep)\frac{N+2}{N}$. By a straightforward algebraic computation, one finds a positive number 
$\delta=\delta(N,\varepsilon_*)>0$ such that, whenever 
$
  p\in(p_c-\delta,\;p_c]$,
the quantity \(N(p-2)+p\sfr\) remains strictly positive.
Indeed, recalling the definition of \(p_c\) and using \(p_c-\delta<p\le p_c\), one readily computes
\begin{align*}
    N(p-2)+p\sfr 
    &> N(p_c -2 -\delta) + (p_c-\delta)(p_c-\delta+\varepsilon)\tfrac{N+2}{N}\\
    &=\underbrace{N (p_c-2)+2\tfrac{2N}{N+2}}_{=0}
      -N\delta-2\delta+2(\varepsilon-\delta)
      -\delta(\varepsilon-\delta)\tfrac{N+2}{N}\\
    &=-(N+4)\delta+2\varepsilon
      -\delta(\varepsilon-\delta)\tfrac{N+2}{N}.
\end{align*}
To ensure that the last quantity is positive, it suffices, for instance, to take
\[
    \delta=\frac{\varepsilon_* N}{N(N+4)+N+2}.
\]
Since, in this range of \(p\), one also has \(\sfr>2\), it follows from Lemma~\ref{lm:sup-est-qual} that such a solution 
(for $p\in(p_c-\delta,\;p_c]$) must in fact be locally bounded in \(E_T\). In principle this argument also applies to the systems as in \eqref{eq-par-gen}, and justifies considering locally bounded solutions.

Second, consider equation~\eqref{P-lapl} with a forcing term $\sff$. 
Once again, we seek self-similar solutions to the form~\eqref{self-sim-II}, with the specific choice $\beta=-\tfrac1p$. 
In this setting, we prescribe
\[
\sff(x,t)=\frac1{(T-t)_+}\,f\Big(|x|(T-t)_+^{-\frac1p}\Big),
\]
where $f$ is a given radial function.
Under these assumptions, a direct computation shows that the corresponding ordinary differential equation for $g$ reads
\[
 \frac{r}{p}\, g'(r)
 + \frac{N-1}{r}\,(-g'(r))^{p-1}
 - (p-1)\,(-g'(r))^{p-2} g''(r)
 = f(r).
\]
If $p=\frac{2N}{N+2}$, and $f\colon (0,\tfrac12)\to\mathbb{R}$ is given by
\[
f(r)=r^{-\frac{N}{2}}(-\ln r)^{-\frac{N-2}{4}}
\left[\frac{N}{2}+\frac{N^{2}-4N-4}{4N}(-\ln r)^{-1}\right],
\]
then, the corresponding solution 
$u\colon B_{\frac12}\times(0,T]\to\mathbb{R}$ 
takes the form
\[
u(x,t)=g\Big(|x|(T-t)_+^{-\frac1p}\Big),
\]
where $g:(0,\tfrac12)\to\mathbb{R}$ is a primitive of
\[
g'(r)=-r^{-\frac{N+2}{2}}(-\ln r)^{-\frac{N+2}{4}}.
\]
Provided $N>4$, it can be verified that 
\[
u\in L^{p}\big(0,T;W^{1,p}(B_{\frac12})\big)
\quad\text{and}\quad 
\sff\in L^{2}\big(B_{\frac12}\times(0,T)\big),
\]
and that $u$ is a weak solution to the  parabolic $p$-Laplace equation with a source term
\[
\partial_t u - \Div\big(|\nabla u|^{p-2}\nabla u\big)=\sff
\quad\text{in }B_{\frac12}\times(0,T).
\]
At the same time, $\nabla u$ fails to exhibit any form of higher integrability. The restriction $N>4$
is essential to ensure that the source term
$\sff$ belongs to $L^2$.

\subsection{Strategy of the proof}
From the examples discussed in the preceding subsection, it is apparent that the sub--critical regime exhibits substantially subtler behavior. 
On the technical side, the main obstruction arises from the fact that the energy inequality (see~\S~\ref{sec:energy}), when combined with the parabolic Sobolev embedding, fails to provide any quantitative gain in the form of a reverse H\"older inequality. 
More precisely, since $p\le\frac{2N}{N+2}<2$, the dominant contribution on the right-hand side of the energy inequality comes from the $L^2$-term. 
In light of the parabolic Sobolev embedding (Lemma~\ref{lem:gag} below), the space 
$
L^\infty_t L^2_x \cap L^q_t W^{1,q}_x
$
embeds into $L^2$ only when $q\ge \tfrac{2N}{N+2}$, and hence $q\ge p$. 
As a consequence, this standard approach cannot yield a reverse H\"older inequality in the sub--critical range.

This inherent difficulty is overcome by employing a quantified supremum bound for the solution. 
The idea of using such bounds in the derivation of gradient estimates originates in~\cite{Saari-Schwarzacher}. 
For this reason, we establish sharp quantitative sup-estimates in \S~\ref{sec:degiorgi}, which are of independent interest. 
These estimates are, however, intrinsically non-homogeneous with respect to the solution. 
Consequently, any subsequent argument that relies on these bounds, including the proof of the reverse H\"older inequality, inherits such a non-homogeneous dependence on $u$.

This necessitates the introduction of an intrinsic geometry that depends on the solution itself: all estimates are carried out on parabolic cylinders whose space-time scaling includes the magnitude of $u$. 
At the same time, the underlying PDE is non-homogeneous with respect to the spatial gradient $Du$. 
It has been known since the seminal works of DiBenedetto and Friedman~\cite{DiBenedetto_Holder} and of Kinnunen and Lewis~\cite{Kinnunen-Lewis:1} that, in this context, gradient estimates must be performed on cylinders intrinsic to $Du$. 
This interplay between two distinct intrinsic 
scalings -- one involving $u$ and the other its gradient
-- creates a substantial technical obstacle: it precludes the use of cylinders intrinsic to a single quantity alone. In this sense, we operate within a framework of \emph{mixed intrinsic scaling}: 
our scaling parameter simultaneously controls the $L^{\sfr}$-oscillation of $u$ and is bounded in terms of the $L^{p}$-norm of $Du$ and the source terms; see~\S~\ref{sec:rev-hoel}. 
Within this setting, we establish a reverse H\"older inequality on the mixed intrinsic cylinders. 
This, however, comes at the expense of a highly delicate construction: 
the definition and selection of these cylinders are carried out in~\S\S~\ref{sec:cylinders}--\ref{SS:selection-radii}. 
The resulting structure yields a weak-type estimate for the super-level sets of $|Du|$, 
which ultimately leads to the higher integrability of the spatial gradient.

\vspace{.5cm}

{\bf Acknowledgments.} 
U. Gianazza belongs to the Gruppo Nazionale per l'Analisi Matematica, la Probabilità e le loro Applicazioni (GNAMPA) of the Istituto Nazionale di Alta Matematica (INdAM), and was partly supported by the GNAMPA-INdAM  Project 2023 ``Regolarità per problemi ellittici e parabolici con crescite non standard" (CUP\_E53C22001930001). This research was funded in whole or in part by the Austrian Science Fund (FWF) [10.55776/P36272]. For open access purposes, the author has applied a CC BY public copyright license to any author accepted manuscript version arising from this submission.

\section{Notation, definitions and tools}\label{SS:2}
Throughout the paper, $\mathbb{N}_0$ denotes $\mathbb{N}\cup\{0\}$, and $\mathbb{R}^k$, $\mathbb{R}^N$ are Euclidean spaces.  
Given a bounded open set $E\subset\mathbb{R}^N$ and $T>0$, we set $E_T:=E\times(0,T]$.  
For a \emph{scalar} function $v\colon E_T\to\mathbb{R}$, $\nabla v$ denotes its spatial gradient;  
for a \emph{vector-valued} function $u\colon E_T\to\mathbb{R}^k$, we write $D u$ for its spatial gradient.  
  
A space–time point is denoted by $z_o=(x_o,t_o)\in\mathbb{R}^N\times\mathbb{R}$, with $N\in\mathbb{N}$.  
By $B_\rho(x_o)$ we denote the open ball in $\mathbb{R}^N$ centered at $x_o$ with radius $\rho>0$;  
when $x_o=0$, we simply write $B_\rho$. For $R,S>0$, we define the \emph{backward parabolic cylinder}
\[
    Q_{R,S}(z_o):=B_R(x_o)\times(t_o-S,t_o].
\]
Occasionally, we abbreviate $\sigma Q_{R,S}(z_o):=Q_{\sigma R,\sigma S}(z_o)$ for $\sigma>0$.
For $\lambda>0$ and $z_o=(x_o,t_o)$, we introduce the \emph{intrinsic parabolic cylinder}
\begin{equation*}
	Q_\rho^{(\lambda)}(z_o)
	:=
	B_\rho^{(\lambda)}(x_o)\times\Lambda_\rho(t_o),
\end{equation*}
where
\[
    B_\rho^{(\lambda)}(x_o)
    :=
    \{x\in\mathbb{R}^N:\ |x-x_o|<\lambda^{\frac{p-2}{2}}\rho\},
    \qquad
    \Lambda_\rho(t_o)
    :=
    (t_o-\rho^2,t_o+\rho^2).
\]
When $\lambda=1$, the superscript is omitted, and when $z_o=(0,0)$ or when the context is clear, the reference to $z_o$ will also be dropped.

We record an elementary iteration lemma (cf. \cite[Lemma 6.1]{Giusti:book}) that will be repeatedly used to absorb error terms along shrinking radii.
\begin{lemma}\label{lem:tech-classical}
Let $0<\eps<1$, $A,B\ge 0$ and $\alpha\ge 0 $. Then there exists a universal constant  $C = C(\alpha,\eps)$
such that whenever $\phi\colon [R_o,R_1]\to \R$  is a non-negative bounded function satisfying
\begin{equation*}
	\phi(t)
	\le
	\eps \phi(s) + \frac{A}{(s-t)^\alpha}  + B
	\qquad \text{for all $R_o\le t<s\le R_1$,}
\end{equation*}
we have
\begin{equation*}
	\phi(t)
	\le
	C\,  \bigg[\frac{A}{(s-t)^\alpha}  + B\bigg]
\end{equation*}
for any $R_o\le t<s\le R_1$.
\end{lemma}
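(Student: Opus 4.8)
\textbf{Proof plan for Lemma~\ref{lem:tech-classical}.}
The plan is to run the standard ``hole-filling'' iteration. Fix $R_o\le t<s\le R_1$ and choose a geometric sequence of radii interpolating between $t$ and $s$: set $\tau\in(0,1)$ to be selected, let $\rho_0:=t$ and $\rho_{i+1}:=\rho_i+(1-\tau)\tau^i(s-t)$, so that $\rho_i\uparrow s$ and $\rho_{i+1}-\rho_i=(1-\tau)\tau^i(s-t)$. Applying the hypothesis on each pair $\rho_i<\rho_{i+1}$ gives
\begin{equation*}
\phi(\rho_i)\le \eps\,\phi(\rho_{i+1})+\frac{A}{(1-\tau)^\alpha\tau^{i\alpha}(s-t)^\alpha}+B.
\end{equation*}
Iterating this $n$ times and using $0<\eps<1$, one obtains
\begin{equation*}
\phi(\rho_0)\le \eps^{\,n}\phi(\rho_n)+\Big[\frac{A}{(1-\tau)^\alpha(s-t)^\alpha}+B\Big]\sum_{i=0}^{n-1}(\eps\,\tau^{-\alpha})^i.
\end{equation*}

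First I would choose $\tau=\tau(\alpha,\eps)\in(0,1)$ so close to $1$ that $\eps\,\tau^{-\alpha}<1$; this is possible since $\eps<1$ and $\tau^{-\alpha}\to1$ as $\tau\uparrow1$. With this choice the geometric series converges, $\sum_{i\ge0}(\eps\,\tau^{-\alpha})^i=(1-\eps\tau^{-\alpha})^{-1}=:C_1(\alpha,\eps)$. Since $\phi$ is bounded on $[R_o,R_1]$ and $\eps^n\to0$, letting $n\to\infty$ kills the first term and yields
\begin{equation*}
\phi(t)=\phi(\rho_0)\le C_1(\alpha,\eps)\Big[\frac{A}{(1-\tau)^\alpha(s-t)^\alpha}+B\Big]\le C(\alpha,\eps)\Big[\frac{A}{(s-t)^\alpha}+B\Big],
\end{equation*}
where $C(\alpha,\eps):=C_1(\alpha,\eps)\,(1-\tau)^{-\alpha}$ depends only on $\alpha$ and $\eps$. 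Since $t<s$ in $[R_o,R_1]$ were arbitrary, this is exactly the claimed inequality.

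The only point requiring a little care — and the nearest thing to an ``obstacle'' — is the simultaneous choice of $\tau$: it must be large enough that $\eps\tau^{-\alpha}<1$ (to make the series summable) yet, once fixed, contributes only the harmless constant factor $(1-\tau)^{-\alpha}$ to $C$. One also notes the degenerate case $\alpha=0$ is trivial (any $\tau$ with $\eps<1$ works, and the $(s-t)^{-\alpha}$ factor is just $1$), and that boundedness of $\phi$ is used solely to justify $\eps^n\phi(\rho_n)\to0$; no continuity or monotonicity of $\phi$ is needed. The argument is entirely elementary and self-contained, using none of the PDE structure.
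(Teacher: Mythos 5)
Your proof is correct and is essentially the classical argument the paper relies on: Lemma~\ref{lem:tech-classical} is stated without proof and referred to \cite[Lemma 6.1]{Giusti:book}, whose proof is exactly your geometric interpolation of radii, iteration, choice of $\tau$ with $\eps\,\tau^{-\alpha}<1$, and passage to the limit using the boundedness of $\phi$. Nothing is missing; the constant $C=(1-\tau)^{-\alpha}(1-\eps\tau^{-\alpha})^{-1}$ you obtain depends only on $\alpha$ and $\eps$, as required.
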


The following lemma \cite[Chapter I, Lemma 4.1]{DiBe} provides a basic decay criterion for a nonlinear recurrence. 
It asserts that, under a suitable smallness condition on the initial value, the sequence converges to zero at a geometric rate. Such statements are classical tools in iterative schemes, particularly in De Giorgi-type arguments for deriving
$L^\infty$-bounds. 
\begin{lemma}\label{it-lemma}
Let $(\boldsymbol{\mathsf Y}_i)_{i \in \N_0}$ be a sequence of non-negative numbers satisfying
\begin{align*}
	\boldsymbol{\mathsf Y}_{i+1}
	\leq
	C\, \boldsymbol{\mathsf b}^i \boldsymbol{\mathsf Y}_i^{1+\alpha}
	\qquad\mbox{for all $i \in \N_0$}
\end{align*}
for some positive constants $C, \alpha$
and $\boldsymbol{\mathsf b} >1$.
If
\begin{align*}
	\boldsymbol{\mathsf Y}_o
	\le
	C^{-\frac{1}{\alpha}} \boldsymbol{\mathsf b}^{-\frac{1}{\alpha^2}},
\end{align*}
then $\boldsymbol{\mathsf Y}_i \to 0$ as $i \to \infty$.
\end{lemma}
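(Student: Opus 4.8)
The statement is the classical fast-geometric-convergence lemma for nonlinear recurrences, and I would prove it by the standard inductive guess that $\boldsymbol{\mathsf Y}_i$ decays geometrically, say $\boldsymbol{\mathsf Y}_i \le \boldsymbol{\mathsf Y}_o\, \boldsymbol{\mathsf b}^{-i/\alpha}$. First I would fix the target ratio $\boldsymbol{\mathsf d}:=\boldsymbol{\mathsf b}^{-1/\alpha}\in(0,1)$ (note $\boldsymbol{\mathsf b}>1$), and prove by induction on $i\in\N_0$ that $\boldsymbol{\mathsf Y}_i \le \boldsymbol{\mathsf d}^{\,i}\boldsymbol{\mathsf Y}_o$. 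The base case $i=0$ is trivial. For the inductive step, assume $\boldsymbol{\mathsf Y}_i\le \boldsymbol{\mathsf d}^{\,i}\boldsymbol{\mathsf Y}_o$; plugging into the recurrence gives
\begin{equation*}
\boldsymbol{\mathsf Y}_{i+1}
\le
C\,\boldsymbol{\mathsf b}^i\big(\boldsymbol{\mathsf d}^{\,i}\boldsymbol{\mathsf Y}_o\big)^{1+\alpha}
=
C\,\boldsymbol{\mathsf Y}_o^{\,\alpha}\,\boldsymbol{\mathsf b}^i\,\boldsymbol{\mathsf d}^{\,i(1+\alpha)}\cdot\boldsymbol{\mathsf d}^{\,i}\boldsymbol{\mathsf Y}_o
=
\big(C\,\boldsymbol{\mathsf Y}_o^{\,\alpha}\big)\big(\boldsymbol{\mathsf b}\,\boldsymbol{\mathsf d}^{\,1+\alpha}\big)^i\,\boldsymbol{\mathsf d}^{\,i}\boldsymbol{\mathsf Y}_o.
\end{equation*}

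\textbf{Key algebraic identities.} The point of the choice $\boldsymbol{\mathsf d}=\boldsymbol{\mathsf b}^{-1/\alpha}$ is the two simplifications: $\boldsymbol{\mathsf b}\,\boldsymbol{\mathsf d}^{\,1+\alpha}=\boldsymbol{\mathsf b}\cdot\boldsymbol{\mathsf b}^{-(1+\alpha)/\alpha}=\boldsymbol{\mathsf b}^{-1/\alpha}=\boldsymbol{\mathsf d}\le 1$, so $(\boldsymbol{\mathsf b}\,\boldsymbol{\mathsf d}^{\,1+\alpha})^i\le 1$ for all $i\ge 0$; and the hypothesis $\boldsymbol{\mathsf Y}_o\le C^{-1/\alpha}\boldsymbol{\mathsf b}^{-1/\alpha^2}$ is exactly $C\,\boldsymbol{\mathsf Y}_o^{\,\alpha}\le \boldsymbol{\mathsf b}^{-1/\alpha}=\boldsymbol{\mathsf d}\le 1$. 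Combining these two bounds in the displayed inequality yields $\boldsymbol{\mathsf Y}_{i+1}\le \boldsymbol{\mathsf d}\cdot 1\cdot \boldsymbol{\mathsf d}^{\,i}\boldsymbol{\mathsf Y}_o=\boldsymbol{\mathsf d}^{\,i+1}\boldsymbol{\mathsf Y}_o$, closing the induction. Since $\boldsymbol{\mathsf d}\in(0,1)$, we get $\boldsymbol{\mathsf Y}_i\le \boldsymbol{\mathsf d}^{\,i}\boldsymbol{\mathsf Y}_o\to 0$ as $i\to\infty$, which is the assertion.

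\textbf{Remark on the main point.} There is essentially no obstacle here: the only subtlety is guessing the right decay rate $\boldsymbol{\mathsf d}=\boldsymbol{\mathsf b}^{-1/\alpha}$ so that the recurrence is \emph{self-reproducing}, i.e.\ the factor $C\boldsymbol{\mathsf b}^i\boldsymbol{\mathsf d}^{\,i(1+\alpha)}$ multiplying $\boldsymbol{\mathsf Y}_o^{\,\alpha}$ stays bounded by $1$. One could alternatively take logarithms and observe that $\log\boldsymbol{\mathsf Y}_i$ satisfies an affine recurrence whose fixed-point analysis gives the same threshold, but the direct induction above is the cleanest. The hypothesis on $\boldsymbol{\mathsf Y}_o$ is exactly sharp for this argument, since it is precisely what makes the leading constant in the inductive step equal to (at most) the decay factor $\boldsymbol{\mathsf d}$.
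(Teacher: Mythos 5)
Your proof is correct in substance and follows the standard argument for this classical iteration lemma; the paper itself gives no proof, quoting the statement from DiBenedetto's book, where the argument is precisely your induction on the geometric ansatz $\boldsymbol{\mathsf Y}_i\le\boldsymbol{\mathsf d}^{\,i}\boldsymbol{\mathsf Y}_o$ with $\boldsymbol{\mathsf d}=\boldsymbol{\mathsf b}^{-1/\alpha}$. One piece of exponent bookkeeping should be repaired: the displayed identity $C\boldsymbol{\mathsf b}^i(\boldsymbol{\mathsf d}^{\,i}\boldsymbol{\mathsf Y}_o)^{1+\alpha}=(C\boldsymbol{\mathsf Y}_o^{\,\alpha})(\boldsymbol{\mathsf b}\,\boldsymbol{\mathsf d}^{\,1+\alpha})^i\,\boldsymbol{\mathsf d}^{\,i}\boldsymbol{\mathsf Y}_o$ is false as written, since the right-hand side equals the left-hand side times an extra $\boldsymbol{\mathsf d}^{\,i}$; the correct factorization is $(C\boldsymbol{\mathsf Y}_o^{\,\alpha})(\boldsymbol{\mathsf b}\,\boldsymbol{\mathsf d}^{\,\alpha})^i\,\boldsymbol{\mathsf d}^{\,i}\boldsymbol{\mathsf Y}_o$ with $\boldsymbol{\mathsf b}\,\boldsymbol{\mathsf d}^{\,\alpha}=1$, or equivalently $(C\boldsymbol{\mathsf Y}_o^{\,\alpha})(\boldsymbol{\mathsf b}\,\boldsymbol{\mathsf d}^{\,1+\alpha})^i\,\boldsymbol{\mathsf Y}_o$ with $\boldsymbol{\mathsf b}\,\boldsymbol{\mathsf d}^{\,1+\alpha}=\boldsymbol{\mathsf d}$. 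The slip is harmless here because you then only use $(\boldsymbol{\mathsf b}\,\boldsymbol{\mathsf d}^{\,1+\alpha})^i\le1$, whereas in fact $(\boldsymbol{\mathsf b}\,\boldsymbol{\mathsf d}^{\,1+\alpha})^i=\boldsymbol{\mathsf d}^{\,i}$, so the two inaccuracies cancel: the corrected chain reads $\boldsymbol{\mathsf Y}_{i+1}\le C\boldsymbol{\mathsf Y}_o^{\,\alpha}\,\boldsymbol{\mathsf d}^{\,i}\,\boldsymbol{\mathsf Y}_o\le\boldsymbol{\mathsf d}^{\,i+1}\boldsymbol{\mathsf Y}_o$, using only your hypothesis $C\boldsymbol{\mathsf Y}_o^{\,\alpha}\le\boldsymbol{\mathsf d}$, and the induction closes exactly as you intend. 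In short: same approach as the cited proof, conclusion sound; just fix the factorization in the inductive step.
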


For a map $u\in L^1(0,T;L^1(E,\R^k))$ and given measurable sets $A\subset E$  and $G\subset E_T$ with positive Lebesgue measure, the slicewise mean $\langle u\rangle_{A}\colon (0,T)\to \R^k$ of $u$ on  $A$ is defined
by
\begin{equation*}
	\langle u\rangle_{A}(t)
	:=
	\mint_{A} u(t)\,\dx,
	\quad\mbox{for a.e.~$t\in(0,T)$,}
\end{equation*}
whereas  the mean value $(u)_{G}\in \R^k$ of $u$ on  $G$ is defined by
\begin{equation*}
	(u)_{G}
	:=
	\biint_{G} u\,\dx\dt.
\end{equation*}
Note that if $u\in C^0((0,T);L^2(E,\R^k))$, the slicewise means are defined for any $t\in (0,T)$. 
If  $A$ is a ball $B_\rho^{(\lambda)}(x_o)$,  we write $\langle
u\rangle_{x_o;\rho}^{(\lambda)}(t):=\langle
u\rangle_{B_\rho^{(\lambda)}(x_o)}(t)$. Similarly,   if $G$ is a
cylinder of the form $Q_\rho^{(\lambda)}(z_o)$, we use the shorthand
notation $(u)^{(\lambda)}_{z_o;\rho}:=(u)_{Q_\rho^{(\lambda)}(z_o)}$.

A convenient device repeatedly used below is the elementary quasi–minimality of the average  $
\R^N\ni a\mapsto \int_A |u-a|^r \,\dx$.
For completeness we record it.

\begin{lemma}\label{lem:mean}
Let $r\ge1$, $A\subset\mathbb{R}^{N}$ be a bounded set of positive measure, and $u\in L^{r}(A,\mathbb{R}^{k})$. Then for every $a\in\mathbb{R}^{k}$,
$$
  \mint_{A}\!|u-(u)_{A}|^{r}\,\mathrm{d}x
  \;\le\;
  2^{r}\mint_{A}\!|u-a|^{r}\,\mathrm{d}x.
$$
\end{lemma}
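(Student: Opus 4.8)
The statement to prove is Lemma~\ref{lem:mean}, the quasi-minimality of the mean.

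\medskip

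The plan is to estimate the left-hand side by inserting $a$ and using the triangle inequality together with a convexity (power-mean) bound. First I would write, for a.e.\ $x\in A$,
\[
  |u(x)-(u)_A|
  \le |u(x)-a| + |a-(u)_A|,
\]
and then raise both sides to the power $r\ge1$. By convexity of $t\mapsto t^r$ (equivalently, the elementary inequality $(\alpha+\beta)^r\le 2^{r-1}(\alpha^r+\beta^r)$ for $\alpha,\beta\ge0$), this gives
\[
  |u(x)-(u)_A|^{r}
  \le 2^{r-1}\big(|u(x)-a|^{r} + |a-(u)_A|^{r}\big).
\]
Integrating over $A$ and dividing by $|A|$ yields
\[
  \mint_A |u-(u)_A|^{r}\,\dx
  \le 2^{r-1}\mint_A |u-a|^{r}\,\dx
     + 2^{r-1}|a-(u)_A|^{r}.
\]

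\medskip

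The one genuine point is to control the second term $|a-(u)_A|^{r}$ by the first integral. Here I would use that $(u)_A=\mint_A u\,\dx$, so $a-(u)_A=\mint_A(a-u)\,\dx$, and then apply Jensen's inequality for the convex function $t\mapsto |t|^r$ (valid since $r\ge1$):
\[
  |a-(u)_A|^{r}
  = \Big|\mint_A (a-u)\,\dx\Big|^{r}
  \le \mint_A |a-u|^{r}\,\dx
  = \mint_A |u-a|^{r}\,\dx.
\]
Combining the two displays gives
\[
  \mint_A |u-(u)_A|^{r}\,\dx
  \le 2^{r-1}\mint_A |u-a|^{r}\,\dx + 2^{r-1}\mint_A |u-a|^{r}\,\dx
  = 2^{r}\mint_A |u-a|^{r}\,\dx,
\]
which is exactly the claimed bound. (When $k>1$ the same computation applies componentwise, or directly with the Euclidean norm, since $|\cdot|^r$ is still convex on $\R^k$ for $r\ge1$.)

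\medskip

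I do not expect any serious obstacle: the proof is a two-line application of the triangle inequality and Jensen's inequality, and the constant $2^r$ is not claimed to be optimal. The only thing to be mildly careful about is that $u\in L^r(A,\R^k)$ ensures $(u)_A$ is well defined and all the integrals appearing are finite, so every step above is legitimate.
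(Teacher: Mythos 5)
Your proof is correct and complete: the combination of the triangle inequality, the convexity bound $(\alpha+\beta)^r\le 2^{r-1}(\alpha^r+\beta^r)$, and Jensen's inequality for $|\cdot|^r$ on $\R^k$ is exactly the standard argument for this quasi-minimality estimate. The paper states the lemma without proof, so there is nothing to compare against, but your argument is the expected one and the constant $2^r$ comes out precisely as claimed.
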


We shall repeatedly interpolate between the $L^q$-energy and
the $L^r$-mass of $u$
by means of the following localized Gagliardo–Nirenberg inequality on balls, stated in the precise form needed below.

\begin{lemma}\label{lem:gag}
Let $1\le p<\infty$. There exists a constant
$c=c(N,p)$ such that for any ball $B_\rho(x_o)\subset\R^N$ with $\varrho>0$ and any function $u \in W^{1,p}(B_\rho(x_o))$, we have
\begin{equation*}
	\mint_{B_\rho(x_o)} \frac{|u|^q}{\rho^q} \,\dx
	\le
	c\bigg[\mint_{B_\varrho(x_o)} \frac{|u|^2}{\rho^2} \,\dx\bigg]
	^{\frac{q-p}{2}}\mint_{B_\rho(x_o)} \bigg[
	\frac{|u|^{p}}{\rho^p} + |Du|^p \bigg] \dx ,
\end{equation*}
where $q=p\frac{N+2}{N}$. 
\end{lemma}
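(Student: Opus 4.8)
The plan is to read Lemma~\ref{lem:gag} as a rescaled, localized instance of the Gagliardo--Nirenberg interpolation inequality, the exponent $q=p\frac{N+2}{N}$ being precisely the one for which the interpolation parameter between the $L^p$-control of the gradient and the $L^2$-mass equals $\theta=p/q$. First I would reduce to the unit ball and to scalar, non-negative functions. Replacing $u$ by $|u|\in W^{1,p}(B_\rho(x_o))$ only decreases the left-hand side and leaves the right-hand side unchanged, since $|\nabla|u||\le|Du|$ a.e.; and one may assume $u\in L^2(B_\rho(x_o))$, as otherwise there is nothing to prove. With $v(y):=u(x_o+\rho y)$ for $y\in B_1$, one has $\nabla v(y)=\rho\,\nabla u(x_o+\rho y)$, each of the four averages transforms by the expected power of $\rho$, and -- since $q=(q-p)+p$, so that $\rho^{-q}=\rho^{-(q-p)}\rho^{-p}$ -- the assertion becomes the scale-free inequality
\[
\mint_{B_1}|v|^q\,\dy\le c(N,p)\Big(\mint_{B_1}|v|^2\,\dy\Big)^{\frac{q-p}{2}}\mint_{B_1}\big(|v|^p+|\nabla v|^p\big)\,\dy .
\]

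To prove this I would extend $v$ to the whole space. Using a bounded linear extension operator for the ball (the Stein extension, say, or a reflection-type construction) which is simultaneously bounded $W^{1,p}(B_1)\to W^{1,p}(\R^N)$ and $L^2(B_1)\to L^2(\R^N)$, I obtain $\bar v\in W^{1,p}(\R^N)\cap L^2(\R^N)$ with $\|\nabla\bar v\|_{L^p(\R^N)}\le c(N,p)\big(\|\nabla v\|_{L^p(B_1)}+\|v\|_{L^p(B_1)}\big)$ and $\|\bar v\|_{L^2(\R^N)}\le c(N)\|v\|_{L^2(B_1)}$. Then I apply the Gagliardo--Nirenberg inequality on $\R^N$, namely $\|g\|_{L^q(\R^N)}\le c(N,p)\|\nabla g\|_{L^p(\R^N)}^{\theta}\|g\|_{L^2(\R^N)}^{1-\theta}$ with $\theta=p/q\in(0,1)$, to $g=\bar v$; the dimensional identity $\frac1q=\theta\big(\frac1p-\frac1N\big)+(1-\theta)\frac12$ holds for exactly this $q$ and this $\theta$. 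Raising to the power $q$, using $\theta q=p$ and $(1-\theta)q=q-p$, the two extension bounds, and $(a+b)^p\le 2^{p-1}(a^p+b^p)$, I get $\int_{B_1}|v|^q\,\dy\le c(N,p)\big(\int_{B_1}(|\nabla v|^p+|v|^p)\,\dy\big)\big(\int_{B_1}|v|^2\,\dy\big)^{(q-p)/2}$, which is the scale-free inequality above up to the dimensional constant $|B_1|$; undoing the rescaling gives the claim with $c=c(N,p)$.

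I do not expect a genuine obstacle -- the inequality is classical -- but one point deserves care: one should pass to $\R^N$ \emph{before} invoking Gagliardo--Nirenberg. On a bounded domain that inequality carries an additive lower-order term and so is not of the homogeneous product form the statement requires; working on $\R^N$, where it is scale-homogeneous, and absorbing the lower-order cost into the $\|v\|_{L^p(B_1)}$ contribution through the extension operator, resolves this cleanly. If one prefers to avoid extension operators altogether, a self-contained alternative is to apply the Sobolev inequality $W^{1,1}(B_1)\hookrightarrow L^{N/(N-1)}(B_1)$ to $|v|^{\kappa}$ for a suitable $\kappa$, bound the arising term $\int_{B_1}|v|^{\kappa-1}|\nabla v|$ by Hölder's inequality in $L^p$ and $L^{p'}$, and interpolate the leftover power of $|v|$ against $\int_{B_1}|v|^2$; this yields the same inequality with a constant depending only on $N$ and $p$.
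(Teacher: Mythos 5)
The paper states Lemma~\ref{lem:gag} as a standard tool without proof, so there is no in-paper argument to compare against; your proof is correct. After rescaling to the unit ball, the interpolation parameter $\theta=p/q=\tfrac{N}{N+2}$ indeed satisfies the Gagliardo--Nirenberg balance $\tfrac{1}{q}=\theta\bigl(\tfrac{1}{p}-\tfrac{1}{N}\bigr)+\tfrac{1-\theta}{2}$, and since $\theta<1$ the exceptional borderline case ($p=N$, $\theta=1$) is avoided for every $p\in[1,\infty)$; extending by a Stein-type operator simultaneously bounded on $W^{1,p}$ and $L^2$ and then applying the homogeneous $\R^N$-inequality delivers exactly the product-form bound, with the extension cost absorbed into the $\|v\|_{L^p(B_1)}^p$ term. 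Your remark that one must pass to $\R^N$ before invoking Gagliardo--Nirenberg to obtain the product form is the right point of care, and the reduction to $u\in L^2$ (otherwise the inequality is trivial) correctly handles the integrability issue before extension.
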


We end this section with the notion of solution.
\begin{definition}\label{def:weak_solution}\upshape
Assume that the Carath\'eodory vector field
$\sfA\colon E_T\times \R^k\times\R^{kN}\to\R^{kN}$ satisfies \eqref{growth-a*}, that
$\sfF\in L^p_{\rm loc} (E_T,\R^{kN})$,
and that $\sff\in L^2_{\rm loc}(E_T,\R^k)$.
We identify a  measurable map
$u\colon E_T\to\R^{N}$ in the class
\begin{equation}\label{func-space}
	u\in C \big([0,T]; L^{2}(E,\R^k)\big) \cap 
	L^p\big(0,T;W^{1,p}(E,\R^k)\big)
\end{equation} 
as a \textit{weak solution} to the  parabolic system \eqref{eq-par-gen} if and only if the identity
\begin{align*}
	\iint_{E_T}\big[u\cdot\varphi_t - \sfA(x,t,u,Du)\cdot D\varphi\big]\dx\dt
    =
    -\iint_{E_T} \big[|\sfF|^{p-2}\sfF\cdot D\varphi 
    +\sff\cdot\varphi\big]\dx\dt 
\end{align*}
holds, for any  testing function $\varphi\in C_0^\infty(E_T,\R^k)$.
\hfill$\Box$
\end{definition}

\section{Local \texorpdfstring{$L^\infty$}{Linfinity}-control in the sub-critical regime}
We consider weak solutions to the quasilinear parabolic 
$p$-Laplace–type system \eqref{eq-par-gen} under structural conditions \eqref{growth-a*} and with the given source terms
$\sfF\colon E_T\to\R^{kN}$, $\sff\colon E_T\to\R^{k}$. The precise regularity assumptions on $\sfF$ and $\sff$ will be given in the following.

\subsection{Energy and Caccioppoli estimates}
A weighted Caccioppoli estimate is required to run the Moser iteration.
The lemma below furnishes precisely this input: the Lipschitz weight $\Phi(|u|^{2})$ and its primitive $v=\int_{0}^{|u|^{2}}\Phi(s)\,\ds$ legitimize testing with powers of $u$ against cut–offs and deliver the balance needed for the $L^{\sfr}\!\to\!L^{\infty}$ upgrade. This device is specific to Moser’s method; the De Giorgi approach will rely instead on unweighted truncation estimates.
\begin{lemma}\label{lem:energy-est}
Let $1<p\le2$ and $\mu\in[0,1]$. There exists a constant $C=C(p,C_o,C_1)$ with the following property. Suppose $u$ is a weak solution to \eqref{eq-par-gen} 
 in $E_T$ under the structural assumptions \eqref{growth-a*}, 
and let $\Phi\colon\mathbb{R}_{\ge0}\to\mathbb{R}_{\ge0}$ be non-negative, bounded, and Lipschitz continuous, with
\begin{equation}
    \label{def:C_Phi}
    C_\Phi:=\sup\Bigl\{\frac{s\,\Phi'(s)}{\Phi(s)}:\ s\ge0,\ \Phi(s)>0\Bigr\}<\infty.
\end{equation}
If $Q_{R,S}(z_o)\Subset E_T$ and $\zeta\in C^\infty\big(Q_{R,S}(z_o),[0,1]\big)$ vanishes on the parabolic boundary of $Q_{R,S}(z_o)$, then
\begin{align}\label{en:Moser}\nonumber
     \sup_{\tau\in (t_o-S,t_o]}&\int_{B_R(x_o)\times\{\tau\}}
    \zeta^pv\,\dx
    +
    C_o \iint_{Q_{R,S}(z_o)}
    \zeta^{p} |Du|^p \Phi\big(|u|^2\big)\,\dx\dt\\\nonumber
    &\le
      C
    \iint_{Q_{R,S}(z_o)}
    \Big[|u|^p|\nabla\zeta|^p+\mu^p\zeta^p\Big]\Phi(|u|^2)
    \,\dx\dt\\  
    &\phantom{\le\,}
    +C\iint_{Q_{R,S}(z_o)}v|\partial_t\zeta^p|
    \,\dx\dt\\\nonumber
        &\phantom{\le\,}
    +
     C\Big( 1+ C_\Phi^\frac{p}{p-1}\Big)
    \iint_{Q_{R,S}(z_o)}|\sfF|^p\Phi\big( |u|^2\big)\zeta^p\,\dx\dt\\\nonumber
    &\phantom{\le\,}
    +
    C\iint_{Q_{R,S}(z_o)}|\sff||u|\Phi\big( |u|^2\big)\zeta^p\,\dx\dt,
\end{align}
where
\begin{equation*}
    v:=\int_{0}^{|u|^2} \Phi (s)\, \ds.
\end{equation*}
\end{lemma}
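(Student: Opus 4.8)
The plan is to derive \eqref{en:Moser} by a standard parabolic Caccioppoli argument, using $\varphi=\zeta^p u\,\Phi(|u|^2)$ as a (formal) test function in the weak formulation of \eqref{eq-par-gen}, making it rigorous through a Steklov average in time. First I would record the elementary calculus facts tying $v$, $\Phi$ and $C_\Phi$ together: since $v=\int_0^{|u|^2}\Phi$, one has $v\le |u|^2\Phi(|u|^2)$ by monotonicity of the argument only under convexity, so instead I would use the two-sided control $v\le |u|^2\,\Phi(|u|^2)\cdot\frac{1}{\,?\,}$ more carefully — in fact the correct bound is $v\le|u|^2\Phi(|u|^2)$ when $\Phi$ is nondecreasing and, in general, $v\le(1+C_\Phi)|u|^2\Phi(|u|^2)$ is not needed; what is needed is the reverse direction $v\ge c|u|^2\Phi(|u|^2)$, which also fails in general. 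The honest route is: treat the time term via $\partial_t u\cdot u\,\zeta^p\Phi(|u|^2)=\tfrac12\zeta^p\,\partial_t w$ where $w:=\int_0^{|u|^2}\Phi(s)\,\d s=v$, so that after integration by parts in $t$ the time contribution becomes exactly $-\tfrac1?\iint v\,\partial_t(\zeta^p)$ plus the boundary supremum term $\sup_\tau\int_{B_R}\zeta^p v\,\dx$. This is where the primitive $v$ enters structurally and why the statement is phrased in terms of $v$ rather than $|u|^2\Phi$.

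The spatial term is handled as follows. Testing produces $\iint \zeta^p\,\sfA(x,t,u,Du)\cdot D\big(u\,\Phi(|u|^2)\big)+\iint p\zeta^{p-1}\Phi(|u|^2)\,\sfA\cdot(\nabla\zeta\otimes u)$. For the principal part, $D\big(u\,\Phi(|u|^2)\big)=\Phi(|u|^2)\,Du+2\,\Phi'(|u|^2)\,u\otimes(Du^Tu)$; contracting with $\sfA$ and using coercivity \eqref{growth-a*}$_1$ gives the good term $C_o\iint\zeta^p|Du|^p\Phi(|u|^2)$ (after the usual reduction $(\mu^2+|\xi|^2)^{(p-2)/2}|\xi|^2\ge c|\xi|^p-c\mu^p$, valid since $p\le2$), while the $\Phi'$-piece has a sign by the directional monotonicity assumption \eqref{growth-a*}$_3$: $\sum_\alpha \sfA_\alpha\cdot u\,(\partial_\alpha u\cdot u)\ge0$, so $\iint 2\zeta^p\Phi'(|u|^2)\sum_\alpha\sfA_\alpha\cdot u\,(\partial_\alpha u\cdot u)\ge0$ since $\Phi'\ge0$ ($\Phi$ nondecreasing is not assumed — but $C_\Phi\ge0$ forces $\Phi'\ge0$ wherever $\Phi>0$, which is exactly the needed sign). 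This is the decisive use of the third structural condition and the reason it is imposed.

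The remaining terms are controlled by Young's inequality. The cut-off cross term $p\zeta^{p-1}\Phi\,\sfA\cdot(\nabla\zeta\otimes u)$ is bounded via the growth condition \eqref{growth-a*}$_2$ by $C\zeta^{p-1}|\nabla\zeta|\,|u|\,(\mu^2+|Du|^2)^{(p-1)/2}\Phi$; Young's inequality with exponents $\frac{p}{p-1},p$ splits off $\tfrac{C_o}{2}\zeta^p|Du|^p\Phi$ (absorbed left) plus $C(|u|^p|\nabla\zeta|^p+\mu^p\zeta^p)\Phi$. The source terms: $\iint\zeta^p|\sfF|^{p-1}|D(u\Phi(|u|^2))|$ — here the $\Phi$-part again absorbs a piece of the good term while the $\Phi'$-part produces the factor $C_\Phi$, and since that term carries $\Phi'(|u|^2)|u|^2\le C_\Phi\Phi(|u|^2)$, a further Young step with weight tuned to $C_\Phi$ yields the coefficient $(1+C_\Phi^{p/(p-1)})$ in front of $\iint|\sfF|^p\Phi\zeta^p$; the $\sff$-term is immediate, giving $C\iint|\sff||u|\Phi\zeta^p$. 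Taking the supremum over $\tau$ of the time-boundary term and combining completes the estimate.

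The main obstacle, and the step requiring care rather than cleverness, is the rigorous justification of the test function: $u\,\Phi(|u|^2)\zeta^p$ is an admissible competitor only after a Steklov (or mollification-in-time) regularization, because $u$ has merely $L^2$-in-time continuity and $\partial_t u$ lives in a dual space; one must verify that the product rule $\partial_t u\cdot u\,\Phi(|u|^2)=\tfrac12\partial_t v$ passes to the limit, which hinges on the boundedness and Lipschitz continuity of $\Phi$ (so that $v$ is Lipschitz in $|u|^2$ and hence $v\in C([0,T];L^1)$ along the approximation) together with the finiteness of $C_\Phi$ ensuring all $\Phi'$-weighted terms are dominated by $\Phi$-weighted ones and thus stay integrable. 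The parabolic boundary vanishing of $\zeta$ then removes the endpoint contribution at $\tau=t_o-S$ and isolates $\sup_\tau\int_{B_R}\zeta^p v\,\dx$ on the left.
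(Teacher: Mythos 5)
Your proposal reproduces the paper's argument essentially step for step: same test function $\zeta^p u\,\Phi(|u|^2)$ (justified by Steklov/time-mollification), same integration by parts in $t$ producing the primitive $v=\int_0^{|u|^2}\Phi$, same decomposition of the diffusion term into a $\Phi$-weighted piece controlled by coercivity and a $\Phi'$-weighted piece whose sign comes from the directional monotonicity \eqref{growth-a*}$_3$, and the same Young splittings for the cut-off cross term, the $\sfF$-term (with the $C_\Phi^{p/(p-1)}$ coefficient arising from the $\Phi'$-piece of $D(u\Phi(|u|^2))$), and the $\sff$-term. The one incorrect side claim is that ``$C_\Phi\ge0$ forces $\Phi'\ge0$ wherever $\Phi>0$'': a supremum can be nonnegative while individual ratios $s\Phi'(s)/\Phi(s)$ are negative, so $\Phi'\ge0$ is a genuine additional hypothesis on $\Phi$ — one the paper also uses (implicitly, and then explicitly in the estimate of $\sfI_1+\sfI_2$, and in passing from $|u|^2\Phi'(|u|^2)$ to $C_\Phi\Phi(|u|^2)$) — and it is satisfied by the concrete weight $\Phi_{\alpha,\ell,k}$ employed downstream.
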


\begin{proof}
By translation, without loss of generality we may assume that $z_o=(0,0)$.
Fix $\tau\in(-S,0)$ and consider $\delta\in(0,-\tau)$. Let us test the weak form of \eqref{eq-par-gen} with
\[
\varphi(x,t):=u(x,t)\,\Phi\big(|u(x,t)|^2\big)\zeta(x,t)^p\chi(t),
\]
where $\chi$ is given by $\chi\equiv1$ on $(-S,\tau)$, $\chi\equiv0$ on $(\tau+\delta,0)$, and linear on $[\tau,\tau+\delta]$. A standard time–mollification argument justifies this choice of $\varphi$.
For the term containing the time derivative we compute, using integration by parts in $t$ and
\[
v:=\int_{0}^{|u|^2}\Phi(s)\,\ds,
\]
that
\begin{align*}
    \iint_{Q_{R,S}} u\cdot\partial_t\varphi\,\dx\dt
    &=
    -\tfrac12\iint_{Q_{R,S}} \partial_t\big(|u|^2\big)\Phi
    \big(|u|^2\big)\zeta^p\chi\,\dx\dt\\
    &=
    -\tfrac12\iint_{Q_{R,S}} \partial_t v\zeta^p\chi\,\dx\dt\\
    &=
    \tfrac12\iint_{Q_{R,S}} v\partial_t(\zeta^p\chi)\,\dx\dt.
\end{align*}
To estimate the contribution of the diffusion term, we first differentiate the testing function. Using $D_{\alpha}|u|^{2}=2D_{\alpha}u\cdot u$, we obtain
\begin{align*}
    D_\alpha \Big(u\Phi\big(|u|^2\big)\zeta^p\Big)
    &=
    D_\alpha u\Phi\big(|u|^2\big)\zeta^p
    +
    u\Phi'\big(|u|^2\big)D_\alpha |u|^2 \zeta^p
    +
    u\Phi\big(|u|^2\big)D_\alpha \zeta^p.
\end{align*}
Inserting this into the weak formulation of \eqref{eq-par-gen}, and recalling the definition of $v$, we arrive at
\begin{align}\label{initial_identity}
    -\tfrac12 \iint_{Q_{R,S}}v\partial_t(\zeta^p \chi) \,\dx\dt
    +\sfI_1+\sfI_2
    =
    -\sfI_3+\sfI\sfI_1+\sfI\sfI_2
\end{align}
where
\begin{align*}
    \sfI_1
    &=
    \iint_{Q_{R,S}} \sum_{\alpha =1}^N \sfA_\alpha(x,t,u,Du)\cdot  D_\alpha u
    \Phi\big(|u|^2\big)\zeta^p\chi
    \,\dx\dt,\\
    \sfI_2
    &= 2   
    \iint_{Q_{R,S}} \sum_{\alpha =1}^N
    \sfA_\alpha (x,t,u,Du)\cdot u D_\alpha u\cdot u
    \Phi'\big(|u|^2\big)\zeta^p\chi
    \,\dx\dt,\\
    \sfI_3
    &=
    \iint_{Q_{R,S}} 
    \sum_{\alpha =1}^N \sfA_\alpha (x,t,u,Du)
    \cdot u D_\alpha \zeta^p
    \Phi\big(|u|^2\big)\chi
    \,\dx\dt,\\
    \sfI\sfI_1
    &=
    \iint_{Q_{R,S}}
    |\sfF|^{p-2}\sum_{\alpha =1}^N \sfF_\alpha \cdot D_\alpha\big( u\Phi(|u|^2)\zeta^p\big)\chi\,\dx\dt,
\end{align*}
and
\begin{align*}
    \sfI\sfI_2
    &=
    \iint_{Q_{R,S}}
    \sff\cdot u\Phi(|u|^2)\zeta^p\chi\,\dx\dt.
\end{align*}
Here, we recall that $\zeta\in C^\infty\big(Q_{R,S},[0,1]\big)$ vanishes on the parabolic boundary of $Q_{R,S}$, and $\chi$ is the time cut–off specified above.  We begin with the diffusion part on the left-hand side, i.e., $\sfI_1+\sfI_2$. By the ellipticity in \eqref{growth-a*}\(_1\) and the directional monotonicity \eqref{growth-a*}\(_3\) (together with \(\Phi'\ge0\)),
\begin{align*}
    \sfI_1+\sfI_2
    &\ge
    C_o\iint_{Q_{R,S}} (\mu^2+|Du|^2)^{\frac{p-2}{2}}|Du|^2\Phi(|u|^2)\zeta^p\chi\,\dx\dt .
\end{align*}
Using the elementary identity
\[
(\mu^2+|Du|^2)^{\frac{p-2}{2}}|Du|^2
=(\mu^2+|Du|^2)^{\frac p2}-\mu^2(\mu^2+|Du|^2)^{\frac{p-2}{2}}
\]
and the fact that for \(1<p\le2\), we have \(\mu^2(\mu^2+|Du|^2)^{\frac{p-2}{2}}\le \mu^{p}\), we infer
\[
\sfI_1+\sfI_2
\ge
C_o\iint_{Q_{R,S}}(\mu^2+|Du|^2)^{\frac p2}\Phi(|u|^2)\zeta^p\chi\,\dx\dt
-
C_o\iint_{Q_{R,S}}\mu^{p}\Phi(|u|^2)\zeta^p\chi\,\dx\dt.
\]
Next we estimate the cut–off term $\sfI_3$. Using the growth bound \eqref{growth-a*}$_2$, we obtain
\begin{align*}
|\sfI_3|
&\le
\iint_{Q_{R,S}}
|\sfA(x,t,u,Du)||u||\nabla\zeta^{p}|\Phi(|u|^{2})\,\chi\,\dx\dt\\
&\le
C_{1}p\iint_{Q_{R,S}}
(\mu^{2}+|Du|^{2})^{\frac{p-2}{2}}|Du||u||\nabla\zeta|\zeta^{p-1}\Phi(|u|^{2})\chi\,\dx\dt\\
&\le
C_{1}p\iint_{Q_{R,S}}
(\mu^{2}+|Du|^{2})^{\frac{p-1}{2}}|u||\nabla\zeta|\zeta^{p-1}\Phi(|u|^{2})\chi\,\dx\dt,
\end{align*}
where in the last step we used $(\mu^{2}+|Du|^{2})^{\frac{p-2}{2}}|Du|\le(\mu^{2}+|Du|^{2})^{\frac{p-1}{2}}$. Using Young’s inequality with a parameter $\epsilon>0$, we have
\begin{equation*}
    C_1p (\mu^{2}+|Du|^{2})^{\frac{p-1}{2}}|u||\nabla\zeta| \zeta^{p-1}
    \le
    \epsilon\,(\mu^{2}+|Du|^{2})^{\frac{p}{2}}\zeta^{p}
    +
    (C_1p)^{p'}\epsilon^{-\frac1{p-1}}|u|^{p}|\nabla\zeta|^{p}.
\end{equation*}
Substituting this into the bound for $\sfI_3$ obtained above yields
\begin{align*}
    |\sfI_3|
    &\le
    \epsilon \iint_{Q_{R,S}}
    (\mu^{2}+|Du|^{2})^{\frac{p}{2}}\Phi(|u|^{2})\zeta^{p}\chi\,\dx\dt
    \\
    &\phantom{\le\,}
    +(C_{1}p)^{p'}\epsilon^{-\frac1{p-1}}
    \iint_{Q_{R,S}} |u|^{p}|\nabla\zeta|^{p}\Phi(|u|^{2})\chi\,\dx\dt,
\end{align*}
where $\epsilon>0$ will be fixed later so that the first term can be absorbed into the left.
Finally, we control the contributions of the source terms $\sfF$ and $\sff$.
Using the product rule for
$D_{\alpha}\big(u\Phi(|u|^{2})\zeta^{p}\big)$,
the term in the weak formulation involving $-\Div(|\sfF|^{p-2}\sfF)$ against the test function decomposes as
\begin{equation*}
    \sfI\sfI_1
    =
    \iint_{Q_{R,S}}
    \big[\sfJ_1+\sfJ_2+\sfJ_3\big]\chi\,\dx\dt,
\end{equation*}
where 
\begin{align*}
    \sfJ_1
    &:=
    \sum_{\alpha=1}^N |\sfF|^{p-2}\sfF_\alpha\cdot D_{\alpha}u\Phi(|u|^{2})\zeta^{p},\\
    \sfJ_2
    &:=p\sum_{\alpha=1}^N |\sfF|^{p-2}(\sfF_\alpha\cdot u)\Phi(|u|^{2})
    \zeta^{p-1}D_{\alpha}\zeta,\\
    \sfJ_3
    &:=2\sum_{\alpha=1}^N |\sfF|^{p-2}(\sfF_\alpha\cdot u)\,(D_{\alpha}u\cdot u)
    \Phi'(|u|^{2})\zeta^{p}.
\end{align*}
The terms $\sfJ_1$ and $\sfJ_2$ are readily estimated by Young's inequality, while $\sfJ_3$ requires a separate treatment. In fact, let $p'=\frac{p}{p-1}$. Since 
\begin{equation*}
    |u|^{2}\Phi'(|u|^{2})
    \le\;
C_\Phi\,\Phi(|u|^{2})
\end{equation*}
due to~\eqref{def:C_Phi}, we have
\begin{align*}
    |\sfJ_3|
    &=
    2\sum_{\alpha =1}^N |\sfF|^{p-1}\Big( \frac{\sfF_\alpha}{|\sfF|}\cdot u\Big)\big( D_\alpha u\cdot u\big)\Phi^\prime(|u|^{2 }) \zeta^p\\
    &\le
    2|\sfJ|^{p-1}
    |Du||u|^2\Phi^\prime(|u|^{2 }) \zeta^p
    \le
    2C_\Phi|\sfF|^{p-1}\Phi(|u|^{2 })
    |Du|\zeta^p.
\end{align*}
Applying Young's inequality with exponents $p$ and $p'$, and with $\epsilon >0$ gives
\begin{equation*}
    |\sfJ_3|
    \le
    \epsilon|Du|^p\Phi(|u|^{2 })\zeta^p
    +2^{p'} C_\Phi^{p'}\epsilon^{-\frac1{p-1}}|\sfF|^p\Phi(|u|^{2 }) \zeta^{p}.
\end{equation*}
Finally, replacing $|Du|^{p}$ by $(\mu^{2}+|Du|^{2})^{p/2}$  yields the form consistent with the coercive term:
\begin{equation*}
    |\sfJ_3|
    \le
    \varepsilon(\mu^{2}+|Du|^{2})^{\frac{p}{2}}\Phi(|u|^{2})\zeta^{p}
    +
    2^{p'}C_\Phi^{p'}
    \varepsilon^{-\frac1{p-1}}
    |\sfF|^{p}\Phi(|u|^{2})\zeta^{p}.
\end{equation*}
For $\sfJ_{1}$ we use $|\sfF|^{p-2}\sfF_\alpha\cdot D_\alpha u\le |\sfF|^{p-1}|Du|$ and Young’s inequality with exponents $p$ and $p'$ and with a parameter $\varepsilon>0$, obtaining the pointwise bound
\begin{align*}
    |\sfJ_{1}|
    &\le |\sfF|^{p-1}
    |Du|\Phi(|u|^{2})\zeta^{p}\\
    &\le 
    \varepsilon(\mu^{2}+|Du|^{2})^{\frac p2}\Phi(|u|^{2})\zeta^{p}
    + \varepsilon^{-\frac{1}{p-1}}|\sfF|^{p}\Phi(|u|^{2})\zeta^{p}.
\end{align*}
For $\sfJ_{2}$ we observe that
\begin{equation*}
    |\sfJ_{2}|
    \le p
    |\sfF|^{p-1}|u||\nabla\zeta|\,\zeta^{p-1}\Phi(|u|^{2}).
\end{equation*}
Then, Young’s inequality with exponents $p$ and $p'$ gives
\begin{equation*}
    |\sfJ_{2}|
    \le 
    \epsilon |u|^{p}|\nabla\zeta|^{p}\Phi(|u|^{2})
    +
    p^{p'}\epsilon^{-\frac1{p-1}}
    |\sfF|^{p}\,\Phi(|u|^{2})\,\zeta^{p}
   .
\end{equation*}
Combining the bounds for $\sfJ_{1}$, $\sfJ_{2}$, and $\sfJ_{3}$ and substituting into the definition of $\sfI\sfI_{1}$, we obtain, for every $\varepsilon>0$,
\begin{align*}
    |\sfI\sfI_1|
    &\le
    2\epsilon\iint_{Q_{R,S}}
    \big(\mu^2+|Du|^2\big)^\frac{p}2 \Phi\big( |u|^2\big)\zeta^p\chi\,\dx\dt\\
    &\phantom{\le\,}+
    \underbrace{\big(1+p^{p'}+ 2^{p'}C_\Phi^{p'}\big)}_{\le C(p)(1+C_\Phi^{p'})}\epsilon^{-\frac1{p-1}}
    \iint_{Q_{R,S}}|\sfF|^p\Phi\big( |u|^2\big)\zeta^p\chi\,\dx\dt\\
    &\phantom{\le\,}+
    \epsilon\iint_{Q_{R,S}}|u|^p|\nabla\zeta|^p     \Phi\big( |u|^2\big)\chi\, \dx\dt.
\end{align*}
Combining the estimate for $\sfI_2$
with the bound for $\sfI\sfI_1$
obtained above and the trivial bound for $\sfI\sfI_2$, we control the right–hand side of \eqref{initial_identity} by
\begin{align*}
    |\sfI_3| +|\sfI\sfI_1|+|\sfI\sfI_2|
    &\le
    3\epsilon \iint_{Q_{R,S}}
    \big(\mu^2+|Du|^2\big)^\frac{p}2 \Phi\big( |u|^2\big)\zeta^p\chi\,\dx\dt\\
    &\phantom{\le\,}
    +
    \Big[(C_{1}p)^{p'}\epsilon^{-\frac1{p-1}}+\epsilon\Big]
    \iint_{Q_{R,S}} |u|^{p}|\nabla\zeta|^{p}\Phi(|u|^{2})\chi\,\dx\dt\\
    &\phantom{\le\,}
    +C(p)\big(1+C_\Phi^{p'}\big)\epsilon^{-\frac1{p-1}}
    \iint_{Q_{R,S}}|\sfF|^p\Phi\big( |u|^2\big)\zeta^p\chi\,\dx\dt\\
     &\phantom{\le\,}
    +\iint_{Q_{R,S}}
    |\sff| |u|\Phi(|u|^2)\zeta^p\chi\,\dx\dt.
\end{align*}
Combining the lower bound for $\sfI_1+\sfI_2$
with the preceding estimate of $|\sfI_2|+ |\sfI\sfI_1|+|\sfI\sfI_2|$
and rearranging terms in \eqref{initial_identity}, we obtain
\begin{align*}
    -\tfrac12 \iint_{Q_{R,S}}&v\partial_t(\zeta^p \chi) \,\dx\dt
    +
    \big(C_o-3\epsilon\big)\iint_{Q_{R,S}}\underbrace{(\mu^2+|Du|^2)^{\frac p2}}_{\ge |Du|^p}\Phi(|u|^2)\zeta^p\chi\,\dx\dt\\
    &\le 
    C_o\iint_{Q_{R,S}}\mu^{p}
    \Phi(|u|^2)\zeta^p\chi\,\dx\dt\\
    &\phantom{\le\,}
    +
    \Big[(C_{1}p)^{p'}\epsilon^{-\frac1{p-1}}+\epsilon\Big]
    \iint_{Q_{R,S}} |u|^{p}|\nabla\zeta|^{p}\Phi(|u|^{2})\chi\,\dx\dt\\
    &\phantom{\le\,}
    +C(p)\big(1+C_\Phi^{p'}\big) \epsilon^{-\frac1{p-1}}
    \iint_{Q_{R,S}}|\sfF|^p\Phi\big( |u|^2\big)\zeta^p\chi\,\dx\dt\\
     &\phantom{\le\,}
    +\iint_{Q_{R,S}}
    |\sff| |u|\Phi(|u|^2)\zeta^p\chi\,\dx\dt.
\end{align*}
Fix $\epsilon:=\frac16 C_o$.
Since $C_o-3\epsilon =\frac12 C_o$, the previous inequality yields
\begin{align*}
    -\tfrac12 \iint_{Q_{R,S}}&v\zeta^p\partial_t \chi \,\dx\dt
    +
    \tfrac12 C_o\iint_{Q_{R,S}} |Du|^p \Phi(|u|^2)\zeta^p\chi\,\dx\dt\\
    &\le 
    C(p)C_o\Big(\frac{C_1}{C_o}\Big)^{p'}
    \iint_{Q_{R,S}} \big[|u|^{p}|\nabla\zeta|^{p}+\mu^{p}\z^p\big]\Phi(|u|^{2})\chi\,\dx\dt\\
    &\phantom{\le\,}
    +\frac{C(p)}{C_o^{\frac1{p-1}}}\big(1+C_\Phi^{p'}\big)
    \iint_{Q_{R,S}}|\sfF|^p\Phi\big( |u|^2\big)\zeta^p\chi\,\dx\dt\\
     &\phantom{\le\,}
    +\iint_{Q_{R,S}}
    |\sff||u|\Phi(|u|^2)\zeta^p\chi\,\dx\dt
    +
    \tfrac12 \iint_{Q_{R,S}}v|\partial_t\zeta^p|\chi \,\dx\dt
    \\
    &\le C\Bigg[
    \iint_{Q_{R,S}} \big[|u|^{p}|\nabla\zeta|^{p}+\mu^{p}\z^p\big]\Phi(|u|^{2})\chi\,\dx\dt\\
    &\qquad\qquad\qquad
    +
    \big(1+C_\Phi^{p'}\big)
    \iint_{Q_{R,S}}|\sfF|^p\Phi\big( |u|^2\big)\zeta^p\chi\,\dx\dt\Bigg]\\
     &\phantom{\le\,}
     +
    \iint_{Q_{R,S}}
    |\sff||u|\Phi(|u|^2)\zeta^p\chi\,\dx\dt
    +
    \tfrac12 \iint_{Q_{R,S}}v|\partial_t\zeta^p|\chi \,
    \dx\dt,
\end{align*}
for a constant $C=C(p,C_{o}/C_{1})$ (in particular, one may take $C\simeq C(p)C_{o}\,(C_{1}/C_{o})^{p'}$, where $p'=\tfrac{p}{p-1}$).  We have used $C_{1}\ge1$ to streamline the constants.
Passing to the limit $\delta\downarrow 0$ in the preceding inequality,  we have $\chi\downarrow \mathbf 1_{(-S,\tau]}$ and
$\partial_t\chi\to -\delta_{t=\tau}$. The first integral on the left-hand side
converges to $ \tfrac12\int_{B_{R}\times\{\tau\}}v\zeta^p\,\dx $,  while the second one can be estimated from below by $\iint_{B_R\times (-S,\tau]} |Du|^p \Phi(|u|^2)\zeta^{p}\,\dx\dt$. In the right-hand side integrals we bound the cut-off function $\chi$ by 1. After that, we take the supremum over $\tau\in (-S,0]$ in the first integral  and let $\tau\uparrow 0$ in the second one. In this way we get the desired energy estimate.
\end{proof}

A De Giorgi–type truncation estimate is needed to control super-level sets without resorting to weighted powers.
The next lemma provides 
this input: testing the system with the truncated vector $u(|u|-\sfk)_{+}^{p'}\,\zeta^{p}$ yields an intrinsic Caccioppoli inequality in which the gradient appears as $|\nabla|u||^{p}(|u|-\sfk)_{+}^{p'}$. The parameter $\sfk\ge0$ localizes the energy to the set $\{|u|>\sfk\}$, the cut–off $\zeta$ restricts to $Q_{R,S}(z_{o})$, and the nondegeneracy level $\mu$ enters only through lower–order terms. This estimate is the quantitative engine behind the De Giorgi iteration: it converts measure decay of super-level sets into pointwise bounds while accommodating the source terms $\sfF$ and $\sff$ under the structural hypotheses \eqref{growth-a*}. 
Note that it does not follow from Lemma~\ref{lem:energy-est}, since assumption \eqref{def:C_Phi} is not satisfied for the function $\Phi(s)=(s-\sfk)_{+}^{p'}$.

\begin{lemma}\label{en:DeGiorgi-version}
Let $1<p\le 2$  and $\mu\in [0,1]$. There exists a constant $C=C(p,C_o,C_1)$, such that whenever $u$ is a locally bounded weak solution in $E_T$ to \eqref{eq-par-gen}
with \eqref{growth-a*}
and $Q_{R,S}(z_o)\subset E_T$, then for any cut-off function $\zeta\in C^\infty\big(Q_{R,S}(z_o),[0,1]\big)$ that vanishes on the parabolic boundary of $Q_{R,S}(z_o)$ and any $\sfk\ge 0$ we have
\begin{align*}
    &\sup_{\tau\in (t_o-S,t_o]}\int_{B_R(x_o)\times\{\tau\}}
    v\zeta^p\,\dx
    +
    \iint_{Q_{R,S}(z_o)}
     |\nabla |u||^p \big(|u|-\sfk\big)_+^{p'}\zeta^{p}\, \dx\dt\\
    &\qquad\le
    C
    \iint_{Q_{R,S}(z_o)}
    \Big[|u|^p|\nabla\zeta|^p+\mu^p\zeta^p\Big]
    \big(|u|-\sfk\big)_+^{p'}
    \,\dx\dt  
    +
    C\iint_{Q_{R,S}(z_o)}v|\partial_t\zeta^p|
    \,\dx\dt\\
    &\qquad\phantom{\le\,}
    +
    C
    \iint_{Q_{R,S}(z_o)}|\sfF|^p
    |u|^{p'}
    \mathbf{1}_{\{|u|>\sfk\}}
    \zeta^p\,\dx\dt
    +\iint_{Q_{R,S}(z_o)}
   |\sff||u|^{p'+1}\mathbf{1}_{\{|u|>\sfk\}} \zeta^p  \,\dx\dt,
\end{align*}
where
\begin{equation*}
    v:=\int_0^{|u|}s(s-\sfk)_+^{p'}\,\ds.
\end{equation*}
\end{lemma}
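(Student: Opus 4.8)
\emph{Strategy.} The proof runs in close parallel to that of Lemma~\ref{lem:energy-est}, with the bounded Lipschitz weight $\Phi(|u|^2)$ replaced by the power $(|u|-\sfk)_+^{p'}$ of $|u|$ (for which assumption \eqref{def:C_Phi} fails, so that Lemma~\ref{lem:energy-est} indeed does not apply). After translating $z_o$ to the origin, the plan is to test the weak formulation in Definition~\ref{def:weak_solution} with
\[
\varphi := u\,(|u|-\sfk)_+^{p'}\,\zeta^p\,\chi ,
\]
where $\chi$ is the Lipschitz time cut-off used in Lemma~\ref{lem:energy-est} ($\chi\equiv1$ on $(-S,\tau)$, $\chi\equiv0$ on $(\tau+\delta,0)$, linear on $[\tau,\tau+\delta]$). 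Since $p'=\frac{p}{p-1}\ge2$ when $p\le2$, the weight is $C^1$ with derivative $p'(\cdot-\sfk)_+^{p'-1}$ bounded near $|u|=0$ even for $\sfk=0$, so $\varphi$ is a bounded spatial Sobolev function with compact support; the choice is legitimized by the standard Steklov-average/time-mollification procedure, which applies because $u$ is locally bounded and lies in $C([0,T];L^2)\cap L^p(0,T;W^{1,p})$ (a preliminary smooth, non-decreasing approximation of $s\mapsto(s-\sfk)_+^{p'}$, removed at the end by monotone convergence, makes every step fully rigorous).

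\emph{Parabolic term.} Writing $\partial_t u\cdot u=|u|\,\partial_t|u|$ and integrating by parts in $t$ twice, one computes $\iint u\cdot\partial_t\varphi=-\iint|u|\,\partial_t|u|\,(|u|-\sfk)_+^{p'}\zeta^p\chi=\iint v\,\partial_t(\zeta^p\chi)$, where $v(r):=\int_0^r s(s-\sfk)_+^{p'}\,\ds$ since $v'(r)=r(r-\sfk)_+^{p'}$; this is precisely the primitive in the statement. Letting $\delta\downarrow0$ turns this into $-\int_{B_R\times\{\tau\}}v\zeta^p\,\dx+\iint_{B_R\times(-S,\tau]}v\,\partial_t\zeta^p\,\dx\dt$, which produces the boundary term on the left and the $v\,|\partial_t\zeta^p|$ term on the right.

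\emph{Elliptic and source terms.} Differentiating $\varphi$ and using $D_\alpha|u|=|u|^{-1}(u\cdot D_\alpha u)$ on $\{|u|>0\}$, the diffusion contribution $\iint\sfA\cdot D\varphi$ splits into three pieces. First, the coercive piece $\iint(\sfA\cdot Du)(|u|-\sfk)_+^{p'}\zeta^p\chi$ is bounded below, via \eqref{growth-a*}\(_1\), the identity $(\mu^2+|Du|^2)^{\frac{p-2}{2}}|Du|^2=(\mu^2+|Du|^2)^{\frac p2}-\mu^2(\mu^2+|Du|^2)^{\frac{p-2}{2}}$, and $\mu^2(\mu^2+|Du|^2)^{\frac{p-2}{2}}\le\mu^p$, by $C_o\iint(\mu^2+|Du|^2)^{\frac p2}(|u|-\sfk)_+^{p'}\zeta^p\chi-C_o\iint\mu^p(|u|-\sfk)_+^{p'}\zeta^p\chi$. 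Second, the ``radial'' piece equals $p'\iint|u|^{-1}(|u|-\sfk)_+^{p'-1}\zeta^p\chi\sum_\alpha(\sfA_\alpha\cdot u)(D_\alpha u\cdot u)$, which is non-negative by the directional monotonicity \eqref{growth-a*}\(_3\) and is simply discarded --- this is the sole use of \eqref{growth-a*}\(_3\) and explains why no corresponding term appears on the right. Third, the cut-off piece is handled by \eqref{growth-a*}\(_2\), the bound $(\mu^2+|Du|^2)^{\frac{p-2}{2}}|Du|\le(\mu^2+|Du|^2)^{\frac{p-1}{2}}$, and Young's inequality with exponents $p',p$ (using $(p-1)p'=p$), producing $\varepsilon(\mu^2+|Du|^2)^{\frac p2}(|u|-\sfk)_+^{p'}\zeta^p+C(\varepsilon,p)\,|u|^p|\nabla\zeta|^p(|u|-\sfk)_+^{p'}$. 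For the source terms, $|\sfF|^{p-2}\sfF\cdot D\varphi$ expands along the same three directions; Young's inequality --- with the splitting $(|u|-\sfk)_+^{p'-1}=(|u|-\sfk)_+^{p'/p}$ in the radial term, so that the absorbable factor is exactly $|Du|^p(|u|-\sfk)_+^{p'}\zeta^p$, together with $(|u|-\sfk)_+^{p'}\le|u|^{p'}\mathbf{1}_{\{|u|>\sfk\}}$ and the vanishing of the radial term on $\{|u|\le\sfk\}$ --- gives bounds of the form $\varepsilon(\mu^2+|Du|^2)^{\frac p2}(|u|-\sfk)_+^{p'}\zeta^p+C(\varepsilon,p)\big[|\sfF|^p|u|^{p'}\mathbf{1}_{\{|u|>\sfk\}}\zeta^p+|u|^p|\nabla\zeta|^p(|u|-\sfk)_+^{p'}\big]$, while the lower-order source is estimated directly, with no Young's inequality, by $|\sff\cdot\varphi|\le|\sff||u|^{p'+1}\mathbf{1}_{\{|u|>\sfk\}}\zeta^p\chi$.

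\emph{Conclusion.} Inserting these estimates into the weak identity, choosing $\varepsilon$ a suitably small multiple of $C_o$ so that the several occurrences of $\varepsilon(\mu^2+|Du|^2)^{\frac p2}(|u|-\sfk)_+^{p'}\zeta^p$ are absorbed into the coercive term, then bounding $(\mu^2+|Du|^2)^{\frac p2}\ge|Du|^p\ge|\nabla|u||^p$ on the left, dropping $\chi\le1$ in the right-hand integrals, taking the supremum over $\tau\in(-S,0]$ in the boundary term and letting $\tau\uparrow0$ in the remaining ones, and relabelling constants (collecting the $\mu^p$-term with the $|\nabla\zeta|^p$-term) yields the asserted inequality. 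The only genuinely delicate point is the parabolic term --- correctly identifying the primitive $v$ and justifying the integration by parts against the $u$-dependent test function through the Steklov regularization; the rest is a routine bookkeeping of Young's inequalities along the lines of Lemma~\ref{lem:energy-est}, the one new ingredient being that the quasi-diagonal monotonicity \eqref{growth-a*}\(_3\) disposes of the radial term.
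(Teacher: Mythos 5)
Your proposal is correct and follows essentially the same route as the paper's own proof: the same test function $u\,(|u|-\sfk)_+^{p'}\,\zeta^p\,\chi$, the same three-way decomposition of the diffusion term (coercive part by \eqref{growth-a*}$_1$, radial part discarded by \eqref{growth-a*}$_3$, cut-off part by \eqref{growth-a*}$_2$ and Young), the same identification of the primitive $v$ in the parabolic term, and the same Young-inequality treatment of $\sfJ_1,\sfJ_2,\sfJ_3$ for the $\sfF$-term with the trivial bound for $\sff$. The only cosmetic difference is that you invoke a smooth approximation of $(s-\sfk)_+^{p'}$ removed by monotone convergence, while the paper justifies the test function directly through local boundedness and Steklov averaging; both are standard and equivalent.
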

\begin{proof}
By translation, we can always assume that $z_o=(0,0)$.
In the weak formulation of \eqref{eq-par-gen} we test with
\begin{equation*}
  \varphi(x,t):=u(x,t)\big(|u(x,t)|-\sfk\big)_{+}^{p'}\zeta(x,t)^{p}
  \chi(t),  
\end{equation*}
where $p'=\frac{p}{p-1}$ is the H\"older conjugate exponent. Here, $\chi$ is the time cut–off defined by $\chi\equiv1$ on $(-S,\tau)$, $\chi\equiv0$ on $(\tau+\delta,0)$, and linear on $[\tau,\tau+\delta]$.
Since $u$ is locally bounded, this choice is admissible; a standard time–mollification (or Steklov averaging) justifies the use of $\varphi$. For the term involving the time derivative in the weak formulation, we compute
\begin{align*}
   -\iint_{Q_{R,S}} u\cdot\partial_t\varphi\,\dx\dt
    &=
     \iint_{Q_{R,S}}\partial_t |u||u|\big(|u|-\sfk\big)_+^{p'}\zeta^p\chi\,\dx\dt\\
    &=
     \iint_{Q_{R,S}}\partial_t \bigg[ \int_0^{|u|}s(s-\sfk)_+^{p'}\,\ds\bigg]\zeta^p\chi\,\dx\dt\\
    &=
    -
     \iint_{Q_{R,S}}v \partial_t(\zeta^p \chi)
    \,\dx\dt.
\end{align*}
To estimate the contribution of the diffusion term in the weak formulation, we first compute the spatial derivative of the testing function:
\begin{align*}
    D_\alpha &\Big(u\big(|u|-\sfk\big)_+^{p'}\zeta^p\Big)\\
    &=
    D_\alpha u\big(|u|-\sfk\big)_+^{p'}\zeta^p
    +
    p'u\big(|u|-\sfk\big)_+^{p'-1}\frac{u\cdot D_\alpha u}{|u|}  \zeta^p
    +
    u\big(|u|-\sfk\big)_+^{p'}D_\alpha \zeta^p.
\end{align*}
Together with the expression for the time derivative term, we arrive at the identity
\begin{equation}\label{initial_identity-*}
    - \iint_{Q_{R,S}} v\partial_t (\zeta^p \chi) \, \dx \dt
    + \sfI_1+ \sfI_2
    = - \sfI_3 + \sfI\sfI_1 + \sfI\sfI_2,
\end{equation}
where we used the following abbreviations:
\begin{align*}
    \sfI_1
    &= \iint_{Q_{R,S}} \sum_{\alpha =1}^N
    \sfA_\alpha(x,t,u,Du) \cdot D_\alpha u 
    (|u|-\sfk)_+^{p'} \zeta^p \chi \, \dx\dt, \\
    \sfI_2
    &= p' \iint_{Q_{R,S}} 
    \sum_{\alpha =1}^N
    \sfA_\alpha(x,t,u,Du) \cdot u  \frac{D_\alpha u \cdot u}{|u|} 
    (|u|-\sfk)_+^{p'-1} \zeta^p \chi \, \dx\dt, \\
    \sfI_3
    &= \iint_{Q_{R,S}} \sum_{\alpha =1}^N
    \sfA_\alpha(x,t,u,Du) \cdot u D_\alpha \zeta^p 
    (|u|-\sfk)_+^{p'} \chi \, \dx\dt, \\
    \sfI\sfI_1
    &= \iint_{Q_{R,S}}
    |\sfF|^{p-2} \sum_{\alpha =1}^N \sfF_\alpha \cdot
    D_\alpha \left(u (|u|-\sfk)_+^{p'} \zeta^p \right) \chi \, \dx\dt,\\
    \sfI\sfI_2
    &=
    \iint_{Q_{R,S}}\sff\cdot u(|u|-\sfk)_+^{p'}\zeta^p\chi\,\dx\dt.
\end{align*}
Next, we consider the integral $\sfI_1$ on the left-hand side. 
Using the coercivity assumption \eqref{growth-a*}$_1$, we obtain
\begin{align*}
    \sfI_1
    &\ge 
    C_o \iint_{Q_{R,S}} \left( \mu^2 + |Du|^2 \right)^{\frac{p-2}{2}} |Du|^2 (|u| - \sfk)_+^{p'} \zeta^p \chi \, \dx\dt \\
    &= 
    C_o \iint_{Q_{R,S}} \left( \mu^2 + |Du|^2 \right)^{\frac{p}{2}} (|u| - \sfk)_+^{p'} \zeta^p \chi \, \dx\dt \\
    &\phantom{=\,} 
    - C_o \iint_{Q_{R,S}} 
    \underbrace{\left( \mu^2 + |Du|^2 \right)^{\frac{p-2}{2}}}_{\le \mu^{p-2}} 
    \mu^2 (|u| - \sfk)_+^{p'} \zeta^p \chi \, \dx\dt \\
    &\ge 
    C_o \iint_{Q_{R,S}} \left( \mu^2 + |Du|^2 \right)^{\frac{p}{2}} (|u| - \sfk)_+^{p'} \zeta^p \chi \, \dx\dt \\
    &\phantom{\le\,} 
    - C_o \iint_{Q_{R,S}} \mu^p (|u| - \sfk)_+^{p'} \zeta^p \chi \, \dx\dt.
\end{align*}
Since $\sfI_2 \ge 0$ due to \eqref{growth-a*}$_3$, among the $\sfI$-terms, it remains to estimate $\sfI_3$ from above. This can be done using the upper bound \eqref{growth-a*}$_2$ for $\sfA$. Indeed, we obtain
\begin{align*}
    |\sfI_3|
    &\le
    C_1p \iint_{Q_{R,S}} \left( \mu^2 + |Du|^2 \right)^{\frac{p-2}{2}} |Du|\,|u|\, |\nabla\zeta| \zeta^{p-1} (|u| - \sfk)_+^{p'} \chi \, \dx\dt \\
    &\le
     C_1 p\iint_{Q_{R,S}} \left( \mu^2 + |Du|^2 \right)^{\frac{p-1}{2}} |u|\, |\nabla\zeta| \zeta^{p-1} (|u| - \sfk)_+^{p'} \chi \, \dx\dt.
\end{align*}
We apply Young's inequality with exponents $p$, $p'$, and a parameter $\epsilon > 0$ to a part of the integrand on the right-hand side to obtain
\begin{align*}
    C_1p\zeta^{p-1} \left( \mu^2 + |Du|^2 \right)^{\frac{p-1}{2}} 
    |u|\, |\nabla \zeta| 
    &\le
    \epsilon\, \zeta^p \left( \mu^2 + |Du|^2 \right)^{\frac{p}{2}} 
    + C(p)C_1^p\epsilon^{1-p} |u|^p |\nabla \zeta|^p.
\end{align*}
Substituting this into the previous estimate, we obtain
\begin{align*}
        |\sfI_3|
        &\le
        \epsilon \iint_{Q_{R,S}}
         \big( \mu^2+|Du|^2\big)^\frac{p}{2} \big(|u|-\sfk\big)_+^{p'}\zeta^{p}\chi\,\dx\dt\\
        &\phantom{\le\, }
        +C(p)C_1^p\epsilon^{1-p}\iint_{Q_{R,S}}|u|^p\big(|u|-\sfk\big)_+^{p'}|\nabla\zeta|^p
        \chi\,\dx\dt.
\end{align*}
It remains to control the terms that arise due to the source terms $\sfF$ and $\sff$. We begin with the contribution from the divergence term. We write
\begin{align*}
    \sfI\sfI_1
    &:=
    \iint_{Q_{R,S}} \big[ \sfJ_1 + \sfJ_2 + \sfJ_3 \big] \chi \, \dx\dt,
\end{align*}
with the abbreviations
\begin{align*}
    \sfJ_1
    &:= \sum_{\alpha =1}^N |\sfF|^{p-2} \sfF_\alpha \cdot D_\alpha u \, (|u| - \sfk)_+^{p'} \zeta^p, \\
    \sfJ_2
    &:= p \sum_{\alpha =1}^N |\sfF|^{p-2} \sfF_\alpha \cdot u \, (|u| - \sfk)_+^{p'} 
    \zeta^{p-1} D_\alpha \zeta, \\
    \sfJ_3
    &:= p' \sum_{\alpha =1}^N |\sfF|^{p-2} \sfF_\alpha \cdot u \, 
    \frac{D_\alpha u \cdot u}{|u|} (|u| - \sfk)_+^{p'-1} \zeta^p.
\end{align*}
The terms $\sfJ_1$ and $\sfJ_2$ can easily be estimated.
In fact,
applying Young’s inequality to the term $\sfJ_1$ gives
\begin{align*}
    |\sfJ_1|
    &\le 
    |\sfF|^{p-1}\,|Du|(|u|-\sfk)_+^{p'} \zeta^p \\
    &\le 
    \epsilon |Du|^p (|u|-\sfk)_+^{p'} \zeta^p
    + 
    \epsilon^{-\frac{1}{p-1}} |\sfF|^p (|u|-\sfk)_+^{p'} \zeta^p \\
    &\le 
    \epsilon (\mu^2+|Du|^2)^{\tfrac{p}{2}} (|u|-\sfk)_+^{p'} \zeta^p
    + 
    \epsilon^{-\frac{1}{p-1}} |\sfF|^p (|u|-\sfk)_+^{p'} \zeta^p .
\end{align*}
A similar application of Young’s inequality to $\sfJ_2$ yields
\begin{align*}
    |\sfJ_2|
    &\le p |\sfF|^{p-1} \zeta^{p-1} |u|\,|\nabla\zeta|(|u|-\sfk)_+^{p'} \\
    &\le \epsilon |u|^p (|u|-\sfk)_+^{p'} |\nabla\zeta|^p
   + C(p) \epsilon^{-\frac{1}{p-1}}|\sfF|^p (|u|-\sfk)_+^{p'} \zeta^p .
\end{align*}
We now consider $\sfJ_3$. By Young’s inequality,
\begin{align*}
    |\sfJ_3|
    &\le 
    p' |\sfF|^{p-1} |u||Du|
    (|u|-\sfk)_+^{p'-1} \zeta^p \\
    &= 
    \Big(|Du| (|u|-\sfk)_+^{p'-1} \zeta \Big)
   \Big(p' |\sfF|^{p-1} |u| \mathbf{1}_{\{|u|>\sfk\}} \zeta^{p-1}\Big) \\
    &\le 
    \epsilon |Du|^p (|u|-\sfk)_+^{p'} \zeta^p
   + C(p)\epsilon^{-\frac{1}{p-1}} |\sfF|^p |u|^{p'} \mathbf{1}_{\{|u|>\sfk\}} \zeta^p \\
    &\le 
    \epsilon (\mu^2+|Du|^2)^{\tfrac{p}{2}} (|u|-\sfk)_+^{p'} \zeta^p
   + C(p)\epsilon^{-\frac1{p-1}} |\sfF|^p |u|^{p'} \mathbf{1}_{\{|u|>\sfk\}} \zeta^p .
\end{align*}
Using the three preceding estimates in $\sfI\sfI_1$, that is, multiplying by $\chi$ and integrating over $Q_{R,S}$, we obtain
\begin{align*}
    |\sfI\sfI_1|
    &\le
     2\epsilon 
     \iint_{Q_{R,S}}
     (\mu^2+|Du|^2)^{\tfrac{p}{2}} (|u|-\sfk)_+^{p'} \zeta^p\chi\,\dx\dt\\
    &\phantom{\le\,}
    +
    \epsilon \iint_{Q_{R,S}}|u|^p (|u|-\sfk)_+^{p'} |\nabla\zeta|^p\chi\,\dx\dt
  \\
   &\phantom{\le\,}+
    C(p)\epsilon^{-\frac1{p-1}} \iint_{Q_{R,S}}|\sfF|^p |u|^{p'} \mathbf{1}_{\{|u|>\sfk\}} \zeta^p
    \chi\,\dx\dt.
\end{align*}
It remains to estimate the term $|\sfI\sfI_2|$. We have
\begin{align*}
    |\sfI\sfI_2|
    &\le \iint_{Q_{R,S}}
   |\sff||u|(|u|-\sfk)_+^{p'} \zeta^p \chi \,\dx\dt \\
    &\le \iint_{Q_{R,S}}
   |\sff||u|^{p'+1}\mathbf{1}_{\{|u|>\sfk\}} \zeta^p \chi \,\dx\dt ,
\end{align*}
since $(|u|-\sfk)_+^{p'} \le |u|^{p'} \mathbf{1}_{\{|u|>\sfk\}}$.
Combining the bounds for for $\sfI_3$, $\sfI\sfI_1$, and  $\sfI\sfI_2$,
we obtain
\begin{align*}
    |\sfI_3|+|\sfI\sfI_1| +|\sfI\sfI_2|
    &\le
    3\epsilon \iint_{Q_{R,S}}
    \big( \mu^2+|Du|^2\big)^\frac{p}{2} \big(|u|-\sfk\big)_+^{p'}\zeta^{p}\chi\,\dx\dt
    \\
    &\phantom{\le\, }
    +\big(C(p)C_1^p\epsilon^{1-p}+\epsilon\big)\iint_{Q_{R,S}}|u|^p\big(|u|-\sfk\big)_+^{p'}|\nabla\zeta|^p\chi\,\dx\dt\\
    &\phantom{\le\,}
    +
    C(p)\epsilon^{-\frac1{p-1}} \iint_{Q_{R,S}}|\sfF|^p |u|^{p'} \mathbf{1}_{\{|u|>\sfk\}} \zeta^p
    \chi\,\dx\dt\\
    &\phantom{\le\,}
    +\iint_{Q_{R,S}}
   |\sff||u|^{p'+1}\mathbf{1}_{\{|u|>\sfk\}} \zeta^p \chi \,\dx\dt.
\end{align*}
We use this to estimate the right-hand side of \eqref{initial_identity-*}, together with the lower bound for $\sfI_1+\sfI_2$. In addition, we absorb the $3\epsilon$-term into the left-hand side. In the time term we also split the derivative of $\zeta^p\chi$. In this way we obtain
\begin{align*}
    -\iint_{Q_{R,S}}& v\zeta^p\partial_t  \chi \, \dx \dt
    +
    \big(C_o-3\epsilon\big) \iint_{Q_{R,S}} \left( \mu^2 + |Du|^2 \right)^{\frac{p}{2}} (|u| - \sfk)_+^{p'} \zeta^p \chi \, \dx\dt 
   \\\nonumber
    &\le
    \big(C(p)C_1^p\epsilon^{1-p}+\epsilon\big)\iint_{Q_{R,S}}|u|^p\big(|u|-\sfk\big)_+^{p'}|\nabla\zeta|^p\chi\,\dx\dt\\\nonumber
    &\phantom{\le\,}+
   C_o \iint_{Q_{R,S}} \mu^p (|u| - \sfk)_+^{p'} \zeta^p \chi \, \dx\dt
   \\\nonumber
    &\phantom{\le\,}
    +
    C(p)\epsilon^{-\frac1{p-1}} \iint_{Q_{R,S}}|\sfF|^p |u|^{p'} \mathbf{1}_{\{|u|>\sfk\}} \zeta^p
    \chi\,\dx\dt\\
    &\phantom{\le\,}
    +\iint_{Q_{R,S}}
   |\sff||u|^{p'+1}\mathbf{1}_{\{|u|>\sfk\}} \zeta^p \chi \,\dx\dt
   +
   \iint_{Q_{R,S}} v\chi\partial_t\zeta^p  \, \dx \dt.
\end{align*}
At this stage we fix
$\epsilon:=\tfrac16C_o$.
Then $C_o-3\epsilon=\tfrac12 C_o$, and the previous inequality becomes 
\begin{align*}
    -\iint_{Q_{R,S}}& v\zeta^p\partial_t  \chi \, \dx \dt
    +
    \tfrac12 C_o
    \iint_{Q_{R,S}} \left( \mu^2 + |Du|^2 \right)^{\frac{p}{2}} (|u| - \sfk)_+^{p'} \zeta^p \chi \, \dx\dt 
   \\
    &\le
    C\iint_{Q_{R,S}}\big[|u|^p|\nabla\zeta|^p+\mu^p\zeta^p\big]\big(|u|-\sfk\big)_+^{p'}\chi\,\dx\dt\\
    &\phantom{\le\,}
    +
    C \iint_{Q_{R,S}}|\sfF|^p |u|^{p'} \mathbf{1}_{\{|u|>\sfk\}} \zeta^p
    \chi\,\dx\dt\\
    &\phantom{\le\,}
    +\iint_{Q_{R,S}}
   |\sff||u|^{p'+1}\mathbf{1}_{\{|u|>\sfk\}} \zeta^p \chi \,\dx\dt
   +
   \iint_{Q_{R,S}} v\chi\partial_t\zeta^p  \, \dx \dt
\end{align*}
for a constant $C=C(p,C_{o}/C_{1})$. In particular, one may take $C\simeq C(p)C_{o}\,(C_{1}/C_{o})^{p'}$. Here $p'=\tfrac{p}{p-1}\ge 2$, and we have used $C_1\ge 1$ to simplify the constants.  Arrived at this point, we let $\delta \downarrow 0$ in the preceding inequality. In the limit we have $\chi \downarrow \boldsymbol{\chi}_{(-S,\tau]}$. The first integral on the left-hand side converges to
$$
\tfrac12 \int_{B_R\times\{\tau\}} v \zeta^p \,\dx ,
$$
while the second integral is bounded below by
$$
\iint_{B_R\times(-S,\tau]} |\nabla |u||^p (|u|-\sfk)_+^{p'} \zeta^p \,\dx\dt .
$$
On the right-hand side we simply bound $\chi$ by $1$. In the obtained inequality we take the supremum over $\tau \in (-S,0)$ in the first term, and let $\tau \uparrow 0$ in the second. Hence we obtain the desired energy estimate.
\end{proof}

\subsection{Moser's iteration: from \texorpdfstring{$L^{\sfr}$}{Lr} to \texorpdfstring{$L^\infty$}{Linfinity}}

Building on the local energy  estimate from the previous
subsection, one can apply the Moser iteration scheme to upgrade an {\it a
priori} \(L^r\)-integrability assumption to an \(L^\infty\)-bound. The
mechanism is the familiar nonlinear iteration on shrinking cylinders,
whose effectiveness relies on the positivity of the parameter
\(\boldsymbol\lambda_r\). The following lemma provides the resulting
estimate for weak solutions to \eqref{eq-par-gen}.

\begin{lemma}\label{lm:sup-est-qual}
Let $p\in (1,\frac{2N}{N+2}]$, $q>\frac{N+p}{p}$, $\sfF\in L^{qp}_{\rm loc}\big(E_T,\R^{kN}
\big)$, $\sff\in L^{qp'}_{\rm loc}\big(E_T,\R^{k}\big)$, and
$\sfr>2$ such that
\begin{equation*}
    \boldsymbol\lambda_\sfr:=N(p-2)+p\sfr>0.
\end{equation*}
Then any local, weak solution $u$
to \eqref{eq-par-gen} in $E_T$ with \eqref{growth-a*} 
satisfying
$$
    u\in L^\sfr_{\rm loc}(E_T,\R^k),
$$
is locally bounded in $E_T$. 
\end{lemma}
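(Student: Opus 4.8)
The plan is to run a Moser iteration on shrinking cylinders, driven by the weighted Caccioppoli inequality of Lemma~\ref{lem:energy-est} together with the parabolic Sobolev embedding of Lemma~\ref{lem:gag}, and to read off from the computation that the range $\boldsymbol\lambda_{\sfr}>0$ is exactly the one in which the resulting exponent recursion escapes to infinity. Since $u$ is only assumed to lie in $L^{\sfr}_{\mathrm{loc}}$, and not a priori in $L^\infty$, the De Giorgi estimate of Lemma~\ref{en:DeGiorgi-version} is not available; instead one feeds Lemma~\ref{lem:energy-est} with the \emph{truncated} powers $\Phi_M(s):=\min\{s,M\}^{\beta}$, for which $C_{\Phi_M}\le\beta$. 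The truncation keeps every quantity finite, and after each application of the energy estimate it is removed by monotone convergence as $M\to\infty$.

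First I would fix two concentric backward cylinders $Q''\subset Q'\Subset E_T$ of comparable size together with a cutoff $\zeta$ vanishing on the parabolic boundary of $Q'$, and apply Lemma~\ref{lem:energy-est} with $\Phi=\Phi_M$ and $\beta=\tfrac{s-2}{2}$, where $s\ge\sfr$ is the exponent at the current stage (so $\boldsymbol\lambda_s>0$). Setting $w:=|u|^{\kappa}$ with $\kappa:=\tfrac{s-2+p}{p}$, one has $|Du|^{p}\,\Phi(|u|^{2})\simeq\kappa^{-p}|Dw|^{p}$ and $v\simeq|u|^{s}$, so (after letting $M\to\infty$) the estimate controls $\sup_{\tau}\int_{B'}|u|^{s}\,\dx$ and $\iint_{Q'}|Dw|^{p}\,\dx\dt$ by integrals of $|u|$ to powers $\le s$ over $Q'$, plus the source terms. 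Thus $w$ belongs to the parabolic class $L^{\infty}_{t}L^{a}_{x}\cap L^{p}_{t}W^{1,p}_{x}$ on $Q'$ with $a:=s/\kappa=\tfrac{sp}{s-2+p}$. The key observation is that $\boldsymbol\lambda_s>0$ is \emph{equivalent} to $a<p^{*}:=\tfrac{Np}{N-p}$, which is exactly what makes the interpolation behind Lemma~\ref{lem:gag} (used with the general exponent $a$ in place of $2$, via the same argument) produce a genuine \emph{gain}: one obtains $w\in L^{p\frac{N+a}{N}}(Q'')$, i.e.\ $u\in L^{s'}(Q'')$ with
\[
s'=\kappa\,p\,\tfrac{N+a}{N}=s\Bigl(1+\tfrac pN\Bigr)+(p-2).
\]
The forcing terms $\iint_{Q'}|\sfF|^{p}|u|^{s-2}\zeta^{p}$ and $\iint_{Q'}|\sff|\,|u|^{s-1}\zeta^{p}$ are absorbed at this stage by Hölder's and Young's inequalities using $\sfF\in L^{qp}_{\mathrm{loc}}$, $\sff\in L^{qp'}_{\mathrm{loc}}$; the hypothesis $q>\tfrac{N+p}{p}$ — equivalently $q'<1+\tfrac pN$, matching the multiplier appearing in the recursion for $s$ — is precisely what keeps them subordinate to the gain.

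Iterating this step on a nested family $Q_n$ with radii $\rho_n\downarrow\rho_\infty>0$ produces exponents $s_0=\sfr$ and $s_{n+1}=s_n(1+\tfrac pN)+(p-2)$. This affine recurrence has the single fixed point $\tfrac{N(2-p)}{p}$; since $1+\tfrac pN>1$ and $\boldsymbol\lambda_{\sfr}>0\iff\sfr>\tfrac{N(2-p)}{p}$, the sequence $(s_n)$ is increasing and tends to $+\infty$ at a geometric rate, and $\boldsymbol\lambda_{s_n}>0$ is preserved throughout, so every step is licit. The constants $C_n$ generated at stage $n$ are polynomial in $s_n$ (entering through $C_{\Phi}=\beta_n$ and the cutoff weights $(\rho_n-\rho_{n+1})^{-p}$, $(\theta_n-\theta_{n+1})^{-1}$) and grow at most geometrically in $n$; since $s_n$ itself grows geometrically, the series $\sum_n s_n^{-1}\log C_n$ converges, so the standard Moser bookkeeping applies and iterating the one-step estimates yields $\sup_n\|u\|_{L^{s_n}(Q_n)}\le C$, with $C$ depending only on the structural data and on $\|u\|_{L^{\sfr}(Q_0)}$, $\|\sfF\|_{L^{qp}(Q_0)}$, $\|\sff\|_{L^{qp'}(Q_0)}$. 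Consequently $\|u\|_{L^{\infty}(Q_{\rho_\infty,\theta_\infty})}<\infty$, and since the initial cylinder was an arbitrary one with $Q_0\Subset E_T$, this gives $|u|\in L^{\infty}_{\mathrm{loc}}(E_T)$.

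I expect the main obstacle to be exactly the point where the sub-critical regime departs from the classical one: as explained in the introduction, for $p\le\tfrac{2N}{N+2}$ the energy inequality combined with the Sobolev embedding gives \emph{no} self-improvement by itself (the $L^{2}$-term dominates, and the homogeneous substitution $w=|u|^{s/2}$ is unavailable because it does not control the gradient term). The entire scheme therefore hinges on the a priori $L^{\sfr}$-assumption, and the delicate part of the bookkeeping is to keep $a_n=\tfrac{s_np}{s_n-2+p}$ strictly below $p^{*}$ along the iteration — equivalently $\boldsymbol\lambda_{s_n}>0$, which is guaranteed precisely by $\sfr>\tfrac{N(2-p)}{p}$ together with the monotonicity of $(s_n)$ — while simultaneously ensuring, through $q>\tfrac{N+p}{p}$, that the forcing never overtakes the geometric gain.
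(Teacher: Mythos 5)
Your proposal reproduces the paper's Moser iteration almost step for step: the same weighted Caccioppoli estimate (Lemma~\ref{lem:energy-est}), the same power substitution (your $w=|u|^\kappa$ with $\kappa=\tfrac{s-2+p}{p}$ is exactly the paper's $\Phi^{(p+2\alpha)/(2p\alpha)}(|u|^2)$ with $s=2+2\alpha$), the same parabolic Sobolev interpolation producing the affine recursion $s_{n+1}=(1+\tfrac pN)s_n+(p-2)$ with fixed point $\tfrac{N(2-p)}{p}$, and the same role of $q>\tfrac{N+p}{p}$ in absorbing the source terms. The one technical point worth flagging is your truncation $\Phi_M(s)=\min\{s,M\}^\beta$: for $\beta=\tfrac{s-2}{2}<1$, which occurs at the very first iteration step whenever $\sfr<4$ (and is unavoidable when $\sfr$ is close to its threshold $\tfrac{N(2-p)}{p}$), $\Phi_M$ is not Lipschitz continuous at $s=0$, so it does not satisfy the stated hypotheses of Lemma~\ref{lem:energy-est}. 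The paper sidesteps this by truncating from below as well as from above — the two-parameter function $\Phi_{\alpha,\ell,k}$ with $\Phi'\equiv0$ on $[0,k^2]$ — and sends $k\downarrow0$ by monotone convergence only after the Sobolev step. This is a repairable rather than structural gap, since the vector-valued composition $u\mapsto u\,\Phi_M(|u|^2)$ remains globally Lipschitz (its differential is bounded by $(1+2C_{\Phi_M})\|\Phi_M\|_\infty$) even when $\Phi_M$ itself is not, so the test function stays admissible; still, one should either add the lower cutoff or state this observation explicitly rather than invoking Lemma~\ref{lem:energy-est} as a black box.
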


\begin{remark}\upshape
It is possible to extract an explicit bound of $u$ out of Moser's iteration. See \cite[Proposition 4.1]{BDGL-25} in this connection. However, such form of bound is insufficient for our later application. 
\end{remark}

\begin{proof}
The proof proceeds in two steps. In the first step, towards establishing the local $L^\infty$-bound for $u$, we refine the energy inequality from Lemma~\ref{lem:energy-est} under the additional assumption that $|u|\in L_{\rm loc}^{2+2\alpha}(E_T)$ for some $\alpha \geq 0$. This is achieved by deriving a quantitative reverse H\"older inequality for $|u|$. In the second step, we apply an iteration scheme to this reverse H\"older inequality. Starting from the assumption $u\in L^\sfr_{\rm loc}(E_T)$, we gradually improve the integrability of $|u|$, while carefully controlling the constants that arise at each stage. Passing to the limit in this iteration ultimately yields the desired local $L^\infty$-bound. Throughout the proof, we always assume that $\rho\le1$.

{\it Step 1.}
Fix $\alpha \geq 0$ and a cylinder $Q_{\rho}(z_o)\equiv B_\rho(x_o)\times(t_o-\rho^p,t_o]\Subset E_T$. 
Assume in addition that
\begin{equation*}
    |u|\in L^{2+2\alpha}_{\rm loc}(E_T).
\end{equation*}
By translation, we can always assume that $z_o=(0,0)$. In the energy inequality \eqref{en:Moser} we employ the testing function
\begin{equation*}
    \Phi_{\alpha, \ell,k}(s)=
    \left\{
    \begin{array}{cl}
        k^{2\alpha}, &\mbox{if $0\le s\le k^2$,}\\[5pt]
        s^\alpha,&\mbox{if $k^2<s<\ell^2$,}\\[5pt]
        \ell^{2\alpha},&\mbox{if $s\ge\ell^{2}$,}
    \end{array}
    \right.
\end{equation*}
and consider the  auxiliary function 
\begin{align*}
    v
    &=\int_0^{|u|^{2}} \Phi_{\alpha,\ell,k }(s)\,\ds.
\end{align*}
The function $\Phi_{\al, \ell, k}$ is identical to $\phi_{\al, k,\ell}$ defined in \cite[Eqn. (4.2)]{BDGL-25} taking $m=1$ there.
By the same consideration as in \cite{BDGL-25}, one checks the following algebraic estimates of $v$, namely
\begin{align*}
     \tfrac{1}{1+\alpha}\Phi_{\alpha ,\ell,k}
    \big(|u|^2\big)^{\frac{1+\alpha}{\alpha}}-1 \le v \le |u|^{2 +2\alpha}+1.
\end{align*}
In addition, by the chain rule for Sobolev functions, also Kato’s inequality, we obtain
\begin{align*}
    &\Big|\nabla \big[\Phi_{\alpha,\ell,k}\big(|u|^2\big)\big]^\frac{p+2\alpha}{2p\alpha}\Big|^p \\
    &\qquad=
    \Big|\tfrac{p+2\alpha}{2p\alpha } \big[\Phi_{\alpha,\ell ,k}
    \big(|u|^2\big)\big]^{\frac{p+2\alpha}{2p\alpha }-1} \Phi_{\alpha,\ell,k}'\big(|u|^2\big)\nabla |u|^2\Big|^p\\
    &\qquad\le
     \big(\tfrac{p+2\alpha}{p\alpha } \big)^p
     \Phi_{\alpha,\ell,k }
     \big(|u|^2\big) |\nabla|u||^p 
  \Big[\big[\Phi_{\alpha,\ell,k}
  \big(|u|^2\big)\big]^{\frac{1}{2\alpha}-1} 
    \Phi_{\alpha,\ell,k }'\big(|u|^2\big)|u|\Big]^p \\
    &\qquad\le
     \big(\tfrac{p+2\alpha}{p\alpha } \big)^p
     \Phi_{\alpha,\ell,k }
     \big(|u|^2\big) |Du|^p 
  \Big[\big[\Phi_{\alpha,\ell,k}
  \big(|u|^2\big)\big]^{\frac{1}{2\alpha}-1} 
    \Phi_{\alpha,\ell,k }'\big(|u|^2\big)|u|\Big]^p.
\end{align*}
If $k<|u|<\ell$, then 
\begin{align*}
     \big[\Phi_{\alpha,\ell,k}\big(|u|^2\big)\big]^{\frac{1}{2\alpha}-1} 
     \Phi_{\alpha,\ell,k}'\big(|u|^2\big)|u| 
     &=
    \alpha
     |u|^{2\alpha(\frac{1}{2\alpha}-1)} 
     |u|^{2(\alpha -1)}|u|
     =
     \alpha.
\end{align*}
For $|u|> \ell$ or $|u|<k$ we have 
$\Phi_{\alpha,\ell,k }'\big(|u|^2)=0$. 
In either case it follows that
\begin{align*}
   \Big|\nabla \big[\Phi_{\alpha,\ell,k}\big(|u|^2\big)\big]^\frac{p+2\alpha}{2p\alpha}\Big|^p
     &\le
     (p+2\alpha)^p
    |Du|^p\Phi_{\alpha,\ell,k }\big(|u|^2\big).
\end{align*}
Finally, we note that
\begin{align*}
    \Phi_{\alpha,\ell,k}\big(|u|^2\big)
    +
    |u|^2 \Phi_{\alpha,\ell,k}^{\prime}\big(|u|^2\big)
    &\le
    (1+\alpha) \Phi_{\alpha,\ell,k}\big(|u|^2\big).
\end{align*}
Inserting the preparatory bounds into \eqref{en:Moser} yields
\begin{align*}
     \tfrac{1}{1+\alpha}
     \sup_{\tau\in (-\rho^p,0]}&\int_{B_\rho\times\{\tau\}}
    \zeta^p   \Phi_{\alpha ,\ell,k}
    \big(|u|^2\big)^{\frac{1+\alpha}{\alpha}}\,\dx\\
    &\phantom{\le\,} +
    \tfrac{C_o}{(p+2\alpha)^p}
    \iint_{Q_{\rho}}
    \zeta^{p}  \Big|\nabla \big[\Phi_{\alpha,\ell,k}\big(|u|^2\big)\big]^\frac{p+2\alpha}{2p\alpha}\Big|^p\,\dx\dt\\
    &\le
      C
    \iint_{Q_{\rho}}
    \big[|u|^p|\nabla\zeta|^p +\mu^p\zeta^p\big]\Phi_{\alpha ,\ell,k}
    \big(|u|^2\big) \,\dx\dt  \\
    &\phantom{\le\,}
    +C\iint_{Q_{\rho}} \big[  |u|^{2 +2\alpha}+1\big]|\partial_t\zeta^p|
    \,\dx\dt +
    \sup_{\tau\in (-\rho^p,0]}\int_{B_\rho\times\{\tau\}}
    \zeta^p \,\dx\\
        &\phantom{\le\,}
    +
     C\Big( 1+ \alpha^\frac{p}{p-1}\Big)
    \iint_{Q_{\rho}}|\sfF|^p\Phi_{\alpha ,\ell,k}
    \big(|u|^2\big)\zeta^p\,\dx\dt\\
    &\phantom{\le\,}
    +
     C\iint_{Q_{\rho}}|\sff||u|\Phi_{\alpha ,\ell,k}
    \big(|u|^2\big)\zeta^p\,\dx\dt ,
\end{align*}
where $C=C(p,C_o,C_1)$. 
On the right-hand side, the first term is estimated by using $\mu^p\zeta^p\le 1$ and
\[
    \big(1+|u|^{p } \big)\Phi_{\alpha,\ell,k}\big(
    |u|^{2}\big)
     \le
     2\big(|u|^{2+2\alpha}+1\big).
\]
Hence, together with the second term they are bounded by
\[
C
    (1+\|\nabla\z\|^p_{\infty}+\|\pl_t\z\|_{\infty})
    \iint_{Q_{\rho}}
    \big(|u|^{2+2\alpha}+1\big) \,\dx\dt ,
\]
whereas the third term is simply bounded by $1$ using $\rho\le 1$. The forth term is estimated by H\"older’s inequality with exponents $q$ and $\frac{q}{q-1}$:
\begin{align*}
    \iint_{Q_{\rho}} |\sfF|^p
    \Phi_{\alpha,\ell,k}\big(|u|^2\big)\zeta^p
    \,\dx\dt
    &\le
    \bigg[ \iint_{Q_{\rho}} | \sfF|^{qp}\,\dx\dt\bigg]^{\frac1q}\sfT
    ,
\end{align*}
where we defined
\begin{equation*}
    \sfT:= \bigg[\iint_{Q_{\rho}}
    \big( \Phi_{\alpha,\ell,k}\big(|u|^2\big)\zeta^p\big)^{\frac{q}{q-1}}\,\dx\dt\Big]^{\frac{q-1}q}.
\end{equation*}
The fifth term is again estimated by using Hölder’s inequality, now with exponents
$qp'$, $p$, and $\frac{qp'}{q-1}$, followed by Young’s inequality with exponents $p$ and $p'$. This gives
\begin{align*}
    &\iint_{Q_{\rho}} |\sff||u|
    \Phi_{\alpha,\ell,k}\big(|u|^2\big)
    \zeta^p\,\dx\dt\\
    &\quad
    =
    \iint_{Q_{\rho}} |\sff|\big[|u|
    \Phi_{\alpha,\ell,k}\big(|u|^2\big)^\frac1p\zeta\big] 
    \big[ \Phi_{\alpha,\ell,k}\big(|u|^2\big)\zeta^p\big]^\frac1{p'}\,\dx\dt\\
    &\quad
    \le
    \bigg[
    \iint_{Q_{\rho}}
    |u|^p\Phi_{\alpha,\ell,k}\big(|u|^2\big)\zeta^p\,\dx\dt
    \bigg]^\frac1p\\
    &\quad\qquad\qquad\cdot
    \bigg[\iint_{Q_{\rho}} |\sff|^{qp'}\,\dx\dt\bigg]^{\frac1{qp'}}
    \bigg[\iint_{Q_{\rho}}
    \big( \zeta^p\Phi_{\alpha,\ell,k}\big(|u|^2\big)\big)^{\frac{q}{q-1}}\,\dx\dt\bigg]^{\frac{q-1}{qp'}}
    \\
    &\quad\le
    \rho^{-p}\iint_{Q_{\rho}}
    |u|^p\Phi_{\alpha,\ell,k}\big(|u|^2\big)\zeta^p\,\dx\dt
    +
    \bigg[ \iint_{Q_{\rho}} |\sff|^{qp'}\,\dx\dt\bigg]^{\frac1{q}}
     \sfT.
\end{align*}
The integrals of $\sfF$ and $\sff$ can be extended to $Q_1$ without loss of generality. Their contributions will not be traced in what follows but simply absorbed into a lump constant $C$.

On the left-hand side of the energy estimate, we absorb the cut-off function $\zeta$
into the gradient in the second integral. For this we use the following estimate
\begin{align*}
    \Big|\nabla \big[\Phi_{\alpha,\ell,k}&\big(|u|^2\big)^\frac{p+2\alpha}{2p\alpha }\zeta\big]\Big|^p \\
    &\le
    2^{p-1} \Big|\nabla \Phi_{\alpha,\ell,k}\big(|u|^2\big)^\frac{p+2\alpha}{2p\alpha }\Big|^p \zeta^p
    +
    2^{p-1} \Phi_{\alpha,\ell,k}\big(|u|^2\big)^\frac{p+2\alpha}{2\alpha }|\nabla\zeta |^p\\
    &\le
    2^{p-1} \Big|\nabla \Phi_{\alpha,\ell,k}\big(|u|^2\big)^\frac{p+2\alpha}{2\alpha }\Big|^p \zeta^p
    +
    2^{p-1} \big( |u|^p+1\big)\Phi_{\alpha,\ell,k}\big(|u|^2\big)|\nabla\zeta |^p.
\end{align*}
Collecting all the preceding estimates, absorbing the cut-off function into the gradient and shifting the coefficients containing $\al$ to the right-hand side, we arrive at 
\begin{align*}
    \sup_{\tau\in [-\rho^p,0]}&\int_{B_\rho\times\{\tau\}} \zeta^p \,\Phi_{\alpha ,\ell,k}
    \big(|u|^2\big)^{\frac{1+\alpha}{\alpha}}\,\dx
    + 
    \iint_{Q_{\rho}}\Big|\nabla \big[\zeta\Phi_{\alpha,\ell,k}\big(|u|^2\big)^\frac{p+2\alpha}{2p\alpha }\big]\Big|^p  \,\dx\dt \\
    &\le 
    C\, (1+\al)^p(\rho^{-p}+\|\nabla\z\|^p_{\infty}+\|\pl_t\z\|_{\infty})
    \iint_{Q_{\rho}}\big[ |u|^{2+2\alpha}
    +1\big]\,\dx\dt\\
    &\phantom{\le\,}
    +C(1+\alpha)^{p'+p}\sfT,
\end{align*}
From now on $C$ also depends on $\|\sff\|_{qp',Q_1}$ and $\|\sfF\|_{qp,Q_1}$. The left-hand side will be denoted as $\sfL$ for short. In what follows we aim to show $\sfT\le \varep\sfL+\text{[other terms]}$ and absorb $\sfL$ to the left in the energy estimate.
To this end, we denote
\begin{equation}\label{def:m-q}
    \sfm=p\frac{2+2\alpha}{p+2\alpha},
    \qquad
    \sfq=p\frac{N+\sfm}{N},
\end{equation}
invoke the parabolic embedding (Proposition~4.1 in the Preliminaries of \cite{DBGV-book}) with
$$
    (v,p,m,q)=
     \Big( 
    \zeta
    \Phi_{\alpha,\ell,k}
    \big(|u|^2\big)^{\frac{p+2\alpha}{2p\alpha }},p,\sfm,\sfq\Big),
$$
and obtain that
\begin{align}\label{Sob:0}
    \iint_{Q_{\rho}}&
    \Big(
    \zeta
    \Phi_{\alpha,\ell,k}
    \big(|u|^2\big)^{\frac{p+2\alpha}{2p\alpha }}
    \Big)^{\sfq}\,\dx\dt
    \\\nonumber
    &\le
    C\iint_{Q_{\rho}}
    \Big|\nabla\Big[\zeta
    \Phi_{\alpha,\ell,k}
    \big(|u|^2\big)^{\frac{p+2\alpha}{2p\alpha }}\Big]\Big|^p\,\dx\dt\\\nonumber
    &\qquad\qquad 
    \cdot\bigg[\sup_{\tau\in(-\rho^p,0]} 
    \int_{B_\rho\times\{\tau\}} 
    \Big[\zeta
    \Phi_{\alpha,\ell,k}\big(|u|^2\big)^{\frac{p+2\alpha}{2p\alpha }}
    \Big]^{p\frac{2+2\alpha}{p+2\alpha}}\,\dx\bigg]^{\frac{p}N}\\ \nonumber
        &\le
    C\iint_{Q_{\rho}}
    \Big|\nabla\Big[\zeta
    \Phi_{\alpha,\ell,k}
    \big(|u|^2\big)^{\frac{p+2\alpha}{2p\alpha }}\Big]\Big|^p\,\dx\dt\\\nonumber
    &\qquad\qquad 
    \cdot\bigg[\sup_{\tau\in( -\rho^p,0]} 
    \int_{B_\rho\times\{\tau\}} 
    \zeta^p
    \Phi_{\alpha,\ell,k}
    \big(|u|^2\big)^{\frac{1+\alpha}{\alpha }}
    \,\dx\bigg]^{\frac{p}N}\\ \nonumber
     &=C
    \underbrace{
    \iint_{Q_{\rho}}
    \Big|\nabla\Big[\zeta
    \Phi_{\alpha,\ell,k}\big(|u|^2\big)^{\frac{p+2\alpha}{2p\alpha }}\Big]\Big|^p\,\dx\dt}_{\le\sfL}\\ \nonumber
    &\qquad\qquad 
    \cdot\bigg[
    \underbrace{
    \sup_{\tau\in (-\rho^p,0]} 
    \int_{B_\rho\times\{\tau\}} 
    \zeta^p
    \Phi_{\alpha,\ell,k}
    \big(|u|^2\big)^{\frac{1+\alpha}{\alpha }}
    \,\dx}_{\le\sfL}\bigg]^{\frac{p}N}\\ \nonumber
    &\le 
    C \sfL^{\frac{N+p}{N}} .
\end{align}
Noticing that $\frac{q}{q-1}<\frac{N+p}{N}<\sfq\frac{p+2\alpha}{2p\alpha}$, we estimate $\sfT$ by first applying
the interpolation inequality of the form $\|\cdot\|_{\frac{q}{q-1}}\le \|\cdot\|^{\theta}_{\frac{N+p}{N}}  \|\cdot\|^{1-\theta}_{1}$ 
with $\theta=\frac{N+p}{pq}$. After that we apply Young's inequality to obtain for every $\varep>0$ that
\begin{align}\label{T-L}
\sfT&\le \varep  \bigg[\iint_{Q_{\rho}}
    \big( \Phi_{\alpha,\ell,k}\big(|u|^2\big)\zeta^p\big)^{\frac{N+p}{N}}\,\dx\dt\Big]^{\frac{N}{N+p}}\\\nonumber
    &\qquad+\varep^{-\frac{N+p}{qp-(N+p)}}\iint_{Q_{\rho,\theta}} \zeta^p\Phi_{\alpha,\ell,k}\big(|u|^2\big)\,\dx\dt.
\end{align}
Using again that $\frac{q}{q-1}<\frac{N+p}{N}<\sfq\frac{p+2\alpha}{2p\alpha}$, we first apply H\"older's inequality to estimate the first integral in \eqref{T-L} (noting again $\rho\le1$) and then apply \eqref{Sob:0} to get
\begin{align*}
\bigg[\iint_{Q_{\rho}}
    &\big( \Phi_{\alpha,\ell,k}\big(|u|^2\big)\zeta^p\big)^{\frac{N+p}{N}}\,\dx\dt\bigg]^{\frac{N}{N+p}}\\
    &\le \bigg[\iint_{Q_{\rho}}
    \big(
    \zeta^p
    \Phi_{\alpha,\ell,k}\big(|u|^2\big)
    \big)^{\sfq\frac{p+2\alpha}{2p\alpha }}\,\dx\dt\bigg]^{\frac{2p\alpha }{\sfq(p+2\alpha)}}\\ \nonumber
    &=
    \bigg[\iint_{Q_{\rho}}
    \Big(
    \zeta^{\frac{p+2\alpha}{2\alpha }}
    \Phi_{\alpha,\ell,k}\big(|u|^2
    \big)^{\frac{p+2\alpha}{2p\alpha }}
    \Big)^{\sfq}\,\dx\dt\bigg]^{\frac{2p\alpha }{\sfq(p+2\alpha)}}\\ \nonumber
    &\le
    \bigg[ \iint_{Q_{\rho}}\Big(
    \zeta
    \Phi_{\alpha,\ell,k}
    \big(|u|^2\big)^{\frac{p+2\alpha}{2p\alpha }}
    \Big)^{\sfq}\,\dx\dt \bigg]^{\frac{2p\alpha }{\sfq(p+2\alpha)}}\\\nonumber
    &\le C \sfL^{\frac{N+p}{N}\frac{2p\alpha }{\sfq(p+2\alpha)}}.
\end{align*}
With the above estimate at hand, we plug \eqref{T-L} into the energy estimate to get
\begin{align*}
    \sfL
    &\le 
    C(1+\al)^p(\rho^{-p}+\|\nabla\z\|^p_{\infty}+\|\pl_t\z\|_{\infty})
    \iint_{Q_{\rho}}\big[ |u|^{2+2\alpha}
    +1\big]\,\dx\dt\\
    &\phantom{\le\,}
    +C(1+\alpha)^{p'+p}\varep\sfL^{\frac{N+p}{N}\frac{2p\alpha }{\sfq(p+2\alpha)}}\\
    &\phantom{\le\,}
    +C\varep^{-\frac{N+p}{qp-(N+p)}}(1+\alpha)^{p'+p}\iint_{Q_{\rho}} \zeta^p\Phi_{\alpha,\ell,k}\big(|u|^2\big)\,\dx\dt.
\end{align*}
In the above estimate, we further use the fact that $\sfL^{\frac{N+p}{N}\frac{2p\alpha }{\sfq(p+2\alpha)}}\le \sfL +1$ since the power is in $(0,1)$ and then choose $\varep$ to satisfy $C(1+\alpha)^{p'+p}\varep=\frac12$ to absorb $\sfL$ to the left-hand side. 
Moreover, using this choice of $\varep$ and the fact that $\Phi_{\alpha,\ell,k}\big(|u|^2\big)
\le |u|^{2 +2\alpha}+1$, the previous inequality eventually gives that
\begin{align*}
    \sfL
    &\le 
    C(1+\al)^{\frac{p'+p}{1-\frac{N+p}{qp}}}(\rho^{-p}+\|\nabla\z\|^p_{\infty}+\|\pl_t\z\|_{\infty})
    \iint_{Q_{\rho}}\big[ |u|^{2+2\alpha}
    +1\big]\,\dx\dt.
\end{align*}
Plug this into \eqref{Sob:0} to get
\begin{align*}
\bigg[\iint_{Q_{\rho}}&
    \Big(
    \zeta
    \Phi_{\alpha,\ell,k}
    \big(|u|^2\big)^{\frac{p+2\alpha}{2p\alpha }}
    \Big)^{\sfq}\,\dx\dt\bigg]^{\frac{N}{N+p}}\\
    &\le     C(1+\al)^{\frac{p'+p}{1-\frac{N+p}{qp}}}(\rho^{-p}+\|\nabla\z\|^p_{\infty}+\|\pl_t\z\|_{\infty})
    \iint_{Q_{\rho}}\big[ |u|^{2+2\alpha}
    +1\big]\,\dx\dt.
\end{align*}
Note that the right-hand side is finite and independent of $k$ and $\ell$. Moreover,
$$
    \lim_{\ell\to\infty}
    \lim_{k\downarrow 0}
    \Phi_{\alpha,\ell,k}
    \big(|u|^2\big)^{\sfq\,\frac{p+2\alpha}{2p\alpha}}
    = |
    u|^{\frac{\sfq}{p}(p+2\alpha)} .
$$
Since $0\le \zeta\le 1$ and $\phi_{\alpha,\ell,k}(|u|^2)\uparrow |u|^{2\alpha}$ as $k\downarrow 0$, $\ell\to\infty$, we may pass to the limit in the previous estimate by monotone convergence to obtain
\begin{align}\label{Moser-energy:0-new}
\nonumber
    \bigg(\iint_{Q_{\rho}}&
    \zeta^{\sfq}\,|u|^{\frac{\sfq}{p}(p+2\alpha)}\,\dx\dt\bigg)^{\!\frac{N}{N+p}}\\
    &\le
    C(1+\alpha)^{\frac{p'+p}{\,1-\frac{N+p}{qp}\,}}(\rho^{-p}+\|\nabla\z\|^p_{\infty}+\|\pl_t\z\|_{\infty})
    \iint_{Q_{\rho}}
    \big(|u|^{2+2\alpha}+1\big)\,\dx\dt,
\end{align}
where $C=C(N,p,C_o,C_1)$, and $\sfq$ is defined in \eqref{def:m-q}.

By a standard covering argument, the estimate on 
$Q_{\frac12\rho}$ propagates to compactly contained cylinders. Consequently,
\begin{equation*}
     |u|\in L^{2+2\alpha}_{\rm loc} (E_T)
     \quad\Longrightarrow\quad
     |u|\in L^{(p+2\alpha)\frac{\bq}{p}}_{\rm loc} (E_T).
\end{equation*}
In order to have 
\[
(p+2\alpha)\frac{\bq}{p}>2+2\alpha,
\]
we need
\[
N(p-2)+(2+2\alpha)p>0,
\]
and this is guaranteed by the existence of $\sfr>2$ such that $\boldsymbol\lm_{\sfr}>0$; indeed, it is enough to let $\sfr\equiv2+2\alpha$.

{\em Step 2. Setup of the iteration.}
Fix $Q_o:=Q_{\rho}\subset E_T$ and define a shrinking family of  cylinders
$ Q_i:=Q_{\rho_i}=B_{\rho_i}\times (-\rho_i^{\,p},0]$, $i\in\mathbb N_0$,
and radii
$$
    \rho_i=\frac{\rho}2 +\frac{\rho}{2^{i+1}},\quad \mbox{for $i\in\N_0$.}
$$
Then $(\rho_i)_{i\in\N_0}$
is decreasing, $\rho_o=\rho$, and $\rho_\infty
=\frac12 \rho $. For each $i$ choose $\zeta=\zeta_i\in W^{1,\infty}(Q_i,[0,1])$ such that $\zeta\equiv 1$ on $Q_{i+1}$, $\zeta=0$ on $\partial_{\rm par} Q_i$, and
\begin{equation*}
    |\nabla\zeta|\le\frac{2^{i+2}}{\rho},
    \quad\mbox{and}\quad
    |\partial_t\zeta|\le\frac{2^{i+4}}{\rho^p}.
\end{equation*}
Apply the above choice of $\zeta=\zeta_i$ in \eqref{Moser-energy:0-new} and take integral averages
yield that
\begin{align}\label{Iter-start-new}\nonumber
    \bigg[ & \biint_{Q_{i+1}} |u|^{(p+2\alpha)\frac{\sfq}{p}}\,\dx\dt\bigg]^{\frac N{N+p}}\\\nonumber
    &\le 
    C\,(1+\alpha)^{\frac{p'+p}{1-\frac{N+p}{qp}}}
    \bigg[
    1+\rho^p\Big(\frac{2^{p(i+2)}}{\rho^p}
    +
    \frac{2^{i+4}}{\rho^p}\Big)\bigg] \biint_{Q_{i}}\big(|u|^{2+2 \alpha}+1\big)\,\dx\dt\\
    &\le
    C\,2^{pi} (1+\alpha)^{\frac{p'+p}{1-\frac{N+p}{qp}}}\biint_{Q_{i}}
    \big(|u|^{2 +2 \alpha}+1\big)\,\dx\dt.
\end{align}
Let $\kappa:=1+\tfrac{p}{N}$. A direct calculation shows
$$
    (p+2\alpha)\,\frac{\sfq}{p}
    =
    2\alpha\,\kappa \;+\; p\,\frac{N+2}{N}.
$$
Therefore, it is natural to choose $\alpha_o$ by $2\alpha_o+2=\sfr$, and to define $(\alpha_i)_{i\in\mathbb N_0}$ recursively by
$$
    2\alpha_{i+1}+2
    =
    2\kappa\,\alpha_i \;+\; p\,\frac{N+2}{N}.
$$
Equivalently, with $s_i:=2+2\alpha_i$ one has the linear recurrence
$$
    s_{i+1}
    =\kappa s_i+(p-2),\qquad s_o=\sfr.
$$
Hence,
\begin{align*}
    s_i
    &=
    \kappa^i \sfr+\frac{p-2}{\kappa-1}(\kappa^i-1)\\
    &=
    \kappa^i\Big(\sfr+\frac{N}{p}(p-2)\Big)-
    \frac{N}{p}(p-2)\\
    &=
    \frac1p 
    \boldsymbol{\lambda}_\sfr
    \kappa^i
    -\frac1p \boldsymbol{\lambda}_2+2,
\end{align*}
so that $s_i\to\infty$ as $i\to\infty$. In particular, the iteration \eqref{Iter-start-new} starts with the $L^\sfr$-integral of $|u|$ on $Q_o$ on the right-hand side, which is finite by assumption, and produces a strictly improving sequence of integrability exponents. 
Inserting the recursion 
$
    2\alpha_{i+1}+2
    =
    2\kappa\,\alpha_i \;+\; p\,\frac{N+2}{N}
$
into \eqref{Iter-start-new} yields
\begin{align*}
    \bigg[
    \biint_{Q_{i+1}}  |u|^{2+2\alpha_{i+1}}\,\dx\dt
    \bigg]^{\frac N{N+p}}\le 
    C\,2^{pi} (1+\alpha_i)^{\frac{p'+p}{1-\frac{N+p}{qp}}}\biint_{Q_{i}}
    \big(|u|^{2 +2 \alpha_i}+1\big)\,\dx\dt.
\end{align*}
In order to control $1+\alpha_i$ in terms of $\kappa^i$, note that with $\boldsymbol\lambda_\sfr=N(p-2)+p\sfr$ and $\boldsymbol\lambda_2=N(p-2)+2p$ we have
\begin{align*}
    s_i=2+2\alpha_i
    &=2+\frac{1}{p}\boldsymbol\lambda_\sfr\kappa^i-\frac{1}{p}\boldsymbol\lambda_2\\
    &=\frac{1}{p}\,N(2-p)+\frac{1}{p}\boldsymbol\lambda_\sfr\,\kappa^i\\
    &\le 
    \frac{1}{p}\,[\,N(2-p)+\boldsymbol\lambda_\sfr\,]\,\kappa^i
=\sfr\,\kappa^i.
\end{align*}
Consequently,
$$
1+\alpha_i \;\le\; \frac{\sfr}{2}\,\kappa^i.
$$
Substituting this inequality 
into the preceding estimate, we obtain the {\it recursive inequality}
\begin{align}\label{rec-Hoelder-new}
    \bigg[
    \biint_{Q_{i+1}}  |u|^{2+2\alpha_{i+1}}\,\dx\dt
    \bigg]^{\frac N{N+p}}\le 
    C\,\Big(2^{p} \kappa^{\frac{p'+p}{1-\frac{N+p}{qp}}}\Big)^i\biint_{Q_{i}}
    \big(|u|^{2 +2 \alpha_i}+1\big)\,\dx\dt,
\end{align}
where now $C$ takes into account also the term $\left(\frac\sfr2\right)^{{\frac{p'+p}{1-\frac{N+p}{qp}}}}$.
In order to complete the iteration, we add $1$ on the left side integral and introduce 
\begin{align*}
    \sfY_i=\bigg[\biint_{Q_i}\big(|u|^{s_i}+1\big)\,\dx\dt\bigg]^{\frac1{s_i}},\qquad \sfb=2^{p} \kappa^{\frac{p'p}{1-\frac{N+p}{qp}}}.
\end{align*}
In this notation, \eqref{rec-Hoelder-new} takes the form
$$
    \sfY_{i+1}^{s_{i+1}} 
    \le
    \big(
    C \sfb^i \sfY_i^{s_i}
    \big)^{\kappa}, \quad \forall \, i \in \mathbb{N}_0,
$$
and the iteration yields
\begin{equation*}
    \sfY_i
    \le
    \prod_{j=1}^i C^{\frac{\kappa^{i-j+1}}{s_i}}
    \prod_{j=1}^i \sfb^{j\frac{\kappa^{i-j+1}}{s_i}} 
    \sfY_o^{\frac{s_o \kappa^i}{s_i}},\qquad\forall\,i\in\mathbb{N}_0.
\end{equation*}
Unwinding the products gives
$$
    \sfY_i
    \le
    C^{\frac{1}{s_i}\sum_{j=1}^{i}\kappa^{\,i-j+1}}
    \sfb^{\frac{1}{s_i}\sum_{j=1}^{i} j\kappa^{i-j+1}}\;
    \sfY_o^{\frac{s_o\kappa^{i}}{s_i}}.
$$
Recall that $s_o=2\alpha_o+2=\sfr$. To compute and bound the exponents, note that
\begin{align*}
    \frac{\kappa^{i-j+1}}{s_i}
    &=
    \frac{\kappa^{i-j+1}}{2 \alpha_i+2}
    =
    \frac{\kappa^{i-j+1}}{\frac{\boldsymbol \lambda_\sfr}{p} \kappa^i-\frac{\boldsymbol \lambda_2}{p}+2}\\
    & =
    \frac{\kappa^{i-j+1}}{\frac{\boldsymbol \lambda_\sfr}{p} \kappa^i+\frac{N(2-p)}{p}} 
    \le \frac{p}{\boldsymbol \lambda_\sfr} \kappa^{-j+1},
\end{align*}
and likewise
\begin{align*}
    \frac{s_o \kappa^i}{s_i}
    &=
    \frac{\sfr}{\frac{\boldsymbol \lambda_\sfr}{p} \kappa^i+\frac{N(2-p)}{p}} \kappa^i 
    \le 
    \frac{\sfr}{\frac{\boldsymbol \lambda_\sfr}{p}} 
    =
    \frac{p \sfr}{\boldsymbol \lambda_\sfr}.
\end{align*}
Here we used $\boldsymbol{\lambda}_\sfr= N(p-2)+\sfr p$ and $\boldsymbol{\lambda}_2= N(p-2)+2p$, so that $2p-\boldsymbol{\lambda}_2=N(2-p)$.  In particular, we obtain the bound
\begin{align*}
     \sum_{j=1}^i \frac{\kappa^{i-j+1}}{s_i} 
     &\le 
     \frac{p}{\boldsymbol\lambda_\sfr} 
     \sum_{j=1}^i \kappa^{-j+1} 
     \le 
     \frac{p}{\boldsymbol\lambda_\sfr} \frac{1}{1-1 /\kappa}
     =
     \frac{N+p}{\boldsymbol\lambda_\sfr},
\end{align*}
and, similarly,
\begin{align*}
    \sum_{j=1}^i j \frac{\kappa^{i-j+1}}{s_i}
    &\le 
    \frac{p}{\boldsymbol \lambda_\sfr} \sum_{j=1}^i j \kappa^{-j+1} 
    \le 
    \frac{p}{\boldsymbol \lambda_\sfr}\Big(\frac{\kappa}{\kappa-1}\Big)^2
    =
    \frac{p}{\boldsymbol \lambda_\sfr}\Big(\frac{N+p}{p}\Big)^2.
\end{align*}
Using these estimates to control the exponents in the product representation of $\sfY_i$ yields
\begin{equation*}
  \limsup_{i \rightarrow \infty} \sfY_i 
  \le 
  C^{\frac{N+p}{\boldsymbol\lambda_\sfr} } 
  \sfb^{\frac{p}{\boldsymbol\lambda_\sfr}(\frac{N+p}{p})^2} 
    \sfY_o^{\frac{p\sfr}{\boldsymbol \lambda_\sfr}}.  
\end{equation*}
Moreover, straightforward computations give that
\begin{align*}
   \limsup_{i\rightarrow\infty}\sfY_i\ge \sup_{Q_{\frac12\rho}}|u|, \qquad \sfY_o^{\frac{p\sfr}{\boldsymbol\lambda_\sfr}}
    &=
    \bigg[
    \biint_{Q_\rho}\big(|u|^\sfr+1\big)\,\dx\dt
    \bigg]^{\frac{p}{\boldsymbol\lambda_\sfr}}.
\end{align*}
Therefore, we have
\begin{equation*}
    \sup_{Q_{\frac12\rho}}|u|
    \le
    \Bigg[
    C 
    \biint_{Q_{\rho}}\big(|u|^\sfr+1\big)
    \,\dx\dt
    \Bigg]^{\frac{p}{\boldsymbol\lambda_\sfr}},
\end{equation*}
where $C$ depends on $N,p,C_o,C_1,q,\sfr$ and also on $\|\sff\|_{qp',Q_1}$ and $\|\sfF\|_{qp,Q_1}$. As a result of a standard covering argument, we conclude that $u$ is locally bounded in $E_T$.
\end{proof}

\begin{remark}
\upshape It is straightforward to verify that if $\sfr=\frac{N(2-p)}{p}$, that is, if $\boldsymbol\lambda_\sfr=0$, then $\forall i\in{\mathbb N}_0$ we have $s_i=\sfr$, and the iteration process described above cannot converge. This does not exclude the possibility that $u$ is bounded; however, such a conclusion cannot be obtained by our method, and therefore remains an open problem.
\end{remark}

\begin{remark}[Qualitative $L^\infty$-estimate for $\frac{2N}{N+2}<p\le 2$]\upshape
In this range we assume
$$
u\in C^0\big([0,T];L^2(E,\R^k)\big)\cap L^p\big(0,T;W^{1,p}(E,\R^k)\big).
$$
By the parabolic Sobolev embedding,
$$
u\in L^\sfr_{\rm loc}(E_T,\R^k)\quad\text{with}\quad \sfr=p\,\frac{N+2}{N}>2.
$$
In particular, $u\in L^2_{\rm loc}(E_T,\R^k)$. Moreover,
$$
\boldsymbol\lambda_2:=N(p-2)+2p>0\qquad\text{whenever}
\quad p>\tfrac{2N}{N+2}.
$$
Hence the structural condition $\boldsymbol\lambda_\sfr>0$ is satisfied, \emph{taking $\sfr=2$}. Consequently, the argument developed for the sub-critical case $1<p\le \tfrac{2N}{N+2}$ applies verbatim (with this choice of $\sfr$) and yields the local boundedness of $u$ without any extra assumption.
\end{remark}

\subsection{De Giorgi's iteration: proof of Theorem~\ref{thm:sup-quant}}\label{sec:degiorgi}
Throughout the proof the center point 
$z_o$ of all cylinders is fixed, without loss of generality we may assume it to coincide with the origin $(0,0)$, and we suppress it in the notation. Since by Lemma~\ref{lm:sup-est-qual} weak solutions are locally bounded, the energy estimate in Lemma \ref{en:DeGiorgi-version} is at our disposal.
In fact, Lemma \ref{en:DeGiorgi-version} gives
\begin{align*}
    &\sup_{t\in (-\theta,0]}\int_{B_\rho\times\{t\}}
    v\zeta^p\,\dx
    +
    \iint_{Q_{\rho,\theta}}
     \big|\nabla \big[ \big(|u|-\sfk\big)_+^{p'}\zeta\big]\big|^{p}\, \dx\dt\\
    &\quad\le
    C
    \iint_{Q_{\rho,\theta}}
    \Big[|u|^p|\nabla\zeta|^p +\mu^p\zeta^p\Big]
    \big(|u|-\sfk\big)_+^{p'}
    \dx\dt 
    +   
    C\iint_{Q_{\rho,\theta}}
    v|\partial_t\zeta^p|
    \dx\dt\\
    &\quad\phantom{\le\,}
    +
     C
    \iint_{Q_{\rho,\theta}}|\sfF|^p
    |u|^{p'}
    \mathbf{1}_{\{|u|>\sfk\}}
    \zeta^p\,\dx\dt
    +
    \iint_{Q_{\rho,\theta}}|\sff|
    |u|^{p'+1}
    \mathbf{1}_{\{|u|>\sfk\}}
    \zeta^p\,\dx\dt,
\end{align*}
where $C=C(p,C_o,C_1)$, $p'=\frac{p}{p-1}$, and
$$
    \tfrac1{2+p'} \big(|u|-\sfk\big)_+^{p'+2}
    \le
    v=\int_0^{|u|}s(s-\sfk)_+^{p'}
    \,\ds
    \le
    \tfrac1{2+p'}|u|^{p'+2}\mathbf{1}_{\{ |u|>\sfk\}}.
$$
This energy estimate holds  for every cylinder $Q_{\rho,\theta}\Subset E_T$, for every level $\sfk\ge 0$, and for every cut-off  $\zeta\in W^{1,\infty}(Q_{\rho,\theta}, [0,1])$ vanishing on the parabolic boundary $\partial_{\rm par}Q_{\rho,\theta}$. Fix a cylinder $Q_o:=Q_{\rho,\theta}\Subset E_T$ and choose $0<\tau<1$. Define a nested family of cylinders
$Q_n:=Q_{\rho_n,\theta_n}$ with
\begin{equation*}
    \rho_n:=\tau\rho
    +\frac{1-\tau}{2^{\,n}}\rho,
    \qquad
    \theta_n:=\tau\theta
    +\frac{1-\tau}{2^{\,n}}\theta,\qquad n\in\mathbb N_0,
\end{equation*}
so that $Q_{n+1}\Subset Q_n$, $\rho_n\downarrow\tau\rho$, and $\theta_n\downarrow\tau\theta$.
For each $n$ pick $\zeta_n\in W^{1,\infty}(Q_n,[0,1])$ with
$\zeta_n\equiv1$ on $Q_{n+1}$,
$\zeta_n=0$ on $\partial_{\rm par}Q_n$,  
\begin{equation*}
    |\nabla\zeta_n|\le \frac{2^{n+2}}{(1-\tau)\rho},
    \quad\mbox{and}\quad
    |\partial_t\zeta_n|
    \le\frac{2^{n+2}}{(1-\tau)\theta}.
\end{equation*}
The truncation levels are taken increasing to $\sfk>0$ by
\begin{equation*}
      \sfk_n:=\sfk-\frac{\sfk}{2^n},
\end{equation*}
so that $\sfk_{n+1}-\sfk_n=\tfrac{\sfk}{2^{n+1}}$ and $\sfk_n\uparrow \sfk$. With these preparations, we insert the pairs $(\zeta_n,\sfk_{n+1})$ in the above energy inequality on the cylinders $Q_n$ and obtain
\begin{align*}
    \sup_{t\in (-\theta_n,0]}\int_{B_n\times\{t\}}&
    \big(|u|-\sfk_{n+1}\big)^{p'+2}\zeta_n^p\,\dx
    +
    \iint_{Q_{n}}
     \big|\nabla \big[ \big(|u|-\sfk_{n+1}\big)_+^{p'}\zeta_n\big]\big|^{p}\, \dx\dt\\
    &\le
    C2^{(n+1)p} \bigg[ \frac{\sfI_1}{(1-\tau)^p\varrho^p}
     +
    \frac{\sfI_2}{(1-\tau)\theta}\bigg]
    +
    C\big[\sfI_3+\sfI_4+\sfI_5
    \big],
\end{align*}
where we abbreviated
\begin{align*}
    \sfI_1
    &:=
    \iint_{Q_{n}}
    |u|^{p'+p}\mathbf{1}_{\{|u|>\sfk_{n+1}\}}
    \dx\dt,\\
    \sfI_2
    &:=
    \iint_{Q_{n}}|u|^{p'+2}
    \mathbf{1}_{\{|u|>\sfk_{n+1}\}}
    \dx\dt,\\
     \sfI_3
    &:=
     \mu^p\iint_{Q_{n}}
    |u|^{p'}
    \mathbf{1}_{\{|u|>\sfk_{n+1}\}}
    \,\dx\dt,\\
    \sfI_4
    &:=
     \iint_{Q_{n}}|\sfF|^p
    |u|^{p'}
    \mathbf{1}_{\{|u|>\sfk_{n+1}\}}
    \,\dx\dt,\\
    \sfI_5
    &:=
     \iint_{Q_{n}}|\sff|
    |u|^{p'+1}
    \mathbf{1}_{\{|u|>\sfk_{n+1}\}}
    \,\dx\dt.
\end{align*}
Our next aim is to dominate each term on the right-hand side of the energy inequality by a truncation integral $\|(|u|-\sfk_n)_{+}\|_{a,Q_n}$ with the same exponent $a$.
For this purpose, we first establish two estimates that will be used repeatedly: For $a\ge b>0$ we have
\begin{align}\label{energy-cascade-1}
    \iint_{Q_n} |u|^{b}
    \mathbf 1_{\{|u|>\sfk_{n+1}\}}\,\dx\dt
    \le
    \frac{2^{(n+1)a}}{\sfk^{a-b}}
    \iint_{Q_n}
    &\big(|u|-\sfk_n\big)_{+}^{a}\,\dx\dt
\end{align}
and
\begin{align}\label{energy-cascade-2}
    \big|Q_n\cap \{|u|>\sfk_{n+1}\}\big|
    \le
    \Big( \frac{2^{n+1}}{\sfk}\Big)^{a}
    \iint_{Q_n}
    \big(|u|-\sfk_n\big)_+^{a}\,\dx\dt.
\end{align}
To show the first estimate, we use the relation between consecutive levels,
\begin{equation*}
    \sfk_n=\sfk_{n+1} \frac{2^{n+1}-2}{2^{n+1}-1},
\end{equation*}
and compute
\begin{align*}
    \iint_{Q_n}
    &\big(|u|-\sfk_n\big)_{+}^{a}\,\dx\dt
    \ge 
    \iint_{Q_n}\big(|u|-\sfk_n\big)_{+}^{b}
    \big(|u|-\sfk_n\big)_+^{a-b} 
    \mathbf 1_{\{|u|>\sfk_{n+1}\}}\,\dx\dt \\
    &\ge
    \big( \sfk_{n+1}-\sfk_n\big)^{a-b}
    \iint_{Q_n}
    \bigg[|u|-\sfk_{n+1}
    \frac{2^{n+1}-2}{2^{n+1}-1}\bigg]^{b} \mathbf 1_{\{|u|>\sfk_{n+1}\}}\,\dx\dt\\
    &=
    \Big( \frac{\sfk}{2^{n+1}}\Big)^{a-b}
    \iint_{Q_n} |u|^{b}
    \bigg[1-
    \frac{\sfk_{n+1}}{|u|}\frac{2^{n+1}-2}{2^{n+1}-1}\bigg]^{b} 
    \mathbf 1_{\{|u|>\sfk_{n+1}\}}\,\dx\dt\\
    &\ge
    \Big( \frac{\sfk}{2^{n+1}}\Big)^{a-b}
    \iint_{Q_n} |u|^{b}
    \bigg[1-
    \frac{2^{n+1}-2}{2^{n+1}-1}\bigg]^{b} 
    \mathbf 1_{\{|u|>\sfk_{n+1}\}}\,\dx\dt\\
    &\ge
    \frac{\sfk^{a-b}}{2^{(n+1)a}}
    \iint_{Q_n} |u|^{b}
    \mathbf 1_{\{|u|>\sfk_{n+1}\}}\,\dx\dt.
\end{align*}
To prove the second estimate, we compute
\begin{align*}
    \iint_{Q_n}
    \big(|u|-\sfk_n\big)_+^{a}\,\dx\dt
    &
    \ge 
    \iint_{Q_n}\big(|u|-\sfk_n\big)_{+}^{a} 
    \mathbf 1_{\{|u|>\sfk_{n+1}\}}
    \,\dx\dt \\
    &
    \ge
    \big(\sfk_{n+1}-\sfk_n\big)^{a}\big|Q_n\cap \{|u|>\sfk_{n+1}\}\big| \\
    & 
    =
    \Big( \frac{\sfk}{2^{n+1}}\Big)^{a}
    \big|Q_n\cap \{|u|>\sfk_{n+1}\}\big|.
\end{align*}

Since $1<p\le\frac{2N}{N+2}$, the condition $\boldsymbol{\lambda}_\sfr=N(p-2)+p\sfr>0$ forces 
$\sfr>\frac{N(2-p)}{p}\ge2$. To proceed, the decisive point is the comparison of $\sfr$ with $p'+2$. 

In the {\bf case} $\boldsymbol{2\,\le\frac{N(2-p)}{p}}<\sfr<p'+2$ 
we will dominate each term on the right-hand side of the energy inequality in terms of the $L^{p'+2}$-integral of the truncated function $(|u|-\sfk_n)_+$. 
Using \eqref{energy-cascade-1} with $(a,b)=(p'+2, p'+p)$ we estimate the term $\sfI_1$ as 
\begin{equation}\label{est:|u|^p'+p}
    \sfI_1
    \le
    \frac{2^{(n+1)(p'+2)}}{\sfk^{2-p}}
     \iint_{Q_n}
    \big(|u|-\sfk_n\big)_{+}^{p'+2}\,\dx\dt
    =
    2^{(n+1)(p'+2)}\sfk^{p'+p}\widetilde{\sfY}_n,
\end{equation}
where we set
\begin{equation*}
    \widetilde{\sfY}_n:= 
    \frac{1}{\sfk^{p'+2}}\iint_{Q_n}
    \big(|u|-\sfk_n\big)_{+}^{p'+2}\,\dx\dt.
\end{equation*}
Similarly, the $\sfI_2$-term is estimated by applying \eqref{energy-cascade-1} with $a=b=p'+2$ as
\begin{equation}\label{est:|u|^p'+2}
    \sfI_2
    \le
    2^{(n+1)(p'+2)}
     \iint_{Q_n}
    \big(|u|-\sfk_n\big)_{+}^{p'+2}\,\dx\dt
    =
    2^{(n+1)(p'+2)}\sfk^{p'+2} \widetilde{\sfY}_n.
\end{equation}
Applying \eqref{energy-cascade-2} with $a=p'+2$ we have
\begin{align}\label{est:meas-p'+2}\nonumber
    \big|Q_n\cap \{|u|>\sfk_{n+1}\}\big|
    &\le
    \frac{2^{(n+1)(p'+2)}}{\sfk^{p'+2}}
    \iint_{Q_n}
    \big(|u|-\sfk_n\big)_+^{p'+2}\,\dx\dt\\
    &=2^{(n+1)(p'+2)}\widetilde{\sfY}_n.
\end{align}
We now estimate the non–degeneracy term $\sfI_3$. By Hölder’s inequality, together with \eqref{est:|u|^p'+p} and \eqref{est:meas-p'+2}, we obtain
\begin{align}\label{est:mu-}\nonumber
     \sfI_3
     &\le
     \mu^p\bigg[\iint_{Q_{n}}|u|^{p'+p}
     \mathbf{1}_{\{|u|>\sfk_{n+1}\}}
    \,\dx\dt\bigg]^\frac1p 
    \big|Q_n\cap \{|u|>\sfk_{n+1}\}\big|^{1-\frac{1}{p}}
    \\\nonumber
    &\le
    \mu^p
    \big[ 2^{(n+1)(p'+2)}\sfk^{p'+p}\widetilde{\sfY}_n\big]^\frac1p
    \big[
    2^{(n+1)(p'+2)}\widetilde{\boldsymbol Y}_n
    \big]^{1-\frac{1}{p}}\\
    &=
    \mu^p 2^{(n+1)(p'+2)}\sfk^{p'}\widetilde{\sfY}_n.
\end{align}
In the same vein we estimate $\sfI_4$ (the 
$\sfF$-term). Using the local boundedness of $u$, H\"older's inequality  with exponents $q$ and $\frac{q}{q-1}$ and the measure bound \eqref{est:meas-p'+2}, we obtain
\begin{align}\label{est:F}\nonumber
    \sfI_4
    &\le
    \|u\|_{\infty, Q_o}^{p'}
    \bigg[
    \iint_{Q_o}|\sfF|^{qp}\,\dx\dt
    \bigg]^\frac{1}{q}
    \big|Q_n\cap \{|u|>\sfk_{n+1}\}\big|^{1-\frac1q}
    \\
    &\le
    \|u\|_{\infty, Q_o}^{p'}
    \|\sfF\|_{qp, Q_o}^{p}
    2^{(n+1)(p'+2)(1-\frac{1}{q})}
    \widetilde{\sfY}_n^{1-\frac1q}.
\end{align}
We now handle the $\sff$-term. 
By the local boundedness of $u$, Hölder’s inequality with exponents $qp'$, $(1-\frac{1}{qp'})^{-1}$  gives
\begin{align}\label{est:f}\nonumber
     \sfI_5
    &\le
    \|u\|_{\infty ,Q_o}^{p'+1}
    \bigg[
    \iint_{Q_{n}}|\sff|^{qp'}\dx\dt
    \bigg]^\frac{1}{qp'}
    \big|Q_n\cap\{|u|>\sfk_{n+1}\}\big|^{1-\frac{1}{qp'}}\\\nonumber
    &\le
    \|u\|_{\infty ,Q_o}^{p'+1}
    \|\sff\|_{qp' ,Q_o}
    \big|Q_n\cap\{|u|>\sfk_{n+1}\}\big|^{1-\frac{1}{q}+(\frac{1}{q}-
    \frac{1}{qp'}) }\\\nonumber
    &\le
     |Q_o|^\frac{1}{qp}\|u\|_{\infty ,Q_o}^{p'+1}
    \|\sff\|_{qp' ,Q_o}
    \big|Q_n\cap\{|u|>\sfk_{n+1}\}\big|^{1-\frac{1}{q}}
    \\
    &\le
     |Q_o|^\frac{1}{qp}\|u\|_{\infty ,Q_o}^{p'+1}
    \|\sff\|_{qp' ,Q_o}
    2^{(n+1)(p'+2)(1-\frac{1}{q})}
    \widetilde{\sfY}_n^{1-\frac1q}.
\end{align}
Here, we also used \eqref{energy-cascade-2} with $a=p'+2$ in the last line.
Collecting the bounds 
\eqref{est:|u|^p'+p}, \eqref{est:|u|^p'+2}, \eqref{est:mu-}, \eqref{est:F}, and \eqref{est:f}, and inserting them into the energy inequality, we obtain for each $n\in\N_0$
\begin{align*}
    &\sup_{t\in (-\theta_n,0]}\int_{B_n\times\{t\}}
    \big(|u|-\sfk_{n+1}\big)_+^{p'+2}\zeta_n^p\,\dx
    +
    \iint_{Q_{n}}
     \big|\nabla \big[ \big(|u|-\sfk_{n+1}\big)_+^{p'}\zeta_n\big]\big|^{p}\, \dx\dt\\
    &\qquad\le
    C\,2^{(n+1)(p'+p+2)}\sfk^{p'+2}
    \bigg[
    \frac{1}{(1-\tau)^p\varrho^p}\frac{1}{\sfk^{2-p}}
    +
    \frac{1}{(1-\tau)\theta}+\frac{\mu^p}{\sfk^2}
    \bigg]
    \widetilde{\sfY}_n
    \\
    &\qquad\phantom{\le\,}+
    C\Big[ \|u\|_{\infty, Q_o}^{p'}
    \|\sfF\|_{qp, Q_o}^{p}+
     |Q_o|^\frac{1}{qp}\|u\|_{\infty ,Q_o}^{p'+1}
    \|\sff\|_{qp' ,Q_o}\Big]
    2^{(n+1)(p'+2)(1-\frac{1}{q})}
    \widetilde{\sfY}_n^{1-\frac{1}{q}}\\
    &\qquad\le
    \frac{C2^{(n+1)(p'+p+2)}\sfk^{p'+2}}{(1-\tau)^p\theta}
    \bigg[
    \frac{\theta}{\varrho^p\sfk^{2-p}}\Big(1+\frac{\mu^p\rho^p}{\sfk^p}\Big)
    +
   1
    \bigg]
    \widetilde{\sfY}_n
    \\
    &\qquad\phantom{\le\,}+
    C 2^{(n+1)(p'+2)(1-\frac{1}{q})} \sfR
    \widetilde{\sfY}_n^{1-\frac{1}{q}},
\end{align*}
where
\begin{equation}\label{def:R}
    \sfR:= \|u\|_{\infty, Q_o}^{p'}
    \|\sfF\|_{qp, Q_o}^{p}+
     |Q_o|^\frac{1}{qp}\|u\|_{\infty ,Q_o}^{p'+1}
    \|\sff\|_{qp' ,Q_o}.
\end{equation}
We now set the first largeness requirements on $\sfk$: indeed, we require
\begin{equation*}
    \frac{\mu^p\rho^p}{\sfk^{p}}\le 1
    \quad\Longleftrightarrow\quad
    \sfk\ge \mu\rho,
\end{equation*}
and 
\begin{equation*}
    \frac{\theta}{\varrho^p\sfk^{2-p}}\le 1
    \quad\Longleftrightarrow\quad
    \sfk\ge \Big(\frac{\theta}{\varrho^p}\Big)^\frac{1}{2-p}.
\end{equation*}
That is, taking
\begin{equation}\label{first-choice-k}
    \sfk\ge \max\bigg\{ \Big(\frac{\theta}{\varrho^p}\Big)^\frac{1}{2-p},\,\mu\rho\bigg\}
\end{equation}
the bracketed factors in front of $\widetilde{\sfY}_n$ are bounded by $3$; we obtain
\begin{align}\label{est:en-prel}\nonumber
    \sup_{t\in (-\theta_n,0]}&\int_{B_n\times\{t\}}
    \big(|u|-\sfk_{n+1}\big)^{p'+2}\zeta_n^p\,\dx
    +
    \iint_{Q_{n}}
     \big|\nabla \big[ \big(|u|-\sfk_{n+1}\big)_+^{p'}\zeta_n\big]\big|^{p}\, \dx\dt\\\nonumber
    &\le
     \frac{C2^{(n+1)(p'+p+2)}\sfk^{p'+2}}{(1-\tau)^p\theta}
    \widetilde{\sfY}_n
    +
     C 2^{(n+1)(p'+2)(1-\frac{1}{q})}\sfR
   \widetilde{\sfY}_n^{1-\frac{1}{q}}\\
    &\le
    C2^{(n+1)(p'+p+2)}
    \bigg[
    \frac{\sfk^{p'+2}\widetilde{\sfY}_n}{(1-\tau)^p\theta}
    +\sfR
    \widetilde{\sfY}_n^{1-\frac{1}{q}}
    \bigg].
\end{align}
Next, we invoke the parabolic Sobolev embedding (Proposition 3.1 in Chapter I of \cite{DiBe}) with
\begin{equation*}
    (v,p,m,q) =\Big(\big(|u|-\sfk_{n+1}\big)_{+}^{p'} \zeta_n^{p'}, p, \sfm, \sfq\Big),
\end{equation*}
where
\begin{equation}\label{def:bq}
    \sfm:=\frac{p'+2}{p'},\qquad\sfq:= p\frac{N+\sfm}{N}.
\end{equation}
This gives
\begin{align}\label{est:q-int}
    \iint_{Q_n}&\big[\big(|u|-\sfk_{n+1}\big)_+^{p'} \zeta_n^{p'}\big]^{\sfq}\,\dx\dt\nonumber\\
    &\le
    C_{\rm Sob}^{\sfq}
    \bigg[\iint_{Q_n}\big|\nabla\big[
    \big(|u|-\sfk_{n+1}\big)_+^{p'}\zeta_n^{p'}
    \big]\big|^p\,\dx\dt\bigg]\nonumber\\
    &\qquad\,\,\,\cdot 
    \bigg[\sup _{t\in (-\theta_n,0]}  
    \int_{B_n\times\{t\}}
    \big[\big(|u|-\sfk_{n+1}\big)_+^{p'}\zeta^{p'}_n\big]^{\sfm  }\,\dx\bigg]^{\frac{p}{N}} \\
    &\le
    C\Big[\mbox{right-hand side of \eqref{est:en-prel}}\Big]^{1+\frac{p}{N}}.\nonumber
\end{align} 
Observe that the hypothesis $\boldsymbol\lambda_\sfr=N(p-2)+p\sfr>0$ together with the small $\sfr$-assumption $\sfr<p'+2$, implies $\sfq>\sfm$. Indeed,
\begin{align*}
    \sfq
    &=
    \frac{Np'p+p(p'+2)}{Np'}
    =
    \frac{N(p+p')+p(p'+2)}{Np'}\\
    &= 
     \frac{N(p-2)+p(p'+2)+N(p'+2)}{Np'}
     >
     \frac{N(p-2)+p\sfr+N(p'+2)}{Np'}\\
     &=
     \frac{\boldsymbol\lambda_\sfr}{Np'}+\frac{p'+2}{p'}
     >\sfm.
\end{align*}
Consequently, Hölder’s inequality yields the integrability gain
\begin{align*}
      \iint_{Q_{n+1}}&
    \big(|u|-\sfk_{n+1}\big)_{+}^{p'+2}\,\dx\dt 
    \le 
    \iint_{Q_n}
    \big[\big(|u|-\sfk_{n+1}\big)_{+}^{p'} \zeta_n^{p'}\big]^{\sfm}\,\dx\dt \\
    &\le
    \bigg[\iint_{Q_n}
    \big[\big(|u|-\sfk_{n+1}\big)_+^{p'}\zeta_n^{p'}\big]^{\sfq}
    \,\dx\dt\bigg]^{\frac{\sfm}{\sfq }}
    \big|Q_n\cap \{|u|>\sfk_{n+1}\} \big|^{1- \frac{\bom}{\sfq }}.
\end{align*}
Let us continue to estimate the first term by \eqref{est:q-int} and the second term by \eqref{est:meas-p'+2}. After a simple algebraic manipulation we get
\begin{align*}
    \iint_{Q_{n+1}}&
    \big(|u|-\sfk_{n+1}\big)_{+}^{p'+2}\,\dx\dt \\
    &\le
    C \Big[\mbox{right-hand side of \eqref{est:en-prel}}\Big]^{\frac{\sfm}{\sfq }(1+\frac{p}N)}
    \Big[ 
     2^{(n+1)(p'+2)}\widetilde{\sfY}_n
    \Big]^{1- \frac{\sfm}{\sfq }}\\
    &\le
    C 2^{(n+1)(p'+p+2)(1+\frac{p}{N}\frac{\bom}{\sfq })}\widetilde{\sfY}_n^{1- \frac{\sfm}{\sfq}} 
    \bigg[
    \frac{\sfk^{p'+2}\widetilde{\sfY}_n}{(1-\tau)^p\theta}
    +
    \sfR
    \widetilde{\sfY}_n^{1-\frac{1}{q}}
    \bigg]^{\frac{\sfm}{\sfq}(1+\frac{p}N)}
   \\
    &\le
    C 2^{(n+1)(p'+p+2)(1+\frac{p}{N}\frac{\sfm}{\sfq})}
    \sfk^{(p'+2)\frac{\sfm}{\sfq }\frac{N+p}{N}}
    \widetilde{\sfY}_n^{1+ \frac{p}{N}\frac{\sfm}{\sfq}(1-\frac{N+p}{qp})}\\
    &\phantom{\le\,\,\, }
    \cdot
   \bigg[
\frac{\widetilde{\sfY}_n^\frac1q}{(1-\tau)^p\theta}
    +\frac{\sfR}{\sfk^{p'+2}}
    \bigg]^{\frac{\sfm}{\sfq }\frac{N+p}{N}}.
\end{align*}
Dividing both sides by $\sfk^{p'+2}$ changes the power of $\sfk$ on the right. Using $\sfq=p\,\frac{N+\sfm}{N}$ and $\sfm=\frac{p'+2}{p'}$, we simplify the power as
\begin{align*}
    &(p'+2)\frac{\sfm}{\sfq }\frac{N+p}{N}
    -(p'+2)
    =
    (p'+2)
    \bigg[
    \frac{\sfm(N+p)}{p( N+\sfm)}-1
    \bigg]\\
    &\qquad\qquad
    =
    \frac{p'+2}{p( N+\sfm)}
    \bigg[
    \sfm(N+p)
    -
    p \big( N+\sfm\big)
    \bigg]\\
    &\qquad\qquad
    =
    \frac{\sfm}{p\frac{N+\sfm}{N}}\big(p'+2-pp'\big)
    =
    \frac{\sfm}{\sfq}(2-p),
\end{align*}
where we used the identity $pp'=p+p'$. Hence, dividing by $\sfk^{p'+2}$
and collecting constants, we arrive at the recursive bound
\begin{align*}
    \widetilde{\sfY}_{n+1}
    &\le
    C \sfb^{n+1}
    \sfk^{\frac{\sfm}{\sfq }(2-p)}
    \widetilde{\sfY}_n^{1+ \frac{p}{N}\frac{\sfm}{\sfq}(1-\frac{N+p}{qp})}
   \bigg[
   \frac{\widetilde{\sfY}_o}{(1-\tau)^{qp}\theta^q}
    +\frac{
    \sfR^q}{\sfk^{q(p'+2)}}
    \bigg]^{\frac{p}{N}\frac{\sfm}{\sfq }\frac{N+p}{qp}},
\end{align*}
where 
\begin{equation*}
    \sfb:=2^{(p'+p+2)(1+\frac{p}{N}\frac{\sfm}{\sfq })}.
\end{equation*}
Passing to mean values in $\widetilde{\sfY}_{n+1}$, $\widetilde{\sfY}_{n}$, and $\widetilde{\sfY}_{o}$
(and collecting the resulting volume factors) yields the recursive inequality
\begin{align*}
    \sfY_{n+1}
    &\le
    C \sfb^{n+1}
    \sfk^{\frac{p'+2}{\sfq p'}(2-p)}|Q_o|^{\frac{p}{N}\frac{\sfm}{\sfq }}
    \sfY_n^{1+ \frac{p}{N}\frac{\sfm}{\sfq }(1-\frac{N+p}{qp})}\\
    &\phantom{\le\,\,\, }\cdot
   \bigg[
   \frac{\sfY_o}{(1-\tau)^{qp}\theta^q}
    +\frac{
    \sfR^q}{|Q_o|\sfk^{q(p'+2)}}
    \bigg]^{\frac{p}{N}\frac{\sfm}{\sfq }\frac{N+p}{qp}},
\end{align*}
where
\begin{equation*}
    \sfY_n:=\frac{1}{\sfk^{p'+2}}\biint_{Q_n}
    \big(|u|-\sfk_n\big)_{+}^{p'+2}\,\dx\dt.
\end{equation*}
Here $\sfR$ abbreviates the contribution arising from the source terms in the previous step.

With a judicious choice of the truncation level 
$\sfk$, the right–hand side can be streamlined. Indeed, the bracket splits into three contributions -- coming from $\sfY_o$, the $\sfF$–term, and the $\sff$–term, namely
\begin{equation*}
    \Big[\dots\Big]
    =
    \frac{\sfY_o}{(1-\tau)^{qp}\theta^q} 
    +
    \frac{\|u\|_{\infty, Q_o}^{qp'}
    \|\sfF\|_{qp, Q_o}^{qp}}{|Q_o|\sfk^{q(p'+2)}}
    +
    \frac{
     |Q_o|^\frac{1}{p}\|u\|_{\infty ,Q_o}^{q(p'+1)}
    \|\sff\|_{qp' ,Q_o}^q }{|Q_o|\sfk^{q(p'+2)}}.
\end{equation*}
We now fix the truncation level $\sfk$
so large that
\begin{equation}\label{k-conditions}
\begin{aligned}
    \frac{\sfY_o}{(1-\tau)^{qp}\theta^q}
    &\le
    \frac{\theta^{\frac{N+p}{p}-q}(1-\tau)^{N+p-qp}}{|Q_o|\sfk^{\frac{N}{p}(2-p)}},\\
        \frac{\|u\|_{\infty, Q_o}^{qp'}
    \|\sfF\|_{qp, Q_o}^{qp}}{|Q_o|\sfk^{q(p'+2)}}
    &\le        
    \frac{\theta^{\frac{N+p}{p}-q}
   (1-\tau)^{N+p-qp}}{|Q_o|\sfk^{\frac{N}{p}(2-p)}},\\
    \frac{ |Q_o|^\frac{1}{p}\|u\|_{\infty ,Q_o}^{q(p'+1)}
    \|\sff\|_{qp' ,Q_o}^q}{|Q_o|\sfk^{q(p'+2)}}&\le\frac{\theta^{\frac{N+p}{p}-q}
   (1-\tau)^{N+p-qp}}{|Q_o|\sfk^{\frac{N}{p}(2-p)}}
    .
\end{aligned}
\end{equation}
Recalling $|Q_o|=|B_1|\rho^N\theta$
and $\boldsymbol{\lambda}_{s}= N(p-2)+ps$, solving the first condition for $\sfk$ yields
\begin{align}\label{est:k-|u|^p'+2}\nonumber
    \sfk^{\frac{\boldsymbol\lambda_{p'+2}}{p}}
    &\ge
    \frac{|Q_o|}{\theta^\frac{N+p}{p}(1-\tau)^{N+p}}\biint_{Q_o}|u|^{p'+2}\,\dx\dt\\
    &=
    \frac{|B_1|}{(1-\tau)^{N+p}}\Big(\frac{\varrho^p}{\theta}\Big)^\frac{N}{p}\biint_{Q_o}|u|^{p'+2}\,\dx\dt.
\end{align}
Similarly, the second condition is ensured provided
\begin{align}\label{est:k-F}\nonumber
    \sfk^\frac{\boldsymbol\lambda_{q(p'+2)}}{p}
    &\ge
    \|u\|_{\infty, Q_o}^{qp'}
     \theta^{q-\frac{N+p}{p}}|Q_o|
    \biint_{Q_o} |\sfF|^{qp}\,\dx\dt\\
    &=
    |B_1| \|u\|_{\infty, Q_o}^{qp'}
    \Big(\frac{\theta}{\varrho^p}\Big)^{q-\frac{N}{p}}
    \biint_{Q_o} |\varrho\, \sfF|^{qp}\,\dx\dt.
\end{align}
The third one is satisfied provided
\begin{align}\label{est:k-f}\nonumber
    \sfk^\frac{\boldsymbol
    \lambda_{q(p'+2)}}{p}
    &\ge
    \theta^{q-\frac{N+p}{p}}
    |Q_o|^\frac1p \|u\|_{\infty ,Q_o}^{q(p'+1)}\bigg[
    \iint_{Q_o}|\sff|^{qp'}\,\dx\dt
     \bigg]^\frac{1}{p'}\\
    &=
    |B_1|\|u\|_{\infty ,Q_o}^{q(p'+1)}
    \Big(\frac{\theta}{\rho^p}\Big)^{q-\frac{N}{p}}
     \bigg[
    \biint_{Q_o}|\rho^p \sff|^{qp'}\,\dx\dt
    \bigg]^\frac{1}{p'}.
\end{align}
The first condition \eqref{est:k-|u|^p'+2} is certainly satisfied if we choose 
$\sfk$ so that
\begin{equation}\label{est:k-|u|^p'+2-*}
    \sfk^{\frac{\boldsymbol\lambda_{p'+2}}{p}}
    \ge
    \frac{|B_1|\|u\|_{\infty, Q_o}^{p'+2-\sfr}}{(1-\tau)^{N+p}}\Big(\frac{\varrho^p}{\theta}\Big)^\frac{N}{p}\biint_{Q_o}|u|^\sfr\,\dx\dt.
\end{equation}
With $\sfk$
fixed so that \eqref{first-choice-k}, \eqref{est:k-|u|^p'+2-*}, \eqref{est:k-F}, and \eqref{est:k-f} all hold, the recursion collapses to
\begin{align*}
    \sfY_{n+1}
    &\le
    C \sfb\Bigg[
    \frac{
    \sfk^{\frac{N}{p}(2-p)}
    |Q_o|}{(1-\tau)^{N+p}\theta^\frac{N+p}{p}}
    \Bigg]^{\frac{p}{N}\frac{\sfm}{\sfq }(1-\frac{N+p}{qp})}
    \sfb^{n}
    \sfY_n^{1+ \frac{p}{N}\frac{\sfm}{\sfq }(1-\frac{N+p}{qp})}.
\end{align*}
With
\begin{equation*}
    \alpha:=
    \frac{p}{N}\frac{\sfm}{\sfq}\Big(1-\frac{N+p}{qp}\Big),
    \quad
    {\sf C}
    :=
     C \sfb\Bigg[
    \frac{
    \sfk^{\frac{N}{p}(2-p)}
    |Q_o|}{(1-\tau)^{N+p}\theta^\frac{N+p}{p}}
    \Bigg]^\alpha,
\end{equation*}
the recursion reads
\begin{equation*}
    \sfY_{n+1}\le {\sf C}\sfb^n\sfY_n^{1+\alpha}.
\end{equation*}
By Lemma \ref{it-lemma}, $\sfY_{n}\to0$ provided 
\begin{align*}
    \sfY_o
    &\le
    {\sf  C}^{-\frac1\alpha}
    \sfb^{-\frac{1}{\alpha^2}}.
\end{align*}
This is equivalent to
\begin{align*}
    \frac1{\sfk^{p'+2}}\biint_{Q_o} |u|^{p'+2}
    \,\dx\dt
    &\le
    \frac{1}{(C\sfb)^{\frac{1}{\alpha}}
    \sfb^{\frac{1}{\alpha^2}}}
    \frac{(1-\tau)^{N+p}\theta^\frac{N+p}{p}}{
    \sfk^{\frac{N}{p}(2-p)}
    |Q_o|}.
\end{align*}
Rearranging and recalling $\boldsymbol\lambda_{p'+2}=N(p-2)+p(p'+2)$
yields the  condition
\begin{align*}
    \sfk^\frac{\boldsymbol\lambda_{p'+2}}{p}
    =
    \sfk^{\frac{N}{p}(p-2)+ p'+2}
    &\ge
    \frac{(C\sfb^{1+\frac{1}{\alpha}})^\frac{1}{\alpha}
    |Q_o|}{(1-\tau)^{N+p}\theta^\frac{N+p}{p}}
    \biint_{Q_o} |u|^{p'+2}
    \,\dx\dt\\
    &=
    \frac{C|B_1|}{(1-\tau)^{N+p}}\Big(\frac{\varrho^p}{\theta}\Big)^\frac{N}{p}
    \biint_{Q_o} |u|^{p'+2}
    \,\dx\dt.
\end{align*}
This is certainly ensured if we choose $\sfk$ so that
\begin{equation}\label{est:k-r}
    \sfk^{\frac{\boldsymbol\lambda_{p'+2}}{p}}
    \ge
    \frac{C|B_1|\|u\|_{\infty,Q_o}^{p'+2-\sfr}}{(1-\tau)^{N+p}}
    \Big(\frac{\varrho^{p}}{\theta}\Big)^{\frac{N}{p}}
    \biint_{Q_o}|u|^{\sfr}\,\dx\dt .
\end{equation}
Indeed, since $|u|^{p'+2}\le \|u\|_{\infty,Q_o}^{p'+2-\sfr}|u|^{\sfr}$ on $Q_o$, the condition \eqref{est:k-r} implies 
\eqref{est:k-|u|^p'+2-*}. In particular, because $C\ge 1$, \eqref{est:k-r} is strictly stronger than \eqref{est:k-|u|^p'+2-*} and thus also guarantees the corresponding requirement used in the iteration.
By Lemma \ref{it-lemma}, the recursion yields $\sfY_n\to \sfY_\infty =0$ 
as $n\to\infty$.  Since $Q_n\downarrow Q_{\tau\rho,\tau\theta}$ and $\sfk_n\uparrow\sfk$, this implies
$$
    \iint_{Q_{\tau\rho,\tau\theta}}(|u|-\sfk)_+^{p'+2}
    \,\dx\dt=0,
$$
and hence $(|u|-\sfk)_+=0$ a.e.~in the limit cylinder $Q_{\tau\rho,\tau\theta}$. In other words,
\begin{equation}\label{est:sup-1}
    \sup_{Q_{\tau\varrho,\tau\theta}}
    |u|
    \le 
    \sfk,
\end{equation}
provided $\sfk$ has been chosen large enough to satisfy \eqref{first-choice-k}, \eqref{est:k-r}, \eqref{est:k-F}, and \eqref{est:k-f}.
With the preparatory bounds in place, we now fix the truncation level by
\begin{align}\label{def:k}\nonumber
    \sfk
    &:=
    \max\Bigg\{
    \|u\|_{\infty, Q_o}^{1-\frac{\boldsymbol\lambda_{\sfr}}{\boldsymbol\lambda_{p'+2}}}
    \bigg[
    \frac{C|B_1|}{(1-\tau)^{N+p}}\Big(\frac{\varrho^p}{\theta}\Big)^\frac{N}{p}
    \biint_{Q_o} |u|^{\sfr} 
    \,\dx\dt
    \bigg]^\frac{p}{\boldsymbol\lambda_{p'+2}},\\\nonumber
    &\qquad\qquad\,\,\,
    \|u\|_{\infty, Q_o}^{1-\frac{\boldsymbol\lambda_{2q}}{\boldsymbol\lambda_{q(p'+2)}}}\bigg[
       |B_1| 
    \Big(\frac{\theta}{\varrho^p}\Big)^{q-\frac{N}{p}}
    \biint_{Q_o} |\varrho\, \sfF|^{qp}\,\dx\dt
    \bigg]^\frac{p}{\boldsymbol\lambda_{q(p'+2)}},
    \\\nonumber
    &\qquad\qquad\,\,\,
   \|u\|_{\infty, Q_o}^{1-\frac{\boldsymbol\lambda_{q}}{\boldsymbol\lambda_{q(p'+2)}}} \bigg[
       |B_1|^{p'} 
    \Big(\frac{\theta}{\varrho^p}\Big)^{p'(q-\frac{N}{p})}
    \biint_{Q_o} |\rho^p \sff|^{qp'}\,\dx\dt
    \bigg]^\frac{p}{p'\boldsymbol\lambda_{q(p'+2)}},\\
     &\qquad\qquad\,\,\,
    \Big(\frac{\theta}{\varrho^p}\Big)^\frac{1}{2-p},\,\mu\rho
    \Bigg\}.
\end{align}
With $\sfk$ chosen as in \eqref{def:k}, the conditions \eqref{first-choice-k}, \eqref{est:k-r}, \eqref{est:k-F}, and  \eqref{est:k-f} are all satisfied, so the iteration applies and, by Lemma \ref{it-lemma}, $\sfY_n\to0$. Consequently, the conclusion \eqref{est:sup-1} follows. This completes the De Giorgi iteration in the case $2<\sfr<p'+2$.

The bound \eqref{est:sup-1} with \eqref{def:k} can be refined via an interpolation. To set up the interpolation, we analyze the first three entries in the maximum more closely. For the first entry, Young’s inequality with exponents
$\big(1-\frac{\boldsymbol \lambda_{\sfr}}{\boldsymbol \lambda_{p'+2}}\big)^{-1}$ and $\frac{\boldsymbol \lambda_{p'+2}}{\boldsymbol \lambda_{\sfr}}$ yields
\begin{align}\label{def:k-first-entry}
    \big[\mbox{first entry}\big]
    &\le
    \tfrac16
    \|u\|_{\infty, Q_o}
    +
    C\bigg[\frac{|B_1|}{(1-\tau)^{N+p}}\Big(\frac{\varrho^p}{\theta}\Big)^\frac{N}{p}
    \biint_{Q_o} |u|^{\sfr}
    \,\dx\dt
    \bigg]^\frac{p}{\boldsymbol\lambda_{\sfr}},
\end{align}
where the constant $C$ in \eqref{def:k-first-entry} denotes $6^{\frac{\boldsymbol\lambda_{p'+2}}{\boldsymbol\lambda_{\sfr}}} C^{\frac p{\boldsymbol\lambda_{\sfr}}}$, with $C$ taken from \eqref{def:k}.
For the second entry (the \(\sfF\)-term), Young’s inequality with exponents
$\big(1-\frac{\boldsymbol \lambda_{2q}}{\boldsymbol \lambda_{q(p'+2)}}\big)^{-1}$ and
$\frac{\boldsymbol \lambda_{q(p'+2)}}{\boldsymbol \lambda_{2q}}$ gives
\begin{align}\label{2nd-entry}
    \big[\mbox{second entry}\big]
    &\le
    \tfrac16
    \|u\|_{\infty, Q_o}
    +
    C\bigg[|B_1|\Big(\frac{\theta}{\varrho^p}\Big)^{q-\frac{N}{p}}
    \biint_{Q_o} |\varrho\, \sfF|^{qp}\,\dx\dt
    \bigg]^\frac{p}{\boldsymbol\lambda_{2q}}.
\end{align}
For the third entry (the \(\sff\)-term),  Young’s inequality with exponents
$\big(1-\frac{\boldsymbol \lambda_{q}}{\boldsymbol \lambda_{q(p'+2)}}\big)^{-1}$
and $\frac{\boldsymbol \lambda_{q(p'+2)}}{\boldsymbol \lambda_{q}}$ implies
\begin{align}\label{3rd-entry}
    \big[\mbox{third entry}\big]
    &\le
    \tfrac16
    \|u\|_{\infty, Q_o}
    +
    C \bigg[
     |B_1|^{p'}
    \Big(\frac{\theta}{\rho^p}\Big)^{p'(q-\frac{N}{p})}
    \biint_{Q_o}|\rho^p \sff|^{qp'}\,\dx\dt
    \bigg]^\frac{p}{p'\boldsymbol\lambda_{q}}.
\end{align}
Therefore, for any $\tau\in [\frac12, 1)$ we obtain
\begin{align}\label{est:sup-interpol}\nonumber
    \sup_{Q_{\tau\varrho,\tau\theta}}|u|
    &\le 
    \tfrac12 \sup_{Q_{\varrho,\theta}}|u|
    +
     \frac{C}{(1-\tau)^{\frac{p}{\boldsymbol\lambda_\sfr}(N+p)}}
    \bigg[
     \Big(\frac{\varrho^p}{\theta}\Big)^\frac{N}{p}
    \biint_{Q_{\varrho,\theta}} |u|^{\sfr}
    \,\dx\dt
    \bigg]^\frac{p}{\boldsymbol\lambda_\sfr}
    \\\nonumber
    &
    \phantom{\le\,}
    +C 
     \bigg[
    \Big(\frac{\theta}{\varrho^p}\Big)^{q-\frac{N}{p}}
    \biint_{Q_{\varrho,\theta}} |\varrho \,\sfF|^{qp}\,\dx\dt
    \bigg]^\frac{p}{\boldsymbol\lambda_{2q}}\\
     &
    \phantom{\le\,}
    +C \bigg[
    \Big(\frac{\theta}{\rho^p}\Big)^{p'(q-\frac{N}{p})}
    \biint_{Q_{\varrho,\theta}}|\rho^p \sff|^{qp'}\,\dx\dt
    \bigg]^\frac{p}{p'\boldsymbol\lambda_{q}}
    +
    \Big(\frac{\theta}{\varrho^p}\Big)^\frac{1}{2-p}
    +
    \mu\rho.
\end{align}
Fix $\tau\in [\frac12, 1)$ and let $\tau\le \tau_1 <\tau_2\le 1$. Applying \eqref{est:sup-interpol} with $(\rho,\theta)$ replaced by $(\tau_2\varrho,\tau_2\theta)$ and $\tau$ replaced by $\frac{\tau_1}{\tau_2}$, we obtain
\begin{align*}
    \sfM (\tau_1)
    &\le
    \tfrac12 
    \sfM (\tau_2)
    +
    \frac{C}{(\tau_2-\tau_1)^{\frac{p}{\boldsymbol\lambda_\sfr}(N+p)}}
    \bigg[
     \Big(\frac{\varrho^p}{\theta}\Big)^\frac{N}{p}
    \biint_{Q_{\varrho,\theta}} |u|^{\sfr}
    \,\dx\dt
    \bigg]^\frac{p}{\boldsymbol\lambda_\sfr}\\
     &
    \phantom{\le\,}
    +C 
     \bigg[
    \Big(\frac{\theta}{\varrho^p}\Big)^{q-\frac{N}{p}}
    \biint_{Q_{\varrho,\theta}} |\varrho \,\sfF|^{qp}\,\dx\dt
    \bigg]^\frac{p}{\boldsymbol\lambda_{2q}}\\
    &\phantom{\le\,}+
    C \bigg[
    \Big(\frac{\theta}{\rho^p}\Big)^{p'(q-\frac{N}{p})}
    \biint_{Q_o}|\rho^p \sff|^{qp'}\,\dx\dt
    \bigg]^\frac{p}{p'\boldsymbol\lambda_{q}}
    +
    C\Big(\frac{\theta}{\varrho^p}\Big)^\frac{1}{2-p}
    +
    C\mu\rho
   ,
\end{align*}
where
\begin{equation*}
    [\tau, 1]\ni s\mapsto \sfM (s)
    :=
    \sup_{Q_{s\varrho,s\theta}}|u|.
\end{equation*}
Invoking Lemma \ref{lem:tech-classical}, we obtain for every $\tau\in [\frac12, 1)$,
\begin{align*}
    \sup_{Q_{\tau\varrho,\tau\theta}}
    |u|
    &\le
    C
    \bigg[
    \frac{1}{(1-\tau)^{N+p}}
     \Big(\frac{\varrho^p}{\theta}\Big)^\frac{N}{p}
    \biint_{Q_{\varrho,\theta}} |u|^{\sfr}
    \,\dx\dt
    \bigg]^\frac{p}{\boldsymbol\lambda_\sfr}\\
     &
    \phantom{\le\,}
    +C 
     \bigg[
    \Big(\frac{\theta}{\varrho^p}\Big)^{q-\frac{N}{p}}
    \biint_{Q_{\varrho,\theta}} |\varrho \,\sfF|^{qp}\,\dx\dt
    \bigg]^\frac{p}{\boldsymbol\lambda_{2q}}
    \\
    &\phantom{\le\,}+
    C \bigg[
    \Big(\frac{\theta}{\rho^p}\Big)^{p'(q-\frac{N}{p})}
    \biint_{Q_o}|\rho^p \sff|^{qp'}\,\dx\dt
    \bigg]^\frac{p}{p'\boldsymbol\lambda_{q}}
    +
    C\Big(\frac{\theta}{\varrho^p}\Big)^\frac{1}{2-p}
    +
    C\mu\rho,
\end{align*}
where $\boldsymbol\lambda_{s}=N(p-2)+ps$, and $C=C(N,p,C_o,C_1,q,\sfr)$.
This proves the claimed quantitative $L^\infty$-estimate in the  case $2<\sfr<p'+2$

{\bf The case} $\boldsymbol{p'+2\le \sfr}$. 
The guiding idea is again to control the individual terms on the right-hand side of the energy inequality by means of the 
$L^\sfr$-integral of the truncated function 
$(|u|-\sfk_n)_+$. More precisely, using \eqref{energy-cascade-1} with $(a,b)=(p'+p,\sfr)$, we get 
\begin{equation}\label{est:|u|^p'+p*}
    \sfI_1
    \le
    \frac{2^{(n+1)r}}{\sfk^{\sfr-(p'+p)}}
     \iint_{Q_n}
    \big(|u|-\sfk_n\big)_{+}^{\sfr}\,\dx\dt
    =
    2^{(n+1)\sfr}\sfk^{p'+p}\widetilde{\sfY}_n,
\end{equation}
where we set
\begin{equation*}
    \widetilde{\sfY}_n:= 
    \frac{1}{\sfk^{\sfr}}\iint_{Q_n}
    \big(|u|-\sfk_n\big)_{+}^{\sfr}\,\dx\dt.
\end{equation*}
Using \eqref{energy-cascade-1} with $(a,b)=(p'+2,\sfr)$ we get 
\begin{equation}\label{est:|u|^p'+2*}
    \sfI_2
    \le
    \frac{2^{(n+1)\sfr}}{\sfk^{\sfr-(p'+2)}}
     \iint_{Q_n}
    \big(|u|-\sfk_n\big)_{+}^{\sfr}\,\dx\dt
    =
    2^{(n+1)\sfr}\sfk^{p'+2} \widetilde{\sfY}_n.
\end{equation}
In addition, applying \eqref{energy-cascade-2} with $a=\sfr$ we have
\begin{equation}\label{est:meas-r}
    \big|Q_n\cap \{|u|>\sfk_{n+1}\}\big|
    \le
    \frac{2^{(n+1)\sfr}}{\sfk^{\sfr}}
    \iint_{Q_n}
    \big(|u|-\sfk_n\big)_+^{\sfr}\,\dx\dt
    =2^{(n+1)\sfr}\widetilde{\sfY}_n.
\end{equation}
For the $\mu$-term $\sfI_3$ we estimate it like \eqref{est:mu-} using Hölder’s inequality with exponents $p$ and $p'$, together with \eqref{est:|u|^p'+p*} and \eqref{est:meas-r} and obtain 
\begin{align*}
     \sfI_3
     &\le
    \mu^p 2^{(n+1)\sfr}\sfk^{p'}\widetilde{\sfY}_n.
\end{align*}
In the same vein we estimate $\sfI_4$ (the 
$\sfF$-term) like \eqref{est:F}, using the local boundedness of $u$, H\"older's inequality  with exponents $q$ and $\frac{q}{q-1}$ and the measure bound \eqref{est:meas-r}, and obtain
\begin{align*}
    \sfI_4
    \le
    \|u\|_{\infty, Q_o}^{p'}
    \|\sfF\|_{qp, Q_o}^{p}
    2^{(n+1)\sfr(1-\frac{1}{q})}
    \widetilde{\sfY}_n^{1-\frac1q}.
\end{align*}
We estimate the $\sff$-Term $\sfI_5$ like \eqref{est:f}, using the local boundedness of $u$, Hölder’s inequality with exponents $qp'$ and $(1-\frac1{qp'})^{-1}$,
and the measure estimate \eqref{est:meas-r},
to obtain
\begin{align*}
     \sfI_5
    \le
     |Q_o|^\frac{1}{qp}\|u\|_{\infty ,Q_o}^{p'+1}
    \|\sff\|_{qp' ,Q_o}
    2^{(n+1)\sfr(1-\frac{1}{q})}
    \widetilde{\sfY}_n^{1-\frac1q}.
\end{align*}
Using \eqref{est:|u|^p'+p*}, \eqref{est:|u|^p'+2*}, \eqref{est:meas-r}, together with the bounds for the $\mu$, $\sfF$, and $\sff$-terms,
we infer
\begin{align*}
    &\sup_{t\in (-\theta_n,0]}
    \int_{B_n\times\{t\}}
    \big(|u|-\sfk_{n+1}\big)^{p'+2}\zeta_n^p\,\dx
    +
    \iint_{Q_{n}}
     \big|\nabla \big[ \big(|u|-\sfk_{n+1}\big)_+^{p'}\zeta_n\big]\big|^{p}\, \dx\dt\\
    &\quad\le
    C2^{(n+1)(\sfr+p)}\sfk^{p'+2}
    \bigg[
    \frac{1}{(1-\tau)^p\varrho^p}\frac{1}{\sfk^{2-p}}
    +
    \frac{1}{(1-\tau)\theta}+\frac{\mu^p}{\sf\sfk^2}
    \bigg]
    \widetilde{\sfY}_n
    \\
    &\quad\phantom{\le\,}+
    C 2^{(n+1)\sfr(1-\frac{1}{q})}
    \Big[ 
    \|u\|_{\infty, Q_o}^{p'}
    \|\sfF\|_{qp, Q_o}^{p}
    +
    |Q_o|^\frac{1}{qp}\|u\|_{\infty ,Q_o}^{p'+1}
    \|\sff\|_{qp' ,Q_o}
    \Big]
    \widetilde{\sfY}_n^{1-\frac{1}{q}}\\
    &\quad\le
    \frac{C2^{(n+1)(\sfr+p)}\sfk^{p'+2}}{(1-\tau)^p\theta}
    \bigg[
    \frac{\theta}{\varrho^p\sfk^{2-p}}
    \Big(
    1+\frac{\mu^p\rho^p}{\sfk^p}
    \Big)
    +
   1
    \bigg]
    \widetilde{\sfY}_n
    +
    C
    2^{(n+1)\sfr(1-\frac{1}{q})}\sfR
    \widetilde{\sfY}_n
^{1-\frac{1}{q}},
\end{align*}
where $\sfR$ was defined in~\eqref{def:R}.
Exactly as in the case $2<\sfr<p'+2$, we fix the truncation level $\sfk$
so that \eqref{first-choice-k} holds. With this choice the right-hand side of the energy inequality simplifies, and we obtain
\begin{align}\label{est:en-prel-r}\nonumber
    \sup_{t\in (-\theta_n,0]}\int_{B_n\times\{t\}}&
    \big(|u|-\sfk_{n+1}\big)^{p'+2}\zeta_n^p\,\dx
    +
    \iint_{Q_{n}}
     \big|\nabla \big[ \big(|u|-\sfk_{n+1}\big)_+^{p'}\zeta_n\big]\big|^{p}\, \dx\dt\\
    &\le
    C2^{(n+1)(\sfr+p)}
    \bigg[
    \frac{\sfk^{p'+2}\widetilde{\sfY}_n}{(1-\tau)^p\theta}
    +
    \sfR
    \widetilde{\sfY}_n^{1-\frac{1}{q}}
    \bigg].
\end{align}
This is the same energy estimate as \eqref{est:en-prel} except an immaterial change of exponent of $2$.
Next, we apply the  parabolic Sobolev embedding (Proposition 3.1 in Chapter I of \cite{DiBe}) with 
\[
    (v,q,p,m)=\Big(\big(|u|-k_{n+1}\big)_{+}^{p'} \zeta_n^{p'},  p,  \sfm, \sfq\Big),
\]
where $\sfm$ and $\sfq$ are given in \eqref{def:bq}.
The embedding yields
\begin{align}\label{est:q-int*}
    \iint_{Q_n}&\big[\big(|u|-\sfk_{n+1}\big)_+^{p'} \zeta_n^{p'}\big]^{\sfq}\,\dx\dt\\\nonumber
    &\le
    C_{\rm Sob}^{\sfq}
    \bigg[\iint_{Q_n}\big|\nabla\big[
    \big(|u|-\sfk_{n+1}\big)_+^{p'}\zeta_n^{p'}
    \big]\big|^p\,\dx\dt\bigg]\\\nonumber
    &\qquad\,\,\,\cdot 
    \bigg[\sup _{t\in (-\theta_n,0]}  
    \int_{B_n\times\{t\}}
    \big[\big(|u|-\sfk_{n+1}\big)_+^{p'}\zeta^{p'}_n\big]^{\sfm  }\,\dx\bigg]^{\frac{p}{N}} \\\nonumber
    &\le
    C\Big[\mbox{right-hand side of \eqref{est:en-prel-r}}\Big]^{1+\frac{p}{N}}
    .
\end{align}
Noting that $\sfq<p'+2\le \sfr$, we first raise the exponent from $\sfr$ to $p'\sfr$
by Hölder’s inequality, then lower it to 
$p'\sfq$
at the cost of a factor $\|u\|_{\infty ,Q_o}^{\sfr-\sfq}$, and finally use the measure estimate \eqref{est:meas-r} together with \eqref{est:q-int*}. This gives
\begin{align*}
    &\iint_{Q_{n+1}}
    \big[\big(|u|-\sfk_{n+1}\big)_{+}^{\sfr}\,\dx\dt 
    \le 
    \iint_{Q_n}
    \big[\big(|u|-\sfk_{n+1}\big)_{+} \zeta_n\big]^\sfr\,\dx\dt \\
    &\quad\le
    \bigg[\iint_{Q_n}
    \big[\big(|u|-\sfk_{n+1}\big)_+\zeta_n\big]^{p'\sfr}
    \,\dx\dt\bigg]^{\frac{1}{p'}}
    \big|Q_n\cap \{|u|>\sfk_{n+1}\} \big|^{1- \frac{1}{p'}}\\
    &\quad\le
    \|u\|_{\infty, Q_o}^{\sfr-\sfq}
    \bigg[\iint_{Q_n}
    \big[\big(|u|-\sfk_{n+1}\big)_+^{p'}\zeta_n^{p'}
    \big]^{\sfq}
    \,\dx\dt\bigg]^{\frac{1}{p'}}
    \big|Q_n\cap \{|u|>\sfk_{n+1}\} \big|^{ \frac{1}{p}}\\
    &\quad\le
    C 
    \|u\|_{\infty, Q_o}^{\sfr-\sfq}
    \Big[\mbox{right-hand side of \eqref{est:en-prel-r}}\Big]^{\frac{1}{p'}(1+\frac{p}N)}
    \Big[ 
     2^{(n+1)\sfr}\widetilde{\sfY}_n
    \Big]^{\frac{1}{p}}\\
    &\quad\le
    C \|u\|_{\infty, Q_o}^{\sfr-\sfq}
    2^{(n+1)(\sfr+p)(1+\frac{p}{N})}\widetilde{\sfY}_n^{ \frac{1}{p}} 
    \bigg[
    \frac{\sfk^{p'+2}\widetilde{\sfY}_n}{(1-\tau)^p\theta}
    +
    \sfR
    \widetilde{\sfY}_n^{1-\frac{1}{q}}
    \bigg]^{\frac{1}{p'}(1+\frac{p}N)}
   \\
    &\quad\le
    C \|u\|_{\infty, Q_o}^{\sfr-\sfq}2^{(n+1)(\sfr+p)(1+\frac{p}{N})}
    \sfk^{\sfm\frac{N+p}{N}}
    \widetilde{\sfY}_n^{1+ \frac{p}{N}\frac{1}{p'}(1-\frac{N+p}{qp})}
   \bigg[
   \frac{\widetilde{\sfY}_n^\frac1q}{(1-\tau)^p\theta}
    +\frac{
    \sfR}{\sfk^{p'+2}}
    \bigg]^{\frac{1}{p'}\frac{N+p}{N}}\\
     &\quad\le
    C \|u\|_{\infty, Q_o}^{\sfr-\sfq}\sfb^{n+1}
    \sfk^{\sfm\frac{N+p}{N}}
    \widetilde{\sfY}_n^{1+ \frac{p}{N}\frac{1}{p'}(1-\frac{N+p}{qp})}
   \bigg[
   \frac{\widetilde{\sfY}_o}{(1-\tau)^{qp}\theta^q}
    +\frac{
    \sfR^q}{\sfk^{q(p'+2)}}
    \bigg]^{\frac{p}{N}\frac{1}{p'}\frac{N+p}{qp}},
\end{align*}
where
\begin{equation*}
    \sfb:= 2^{(\sfr+p)(1+\frac{p}{N})}.
\end{equation*}
Dividing both sides by $\sfk^\sfr$ gives
\begin{align*}
 \widetilde{\sfY}_{n+1}
     &\le
    C \|u\|_{\infty, Q_o}^{\sfr-\sfq}\sfb^{n+1}
    \sfk^{\sfm\frac{N+p}{N}-\sfr}
    \widetilde{\sfY}_n^{1+ \frac{p}{N}\frac{1}{p'}(1-\frac{N+p}{qp})}\\
    &\phantom{\le\,\,\, }\cdot
   \bigg[
   \frac{\widetilde{\sfY}_o}{(1-\tau)^{qp}\theta^q}
    +\frac{\sfR^q
    }{\sfk^{q(p'+2)}}
    \bigg]^{\frac{p}{N}\frac{1}{p'}\frac{N+p}{qp}}.
\end{align*}
Passing to mean values in 
$\widetilde{\sfY}_n$, $\widetilde{\sfY}_{n+1}$ and $\widetilde{\sfY}_o$, we obtain the recursive inequality
\begin{align*}
     \sfY_{n+1}
     &\le
    C \|u\|_{\infty, Q_o}^{\sfr-\sfq}\sfb^{n+1}
    |Q_o|^{\frac{p}{N}\frac{1}{p'}}\sfk^{\sfm\frac{N+p}{N}-\sfr}
    \sfY_n^{1+ \frac{p}{N}\frac{1}{p'}(1-\frac{N+p}{qp})}\\
    &\phantom{\le\,\,\, }\cdot
   \bigg[
   \frac{\sfY_o}{(1-\tau)^{qp}\theta^q}
    +\frac{
    \sfR^q}{|Q_o|\sfk^{q(p'+2)}}
    \bigg]^{\frac{p}{N}\frac{1}{p'}\frac{N+p}{qp}},
\end{align*}
where
\begin{equation*}
    \sfY_n
    :=
    \frac{1}{\sfk^\sfr}\biint_{Q_n}(|u|-\sfk_n)_+^\sfr\,\dx\dt.
\end{equation*}
As in the case $\sfr<p'+2$, a judicious choice of the truncation level $\sfk$
allows us to streamline the right–hand side of the recursive inequality. The bracket splits into three contributions -- coming from $\sfY_o$, the $\sfF$–term, and the $\sff$–term:
\begin{equation*}
    \Big[\dots\Big]
    =
    \frac{\sfY_o}{(1-\tau)^{qp}\theta^q} 
    +
    \frac{\|u\|_{\infty, Q_o}^{qp'}
    \|\sfF\|_{qp, Q_o}^{qp}}{|Q_o|\sfk^{q(p'+2)}}
    +
    \frac{
     |Q_o|^\frac{1}{p}\|u\|_{\infty ,Q_o}^{q(p'+1)}
    \|\sff\|_{qp' ,Q_o}^q}{|Q_o|\sfk^{q(p'+2)}}
\end{equation*}
We now choose $\sfk$ sufficiently large to satisfy the three conditions in \eqref{k-conditions}.
Consequently, the bracket is controlled by 
\begin{equation*}
    \frac{\theta^{\frac{N+p}{p}-q}
   (1-\tau)^{N+p-qp}}{|Q_o|\sfk^{\frac{N}{p}(2-p)}}.
\end{equation*}
Using this control, the recursion simplifies to
\begin{align*}
     \sfY_{n+1}
     &\le
    C \|u\|_{\infty, Q_o}^{\sfr-\sfq}
    \bigg[
    \frac{|Q_o|}{(1-\tau)^{N+p}\theta^\frac{N+p}{p}}
    \bigg]^{\frac{p}{N}\frac{1}{p'}(1-\frac{N+p}{qp})}\\
     &\phantom{\le\,\,\, }\cdot
    \sfk^{\sfm\frac{N+p}{N}+\frac{N}{p}(p-2)\frac{p}{N}\frac{1}{p'}\frac{N+p}{qp}-\sfr}
    \sfb^{n+1}\sfY_n^{1+ \frac{p}{N}\frac{1}{p'}(1-\frac{N+p}{qp})}.
\end{align*}
Here, the exponent of 
$\sfk$ can be simplified. In fact, by \eqref{def:bq},
\begin{equation*}
    \mbox{exponent of $\sfk$}
    =
    \sfq -\sfr +
    \frac{N}{p}(2-p)\frac{p}{N}\frac{1}{p'}\Big(1-\frac{N+p}{qp}\Big).
\end{equation*}
Consequently,
\begin{align*}
     \sfY_{n+1}
     &\le
    C \sfb\|u\|_{\infty, Q_o}^{\sfr-\sfq}
    \bigg[
    \frac{|Q_o|\sfk^{\frac{N}{p}(2-p)}}{(1-\tau)^{N+p}\theta^\frac{N+p}{p}}
    \bigg]^{\frac{p}{N}\frac{1}{p'}(1-\frac{N+p}{qp})}
    \sfk^{\sfq -\sfr}\sfb^{n}\sfY_n^{1+ \frac{p}{N}\frac{1}{p'}(1-\frac{N+p}{qp})}.
\end{align*}
Before proceeding with the iteration, we revisit the constraints \eqref{k-conditions} imposed on $\sfk$. Solving them for 
$\sfk$, the first condition in \eqref{k-conditions} yields an analogue of \eqref{est:k-|u|^p'+2}, that is 
\begin{equation*}
    \sfk^\frac{\boldsymbol{\lambda}_\sfr}{p}
    \ge \frac{|B_1|}{(1-\tau)^{N+p}}\Big(
    \frac{\rho^p}{\theta} \Big)^\frac{N}{p}
    \biint_{Q_o}|u|^\sfr\,\dx\dt,
\end{equation*}
whereas the second and third conditions give exactly \eqref{est:k-F} and \eqref{est:k-f} respectively.
Set
\begin{equation*}
    \alpha:=
    \frac{p}{N}\frac{1}{p'}\Big(1-\frac{N+p}{qp}\Big),
    \quad
    {\sf C}
    :=
     C \sfb\|u\|_{\infty, Q_o}^{\sfr-\sfq}\sfk^{\sfq -\sfr}
    \bigg[
    \frac{|Q_o|\sfk^{\frac{N}{p}(2-p)}}{(1-\tau)^{N+p}\theta^\frac{N+p}{p}}
    \bigg]^{\alpha}
    .
\end{equation*}
Notice that $\alpha>0$ since $q>\frac{N+p}{p}$.
To apply Lemma \ref{it-lemma} with $(\sfY_n, \sfb, {\sf C},\alpha)$ we need
\begin{align*}
    \sfY_o
    &\le
    {\sf C}^{-\frac1\alpha}
    \sfb^{-\frac{1}{\alpha^2}}.
\end{align*}
Equivalently,
\begin{align*}
    \frac1{\sfk^{\sfr}}\biint_{Q_o} |u|^{\sfr}
    \,\dx\dt
    &\le
     \frac{\|u\|_{\infty, Q_o}^\frac{\bq-\sfr}{\alpha}}{(C\sfb)^{\frac{1}{\alpha}}
    \sfb^{\frac{1}{\alpha^2}}}
    \frac{(1-\tau)^{N+p}\theta^\frac{N+p}{p}}{
    |Q_o|}\sfk^{\frac{N}{p}(p-2) + \frac{\sfr-\sfq}{\alpha}}.
\end{align*}
Since $\|u\|_{\infty,Q_o}>0$ (otherwise the claim is trivial), we may solve the preceding condition for $\sfk$. This gives another condition on $\sfk$, that is
\begin{equation*}
    \sfk^{\frac{\boldsymbol\lambda_\sfr}{p}+\frac{\sfr-\sfq}{\alpha}}
    \ge
    \frac{\big(C\sfb^{1+\frac{1}{\alpha}}\big)^{\frac{1}{\alpha}}
    |Q_o|\|u\|_{\infty,Q_o}^{\frac{\sfr-\sfq}{\alpha}}}
    {(1-\tau)^{N+p}\,\theta^{\frac{N+p}{p}}}
    \biint_{Q_o}|u|^{\sfr}\,\dx\dt,
\end{equation*}
or equivalently,
\begin{equation*}
    \sfk^{\frac{\boldsymbol\lambda_\sfr}{p}+\frac{\sfr-\sfq}{\alpha}}
    \ge
    \frac{C|B_1|\|u\|_{\infty,Q_o}^{\frac{\sfr-\sfq}{\alpha}}}{(1-\tau)^{N+p}}
    \Big(\frac{\varrho^{p}}{\theta}\Big)^{\frac{N}{p}}
    \biint_{Q_o}|u|^{\sfr}\,\dx\dt.
\end{equation*}
By Lemma \ref{it-lemma} we have $\sfY_n\to 0$ as $n\to\infty$. In particular,
$$
    \sup_{Q_{\tau\varrho,\tau\theta}} |u|\le\sfk,
$$
provided $\sfk$ is chosen large as required above. Hence we fix $\sfk$ by
\begin{align*}
    \sfk
    &=\max
    \Bigg\{
    \bigg[
    \frac{|B_1|}{(1-\tau)^{N+p}}
     \Big(\frac{\varrho^p}{\theta}\Big)^\frac{N}{p}
    \biint_{Q_{o}} |u|^{\sfr}
    \,\dx\dt\bigg]^{\frac{p}{\boldsymbol\lambda_\sfr}},
    \\   
    &\qquad\qquad\,\,\, 
    \|u\|_{\infty, Q_o}^{1-\frac{\frac{\boldsymbol \lambda_\sfr}{p}}{\frac{\boldsymbol \lambda_\sfr}{p}+\frac{\sfr-\sfq}{\alpha}}}
    \bigg[
    \frac{C|B_1|}{(1-\tau)^{N+p}}\Big(\frac{\varrho^p}{\theta}\Big)^\frac{N}{p}
    \biint_{Q_o} |u|^{\sfr}
    \,\dx\dt
    \bigg]^{\frac{1}{\frac{\boldsymbol \lambda_\sfr}{p}+\frac{\sfr-\sfq}{\alpha}}},\\
    &  \qquad\qquad\,\,\, 
    \|u\|_{\infty, Q_o}^{1- \frac{\boldsymbol\lambda_{2q}}{\boldsymbol\lambda_{q(p'+2)}}}
      \bigg[
       |B_1|
    \Big(\frac{\theta}{\varrho^p}\Big)^{q-\frac{N}{p}}
    \biint_{Q_o} |\varrho\, \sfF|^{qp}\,\dx\dt
    \bigg]^\frac{p}{\boldsymbol\lambda_{q(p'+2)}}
    , \\
    &  \qquad\qquad\,\,\,
    \|u\|_{\infty, Q_o}^{1- \frac{\boldsymbol\lambda_{q}}{\boldsymbol\lambda_{q(p'+2)}}}
      \bigg[
       |B_1|^{p'}
    \Big(\frac{\theta}{\varrho^p}\Big)^{p'(q-\frac{N}{p})}
    \biint_{Q_o} |\varrho^p\sff|^{qp'}\,\dx\dt
    \bigg]^\frac{p}{p'\boldsymbol\lambda_{q(p'+2)}}
    , \\
    & \qquad\qquad\,\,\,
    \Big(\frac{\theta}{\varrho^p}\Big)^\frac{1}{2-p}
    ,\mu\rho\Bigg\}.
\end{align*}

The bound can be refined via interpolation and the argument is essentially identical to the case $2<\sfr<p'+2$. 
Indeed, apply Young’s inequality with conjugate exponents
$$
    s=
    \frac{\frac{\boldsymbol\lambda_\sfr}{p}+\frac{\sfr-\sfq}{\alpha}}{\frac{\sfr-\sfq}{\alpha}},
    \qquad
    s'=\frac{\frac{\boldsymbol\lambda_\sfr}{p}+\frac{\sfr-\sfq}{\alpha}}{\frac{\boldsymbol\lambda_\sfr}{p}},
$$
to the second entry. This yields
$$
    \big[\text{second entry}\big]
    \le
    \tfrac16\|u\|_{\infty,Q_o}
    +
    C\left[
    \frac{|B_1|}{(1-\tau)^{N+p}}
    \Big(\frac{\varrho^p}{\theta}\Big)^{\frac{N}{p}}
    \biint_{Q_o}|u|^{\sfr}\,\dx\dt
    \right]^{\frac{p}{\boldsymbol\lambda_\sfr}},
$$
The estimates of the third and forth entries are identical to \eqref{2nd-entry} and \eqref{3rd-entry} respectively.
Hence, for any $\tau\in [\frac12, 1)$ there holds
\begin{align*}
    \sup_{Q_{\tau\varrho,
    \tau\theta}} |u|
    &\le
    \tfrac12 \sup_{Q_{\varrho,
    \theta}} |u|
    +
    \frac{C}{(1-\tau)^{\frac{p}{\boldsymbol\lambda_r}(N+p)}}
    \left[
    \Big(\frac{\varrho^p}{\theta}\Big)^{\frac{N}{p}}
    \biint_{Q_o}|u|^{\sfr}\,\dx\dt
    \right]^{\frac{p}{\boldsymbol\lambda_\sfr}}\\
    &\phantom{\le\,}
    +C\Bigg[
    \Big(\frac{\theta}{\varrho^p}\Big)^{q-\frac{N}{p}}
    \biint_{Q_o} |\varrho\, \sfF|^{qp}\,\dx\dt
    \Bigg]^\frac{p}{\boldsymbol\lambda_{2q}}\\
    &\phantom{\le\,}
    +C\left[
    \Big(\frac{\theta}{\varrho^p}\Big)^{p'(q-\frac{N}{p})}
    \biint_{Q_o}|\varrho^{p}\sff|^{q p'}\,\dx\dt
    \right]^{\frac{p}{p'\boldsymbol\lambda_q}}
     +C\Big(\frac{\theta}{\varrho^p}\Big)^\frac{1}{2-p}
    +C\mu\rho,
\end{align*}
where $\boldsymbol\lambda_s=N(p-2)+ps$ and $C=C(N,p,C_o,C_1,q,\sfr)$. The remainder of the argument is identical to the case $2<\sfr<p'+2$ (up to harmless changes of constants). In particular, repeating the interpolation yields the same quantitative local $L^\infty$-bound, so the proof in the regime $\sfr\ge p'+2$ is complete.

\begin{remark}[Quantative $L^\infty$-estimate: the case $\frac{2N}{N+2}<p\le 2$]\upshape
In our notion of weak solution we assume
$$
u\in C^0\big([0,T];L^2(E,\mathbb{R}^k)\big)\cap L^p\big(0,T;W^{1,p}(E,\mathbb{R}^k)\big).
$$
By the parabolic Sobolev embedding this yields
$$
u\in L^\sfr_{\mathrm{loc}}(E_T,\mathbb{R}^k)\quad\text{with}\quad \sfr=\frac{p(N+2)}{N}>2.
$$
Moreover,
$$
\boldsymbol{\lambda}_2:=N(p-2)+2p=(N+2)p-2N>0.
$$
Hence the structural condition $\boldsymbol\lambda_\sfr>0$ is met already with $\sfr=2$, and the quantitative scheme developed for the sub-critical range $1<p\le \tfrac{2N}{N+2}$ (reverse Hölder + interpolation/iteration) applies verbatim. In particular, for any 
cylinder $Q_o:=Q_{\rho,\theta}\Subset E_T$, and for any $\tau\in[\frac12,1)$ we obtain
\begin{align*}
    \sup_{Q_{\tau\varrho,
    \tau\theta}} |u|
    &\le
    C
    \left[
    \frac{1}{(1-\tau)^{N+p}}
    \Big(\frac{\varrho^p}{\theta}\Big)^{\frac{N}{p}}
    \biint_{Q_o}|u|^{2}\,\dx\dt
    \right]^{\frac{p}{(N+2)p-2N}}\\
    &\phantom{\le\,}
    +C\left[
    \Big(\frac{\theta}{\varrho^p}\Big)^{q-\frac{N}{p}}
    \biint_{Q_o} |\varrho\, \sfF|^{qp}\,\dx\dt
    \right]^\frac{p}{\boldsymbol\lambda_{2q}}\\
    &\phantom{\le\,}
    +C\left[
    \Big(\frac{\theta}{\varrho^p}\Big)^{p'(q-\frac{N}{p})}
    \biint_{Q_o}|\varrho^{p}\sff|^{q p'}\,\dx\dt
    \right]^{\frac{p}{p'\boldsymbol\lambda_q}}
     +C\Big(\frac{\theta}{\varrho^p}\Big)^\frac{1}{2-p}
    +C\mu\rho,
\end{align*}
where $q>\tfrac{N+p}{p}$ as in the assumptions. 
The dependence of the constant is of the same form as in the sub-critical case, namely $C=C(N,p,C_o,C_1,q)$.
\end{remark}

\section{Reverse Hölder-type inequality}\label{S:4}
From here on, i.e. both in \S~\ref{S:4} and in \S~\ref{S:5}, we assume $\mu=0$ in the structure conditions \eqref{growth-a*}. 
For $\lambda>0$ and $z_o=(x_o,t_o)$, recall that we set
\begin{equation}\label{def-Q}
	Q_\rho^{(\lambda)}(z_o)
	:=
	B_\rho^{(\lambda)}(x_o)\times\Lambda_\rho(t_o),
\end{equation}
where
\begin{equation*}
    B_\rho^{(\lambda)}(x_o)
    :=
    \big\{x\in\mathbb{R}^N\colon\ |x-x_o|<\lambda^{\frac{p-2}{2}}\rho\big\},
    \quad
    \Lambda_\rho(t_o)
    :=
    (t_o-\rho^2,t_o+\rho^2).
\end{equation*}
The parameter $\lm$ encodes the dimension of solutions and will restore the homogeneity of forthcoming estimates.
\subsection{Energy inequality}\label{sec:energy}

The proof of the following energy inequality is standard and can for instance be inferred from \cite[Lemma~3.2]{Kinnunen-Lewis:1}.

\begin{lemma}\label{lm:energy-without-zeta}
Let $p>1$. Then, there exists a constant $C=C(N, p,C_o,C_1)$ such that whenever $u$ is  a weak solution to \eqref{eq-par-gen} in $E_T$ in the sense of Definition \ref{def:weak_solution} with \eqref{growth-a*}, $Q_\rho^{(\lambda)}(z_o)\subseteq E_T$, $r\in[\tfrac12\rho, \rho)$,  and $a\in\R^k$,
then the following energy estimate holds
\begin{align}\label{est:energy*}\nonumber
    \sup_{t\in \Lambda_r(t_o)}&
    \mint_{B^{(\lm)}_{r}(x_o)} 
     \frac{| u(t)-a|^2}{\rho^2}
    \,\dx
    +
    \biint_{Q_r^{(\lambda)}(z_o)}|Du|^p\,\dx\dt\\
    &
    \le
    C
    \biint_{Q_\rho^{(\lambda)}(z_o)}
    \bigg[ \frac{|u-a|^p}{\lm^{\frac{p-2}{2}p}(\rho-r)^p}
    + 
    \frac{|u-a|^2}{\rho^2-r^2}
    +
    |\sfF|^p+|\lm^{\frac{p-2}{2}}\rho\,\sff|^{p'}\bigg]\,\dx\dt.
\end{align}
\end{lemma}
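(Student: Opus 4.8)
The plan is to derive the energy inequality from the weak formulation of \eqref{eq-par-gen} by testing with $\varphi=(u-a)\zeta^p$, where $\zeta$ is a standard cut-off between the inner cylinder $Q_r^{(\lambda)}(z_o)$ and the outer cylinder $Q_\rho^{(\lambda)}(z_o)$. Since $\mu=0$ here, the structure conditions \eqref{growth-a*} simplify: coercivity reads $\sfA\cdot\xi\ge C_o|\xi|^p$ and growth $|\sfA|\le C_1|\xi|^{p-1}$. Note that because $\sfA$ need not be independent of $u$, we cannot translate $u\mapsto u-a$ in the equation; instead the constant vector $a\in\R^k$ enters only through the test function, which is legitimate since $a$ is spatially constant and hence $D\varphi=D u\,\zeta^p+p(u-a)\zeta^{p-1}D\zeta$. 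The directional monotonicity \eqref{growth-a*}$_3$ is not needed for this particular estimate.

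First I would justify the choice of test function by the usual Steklov-averaging (or time-mollification) argument, using $u\in C^0([0,T];L^2)$, which makes the parabolic term rigorous. The time term produces, after integration by parts in $t$ and the identity $u\cdot\partial_t u=\tfrac12\partial_t|u-a|^2$ (valid since $\partial_t a=0$), the expression $\tfrac12\sup_{t}\int_{B_r^{(\lambda)}}|u-a|^2\zeta^p\,\dx$ on the left plus a term $-\tfrac12\iint|u-a|^2\partial_t(\zeta^p)\,\dx\dt$ that is absorbed into the right-hand side; here one must pick $\zeta$ with $|\partial_t\zeta|\le C/(\rho^2-r^2)$ consistent with the time-slices $\Lambda_\rho$, $\Lambda_r$ differing at scale $\rho^2-r^2$, which is exactly why the $|u-a|^2/(\rho^2-r^2)$ term appears. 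The diffusion term $\iint\sfA(x,t,u,Du)\cdot D\varphi$ splits as $\iint\sfA\cdot Du\,\zeta^p + p\iint\sfA\cdot(u-a)\zeta^{p-1}D\zeta$; the first piece is bounded below by $C_o\iint|Du|^p\zeta^p$ using coercivity, and the second is estimated by growth and Young's inequality with exponents $p,p'$ as $\varepsilon\iint|Du|^p\zeta^p + C_\varepsilon\iint|u-a|^p|D\zeta|^p$, with $|D\zeta|\le C/(\lambda^{(p-2)/2}(\rho-r))$ because the spatial radius of $B_\rho^{(\lambda)}$ scales like $\lambda^{(p-2)/2}\rho$. The source terms are handled identically: $|\sfF|^{p-2}\sfF\cdot D\varphi$ gives $\iint|\sfF|^{p-1}(|Du|\zeta^p+|u-a|\zeta^{p-1}|D\zeta|)$, split by Young into an absorbable $\varepsilon$-gradient term, an $|u-a|^p|D\zeta|^p$ term, and $C\iint|\sfF|^p$; the term $\sff\cdot\varphi$ gives $\iint|\sff||u-a|\zeta^p$, which by Young with exponents $p,p'$ and the scaling weight becomes $\varepsilon\,\lambda^{-(p-2)p/2}(\rho-r)^{-p}\iint|u-a|^p + C\iint|\lambda^{(p-2)/2}\rho\,\sff|^{p'}$ — matching the claimed source term (one uses $\rho-r\le\rho$ and $r\ge\tfrac12\rho$ to replace $(\rho-r)$ by $\rho$ in the $\sff$-bound, absorbing the lost factor into $C$).

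After collecting terms, I would choose $\varepsilon=\tfrac12 C_o$ to absorb all $\varepsilon\iint|Du|^p\zeta^p$ contributions into the coercive left-hand side, then bound $\zeta\equiv1$ on $Q_r^{(\lambda)}$ below and $0\le\zeta\le1$, $\supp\zeta\subset Q_\rho^{(\lambda)}$ above, and finally take $\sup_{t\in\Lambda_r}$ in the first term and keep the full $Q_r$-integral of $|Du|^p$ in the second. Dividing the time-slice term by $\rho^2$ (harmless since we also divide the $L^2$-oscillation term on the right by the comparable quantity $\rho^2-r^2$, and $r\ge\tfrac12\rho$ keeps these scales equivalent up to constants) produces exactly \eqref{est:energy*}. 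I do not expect a genuine obstacle: the only points requiring care are the correct bookkeeping of the intrinsic scaling factors $\lambda^{(p-2)/2}$ in $|D\zeta|$ and in the $\sff$-weight, and the standard-but-necessary mollification to make the time-derivative rigorous for merely $C^0([0,T];L^2)$ solutions; both are routine, which is why the lemma is stated as standard and attributed to \cite[Lemma~3.2]{Kinnunen-Lewis:1}.
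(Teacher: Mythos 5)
Your proposal is correct and follows the standard Caccioppoli argument, which is precisely what the paper is invoking by its citation of \cite[Lemma~3.2]{Kinnunen-Lewis:1} (the paper gives no proof of its own). The key observations you make are all sound: the constant vector $a$ enters only the test function $\varphi=(u-a)\zeta^p$, so no translation-invariance of $\sfA$ in $u$ is required, and the monotonicity assumption \eqref{growth-a*}$_3$ plays no role. The bookkeeping of the intrinsic scaling -- $|D\zeta|\lesssim\lambda^{(2-p)/2}(\rho-r)^{-1}$, $|\partial_t\zeta|\lesssim(\rho^2-r^2)^{-1}$, and the weight $\lambda^{(p-2)/2}\rho$ attached to $\sff$ via Young's inequality -- reproduces the stated right-hand side, and the restriction $r\ge\tfrac12\rho$ is exactly what lets one pass from raw integrals to the averaged form with the $\rho^{-2}$ normalization, since it controls the volume ratios $|Q^{(\lambda)}_\rho|/|Q^{(\lambda)}_r|$ by a dimensional constant. (One small slip of the pen: you write ``$u\cdot\partial_t u=\tfrac12\partial_t|u-a|^2$''; the correct identity feeding the computation is $(u-a)\cdot\partial_t u=\tfrac12\partial_t|u-a|^2$, which is what you actually use.)
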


\subsection{Gluing lemma}
We record a radius–selection estimate controlling the temporal oscillation of spatial means. Writing the time slice-mean
$$\langle u\rangle^{(\lm)}_{x_o;r}(t):=\mint_{B^{(\lm)}_r(x_o)}u(x,t)\,\dx,
$$
the lemma shows that for a suitable $\hat\rho\in[\frac12 \rho,\rho)$ the variation of $\langle u\rangle^{(\lm)}_{x_o;\hat\rho}(t)$ is bounded by the space–time integral of the natural flux terms. 

\begin{lemma}\label{lem:glue}
Let $p>1$. There exists a constant $C=C(N,C_1)>0$ with the following property. If $u$ is a weak solution to 
\eqref{eq-par-gen} in $E_T$ in the sense of Definition~\ref{def:weak_solution}, satisfying \eqref{growth-a*}, and if $Q_\rho^{(\lambda)}(z_o) \subseteq E_T$, then there exists $\hat\rho\in[\tfrac{\rho}{2},\rho)$ such that for all $t_1,t_2\in\Lambda_\rho(t_o)$,
$$
	\big|\langle u\rangle^{(\lm)}_{x_o;\hat\rho}(t_2)-
	\langle u\rangle^{(\lm)}_{x_o;\hat\rho}(t_1)\big|
	\le
	C\lm^{\frac{2-p}{2}}\rho\,
	\biint_{Q_\rho^{(\lambda)}(z_o)}\!\Big[|Du|^{p-1}+|\sfF|^{p-1}+|\lambda^{\frac{p-2}{2}}\rho\,\sff|\Big]\,\dx\dt.
$$
\end{lemma}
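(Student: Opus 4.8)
The plan is to test the weak formulation of \eqref{eq-par-gen} against a product of a spatial bump that is essentially the indicator of an intrinsic ball $B^{(\lm)}_r(x_o)$ with a temporal factor approximating $\mathbf 1_{[t_1,t_2]}$, and then to choose $\hat\rho$ as a radius at which the boundary flux generated by the bump is small. Concretely, for $r\in[\tfrac12\rho,\rho)$ and $\delta\in(0,r)$ I would take $\psi_{r,\delta}\in W^{1,\infty}(\R^N,[0,1])$ equal to $1$ on $\{|x-x_o|\le\lm^{\frac{p-2}{2}}(r-\delta)\}$, vanishing outside $B^{(\lm)}_r(x_o)$ and affine in $|x-x_o|$ in between, so that $\nabla\psi_{r,\delta}$ is supported in the annulus $A_{r,\delta}:=B^{(\lm)}_r(x_o)\setminus\{|x-x_o|\le\lm^{\frac{p-2}{2}}(r-\delta)\}$ with $|\nabla\psi_{r,\delta}|\le C\lm^{\frac{2-p}{2}}\delta^{-1}$. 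Testing Definition~\ref{def:weak_solution} with $\varphi(x,t)=\psi_{r,\delta}(x)\,\alpha(t)$, where $\alpha$ is a Lipschitz approximation of $\mathbf 1_{[t_1,t_2]}$ (made rigorous by Steklov averaging or time–mollification, using $u\in C^0([0,T];L^2)$ so that the time slices $u(\cdot,t)$ are well defined), and sending $\alpha\to\mathbf 1_{[t_1,t_2]}$, one obtains, assuming $t_1\le t_2$ without loss of generality,
\begin{align*}
\int_{B^{(\lm)}_r(x_o)}\psi_{r,\delta}\big(u(\cdot,t_2)-u(\cdot,t_1)\big)\dx
&=\int_{t_1}^{t_2}\!\!\int_{B^{(\lm)}_r(x_o)}\big(|\sfF|^{p-2}\sfF-\sfA(x,t,u,Du)\big)\cdot\nabla\psi_{r,\delta}\,\dx\dt\\
&\quad+\int_{t_1}^{t_2}\!\!\int_{B^{(\lm)}_r(x_o)}\sff\,\psi_{r,\delta}\,\dx\dt.
\end{align*}

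Next I would select the radius. Put $G(x):=\int_{\Lambda_\rho(t_o)}\big(|Du(x,t)|^{p-1}+|\sfF(x,t)|^{p-1}\big)\dt$; crucially $G$ does not depend on $t_1,t_2$ and $G\in L^1(B^{(\lm)}_\rho(x_o))$ since $|Du|^{p-1},|\sfF|^{p-1}\in L^{p'}_{\mathrm{loc}}$. The coarea formula gives
\[
\int_{\rho/2}^{\rho}\bigg(\int_{\partial B^{(\lm)}_s(x_o)}G\,\mathrm{d}\mathcal H^{N-1}\bigg)\ds
=\lm^{\frac{2-p}{2}}\!\!\int_{B^{(\lm)}_\rho(x_o)\setminus B^{(\lm)}_{\rho/2}(x_o)}\!\!G\,\dx
\le\lm^{\frac{2-p}{2}}\!\iint_{Q^{(\lm)}_\rho(z_o)}\!\big(|Du|^{p-1}+|\sfF|^{p-1}\big)\dx\dt,
\]
so by Chebyshev's inequality the set of $s\in[\tfrac12\rho,\rho)$ with $\int_{\partial B^{(\lm)}_s(x_o)}G\,\mathrm{d}\mathcal H^{N-1}\le\frac{4}{\rho}\lm^{\frac{2-p}{2}}\iint_{Q^{(\lm)}_\rho(z_o)}(|Du|^{p-1}+|\sfF|^{p-1})\dx\dt$ has measure at least $\rho/4$, hence meets the full–measure set of Lebesgue points of $s\mapsto\int_{\partial B^{(\lm)}_s(x_o)}G\,\mathrm{d}\mathcal H^{N-1}$. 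I would fix $\hat\rho$ in this intersection; since $G$ is independent of $t_1,t_2$, this choice works for all $t_1,t_2$, as required.

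Finally I would return to the displayed identity with $r=\hat\rho$ and let $\delta\downarrow0$. On the left, $\psi_{\hat\rho,\delta}\uparrow\mathbf 1_{B^{(\lm)}_{\hat\rho}(x_o)}$ boundedly, so dominated convergence produces $|B^{(\lm)}_{\hat\rho}(x_o)|\big(\langle u\rangle^{(\lm)}_{x_o;\hat\rho}(t_2)-\langle u\rangle^{(\lm)}_{x_o;\hat\rho}(t_1)\big)$, and the $\sff$–integral tends to $\int_{t_1}^{t_2}\!\int_{B^{(\lm)}_{\hat\rho}(x_o)}\sff\,\dx\dt$. For the diffusion and $\sfF$ terms, the growth bound $|\sfA(x,t,u,Du)|\le C_1|Du|^{p-1}$ from \eqref{growth-a*}$_2$ (here $\mu=0$), together with $|\nabla\psi_{\hat\rho,\delta}|\le C\lm^{\frac{2-p}{2}}\delta^{-1}\mathbf 1_{A_{\hat\rho,\delta}}$ and the coarea formula, give
\[
\bigg|\int_{t_1}^{t_2}\!\!\int\big(|\sfF|^{p-2}\sfF-\sfA\big)\cdot\nabla\psi_{\hat\rho,\delta}\,\dx\dt\bigg|
\le\frac{CC_1}{\lm^{\frac{p-2}{2}}\delta}\int_{A_{\hat\rho,\delta}}\!\!G\,\dx
\;\xrightarrow[\delta\to0]{}\;CC_1\!\int_{\partial B^{(\lm)}_{\hat\rho}(x_o)}\!\!G\,\mathrm{d}\mathcal H^{N-1},
\]
the convergence being exactly the Lebesgue–point property arranged above. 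Combining with the smallness of $\int_{\partial B^{(\lm)}_{\hat\rho}(x_o)}G\,\mathrm{d}\mathcal H^{N-1}$, enlarging $[t_1,t_2]$ to $\Lambda_\rho(t_o)$ in the remaining nonnegative integrals, dividing by $|B^{(\lm)}_{\hat\rho}(x_o)|$, and converting solid integrals to averages via $|Q^{(\lm)}_\rho(z_o)|/|B^{(\lm)}_{\hat\rho}(x_o)|=2\rho^2\,|B^{(\lm)}_\rho(x_o)|/|B^{(\lm)}_{\hat\rho}(x_o)|\le 2^{N+1}\rho^2$, one lands on
\[
\big|\langle u\rangle^{(\lm)}_{x_o;\hat\rho}(t_2)-\langle u\rangle^{(\lm)}_{x_o;\hat\rho}(t_1)\big|
\le C(N,C_1)\,\lm^{\frac{2-p}{2}}\rho\,\biint_{Q^{(\lm)}_\rho(z_o)}\Big[|Du|^{p-1}+|\sfF|^{p-1}+\big|\lm^{\frac{p-2}{2}}\rho\,\sff\big|\Big]\dx\dt,
\]
where the weight $\lm^{\frac{p-2}{2}}\rho$ in front of $\sff$ appears because $\lm^{\frac{2-p}{2}}\rho\cdot\lm^{\frac{p-2}{2}}\rho=\rho^2$.

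The main obstacle is the radius selection coupled with the passage $\delta\to0$: $\hat\rho$ must be fixed once and for all, i.e. independently of $t_1,t_2$, which is why $G$ must be integrated over all of $\Lambda_\rho(t_o)$ rather than over $[t_1,t_2]$, and the annular flux of $\sfA$ and $\sfF$ must converge to a surface integral that has simultaneously been made small — so one has to reconcile a Chebyshev-type selection with a Lebesgue-point selection of the same radius. Keeping the intrinsic scaling weights $\lm^{\pm\frac{p-2}{2}}$ straight so that the final estimate carries precisely the homogeneity in the statement is the only other delicate point; the testing step and the use of \eqref{growth-a*}$_2$ are entirely routine.
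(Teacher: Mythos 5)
Your proof is correct and follows precisely the strategy the paper has in mind when it cites Kinnunen--Lewis~\cite{Kinnunen-Lewis:1} (Lemma~3.1): test the weak formulation against an annular spatial cutoff times a temporal approximation of $\mathbf 1_{[t_1,t_2]}$, select a good radius $\hat\rho$ independent of $t_1,t_2$ by combining the coarea formula with Chebyshev (intersected with the set of Lebesgue points of the surface-flux function so that the $\delta\downarrow 0$ limit is controlled), and then convert to averages using $|Q_\rho^{(\lambda)}|/|B_{\hat\rho}^{(\lambda)}|\le 2^{N+1}\rho^2$. The intrinsic weights $\lambda^{\pm\frac{p-2}{2}}$ are tracked correctly and the constant depends only on $N$ and $C_1$ as claimed, so this is a faithful expansion of the paper's one-line reference.
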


\begin{proof}
It is a straightforward adaptation of the argument used in \cite[Lemma~3.1]{Kinnunen-Lewis:1}. We refrain from going into further details.
\end{proof}

\subsection{Parabolic Sobolev–Poincaré-type inequalities}
A parabolic Poincaré-type inequality on space–time cylinders is recorded next. Combining the slice-wise Poincaré estimate with the radius–selection bound from Lemma~\ref{lem:glue} controls the deviation from the space–time mean by the spatial gradient and 
by the temporal oscillation encoded in the flux terms. 

\begin{lemma}\label{lem:poin}
Let $p>1$. There exists a constant $C=C(N,p,C_1)>0$ with the following property. If $u$ is a weak solution to \eqref{eq-par-gen} in $E_T$ in the sense of Definition \ref{def:weak_solution}, satisfying \eqref{growth-a*}, and if $Q_\rho^{(\lambda)}\equiv Q_\rho^{(\lambda)}(z_o)\subseteq E_T$, then for any $\sigma\in [1,p]$ one has the Poincar\'e-type inequality
\begin{align*}
	&\biint_{Q_\rho^{(\lambda)}} 
	\frac{\big|u-(u)_{z_o;\rho}^{(\lambda)}\big|^{\sigma}}{(\lm^{\frac{p-2}{2}}\rho)^\sigma}\, \dx\dt\\  
    &\quad\le
	C\biint_{Q_\rho^{(\lambda)}}
	|Du|^\sigma\,\dx\dt
     +C\bigg[\lm^{2-p}
	\biint_{Q_\rho^{(\lambda)}} \Big[|Du|^{p-1} + |\sfF|^{p-1}+|\lambda^{\frac{p-2}{2}}\rho\,\sff|\Big]\dx\dt
	\bigg]^\sigma.
\end{align*}
\end{lemma}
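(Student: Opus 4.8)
The plan is to reduce the parabolic Poincaré inequality to two classical ingredients: a slicewise (elliptic) Poincaré inequality on the ball $B^{(\lambda)}_\rho(x_o)$, and the temporal control provided by Lemma~\ref{lem:glue}. Throughout we fix $z_o=(x_o,t_o)$ and suppress it. First I would pick the radius $\hat\rho\in[\tfrac\rho2,\rho)$ furnished by Lemma~\ref{lem:glue}, so that the slicewise mean $t\mapsto\langle u\rangle^{(\lambda)}_{x_o;\hat\rho}(t)$ has small temporal oscillation. I would then split, for a.e.\ $(x,t)\in Q^{(\lambda)}_\rho$,
\[
  u(x,t)-(u)^{(\lambda)}_{z_o;\rho}
  =\Big(u(x,t)-\langle u\rangle^{(\lambda)}_{x_o;\hat\rho}(t)\Big)
  +\Big(\langle u\rangle^{(\lambda)}_{x_o;\hat\rho}(t)-(u)^{(\lambda)}_{z_o;\rho}\Big),
\]
and estimate the two pieces separately, using $|a+b|^\sigma\le 2^{\sigma-1}(|a|^\sigma+|b|^\sigma)$.

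For the first piece I would apply, for a.e.\ fixed $t\in\Lambda_\rho(t_o)$, the classical Sobolev–Poincaré inequality on the ball $B^{(\lambda)}_{\hat\rho}(x_o)$ (whose radius is comparable to $\lambda^{\frac{p-2}2}\rho$): since $1\le\sigma\le p$,
\[
  \mint_{B^{(\lambda)}_{\hat\rho}(x_o)}
  \frac{\big|u(t)-\langle u\rangle^{(\lambda)}_{x_o;\hat\rho}(t)\big|^\sigma}{(\lambda^{\frac{p-2}2}\rho)^\sigma}\,\dx
  \le C(N,p)\mint_{B^{(\lambda)}_{\hat\rho}(x_o)}|Du(t)|^\sigma\,\dx.
\]
Integrating in $t$ over $\Lambda_\rho(t_o)$ and comparing the measures of $B^{(\lambda)}_{\hat\rho}$ and $B^{(\lambda)}_\rho$ (they differ only by a fixed multiplicative factor since $\hat\rho\ge\rho/2$) gives the term $C\biint_{Q^{(\lambda)}_\rho}|Du|^\sigma\,\dx\dt$. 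For the second piece, I would further decompose
\[
  \langle u\rangle^{(\lambda)}_{x_o;\hat\rho}(t)-(u)^{(\lambda)}_{z_o;\rho}
  =\Big(\langle u\rangle^{(\lambda)}_{x_o;\hat\rho}(t)-\langle u\rangle^{(\lambda)}_{x_o;\hat\rho}(\tau)\Big)
  +\Big(\langle u\rangle^{(\lambda)}_{x_o;\hat\rho}(\tau)-(u)^{(\lambda)}_{z_o;\rho}\Big),
\]
average in $\tau$ over $\Lambda_\rho(t_o)$, and observe that the $\tau$–average of the second bracket is itself an oscillation of $\langle u\rangle^{(\lambda)}_{x_o;\hat\rho}$ against the space–time mean, which is again dominated by the temporal oscillation up to a bounded constant (using Jensen to pass between the ball–average of $u$ and $\langle u\rangle$). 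Consequently the whole second piece is bounded by
$
  \sup_{t_1,t_2\in\Lambda_\rho}\big|\langle u\rangle^{(\lambda)}_{x_o;\hat\rho}(t_2)-\langle u\rangle^{(\lambda)}_{x_o;\hat\rho}(t_1)\big|,
$
which Lemma~\ref{lem:glue} controls by
$
  C\lambda^{\frac{2-p}2}\rho\,\biint_{Q^{(\lambda)}_\rho}[\,|Du|^{p-1}+|\sfF|^{p-1}+|\lambda^{\frac{p-2}2}\rho\,\sff|\,]\,\dx\dt.
$
Dividing by $(\lambda^{\frac{p-2}2}\rho)^\sigma$ and raising to the power $\sigma$ produces exactly the second term on the right-hand side of the asserted inequality, namely $C\big[\lambda^{2-p}\biint_{Q^{(\lambda)}_\rho}(\,|Du|^{p-1}+|\sfF|^{p-1}+|\lambda^{\frac{p-2}2}\rho\,\sff|\,)\,\dx\dt\big]^\sigma$ (noting $\lambda^{\frac{2-p}2}\rho\cdot(\lambda^{\frac{p-2}2}\rho)^{-1}=\rho^{0}\lambda^{\ldots}$ collapses the powers correctly, since $\lambda^{(2-p)/2}/\lambda^{(p-2)/2}=\lambda^{2-p}$ and the $\rho$-factors match after raising to the $\sigma$-th power).

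The main obstacle is bookkeeping rather than conceptual: one must keep the constants dimensionally consistent when switching between the ball $B^{(\lambda)}_{\hat\rho}$ and $B^{(\lambda)}_\rho$, and verify that replacing $\hat\rho$ by $\rho$ in the cylinders on the right-hand side costs only a harmless factor depending on $N,p$. A secondary care point is the measurability/continuity in $t$ needed to make the slicewise Poincaré argument and the pointwise-in-$t$ evaluation of $\langle u\rangle^{(\lambda)}_{x_o;\hat\rho}$ legitimate; this is covered by $u\in C^0([0,T];L^2)$ from Definition~\ref{def:weak_solution}, together with $Du\in L^p_{\mathrm{loc}}$. Everything else is a direct combination of the slicewise Sobolev–Poincaré inequality, Jensen's inequality, and Lemma~\ref{lem:glue}.
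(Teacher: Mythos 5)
Your strategy is the same as the paper's -- slicewise Poincar\'e on the ball, plus the gluing Lemma~\ref{lem:glue} for the temporal part -- but you omit the crucial first step the paper takes, namely an application of the quasi-minimality Lemma~\ref{lem:mean} to replace $(u)^{(\lambda)}_{z_o;\rho}$ by $(u)^{(\lambda)}_{z_o;\hat\rho}$ before splitting. That omission leaves a residual term whose treatment, as you describe it, is incorrect.

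Concretely: after your decomposition, the second piece is $\langle u\rangle^{(\lambda)}_{\hat\rho}(t)-(u)^{(\lambda)}_\rho$, and you further split off $\langle u\rangle^{(\lambda)}_{\hat\rho}(\tau)-(u)^{(\lambda)}_\rho$ which you then average in $\tau$. Since $(u)^{(\lambda)}_\rho=\bint_{\Lambda_\rho}\langle u\rangle^{(\lambda)}_\rho(\tau)\,\dtau$, the $\tau$-average of that bracket equals $\bint_{\Lambda_\rho}\big[\langle u\rangle^{(\lambda)}_{\hat\rho}(\tau)-\langle u\rangle^{(\lambda)}_\rho(\tau)\big]\,\dtau$, which is a difference of \emph{spatial} averages over the concentric balls $B^{(\lambda)}_{\hat\rho}$ and $B^{(\lambda)}_\rho$ at each time slice. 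This is \emph{not} controlled by the temporal oscillation $\sup_{t_1,t_2}|\langle u\rangle^{(\lambda)}_{\hat\rho}(t_2)-\langle u\rangle^{(\lambda)}_{\hat\rho}(t_1)|$, so your claim that it is ``again dominated by the temporal oscillation up to a bounded constant'' is false, and the vague appeal to Jensen does not fill the gap. The term is controllable, but by a separate slicewise Poincar\'e argument that feeds it into the $|Du|^\sigma$ contribution, not by Lemma~\ref{lem:glue}. The paper sidesteps the issue entirely: one pays a factor $2^\sigma$ at the outset via Lemma~\ref{lem:mean} to recenter at $(u)^{(\lambda)}_{\hat\rho}$, so that the second piece becomes $\langle u\rangle^{(\lambda)}_{\hat\rho}(t)-(u)^{(\lambda)}_{\hat\rho}$, whose $\tau$-average over $\Lambda_{\hat\rho}$ vanishes identically and which is thus bounded by the temporal oscillation alone. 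You should either insert that preliminary recentering, or explicitly bound the residual spatial oscillation by the slicewise Poincar\'e inequality and absorb it into the $|Du|^\sigma$ term; as written the proof has a genuine gap.
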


\begin{proof}
Throughout the proof we suppress the center $z_o$ from the notation. Let $\hat\rho\in[\frac12 \rho,\rho)$ be the radius provided by Lemma~\ref{lem:glue}; by the quasi–minimality Lemma~\ref{lem:mean}, we get
\begin{align*}
     \biint_{Q_\rho^{(\lambda)}}&
     \frac{\big|u-(u)^{(\lm)}_{\rho}\big|^\sigma }{(\lm^{\frac{p-2}{2}}\rho)^\sigma }\,\dx\dt
     \le
     2^\sigma\biint_{Q_\rho^{(\lambda)}}
     \frac{\big|u-
     (u)^{(\lm)}_{\hat\rho}
     \big|^\sigma }
     {(\lm^{\frac{p-2}{2}}\rho)^{\sigma }}\,\dx\dt 
     \le 
     8[\sfI+\sfI\sfI],
\end{align*}
where
\begin{equation*}
	\sfI
    :=
    \biint_{Q_\rho^{(\lambda)}}
	\frac{\big|u-\langle u\rangle^{(\lm)}_{\hat\rho}(t)\big|^\sigma }
    {\rho^{\sigma }}\,\dx\dt,
    \qquad
    \sfI\sfI
    :=
    \bint_{\Lambda_{\rho}}
    \frac{\big|\langle u\rangle^{(\lm)}_{\hat\rho}(t)-
    (u)^{(\lm)}_{\hat\rho}\big|^\sigma }
    {\rho^{\sigma }}\,\dt,
\end{equation*}
and moreover, 
\begin{equation*}
    \langle u\rangle^{(\lm)}_{\hat\rho}(t):=\mint_{B^{(\lm)}_{\hat\rho}}u(x,t)\,
    \dx
    \quad\mbox{and}\quad
    (u)^{(\lm)}_{\hat\rho}
    :=\biint_{Q_{\hat\rho}^{(\lambda)}}u\,
    \dx\dt.
\end{equation*}
Next, we estimate $\sfI$ and $\sfI\sfI$. Since $\hat\rho\in[\frac12\rho,\rho)$, Lemma~\ref{lem:mean} and, subsequently, Poincaré’s inequality on the ball $B_\rho^{(\lm)}$ (applied slice-wise) yield
\begin{align*}
	\sfI
	\le
	C\biint_{Q_\rho^{(\lambda)}}
	\frac{|u-\langle u\rangle^{(\lm)}_{\rho}(t)|^\sigma }{(\lm^{\frac{p-2}{2}}\rho)^{\sigma }}\,\dx\dt 
	\le
	C\biint_{Q_\rho^{(\lambda)}}
	|Du|^\sigma \,\dx\dt  ,
\end{align*}
with $C=C(N,p)$. The dependence of the constant in Poincaré’s inequality on $\sigma\in[1,p]$ is continuous and can be absorbed into $C$.
As for $\sfI\sfI$, by Lemma~\ref{lem:glue}, we have
\begin{align*}
    \sfI\sfI
    &\le
    \frac{C}{(\lm^{\frac{p-2}{2}}\rho)^{\sigma }} 
    \sup_{t,\tau\in \Lambda_{\rho}} 
    \big|\langle u\rangle^{(\lm)}_{\hat\rho}(t)-
    \langle u\rangle^{(\lm)}_{\hat\rho}(\tau)\big|^\sigma  \\
    &\le
    C\bigg[\lm^{2-p}
	\biint_{Q_\rho^{(\lambda)}} \Big[|Du|^{p-1} + |\sfF|^{p-1}+|\lambda^{\frac{p-2}{2}}\rho\,\sff|\Big]\dx\dt
	\bigg]^\sigma   ,
\end{align*}
with $C=C(N,C_1)$. Combining the estimates for $\sfI$ and $\sfI\sfI$ yields the claim.
\end{proof}

The next lemma records a Poincaré-type inequality on intrinsic cylinders. 
The estimate follows from Lemma~\ref{lem:poin}  together with Hölder’s and Young’s inequalities. The parameter $\varepsilon$ balances the mean-oscillation term against the energy and source terms. 

\begin{corollary}\label{cor:poin}
Let $1<p<2$. There exists a constant $C=C(N,p,C_1)>0$ with the following property. If $u$ is a weak solution to \eqref{eq-par-gen} in $E_T$ in the sense of Definition~\ref{def:weak_solution}, satisfying \eqref{growth-a*}, and if $Q_{2\rho}^{(\lambda)}(z_o)\subseteq E_T$, then for any $\sigma\in[p-1,p]$ and any $\varepsilon\in(0,1]$,
\begin{align*}
	\biint_{Q_{\rho}^{(\lambda)}(z_o)}&
	\frac{\big|u-(u)_{z_o;\rho}^{(\lambda)}\big|^{\sigma}}{(\lambda^{\frac{p-2}{2}}\rho)^\sigma}\, \dx\dt \\
	&\le
	\epsilon\lambda^\sigma +
    \frac{C}{\epsilon^{\frac{2-p}{p-1}}} \biint_{Q_{\rho}^{(\lambda)}(z_o)} \Big[|Du|^{\sigma} + |\sfF|^{\sigma}+|\lm^{\frac{p-2}2}\rho\,\sff|^\frac{\sigma}{p-1}\Big]\,\dx\dt,
\end{align*}
where we recall that
\begin{equation*}
    (u)_{z_o;\rho}^{(\lambda)}
    :=\biint_{Q_{\rho}^{(\lambda)}(z_o)}u\,\dx\dt.
\end{equation*}
\end{corollary}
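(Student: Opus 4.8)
The statement is a reformulation of Lemma~\ref{lem:poin} via H\"older's and Young's inequalities. The plan is: (i) apply Lemma~\ref{lem:poin} on $Q_\rho^{(\lambda)}(z_o)$ with the given exponent $\sigma$; (ii) leave its first (gradient) term untouched and reshape only its second term; (iii) pass from $L^{p-1}$- and $L^1$-means to $L^\sigma$- and $L^{\sigma/(p-1)}$-means by monotonicity of $L^r$-means on the probability space $Q_\rho^{(\lambda)}$; (iv) absorb the $\lambda$-powers by Young's inequality with a carefully chosen conjugate pair.

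First I would note that $Q_\rho^{(\lambda)}(z_o)\subset Q_{2\rho}^{(\lambda)}(z_o)\subseteq E_T$, so Lemma~\ref{lem:poin} is at our disposal. For $\sigma\in[1,p]$ it applies verbatim; for $\sigma\in[p-1,1)$ one uses that the slice-wise Poincar\'e inequality on balls underlying that lemma holds for every exponent in $(0,1)$ as well, with a constant depending continuously on the exponent, so Lemma~\ref{lem:poin} may in fact be invoked for all $\sigma\in[p-1,p]$. This gives
\begin{equation*}
\biint_{Q_\rho^{(\lambda)}}\frac{\big|u-(u)_{z_o;\rho}^{(\lambda)}\big|^{\sigma}}{(\lambda^{\frac{p-2}{2}}\rho)^\sigma}\,\dx\dt
\le C\biint_{Q_\rho^{(\lambda)}}|Du|^\sigma\,\dx\dt
+C\Big[\lambda^{2-p}\biint_{Q_\rho^{(\lambda)}}\big(|Du|^{p-1}+|\sfF|^{p-1}+|\lambda^{\frac{p-2}{2}}\rho\,\sff|\big)\,\dx\dt\Big]^{\sigma}.
\end{equation*}
The first term on the right is already of the required shape, so everything reduces to the second term.

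To treat it I would expand $(a+b+c)^\sigma\le 3^{p}(a^\sigma+b^\sigma+c^\sigma)$ (legitimate since $\sigma\le p<2$), splitting the second term into $\lambda^{(2-p)\sigma}\big(\biint|Du|^{p-1}\big)^\sigma$, $\lambda^{(2-p)\sigma}\big(\biint|\sfF|^{p-1}\big)^\sigma$ and $\lambda^{(2-p)\sigma}\big(\biint|\lambda^{\frac{p-2}{2}}\rho\,\sff|\big)^\sigma$. Since $\sigma\ge p-1$, monotonicity of $L^r$-means yields $\big(\biint|Du|^{p-1}\big)^\sigma\le\big(\biint|Du|^\sigma\big)^{p-1}$, and likewise for $\sfF$; for the source term, as $\sigma/(p-1)\ge1$, it gives $\big(\biint|\lambda^{\frac{p-2}{2}}\rho\,\sff|\big)^\sigma\le\big(\biint|\lambda^{\frac{p-2}{2}}\rho\,\sff|^{\sigma/(p-1)}\big)^{p-1}$. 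Each resulting term has the form $\lambda^{(2-p)\sigma}X^{p-1}$, and I would apply Young's inequality with conjugate exponents $s=\tfrac1{2-p}$ and $s'=\tfrac1{p-1}$ and parameter $\tfrac{\epsilon}{3}$: since $(\lambda^{(2-p)\sigma})^{s}=\lambda^\sigma$, $(X^{p-1})^{s'}=X$ and $s'/s=\tfrac{2-p}{p-1}$, this produces $\lambda^{(2-p)\sigma}X^{p-1}\le\tfrac{\epsilon}{3}\lambda^\sigma+C\epsilon^{-\frac{2-p}{p-1}}X$. Summing the three contributions and adding back the untouched first term $C\biint_{Q_\rho^{(\lambda)}}|Du|^\sigma\,\dx\dt$ (and using $\epsilon\le1$ to write $C\le C\epsilon^{-\frac{2-p}{p-1}}$) yields exactly the asserted inequality.

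I do not expect a genuine obstacle here; this is a bookkeeping corollary. The only points requiring care are the precise choice of the Young exponents $\tfrac1{2-p}$, $\tfrac1{p-1}$, so that the $\lambda$-power lands on $\lambda^\sigma$ and the $\epsilon$-power comes out exactly as $\epsilon^{-\frac{2-p}{p-1}}$, and the preliminary observation that Lemma~\ref{lem:poin} remains valid for $\sigma<1$ thanks to the sub-linear Poincar\'e inequality on balls.
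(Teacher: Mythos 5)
Your argument follows the paper's essentially line by line: apply Lemma~\ref{lem:poin} with exponent $\sigma$, raise the $L^{p-1}$- and $L^1$-means on the right to $L^\sigma$- and $L^{\sigma/(p-1)}$-means via Jensen/H\"older (using $\sigma\ge p-1$, i.e.\ $\sigma/(p-1)\ge1$), and absorb the $\lambda^{(2-p)\sigma}$ factor by Young's inequality with conjugate pair $\bigl(\tfrac1{2-p},\tfrac1{p-1}\bigr)$. The paper does a single H\"older on the full bracket and one Young, whereas you split by subadditivity and apply Young three times; this is a cosmetic rearrangement, and your exponent bookkeeping (in particular $s'/s=\tfrac{2-p}{p-1}$) is correct.

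One caution. You write that Lemma~\ref{lem:poin} ``may in fact be invoked for all $\sigma\in[p-1,p]$'' because ``the slice-wise Poincar\'e inequality on balls $\ldots$ holds for every exponent in $(0,1)$ as well.'' This last assertion is false. The matching-exponent Poincar\'e inequality
$\mint_B|v-\langle v\rangle_B|^{\sigma}\,\dx \le C\rho^{\sigma}\mint_B|Dv|^{\sigma}\,\dx$
fails for $\sigma<1$: take $B=(-1,1)$ and $v_\delta\in W^{1,\infty}(B)$ with $v_\delta=0$ on $(-1,0)$, $v_\delta(x)=Mx$ on $(0,\delta)$, $v_\delta=M\delta$ on $(\delta,1)$; then $\mint_B|v_\delta-\langle v_\delta\rangle_B|^{\sigma}\simeq (M\delta)^{\sigma}$ while $\rho^{\sigma}\mint_B|Dv_\delta|^{\sigma}\simeq M^{\sigma}\delta$, so the quotient behaves like $\delta^{\sigma-1}\to\infty$ as $\delta\downarrow0$. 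The version of Poincar\'e that survives for $0<\sigma<1$ keeps a power of the $L^1$-average of $|Dv|$ on the right,
$\mint_B|v-\langle v\rangle_B|^{\sigma}\le C\bigl(\rho\mint_B|Dv|\bigr)^{\sigma}$,
and for $\sigma<1$ this does not dominate $\rho^{\sigma}\mint_B|Dv|^{\sigma}$, since Jensen points the wrong way. So your proposed bridge from Lemma~\ref{lem:poin} ($\sigma\in[1,p]$) to the announced range $\sigma\in[p-1,p]$ is not a proof. You were right to flag the issue: Lemma~\ref{lem:poin} is stated and proved only for $\sigma\ge1$, while Corollary~\ref{cor:poin} is stated for $\sigma\in[p-1,p]$ and its proof simply invokes Lemma~\ref{lem:poin}, and the sub-unit regime is genuinely used downstream (Proposition~\ref{prop:RevH} takes $\sigma=\theta=\tfrac{Np}{N+2}$, which is $<1$ whenever $p<1+\tfrac2N$, in particular for all $p\le p_c$ when $N\in\{3,4\}$). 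Closing that range requires a different argument than the one you sketched; the remainder of your proposal, restricted to $\sigma\in[1,p]$, is fine and coincides with the paper's.
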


\begin{proof}
We suppress $z_o$ in the notation; apply Lemma~\ref{lem:poin}
and Hölder's inequality to get 
\begin{align*}
	&\biint_{Q_{\rho}^{(\lambda)}} 
	\frac{\big|u-(u)_{\rho}^{(\lambda)}\big|^{\sigma}}{(\lambda^{\frac{p-2}{2}}\rho)^\sigma}\, \dx\dt \\
	&\,\,\le
	C\biint_{Q_{\rho}^{(\lambda)}}
	|Du|^\sigma\,\dx\dt +
    C\lambda^{\sigma(2-p)}
    \bigg[
	\biint_{Q_{\rho}^{(\lambda)}} \Big[|Du|^{p-1} + |\sfF|^{p-1} +|\lambda^{\frac{p-2}{2}}\rho\,\sff|\Big]\dx\dt
	\bigg]^\sigma \\
	&\,\,\le
	C\biint_{Q_{\rho}^{(\lambda)}}
	|Du|^\sigma\,\dx\dt +
    C\lambda^{\sigma(2-p)}\bigg[
	\biint_{Q_{\rho}^{(\lambda)}} \Big[|Du|^{\sigma} + |\sfF|^{\sigma}
    +|\lambda^{\frac{p-2}{2}}\rho\,\sff|^\frac{\sigma}{p-1}
    \Big]\dx\dt
	\bigg]^{p-1}.
\end{align*}
Apply Young’s inequality with exponents $\frac{1}{2-p}$ and $\frac{1}{p-1}$ to the second term on the right-hand side and conclude the proof.
\end{proof}

\subsection{Estimates on (sub-)intrinsic cylinders}\label{sec:est-intrinsic-cyl}
Fix $\sfr>2$ such that~\eqref{Eq:lm>0} holds. 
Throughout this subsection, a cylinder $Q_{2\rho}^{(\lambda)}(z_o)\subseteq E_T$ is said to satisfy a \emph{sub-intrinsic coupling} if, for some $\sfK\ge1$,
\begin{align}\label{sub-intr}
	\biint_{Q_{2\rho}^{(\lambda)}(z_o)}
	\frac{\big|u-(u)_{z_o;2\rho}^{(\lambda)}\big|^\sfr}{(2\rho)^\sfr} \,\dx\dt 
    \le
    \sfK\lambda^{\frac{p\sfr}2}.
\end{align}
Likewise, $Q_{2\rho}^{(\lambda)}(z_o)$ is called \emph{super-intrinsic} if, for some $\sfK\ge1$,
\begin{align}\label{super-intr}
	\lambda^{\frac{p\sfr}2}
	\le
	\sfK\biint_{Q_\rho^{(\lambda)}(z_o)}
	\frac{\big|u-(u)_{z_o;\rho}^{(\lambda)}\big|^\sfr}{\rho^\sfr} \,\dx\dt .
\end{align}
The next result provides a quantitative $L^\infty$-estimate on these cylinders.

\begin{corollary}\label{cor:sup-qualitativ}
Under the assumptions of Theorem~\ref{thm:sup-quant}, for any $\tau\in(0,1)$ and any cylinder $Q_{\rho}^{(\lambda)}(z_o)\subseteq E_T$,
\begin{align*}
    \sup_{Q_{\tau\rho}^{(\lambda)}(z_o)} |u|
    &\le
    C\lambda^{\frac{p-2}{2}}\rho
    \Bigg[
    \bigg[
    \frac{\lambda^{(p-2)\frac{N}{p}}}{(1-\tau)^{N+p}}
    \biint_{Q_{\rho}^{(\lambda)}(z_o)} 
    \frac{|u|^\sfr}{(\lambda^\frac{p-2}2\rho)^\sfr} \,\dx\dt
    \bigg]^\frac{p}{\boldsymbol\lambda_\sfr}\\
     &\qquad\qquad+
    \bigg[ 
    \biint_{Q_{\rho}^{(\lambda)}(z_o)} 
    \Big[| \sfF|^{qp} + \big|\lambda^{\frac{p-2}{2}}\rho\, \sff\big|^{qp'}\Big] \,\dx\dt
    \bigg]^\frac{1}{pq} +
    \lambda 
    \Bigg],
\end{align*}
where $\boldsymbol{\lambda}_{\sfr}=N(p-2)+p\sfr$ and $C$ depends only on the structural data.
\end{corollary}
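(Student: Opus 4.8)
\textbf{Reduction to Theorem~\ref{thm:sup-quant}.}
The plan is to read $Q_\rho^{(\lambda)}(z_o)$ as an ordinary backward parabolic cylinder, apply Theorem~\ref{thm:sup-quant}, and then rewrite the five terms in intrinsic quantities. Write $R:=\lambda^{\frac{p-2}{2}}\rho$ for the spatial radius of $Q_\rho^{(\lambda)}(z_o)$ and set $\theta:=2\rho^2$. Recentring in time at $z_1:=(x_o,t_o+\rho^2)$, one has $Q_{R,\theta}(z_1)=B_R(x_o)\times(t_o-\rho^2,t_o+\rho^2]$, which contains $Q_\rho^{(\lambda)}(z_o)$, differs from it only by a null set (hence has the same Lebesgue measure and the same mean values of nonnegative functions), and still lies in $E_T$ by the assumption $Q_\rho^{(\lambda)}(z_o)\subseteq E_T$. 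A short elementary check on the time intervals shows that the choice $\tau_*:=\tfrac{1+\tau^2}{2}$ satisfies $\tau\le\tau_*<1$ and $Q_{\tau\rho}^{(\lambda)}(z_o)\subseteq Q_{\tau_*R,\tau_*\theta}(z_1)$, while $\tfrac12(1-\tau)\le 1-\tau_*\le 1-\tau$; thus $\sup_{Q_{\tau\rho}^{(\lambda)}(z_o)}|u|\le\sup_{Q_{\tau_*R,\tau_*\theta}(z_1)}|u|$ and powers of $1-\tau_*$ may be traded for powers of $1-\tau$ at the cost of harmless constants.

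\textbf{Translating the easy terms.}
Applying Theorem~\ref{thm:sup-quant} on $Q_{R,\theta}(z_1)$ with scaling parameter $\tau_*$, the last term disappears because $\mu=0$. The remaining four terms are rewritten via $R=\lambda^{\frac{p-2}{2}}\rho$, using $\frac{R^p}{\theta}=c\,\lambda^{\frac{(p-2)p}{2}}\rho^{p-2}$ and $\frac{\theta}{R^p}=c\,\lambda^{\frac{(2-p)p}{2}}\rho^{2-p}$. For the $L^\sfr$-term one combines the identity $1-\frac{p\sfr}{\boldsymbol\lambda_\sfr}=\frac{N(p-2)}{\boldsymbol\lambda_\sfr}$ with $R^{\boldsymbol\lambda_\sfr/p}=R^\sfr\lambda^{\frac{(p-2)^2N}{2p}}\rho^{\frac{(p-2)N}{p}}$; a power count then reproduces exactly the first term $CR\big[\frac{\lambda^{(p-2)N/p}}{(1-\tau)^{N+p}}\biint_{Q_\rho^{(\lambda)}}\frac{|u|^\sfr}{R^\sfr}\big]^{p/\boldsymbol\lambda_\sfr}$. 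The term $\big(\frac{\theta}{R^p}\big)^{\frac1{2-p}}$ equals $c\,R\lambda$, i.e.\ the ``$\lambda$''-contribution inside the bracket.

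\textbf{The $\sfF$- and $\sff$-terms (the delicate point).}
Here Theorem~\ref{thm:sup-quant} delivers the non-homogeneous exponents $\frac{p}{\boldsymbol\lambda_{2q}}$ and $\frac{p}{p'\boldsymbol\lambda_q}$ rather than the homogeneous $\frac1{pq}$ appearing in the Corollary, and the point is that the mismatch is precisely a surplus power of $\lambda$ that Young's inequality splits off against the $R\lambda$-term. After substitution the $\sfF$-term reads $CR\,\lambda^{\alpha}\big[\biint_{Q_\rho^{(\lambda)}}|\sfF|^{qp}\big]^{p/\boldsymbol\lambda_{2q}}$ with $\alpha=\frac{(2-p)(pq-N)}{\boldsymbol\lambda_{2q}}$; since $q>\frac{N+p}{p}$ forces $pq>N$ one has $\alpha\in(0,1)$ and the key identity $\boldsymbol\lambda_{2q}(1-\alpha)=p^2q$, so the weighted AM--GM inequality $a^{1-\alpha}b^{\alpha}\le(1-\alpha)a+\alpha b$ applied with $a=R\big[\biint|\sfF|^{qp}\big]^{1/(pq)}$ and $b=R\lambda$ gives $\le CR\big[\biint|\sfF|^{qp}\big]^{1/(pq)}+CR\lambda$. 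The $\sff$-term becomes $CR^{p'}\lambda^{B}\big[\biint_{Q_\rho^{(\lambda)}}|\sff|^{qp'}\big]^{p/(p'\boldsymbol\lambda_q)}$ with $B=\frac{(2-p)(pq-N)}{\boldsymbol\lambda_q}$, and is handled the same way: again $pq>N$ makes $\mu:=\frac{pq(p-1)}{\boldsymbol\lambda_q}$ lie in $(0,1)$ and simultaneously satisfy $1-\mu=B$, $\ \frac{\mu}{pq}=\frac{p}{p'\boldsymbol\lambda_q}$, and $\mu(p'-1)=A-1$ where $A=1+\frac{pq}{\boldsymbol\lambda_q}$ is the exponent of $R$; weighted AM--GM with $a=R^{p'}\big[\biint|\sff|^{qp'}\big]^{1/(pq)}$ and $b=R\lambda$ reduces it to $CR\big(\biint|R\,\sff|^{qp'}\big)^{1/(pq)}+CR\lambda$ (using $1+\tfrac{p'}{p}=p'$). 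Finally $\big[\biint|\sfF|^{qp}\big]^{1/(pq)}+\big[\biint|R\,\sff|^{qp'}\big]^{1/(pq)}\le 2\big[\biint(|\sfF|^{qp}+|R\,\sff|^{qp'})\big]^{1/(pq)}$, and collecting all contributions — each carrying the common prefactor $R=\lambda^{\frac{p-2}{2}}\rho$ — yields the asserted inequality. The main obstacle is exactly this exponent bookkeeping: verifying that the discrepancy between the non-homogeneous exponents of Theorem~\ref{thm:sup-quant} and the homogeneous ones of the Corollary is precisely a power of $\lambda$ matched to the Young conjugacy, so that no stray factor of $\lambda$, $\rho$, or $R$ survives; the geometric part (the choice of $\tau_*$ and the nesting of cylinders) is entirely routine.
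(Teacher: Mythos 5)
Your argument is correct and follows the paper's route: apply Theorem~\ref{thm:sup-quant} on the intrinsic cylinder recast as a standard backward cylinder with spatial radius $R=\lambda^{\frac{p-2}{2}}\rho$, translate all five terms into intrinsic quantities, and split off the surplus $\lambda$-power in the source terms via weighted AM--GM (the paper phrases this as Young's inequality with conjugate exponents $\big(1-\tfrac{p^2q}{\boldsymbol\lambda_{2q}}\big)^{-1}$ and $\big(\tfrac{p^2q}{\boldsymbol\lambda_{2q}}\big)^{-1}$, which is the same computation); your handling of the time recentering -- taking $\theta=2\rho^2$, $z_1=(x_o,t_o+\rho^2)$, and $\tau_*=\tfrac{1+\tau^2}{2}$ -- is in fact more careful than the paper's, which simply substitutes $\theta=\rho^2$ without addressing the two-sided versus backward mismatch. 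The only slip is cosmetic: the displayed $\sff$-term should read $CR^{A}\lambda^{B}\big[\biint|\sff|^{qp'}\big]^{p/(p'\boldsymbol\lambda_q)}$ with $A=1+\tfrac{pq}{\boldsymbol\lambda_q}=\tfrac{\boldsymbol\lambda_{2q}}{\boldsymbol\lambda_q}$ (not $CR^{p'}\lambda^{B}[\cdots]$), which agrees with your own stated identity $\mu(p'-1)=A-1$ and leaves the remainder of the argument intact.
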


\begin{proof} 
Set $z_o=0$; apply Theorem~\ref{thm:sup-quant} with $(\rho,\theta)$ replaced by $\big(\lambda^{\frac{p-2}{2}}\rho,\ \rho^{2}\big)$ to obtain
\begin{align*}
    \sup_{Q_{\tau\rho}^{(\lambda)}} |u|
    &\le
    C
    \bigg[
    \frac{1}{(1-\tau)^{N+p}}
     \bigg(\frac{(\lambda^{\frac{p-2}{2}}\varrho)^p}{\rho^2}\bigg)^\frac{N}{p}
    \biint_{Q_{\rho}^{(\lambda)}} 
    |u|^\sfr \,\dx\dt
    \bigg]^\frac{p}{\boldsymbol\lambda_\sfr}\\
    &\phantom{\le\,} +
    C \bigg[
    \bigg(\frac{\rho^2}{(\lambda^{\frac{p-2}{2}}\varrho)^p}\bigg)^{q-\frac{N}{p}}
    \biint_{Q_{\rho}^{(\lambda)}} |\lambda^{\frac{p-2}{2}}\varrho \,\sfF|^{qp}\,\dx\dt
    \bigg]^\frac{p}{\boldsymbol\lambda_{2q}} \\
    &\phantom{\le\,}+
    C \bigg[
    \bigg(\frac{\rho^2}{(\lambda^{\frac{p-2}{2}}\varrho)^p}\bigg)^{p'(q-\frac{N}{p})}
    \biint_{Q_{\rho}^{(\lambda)}}|(\lambda^{\frac{p-2}{2}}\rho)^p \sff|^{qp'}\,\dx\dt
    \bigg]^\frac{p}{p'\boldsymbol\lambda_{q}}\\
    &
    \phantom{\le\,}
    +
    C\bigg[\frac{\rho^2}{(\lambda^{\frac{p-2}{2}}\varrho)^p}\bigg]^\frac{1}{2-p} \\
    &=
    C\lambda^{\frac{p-2}{2}}\rho
    \Bigg[ 
    \bigg[
    \frac{\lambda^{(p-2)\frac{N}{p}}}{(1-\tau)^{N+p}}
    \biint_{Q_{\rho}^{(\lambda)}} 
    \frac{|u|^\sfr}{(\lambda^{\frac{p-2}{2}}\rho)^\sfr} \,\dx\dt
    \bigg]^\frac{p}{\boldsymbol\lambda_\sfr}\\
     &\qquad\qquad\qquad +
    \lambda^{1-pq\frac{p}{\boldsymbol\lambda_{2q}}}
    \bigg[ 
    \biint_{Q_{\rho}^{(\lambda)}} | \sfF|^{qp}\,\dx\dt
    \bigg]^\frac{p}{\boldsymbol\lambda_{2q}} \\
    &\qquad\qquad\qquad +
    \lambda^{1-pq\frac{p}{p'\boldsymbol\lambda_{q}}}
    \bigg[
    \biint_{Q_{\rho}^{(\lambda)}(z_o)}|\lambda^{\frac{p-2}{2}}\rho \,\sff|^{qp'}\,\dx\dt
    \bigg]^\frac{p}{p'\boldsymbol\lambda_{q}}
    +
    \lambda
    \Bigg] .
\end{align*}  
Applying Young’s inequality to the second term with conjugate exponents $\big(1-pq\,\tfrac{p}{\boldsymbol{\lambda}_{2q}}\big)^{-1}$ and $\big(pq\,\tfrac{p}{\boldsymbol{\lambda}_{2q}}\big)^{-1}$, and to the third term with $\big(1-pq\,\tfrac{p}{p'\boldsymbol{\lambda}_{q}}\big)^{-1}$ and $\big(pq\,\tfrac{p}{p'\boldsymbol{\lambda}_{q}}\big)^{-1}$, one arrives at the desired estimate.
\end{proof}

The $u$-dependence of $\sfA$ is not allowed in the following result as we have to guarantee that $u+const.$ is also a solution.

\begin{lemma}[Intrinsic $L^\infty$
estimate on sub-intrinsic cylinders]\label{lem:intr-sup}
Let $p\in\bigl(1,\tfrac{2N}{N+2}\bigr]$ and $\sfK\ge1$. If $u$ is a weak solution to \eqref{eq-par-gen} in $E_T$ in the sense of Definition~\ref{def:weak_solution}, satisfying \eqref{growth-a*}, and if $Q_{2\rho}^{(\lambda)}(z_o)\subseteq E_T$ is a parabolic cylinder fulfilling \eqref{sub-intr},
then 
\begin{align*}
    \sup_{Q_{\rho}^{(\lambda)}(z_o)}&
    \frac{\big|u-(u)_{z_o;2\rho}^{(\lambda)}\big|}{\lambda^{\frac{p-2}{2}}\rho}
    \le
    C\,\lm
    +
    C\bigg[
    \biint_{Q_{2\rho}^{(\lambda)}(z_o)}
\Big[|\sfF|^{qp}+|\lambda^{\frac{p-2}{2}}\rho\,\sff|^{qp'}\Big]\,\dx\dt
\bigg]^{\frac{1}{pq}},
\end{align*}
with a constant $C=C(N,p,C_o,C_1,q,\sfr,\sfK)$,
where
\begin{equation*}
    (u)_{z_o;2\rho}^{(\lambda)}
    :=\biint_{Q_{2\rho}^{(\lambda)}(z_o)}u\, \dx\dt.
\end{equation*}
\end{lemma}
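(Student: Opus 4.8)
\smallskip
\noindent\textbf{Plan of proof.} The plan is to reduce the assertion to the quantitative $L^\infty$-bound of Corollary~\ref{cor:sup-qualitativ} by passing to the mean-subtracted solution. By translation we may take $z_o=0$. Since in this section $\sfA=\sfA(x,t,\xi)$ does not depend on $u$, the shifted map
\[
v:=u-a,\qquad a:=(u)_{2\rho}^{(\lambda)}=\biint_{Q_{2\rho}^{(\lambda)}}u\,\dx\dt,
\]
is again a weak solution of \eqref{eq-par-gen} in $E_T$ for the \emph{same} vector field $\sfA$ and the \emph{same} source terms $\sfF,\sff$: the conditions \eqref{growth-a*} are requirements on $\sfA$ alone, valid for all arguments, and hence unaffected by replacing $u$ by $v$, while $v\in L^\sfr_{\rm loc}(E_T)$ as well. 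Moreover $(v)_{2\rho}^{(\lambda)}=0$ and the sub-intrinsic coupling \eqref{sub-intr} becomes
\[
\biint_{Q_{2\rho}^{(\lambda)}}\frac{|v|^\sfr}{(2\rho)^\sfr}\,\dx\dt\le\sfK\,\lambda^{\frac{p\sfr}2}.
\]

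\smallskip
First I would apply Corollary~\ref{cor:sup-qualitativ} to $v$ on the cylinder $Q_{2\rho}^{(\lambda)}$ with $\tau=\tfrac12$, which gives
\begin{align*}
\sup_{Q_{\rho}^{(\lambda)}}|v|
&\le C\,\lambda^{\frac{p-2}2}\rho\,\Bigg[
\bigg[\lambda^{(p-2)\frac Np}\biint_{Q_{2\rho}^{(\lambda)}}\frac{|v|^\sfr}{(\lambda^{\frac{p-2}2}2\rho)^\sfr}\,\dx\dt\bigg]^{\frac{p}{\boldsymbol\lambda_\sfr}}\\
&\qquad\qquad
+\bigg[\biint_{Q_{2\rho}^{(\lambda)}}\Big[|\sfF|^{qp}+\big|\lambda^{\frac{p-2}2}\rho\,\sff\big|^{qp'}\Big]\,\dx\dt\bigg]^{\frac1{pq}}
+\lambda\Bigg],
\end{align*}
where the innocuous numerical factors $2$ arising from $\rho\mapsto2\rho$ in the source term are absorbed into $C$. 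The crucial point is the bookkeeping of the powers of $\lambda$ in the first bracket: extracting $\lambda^{-\frac{(p-2)\sfr}2}$ from $(\lambda^{\frac{p-2}2}2\rho)^{-\sfr}$ and inserting the displayed consequence of \eqref{sub-intr} yields
\[
\lambda^{(p-2)\frac Np}\biint_{Q_{2\rho}^{(\lambda)}}\frac{|v|^\sfr}{(\lambda^{\frac{p-2}2}2\rho)^\sfr}\,\dx\dt
\le\sfK\,\lambda^{(p-2)\frac Np-\frac{(p-2)\sfr}2+\frac{p\sfr}2}
=\sfK\,\lambda^{\frac{\boldsymbol\lambda_\sfr}{p}},
\]
since $(p-2)\tfrac Np+\tfrac\sfr2\bigl(p-(p-2)\bigr)=\tfrac1p\bigl(N(p-2)+p\sfr\bigr)=\tfrac{\boldsymbol\lambda_\sfr}{p}$. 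Raising to the power $\tfrac{p}{\boldsymbol\lambda_\sfr}$ then turns the first bracket into $\sfK^{p/\boldsymbol\lambda_\sfr}\lambda$.

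\smallskip
Collecting the three contributions, the whole bracket is bounded by $C\bigl(\lambda+[\,\cdots\,]^{1/pq}\bigr)$ with $C=C(N,p,C_o,C_1,q,\sfr,\sfK)$ (the constant $\sfK^{p/\boldsymbol\lambda_\sfr}$ being absorbed into $C$); dividing through by $\lambda^{\frac{p-2}2}\rho$ and recalling that $v=u-(u)_{2\rho}^{(\lambda)}$ gives precisely the claimed inequality. The only genuine subtlety is the very first step — that subtracting a constant preserves the property of being a weak solution — which is exactly why the hypothesis $\sfA=\sfA(x,t,\xi)$ (equivalently, that $u+\mathrm{const.}$ is again a solution) is imposed in the vector-valued case; everything else is the exponent arithmetic displayed above together with routine absorption of constants.
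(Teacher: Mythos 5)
Your proof is correct and matches the paper's own argument essentially line for line: translate so that $z_o=0$, observe that $u-(u)^{(\lambda)}_{2\rho}$ is again a weak solution because $\sfA$ does not depend on $u$, apply Corollary~\ref{cor:sup-qualitativ} to this shifted solution on $Q_{2\rho}^{(\lambda)}$ with $\tau=\tfrac12$, and use the sub-intrinsic coupling \eqref{sub-intr} together with the exponent identity $(p-2)\tfrac Np-\tfrac{(p-2)\sfr}{2}+\tfrac{p\sfr}{2}=\tfrac{\boldsymbol\lambda_\sfr}{p}$ to reduce the oscillation term to $\sfK^{p/\boldsymbol\lambda_\sfr}\lambda$. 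Nothing to add.
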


\begin{proof}
Omit $z_o$ from the notation. 
First observe that $u-(u)_{2\rho}^{(\lambda)}$ is again a weak solution to \eqref{eq-par-gen}. Applying Corollary~\ref{cor:sup-qualitativ} to $u-(u)_{2\rho}^{(\lambda)}$ on $Q_{2\rho}^{(\lambda)}$ and taking $\tau=1/2$ yield
\begin{align*}
    \sup_{Q_{\rho}^{(\lambda)}} 
    \frac{\big|u-(u)_{2\rho}^{(\lambda)}\big|}{\lambda^{\frac{p-2}{2}}\rho}
    &\le
    C
    \Bigg[
    \lambda^{(p-2)\frac{N}{p}}
    \biint_{Q_{2\rho}^{(\lambda)}} 
    \frac{\big|u-(u)_{2\rho}^{(\lambda)}\big|^\sfr}{(2\lambda^{\frac{p-2}{2}}\rho)^\sfr} \,\dx\dt
    \Bigg]^\frac{p}{\boldsymbol\lambda_\sfr}\\
     &
    \phantom{\le\,}
    +C  
    \bigg[ 
    \biint_{Q_{2\rho}^{(\lambda)}} 
    \Big[| \sfF|^{qp} + \big|\lambda^{\frac{p-2}{2}}\rho\, \sff\big|^{qp'}\Big] \,\dx\dt
    \bigg]^\frac{1}{pq} + C\lambda,
\end{align*}
where $\boldsymbol\lambda_\sfr=N(p-2)+p\sfr$. 
In view of the sub-intrinsic condition \eqref{sub-intr}, we have
\begin{align*}
    \lambda^{(p-2)\frac{N}{p}}
    \biint_{Q_{2\rho}^{(\lambda)}} 
    \frac{\big|u-(u)_{2\rho}^{(\lambda)}\big|^\sfr}{(2\lambda^{\frac{p-2}{2}}\rho)^\sfr} \,\dx\dt
    \le 
    \sfK\lambda^{(p-2)\frac{N}{p}-\frac{p-2}2\sfr}
    \lambda^{\frac{p\sfr}2}
    =
    \sfK \lambda^\frac{\boldsymbol\lambda_\sfr}{p}.
\end{align*}
Consequently,
\begin{equation*}
    \Bigg[
    \lambda^{(p-2)\frac{N}{p}}
    \iint_{Q_{2\rho}^{(\lambda)}}
    \frac{\big|u-(u)_{2\rho}^{(\lambda)}\big|^{\sfr}}{(2\lambda^{\frac{p-2}{2}}\rho)^{\sfr}}\,\dx\dt
    \Bigg]^{\frac{p}{\boldsymbol{\lambda}_\sfr}}
    \le
    \sfK^\frac{p}{\boldsymbol{\lambda}_\sfr}\lambda,
\end{equation*}
so that the conclusion follows.
\end{proof}

Assuming both the sub-intrinsic coupling \eqref{sub-intr} and the super-intrinsic bound \eqref{super-intr}, we arrive at the following estimate.
\begin{lemma}[intrinsic scale bound]\label{lem:sub-intr-2}
Let $p\in\bigl(1,\tfrac{2N}{N+2}\bigr]$ and $\sfK\ge1$. There exists a constant
$C=C(N,p,C_o,C_1,q,\sfr,\sfK)>0$ with the following property. If $u$ is a weak
solution to \eqref{eq-par-gen} in $E_T$ in the sense of
Definition~\ref{def:weak_solution}, satisfying \eqref{growth-a*}, and if
$Q_{2\rho}^{(\lambda)}(z_o)\subseteq E_T$ is a parabolic cylinder fulfilling
\eqref{sub-intr} and \eqref{super-intr}, then
\begin{align*}
	\lambda^p
    \le
	C \biint_{Q_{\rho}^{(\lambda)}(z_o)} |Du|^p \,\dx\dt +
    C \bigg[\biint_{Q_{2\rho}^{(\lambda)}(z_o)} \Big[| \sfF|^{qp} + \big|\lambda^{\frac{p-2}{2}}\rho\, \sff\big|^{qp'}\Big]\,\dx\dt
	\bigg]^{\frac{1}{q}}.
\end{align*}
\end{lemma}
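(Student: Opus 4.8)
The plan is to bootstrap the super-intrinsic lower bound \eqref{super-intr} on $\lambda$ into the asserted bound on $\lambda^p$ by feeding it through the intrinsic $L^\infty$-estimate of Lemma~\ref{lem:intr-sup} and the Poincar\'e inequality of Corollary~\ref{cor:poin}, and to close by absorption after a dichotomy on the size of the source terms relative to $\lambda$. Throughout, abbreviate $\mathcal G:=\biint_{Q_\rho^{(\lambda)}(z_o)}|Du|^p\,\dx\dt$ and $\mathcal S:=\biint_{Q_{2\rho}^{(\lambda)}(z_o)}\bigl[|\sfF|^{qp}+|\lambda^{\frac{p-2}{2}}\rho\,\sff|^{qp'}\bigr]\,\dx\dt$; recall also that $\sfr>2>p$, so $\sfr-p>0$, and that $\lambda>0$.

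First I would recast \eqref{super-intr} in the intrinsic scaling: multiplying numerator and denominator by $\lambda^{\frac{(p-2)\sfr}{2}}$ and using $\tfrac{p\sfr}{2}-\tfrac{(p-2)\sfr}{2}=\sfr$ turns it into
\[
\lambda^{\sfr}\le\sfK\biint_{Q_\rho^{(\lambda)}(z_o)}\frac{\bigl|u-(u)_{z_o;\rho}^{(\lambda)}\bigr|^{\sfr}}{(\lambda^{\frac{p-2}{2}}\rho)^{\sfr}}\,\dx\dt.
\]
Next, since \eqref{sub-intr} is in force, Lemma~\ref{lem:intr-sup} gives a pointwise bound for $|u-(u)_{z_o;2\rho}^{(\lambda)}|$ on $Q_\rho^{(\lambda)}(z_o)$; combining it with Jensen's inequality for the difference $(u)_{z_o;\rho}^{(\lambda)}-(u)_{z_o;2\rho}^{(\lambda)}$ yields
\[
\frac{\bigl|u-(u)_{z_o;\rho}^{(\lambda)}\bigr|}{\lambda^{\frac{p-2}{2}}\rho}\le C\bigl(\lambda+\mathcal S^{\frac1{pq}}\bigr)\qquad\text{on }Q_\rho^{(\lambda)}(z_o).
\]
Then I would estimate $\sfr-p$ of the powers in the previous integrand by this $L^\infty$-bound, keeping the remaining $p$ powers, so that
\[
\lambda^{\sfr}\le C\bigl(\lambda+\mathcal S^{\frac1{pq}}\bigr)^{\sfr-p}\biint_{Q_\rho^{(\lambda)}(z_o)}\frac{\bigl|u-(u)_{z_o;\rho}^{(\lambda)}\bigr|^{p}}{(\lambda^{\frac{p-2}{2}}\rho)^{p}}\,\dx\dt.
\]
Applying Corollary~\ref{cor:poin} with $\sigma=p$ and a free parameter $\varepsilon\in(0,1]$, and using H\"older's inequality together with $Q_\rho^{(\lambda)}\subset Q_{2\rho}^{(\lambda)}$ and $|Q_{2\rho}^{(\lambda)}|=2^{N+2}|Q_\rho^{(\lambda)}|$ to dominate $\biint_{Q_\rho^{(\lambda)}}\bigl(|\sfF|^p+|\lambda^{\frac{p-2}{2}}\rho\,\sff|^{p'}\bigr)$ by $C\mathcal S^{1/q}$, I would obtain
\[
\lambda^{\sfr}\le C\bigl(\lambda+\mathcal S^{\frac1{pq}}\bigr)^{\sfr-p}\Bigl[\varepsilon\lambda^{p}+C_\varepsilon\bigl(\mathcal G+\mathcal S^{\frac1q}\bigr)\Bigr].
\]

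To finish I would split into two cases. If $\mathcal S^{1/(pq)}\ge\lambda$, then $\lambda^p\le\mathcal S^{1/q}\le\mathcal G+\mathcal S^{1/q}$ and we are done. If $\mathcal S^{1/(pq)}<\lambda$, then $(\lambda+\mathcal S^{1/(pq)})^{\sfr-p}\le 2^{\sfr-p}\lambda^{\sfr-p}$, and the last inequality becomes $\lambda^{\sfr}\le a\varepsilon\,\lambda^{\sfr}+C_\varepsilon'\,\lambda^{\sfr-p}\bigl(\mathcal G+\mathcal S^{1/q}\bigr)$ with $a$ depending only on the structural data and $\sfK$; choosing $\varepsilon=\varepsilon(N,p,C_o,C_1,q,\sfr,\sfK)$ so that $a\varepsilon\le\tfrac12$, absorbing the first term, and dividing through by $\tfrac12\lambda^{\sfr-p}$ gives $\lambda^p\le C\bigl(\mathcal G+\mathcal S^{1/q}\bigr)$, which is the assertion.

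I expect the only genuinely delicate point to be this final absorption. The chain of estimates mixes the $L^{\sfr}$-scale coming from the (super-)intrinsic hypotheses with the $L^p$-scale of the gradient energy, so the resulting inequality is not homogeneous in $\lambda$; the term $\varepsilon\lambda^p$ can be turned back into an absorbable $\lambda^{\sfr}$ only after multiplication by $\lambda^{\sfr-p}$, which is legitimate precisely in the regime $\mathcal S^{1/(pq)}\lesssim\lambda$ -- the complementary regime being trivial. Everything else is a routine combination of Lemma~\ref{lem:intr-sup}, Corollary~\ref{cor:poin}, H\"older's and Jensen's inequalities, and the volume comparability of $Q_\rho^{(\lambda)}$ and $Q_{2\rho}^{(\lambda)}$.
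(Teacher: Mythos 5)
Your proof is correct, and it follows the paper's argument step for step through the chain super-intrinsic $\Rightarrow$ $L^\infty$-estimate (Lemma~\ref{lem:intr-sup} plus the triangle-inequality switch between $(u)_{z_o;\rho}^{(\lambda)}$ and $(u)_{z_o;2\rho}^{(\lambda)}$) $\Rightarrow$ Poincar\'e with parameter (Corollary~\ref{cor:poin}) and H\"older on the source terms, arriving at the same inequality $\lambda^\sfr \le C(\lambda+\mathcal S^{1/(pq)})^{\sfr-p}\bigl[\varepsilon\lambda^p + C_\varepsilon(\mathcal G+\mathcal S^{1/q})\bigr]$. The only divergence is the final absorption. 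The paper rewrites the product so that both brackets scale like a power of $p$, then applies Young's inequality with conjugate exponents $\tfrac{\sfr}{\sfr-p}$ and $\tfrac{\sfr}{p}$, producing $C\varepsilon\lambda^\sfr + C_\varepsilon(\mathcal G+\mathcal S^{1/q})^{\sfr/p}$ and concluding after absorption and a final $p/\sfr$-th root. You instead split on whether $\mathcal S^{1/(pq)}\ge\lambda$ (in which case $\lambda^p\le\mathcal S^{1/q}$ is immediate) or $\mathcal S^{1/(pq)}<\lambda$ (in which case the prefactor collapses to $2^{\sfr-p}\lambda^{\sfr-p}$ and you may divide by $\lambda^{\sfr-p}$ after absorbing $\varepsilon\lambda^\sfr$). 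Both closings are valid; yours avoids the Young step and the final $\sfr/p$-th power at the cost of a case distinction, while the paper's Young argument treats both regimes in a single estimate. The dependence of the final constant on $N,p,C_o,C_1,q,\sfr,\sfK$ comes out the same either way since $\varepsilon$ is fixed in terms of the structural data and $\sfK$.
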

The cylinders $Q_\rho^{(\lambda)}$ are defined in the intrinsic geometry of the parabolic $p$-Laplacian, with spatial radius $\lambda^{\frac{p-2}{2}}\rho$ and time height $\rho^2$; the parameter $\lambda$ encodes the local scale of oscillation/degeneracy. The sub-intrinsic and super-intrinsic conditions calibrate this scale by tying $\lambda$ to the $L^\sfr$-oscillation of $u$ on $Q_{2\rho}^{(\lambda)}$ and $Q_\rho^{(\lambda)}$. Then, the lemma shows that this intrinsic scale is bounded -- quantitatively controlled -- by the local energy and the source terms.
Thus, the result is a scale bound for $\lambda$: it prevents an uncalibrated choice of intrinsic cylinders, converts geometric coupling into energetic control, and furnishes the normalization needed for subsequent local estimates.

\begin{proof}
Throughout the proof, the reference point $z_o$ is suppressed.  From the super-intrinsic property \eqref{super-intr} it follows that
\begin{align*}
	\lambda^\sfr
	&\le
	\sfK\biint_{Q_\rho^{(\lambda)}}
	\frac{\big|u-(u)_{\rho}^{(\lambda)}\big|^\sfr}{(\lambda^{\frac{p-2}{2}}\rho)^\sfr} \,\dx\dt \\
    &\le
    \sfK
    \Bigg[\sup_{Q_\rho^{(\lambda)}
    } \frac{\big|u-(u)_{\rho}^{(\lambda)}\big|}{(\lambda^{\frac{p-2}{2}}\rho)}\Bigg]^{\sfr-p} 
    \biint_{Q_\rho^{(\lambda)}}
	\frac{\big|u-(u)_{\rho}^{(\lambda)}\big|^p}{(\lambda^{\frac{p-2}{2}}\rho)^p} \,\dx\dt.
\end{align*}
To estimate the sup-term, observe that
\begin{align*}
    \sup_{Q_\rho^{(\lambda)}}\big|u-(u)_{\rho}^{(\lambda)}\big|
    &\le 
    \sup_{Q_\rho^{(\lambda)}}\big|u-(u)_{2\rho}^{(\lambda)}\big| + 
    \big|(u)_{\rho}^{(\lambda)} - (u)_{2\rho}^{(\lambda)}\big| \\
    &\le 
    \sup_{Q_\rho^{(\lambda)}}\big|u-(u)_{2\rho}^{(\lambda)}\big| + 
    \biint_{Q_\rho^{(\lambda)}} \big|u - (u)_{2\rho}^{(\lambda)}\big| \,\dx\dt \\
    &\le 
    2\sup_{Q_\rho^{(\lambda)}}\big|u-(u)_{2\rho}^{(\lambda)}\big|  .
\end{align*}
Then, Lemma~\ref{lem:intr-sup}  yields
\begin{align*}
	\sup_{Q_\rho^{(\lambda)}} \frac{\big|u-(u)_{\rho}^{(\lambda)}\big|}{\lambda^{\frac{p-2}{2}}\rho}
    \le 
    C\lambda + 
    C\bigg[ 
    \biint_{Q_{2\rho}^{(\lambda)}} 
    \Big[| \sfF|^{qp} + \big|\lambda^{\frac{p-2}{2}}\rho\, \sff\big|^{qp'}\Big] \,\dx\dt
    \bigg]^\frac{1}{pq}.
\end{align*}
Substituting the preceding bound and applying the Poincaré inequality from Corollary~\ref{cor:poin} together with Hölder’s inequality, we obtain
\begin{align*}
	\lambda^\sfr
    &\le
    C\Bigg[
    \lambda +
    \bigg[ 
    \biint_{Q_{2\rho}^{(\lambda)}} 
    \Big[| \sfF|^{qp} + \big|\lambda^{\frac{p-2}{2}}\rho\, \sff\big|^{qp'}\Big] \,\dx\dt
    \bigg]^\frac{1}{pq} 
    \Bigg]^{\sfr-p} \\
    &\quad\cdot
    \bigg[\epsilon\lambda^p +
	\frac{1}{\epsilon^{\frac{2-p}{p-1}}}\biint_{Q_\rho^{(\lambda)}} \Big[|Du|^{p} + |\sfF|^{p}+\big|\lambda^{\frac{p-2}{2}}\rho \,\sff\big|^{p'}\Big]\,\dx\dt
	\bigg]\\
    &\le
    C\epsilon\Bigg[
    \lambda^p + 
    \bigg[ 
    \biint_{Q_{2\rho}^{(\lambda)}} 
    \Big[| \sfF|^{qp} + \big|\lambda^{\frac{p-2}{2}}\rho \,\sff\big|^{qp'}\Big] \,\dx\dt
    \bigg]^\frac{1}{q} 
    \Bigg]^\frac{\sfr-p}{p} \\
    &\quad\cdot
    \Bigg[\lambda^p +
	\frac{1}{\epsilon^{\frac{1}{p-1}}}\bigg\{\biint_{Q_\rho^{(\lambda)}} |Du|^{p}\,\dx\dt + 
    \bigg[
    \biint_{Q_{2\rho}^{(\lambda)}}
    \Big[|\sfF|^{qp}+\big|\lambda^{\frac{p-2}{2}}\rho\, \sff\big|^{qp'}\Big]\,\dx\dt\bigg]^\frac1q
	\bigg\}\Bigg]\\
    &\le C\lambda^\sfr\epsilon
    +
    C_\epsilon
    \Bigg[
    \biint_{Q_\rho^{(\lambda)}} |Du|^{p}\,\dx\dt
    +
    \bigg[\biint_{Q_{2\rho}^{(\lambda)}} 
    \Big[| \sfF|^{qp} + \big|\lambda^{\frac{p-2}{2}}\rho \,\sff\big|^{qp'}\Big] \,\dx\dt
    \bigg]^\frac{1}{q}
    \Bigg]^\frac{\sfr}{p}
\end{align*}
for any $\epsilon\in(0,1]$. Choosing $\varepsilon>0$ so small that $C\,\varepsilon\le \tfrac12$, the first term on the right can be absorbed into the left–hand side, and the previous estimate yields
\begin{equation*}
    \lambda^{\sfr}
    \le
    C
    \Bigg[
    \biint_{Q_\rho^{(\lambda)}} |Du|^{p}\,\dx\dt
    +
    \bigg[\biint_{Q_{2\rho}^{(\lambda)}} 
    \Big[| \sfF|^{qp} + \big|\lambda^{\frac{p-2}{2}}\rho\, \sff\big|^{qp'}\Big] \,\dx\dt
    \bigg]^\frac{1}{q}
    \Bigg]^\frac{\sfr}{p}.
\end{equation*}
Raising both sides to the power $p/\sfr$ gives the desired bound.
\end{proof}

\subsection{Reverse Hölder-type inequality}\label{sec:rev-hoel}
Under the sub- and super-intrinsic assumptions \eqref{sub-intr-rh} -- \eqref{super-intr-rh}, the intrinsic parameter $\lambda$ is calibrated by the local energy and the source terms (cf. Lemma~\ref{lem:sub-intr-2}). In combination with the Caccioppoli estimate and the parabolic Poincaré inequality on intrinsic cylinders (Corollary~\ref{cor:poin}), this calibration prevents loss of scale and yields a self-improvement of integrability for $|Du|$ in the Gehring sense. The next proposition records the resulting reverse Hölder inequality on $Q_{\rho}^{(\lambda)}(z_o)$, with constants depending only on the structural quantities and the coupling parameter $\sfK$.

\begin{proposition}[Reverse Hölder on intrinsic cylinders]\label{prop:RevH}
Let $p>1$ and $\sfK\ge1$. There exists a constant $C=C(N,p,C_o,C_1,q,\sfr,\sfK)>0$ with the following property. If $u$ is a weak solution to \eqref{eq-par-gen} in $E_T$ in the sense of Definition~\ref{def:weak_solution}, satisfying \eqref{growth-a*}, and if $Q_{2\rho}^{(\lambda)}(z_o)\subseteq E_T$ is a parabolic cylinder on which both
\begin{align}\label{sub-intr-rh}
	\biint_{Q_{2\rho}^{(\lambda)}(z_o)}
	\frac{\big|u-(u)_{z_o;2\rho}^{(\lambda)}\big|^\sfr}{(2\rho)^\sfr} \,\dx\dt 
    \le
    \sfK\lambda^\frac{p\sfr}{2}
\end{align}
and 
\begin{align}\label{super-intr-rh}
    \lambda^p
	\le
	\sfK\Bigg[\biint_{Q_\rho^{(\lambda)}(z_o)}
	|Du|^p \,\dx\dt + 
    \bigg[\biint_{Q_{2\rho}^{(\lambda)}(z_o)} \Big[| \sfF|^{qp} + \big|\lambda^{\frac{p-2}{2}}\rho\, \sff\big|^{qp'}\Big]\,\dx\dt \bigg]^{\frac{1}{q}} \Bigg]
\end{align}
hold, then the reverse Hölder inequality
\begin{align*}
	\biint_{Q_{\rho}^{(\lambda)}(z_o)} |Du|^{p}  \,\dx\dt
    &\le
	C \bigg[\biint_{Q_{2\rho}^{(\lambda)}(z_o)}
	|Du|^{\theta}  \,\dx\dt\bigg]^{\frac{p}{\theta}} \\
    &\quad +
	C \bigg[\biint_{Q_{2\rho}^{(\lambda)}(z_o)} \Big[| \sfF|^{qp} + \big|\lambda^{\frac{p-2}{2}}\rho\, \sff\big|^{qp'}\Big]\,\dx\dt \bigg]^{\frac{1}{q}},
\end{align*}
is valid, where $\theta:=\frac{Np}{N+2}$. 
\end{proposition}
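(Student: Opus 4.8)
The plan is to run the usual parabolic Gehring scheme — energy (Caccioppoli) inequality, intrinsic Poincaré inequality, Hölder and Young — but, since $p$ is sub-critical, to let the quantitative $L^\infty$-bound play the role that the parabolic Sobolev embedding plays in the super-critical case (there it absorbs the $L^2$-term of the energy inequality; here it cannot). Throughout, write $a:=(u)^{(\lambda)}_{z_o;2\rho}$ and $\Sigma:=\biint_{Q_{2\rho}^{(\lambda)}}\big[|\sfF|^{qp}+|\lambda^{\frac{p-2}{2}}\rho\,\sff|^{qp'}\big]\,\dx\dt$.

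First I would apply Lemma~\ref{lm:energy-without-zeta} on $Q_\rho^{(\lambda)}(z_o)\Subset Q_{3\rho/2}^{(\lambda)}(z_o)$, subtracting $a$, which gives
\begin{equation*}
	\biint_{Q_\rho^{(\lambda)}}|Du|^p\,\dx\dt
	\le
	C\biint_{Q_{3\rho/2}^{(\lambda)}}\Big[\tfrac{|u-a|^p}{(\lambda^{\frac{p-2}{2}}\rho)^p}+\tfrac{|u-a|^2}{\rho^2}\Big]\dx\dt
	+C\,\Sigma^{\frac1q}
\end{equation*}
(the source terms of the energy inequality are dominated by $\Sigma^{1/q}$ by Jensen's inequality, since $q>1$). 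Since $\sfA$ is independent of $u$ in the vector case, $u-a$ is again a weak solution; as $Q_{2\rho}^{(\lambda)}$ fulfils \eqref{sub-intr-rh}, Corollary~\ref{cor:sup-qualitativ} applied to $u-a$ on $Q_{2\rho}^{(\lambda)}$ — exactly as in the proof of Lemma~\ref{lem:intr-sup}, but with $\tau=\tfrac34$ — yields $\sup_{Q_{3\rho/2}^{(\lambda)}}|u-a|\le C\lambda^{\frac{p-2}{2}}\rho\,(\lambda+\Sigma^{\frac1{pq}})$.

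Using $\rho^2=(\lambda^{\frac{p-2}{2}}\rho)^2\lambda^{2-p}$ together with this bound, I would lower the oscillation exponent from $p$ and $2$ down to $\theta=\tfrac{Np}{N+2}$ through $|u-a|^p\le|u-a|^\theta(\sup|u-a|)^{p-\theta}$ and $|u-a|^2\le|u-a|^\theta(\sup|u-a|)^{2-\theta}$, so that the two bracketed terms are dominated, respectively, by $C(\lambda+\Sigma^{\frac1{pq}})^{p-\theta}\sfP$ and $C\lambda^{p-2}(\lambda+\Sigma^{\frac1{pq}})^{2-\theta}\sfP$, with $\sfP:=\biint_{Q_{3\rho/2}^{(\lambda)}}|u-a|^\theta(\lambda^{\frac{p-2}{2}}\rho)^{-\theta}\dx\dt$. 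The intrinsic Poincaré inequality of Lemma~\ref{lem:poin} with $\sigma=\theta\in[p-1,p]$ (replacing $a$ by $(u)^{(\lambda)}_{z_o;3\rho/2}$ via Lemma~\ref{lem:mean}, the switching error being $\le C\lambda^\theta$ by \eqref{sub-intr-rh}) then gives
\begin{equation*}
	\sfP
	\le
	C\biint_{Q_{3\rho/2}^{(\lambda)}}|Du|^\theta\,\dx\dt
	+C\Big[\lambda^{2-p}\biint_{Q_{3\rho/2}^{(\lambda)}}\big(|Du|^{p-1}+|\sfF|^{p-1}+|\lambda^{\frac{p-2}{2}}\rho\,\sff|\big)\dx\dt\Big]^\theta
	+C\lambda^\theta.
\end{equation*}

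Finally the spare powers of $\lambda$ must be discharged. The crucial input is the scale bound in $\theta$-form, $\lambda^p\le C\big[\biint_{Q_{2\rho}^{(\lambda)}}|Du|^\theta\dx\dt\big]^{p/\theta}+C\Sigma^{1/q}$, which I would obtain from the super-intrinsic hypothesis \eqref{super-intr-rh} by repeating the argument of Lemma~\ref{lem:sub-intr-2} with the Poincaré exponent $\theta$ in place of $p$ (Corollary~\ref{cor:poin} on $Q_\rho^{(\lambda)}$) and absorbing the resulting $\varepsilon\lambda^\sfr$-term. Substituting this into every factor $\lambda^{j}$ ($j>0$) produced above, using $\biint|Du|^{p-1}\le(\biint|Du|^\theta)^{(p-1)/\theta}$ (valid since $\theta\ge p-1$), $\biint\big(|\sfF|^{p-1}+|\lambda^{\frac{p-2}{2}}\rho\,\sff|\big)\le C\Sigma^{\frac{p-1}{pq}}$, and $\biint_{Q_{3\rho/2}^{(\lambda)}}(\cdot)\le C\biint_{Q_{2\rho}^{(\lambda)}}(\cdot)$, and then invoking Young's inequality term by term, every contribution turns out to be $\le C\big[\biint_{Q_{2\rho}^{(\lambda)}}|Du|^\theta\big]^{p/\theta}$ or $\le C\Sigma^{1/q}$; the exponents all match because, heuristically, $|Du|\sim\lambda$ makes each quantity homogeneous of degree $p$ in $\lambda$. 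Collecting these estimates gives the asserted reverse Hölder inequality. I expect the main obstacle to be precisely this last step, driven by the $L^2$-term of the energy inequality: it is the dominant term in the sub-critical range and cannot be absorbed via the parabolic Sobolev embedding, one only has $\biint\tfrac{|u-a|^2}{\rho^2}\le C\lambda^p$ (Hölder plus \eqref{sub-intr-rh}, using $\sfr>2$), so converting $\lambda^p$ into the right-hand side is forced through the sharp $\theta$-calibration; the delicate bookkeeping is to verify that, after invoking this calibration, no uncontrolled negative power of $\lambda$ survives from the $L^\infty$-interpolation or from the temporal terms of Lemma~\ref{lem:poin}, and that every leftover factor of $\lambda$ lands either on $\big[\biint_{Q_{2\rho}^{(\lambda)}}|Du|^\theta\big]^{p/\theta}$ or on $\Sigma^{1/q}$.
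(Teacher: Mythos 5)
Your plan hinges on a ``$\theta$-form'' of the super-intrinsic scale bound, namely $\lambda^p\le C\big[\biint_{Q_{2\rho}^{(\lambda)}}|Du|^\theta\,\dx\dt\big]^{p/\theta}+C\Sigma^{1/q}$, which you say you would ``obtain from the super-intrinsic hypothesis \eqref{super-intr-rh} by repeating the argument of Lemma~\ref{lem:sub-intr-2} with the Poincar\'e exponent $\theta$ in place of $p$.'' This does not work: Lemma~\ref{lem:sub-intr-2} does not run off \eqref{super-intr-rh} at all — it \emph{produces} \eqref{super-intr-rh} starting from the $L^\sfr$-oscillation coupling \eqref{super-intr}, which is \emph{not} a hypothesis of Proposition~\ref{prop:RevH}. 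From \eqref{super-intr-rh} alone one only has $\lambda^p\le \sfK\biint_{Q_\rho^{(\lambda)}}|Du|^p+\sfK\Sigma^{1/q}$, and since $\biint|Du|^p\ge\big(\biint|Du|^\theta\big)^{p/\theta}$ by Jensen, your $\theta$-form is strictly \emph{stronger} than \eqref{super-intr-rh}; in fact it is essentially equivalent to the conclusion of the proposition itself, so obtaining it at this stage would be circular.

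Without the $\theta$-form your scheme stalls, because the leftover $\lambda^p$-term carries a non-tunable constant. After the $L^\infty$-interpolation $|u-a|^p\le |u-a|^\theta(\sup|u-a|)^{p-\theta}$, the bound for $\sfP$ from Lemma~\ref{lem:poin}, and the mean-switching correction $|(u)^{(\lambda)}_{3\rho/2}-a|^\theta\lesssim\lambda^\theta$, you arrive at $C\lambda^p$ with a \emph{fixed structural} constant $C$ coming from Corollary~\ref{cor:sup-qualitativ}, Lemma~\ref{lem:poin}, and \eqref{sub-intr-rh} — none of these carries a small free parameter. Feeding this back through \eqref{super-intr-rh} produces $C\sfK\biint_{Q_\rho^{(\lambda)}}|Du|^p$ on the right with $C\sfK>1$, which cannot be reabsorbed. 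This is exactly why the paper's proof of Proposition~\ref{prop:RevH} avoids the $L^\infty$-estimate altogether: the $L^2$-term of the energy inequality is interpolated between $L^p$ and $L^\sfr$ with a free parameter $\tilde\epsilon$, the resulting $|u-a|^p$-integral is handled via the Gagliardo--Nirenberg inequality (Lemma~\ref{lem:gag}) in terms of $\phi(s)$ — absorbed through the iteration Lemma~\ref{lem:tech-classical} — and the Poincar\'e step uses Corollary~\ref{cor:poin} with its own parameter $\epsilon$; tracking $\tilde\epsilon,\epsilon$ ensures the final coefficient of $\lambda^p$ is $\le 1/(2\sfK)$, so that \eqref{super-intr-rh} leaves an absorbable $\tfrac12\biint|Du|^p$. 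A secondary issue is scope: Proposition~\ref{prop:RevH} is stated for all $p>1$ and general Carath\'eodory $\sfA$, whereas your route needs $p\le\frac{2N}{N+2}$ and $\sfA$ independent of $u$ (to invoke Theorem~\ref{thm:sup-quant} and to translate $u$ by a constant), so even if repaired it would prove a more restricted statement.
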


\begin{proof}
Throughout the proof, the reference point $z_o$ is suppressed from the notation. Set $a:=(u)_{2\rho}^{(\lambda)}$ and define 
\begin{align*}
    \phi(r)
    &:=
    \sup_{t\in\Lambda_{r}}
    \mint_{B_{r}^{(\lambda)}}
    \frac{|u(t)-a|^{2}}{r^{2}}\,\dx
    +
\biint_{Q_{r}^{(\lambda)}} |Du|^{p}\,\dx\dt.
\end{align*}
for $\rho\le r\le 2\rho$. Applying Lemma~\ref{lm:energy-without-zeta} on nested cylinders $Q_r^{(\lambda)}\subset Q_s^{(\lambda)}$ for $\rho\le r<s\le 2\rho$ gives the energy inequality
\begin{align*}
    \phi(r)
    &\le
    C\biint_{Q_s^{(\lambda)}}\bigg[
    \Big(\frac{s}{s-r}\Big)^p\frac{|u-a|^p}{(\lambda^{\frac{p-2}{2}}s)^p} +
    \Big(\frac{s}{s-r}\Big)^2\frac{|u-a|^2}{s^2}\bigg]\dx\dt\\
    &\phantom{\le\,}
    +
    C\biint_{Q_s^{(\lambda)}}\Big[
    |\sfF|^p+\big|\lambda^{\frac{p-2}{2}}s\,\sff\big|^{p'} \Big]\,\dx\dt. 
\end{align*}
Since $p<2<\sfr$, interpolate between $L^p$ and $L^\sfr$ to control the $L^2$-term on the right-hand side. Using \eqref{sub-intr-rh} and then Young’s inequality with conjugate exponents $\tfrac{\sfr-p}{2-p}$ and $\tfrac{\sfr-p}{\sfr-2}$, we obtain
\begin{align*}
    \biint_{Q_{s}^{(\lambda)}} &
    \frac{|u-a|^2}{s^2} \,\dx\dt \\ 
	&\le
    \bigg[\biint_{Q_{s}^{(\lambda)}}
    \frac{|u-a|^\sfr}{s^\sfr} \,\dx\dt\bigg]^{\frac{2-p}{\sfr-p}} 
    \bigg[\biint_{Q_{s}^{(\lambda)}}
    \frac{|u-a|^p}{s^p} \,\dx\dt\bigg]^{\frac{\sfr-2}{\sfr-p}} \\
    &\le
    C\lambda^{\frac{p(p-2)(\sfr-2)}{2(\sfr-p)}} 
    \bigg[
    \underbrace{\biint_{Q_{2\rho}^{(\lambda)}}
    \frac{|u-a|^\sfr}{(2\rho)^\sfr} \,\dx\dt}_{\le\sfK 
    \lambda^\frac{p\sfr}{2}}\bigg]^{\frac{2-p}{\sfr-p}} 
    \bigg[\biint_{Q_{s}^{(\lambda)}}
    \frac{|u-a|^p}{(\lambda^{\frac{
    p-2}{2}}s)^p} \,\dx\dt\bigg]^{\frac{\sfr-2}{\sfr-p}} \\
    &= 
	C\lambda^{\frac{p(2-p)}{\sfr-p}} \bigg[\biint_{Q_{s}^{(\lambda)}}
    \frac{|u-a|^p}{(\lambda^{\frac{p-2}{2}}s)^p} \,\dx\dt\bigg]^{\frac{\sfr-2}{\sfr-p}} \\
    &\le 
    \tilde\epsilon\lambda^p + 
    \frac{C}{\tilde\epsilon^{\frac{2-p}{\sfr-2}}}
    \biint_{Q_{s}^{(\lambda)}}
    \frac{|u-a|^p}{(\lambda^{\frac{p-2}{2}}s)^p} \,\dx\dt,
\end{align*}
for any $\tilde\epsilon\in (0,1]$. Substituting the preceding interpolation bound into the energy inequality gives, for $\rho\le r<s\le2\rho$,
\begin{align*}
    \phi(r)
    &\le
    C\sfR_{r,s}^2 \lambda^p \tilde\epsilon +
	\frac{C\,\sfR_{r,s}^p}{\tilde\epsilon^{\frac{2-p}{\sfr-2}}}
    \biint_{Q_{s}^{(\lambda)}}
    \frac{|u-a|^p}{(\lambda^{\frac{p-2}{2}}s)^p} \,\dx\dt +
    C \sfI,
\end{align*}
where $\sfR_{r,s}:=\frac{\rho}{s-r}$ and
$$
    \sfI
    :=
    \bigg[\biint_{Q_\rho^{(\lambda)}(z_o)} \Big[| \sfF|^{qp} + \big|\lambda^{\frac{p-2}{2}}\rho\, \sff\big|^{qp'}\Big]\,\dx\dt \bigg]^{\frac{1}{q}}. 
$$
Here $\sfI$ arises from the source terms after an application of Hölder’s inequality in the energy estimate. 
Applying the Gagliardo-Nirenberg inequality from Lemma \ref{lem:gag} on $Q_s^{(\lambda)}$ (with $|u-a|/(\lambda^\frac{p-2}2s)$) yields 
\begin{align*}
	\biint_{Q_{s}^{(\lambda)}} &
    \frac{|u-a|^p}{(\lambda^{\frac{p-2}{2}}s)^p} \,\dx\dt \\ 
    &\le
	C\Bigg[
    \lambda^{2-p}\sup_{t\in\Lambda_{s}}
    \underbrace{\mint_{B_{s}^{(\lambda)}}
    \frac{|u(t)-a|^2}{s^2} \,\dx}_{\le \phi(s)}
	\Bigg]^{\frac{p}{N+2}}
    \biint_{Q_{s}^{(\lambda)}} 
    \bigg[ |Du|^\theta + \frac{|u-a|^\theta}{(\lambda^{\frac{p-2}{2}}s)^\theta} \bigg] \,\dx\dt \\
    &\le 
    C \big[\lambda^{2-p}\phi(s)\big]^{\frac{p}{N+2}}
    \biint_{Q_{2\rho}^{(\lambda)}} 
    \bigg[ |Du|^\theta + \frac{|u-a|^\theta}{(\lambda^{\frac{p-2}{2}}\rho)^\theta} \bigg] \,\dx\dt ,
\end{align*}
since $Q_{s}^{(\lambda)}\subset Q_{2\rho}^{(\lambda)}$ and enlarging the domain of integration only increases the integral.
Applying Young’s inequality with conjugate exponents $\frac{N+2}{p}$ and $\frac{N+2}{N+2-p}$ to the factor $\phi^\frac{p}{N+2}$
in the last bound, we obtain
\begin{align*}
    \phi(r)
    &\le
	C\sfR_{r,s}^2 \lambda^p \tilde\epsilon
    +
     C\sfI \\
     &\phantom{\le \,}
    +\phi^\frac{p}{N+2}(s)
    \cdot
    \frac{C\,\sfR_{r,s}^p
    \lambda^{\frac{p(2-p)}{N+2}}}{\tilde\epsilon^{\frac{2-p}{\sfr-2}}}
    \biint_{Q_{2\rho}^{(\lambda)}} 
    \bigg[ |Du|^\theta + \frac{|u-a|^\theta}{(\lambda^{\frac{p-2}{2}}\rho)^\theta} \bigg] \,\dx\dt
   \\
    &\le 
    \tfrac12 \phi(s)
    +
    C\sfR_{r,s}^2 \lambda^p \tilde\epsilon
    +
    C\sfI\\
    &\phantom{\le\,}
    +
    C_{\tilde\epsilon}
    \sfR_{r,s}^{\frac{Np}{N-\theta}}
    \lambda^{\frac{p(2-p)}{N+2-p}} 
    \Bigg[ \biint_{Q_{2\rho}^{(\lambda)}} 
    \bigg[ |Du|^\theta + \frac{|u-a|^\theta}{(\lambda^{\frac{p-2}{2}}\rho)^\theta} \bigg] \,\dx\dt \Bigg]^{\frac{N+2}{N+2-p}}
    .
\end{align*}
Applying the iteration lemma (Lemma~\ref{lem:tech-classical}) to reabsorb the term $\tfrac12\phi(s)$ yields
\begin{equation*}
    \phi(\rho)
    \le 
    C \lambda^p \tilde\epsilon +
    C_{\tilde\epsilon} \lambda^{\frac{p(2-p)}{N+2-p}} 
    \Bigg[ \biint_{Q_{2\rho}^{(\lambda)}} 
    \bigg[ |Du|^\theta + \frac{|u-a|^\theta}{(\lambda^{\frac{p-2}{2}}\rho)^\theta} \bigg] \,\dx\dt \Bigg]^{\frac{N+2}{N+2-p}} +
    C\sfI.
\end{equation*}
A further application of Young’s inequality with conjugate exponents
$\frac{N+2-p}{2-p}$ and $\frac{N+2-p}{N}$ gives
\begin{equation*}
    \phi(\rho)
    \le 
    C \lambda^p \tilde\epsilon+
    C_{\tilde\epsilon} 
    \Bigg[ \biint_{Q_{2\rho}^{(\lambda)}} 
    \bigg[ |Du|^\theta + \frac{|u-a|^\theta}{(\lambda^{\frac{p-2}{2}}\rho)^\theta} \bigg] \,\dx\dt \Bigg]^{\frac{p}{\theta}} +
    C\sfI,
\end{equation*}
where $\tilde\epsilon\in (0,1]$. 
Choose $\tilde\varepsilon=(4C\sfK)^{-1}$, where $C$ is the constant in front of $\lambda^{p}$ in the preceding bound and $\sfK$ is the constant from \eqref{super-intr-rh}. Applying Corollary~\ref{cor:poin} on $Q_{2\rho}^{(\lambda)}$ with $\sigma=\theta\in[p-1,p]$ to $w=u-a$, and using Hölder’s inequality, we infer that for any $\epsilon\in(0,1]$,
\begin{align*}
    \phi(\rho)
    &\le 
    \frac{\lambda^p}{4\sfK}  +
    C\Bigg[ \epsilon\lambda^\theta +
    \frac{1}{\epsilon^{\frac{2-p}{p-1}}} \biint_{Q_{2\rho}^{(\lambda)}} \Big[|Du|^{\theta} + |\sfF|^{\theta}+\big|\lambda^{\frac{p-2}{2}}\rho\,\sff\big|^\frac{\theta}{p-1}\Big]\,\dx\dt \Bigg]^{\frac{p}{\theta}} +
    C\sfI \\
    &\le 
    \big[\tfrac{1}{4\sfK} + C\epsilon\big] \lambda^p +
    \frac{C}{\epsilon^{\frac{2-p}{p-1}\cdot\frac{p}{\theta}}} 
    \bigg[ 
    \biint_{Q_{2\rho}^{(\lambda)}} |Du|^{\theta} \,\dx\dt \bigg]^{\frac{p}{\theta}} +
    \frac{C}{\epsilon^{\frac{2-p}{p-1}\cdot\frac{p}{\theta}}}\sfI .
\end{align*}
Choosing $\varepsilon=\frac{1}{4C\sfK}$ in the preceding bound gives
\begin{equation*}
    \biint_{Q_{\rho}^{(\lambda)}} |Du|^{p}\,\dx\dt
    \le
    \phi(\rho)
    \le
    \frac{\lambda^{p}}{2\sfK}\,
    +
    C\bigg[\biint_{Q_{2\rho}^{(\lambda)}} |Du|^{\theta}\,\dx\dt\bigg]^{\frac{p}{\theta}}
    +
    C\sfI.
\end{equation*}
By the super–intrinsic assumption \eqref{super-intr-rh},
\begin{align*}
    \frac{\lambda^{p}}{2\sfK}
    \le
    \tfrac12\biint_{Q_{\rho}^{(\lambda)}} |Du|^{p}\,\dx\dt
    +
    \tfrac12
    \bigg[\biint_{Q_{2\rho}^{(\lambda)}}
    \Big[|\sfF|^{qp}
    +|\lambda^{\frac{p-2}{2}}\rho\,\sff|^{qp'}\Big]\,\dx\dt\bigg]^{\frac{1}{q}}.
\end{align*}
Inserting this and absorbing $\tfrac12\biint_{Q_{\rho}^{(\lambda)}} |Du|^{p}\dx\dt$ into the left–hand side yields the claimed reverse Hölder inequality.
\end{proof}

\section{Higher integrability}\label{S:5}
To set up the higher integrability scheme, fix a standard cylinder
$$
	Q_{8R}(y_o,\tau_o)
	\equiv
	B_{8R}(y_o)\times \big(\tau_o-(8R)^2,\tau_o+(8R)^2\big)
	\subseteq E_T.
$$
Throughout this section the center $(y_o,\tau_o)$ is suppressed from the notation; write $Q_\rho:=Q_\rho(y_o,\tau_o)$ for $\rho\in(0,8R]$. Choose $\sfr>2$ so that \eqref{Eq:lm>0} is satisfied 
and introduce a parameter $\lambda_o\ge1$ (to be fixed below) satisfying
\begin{equation}\label{lambda-0}
  \lambda_o
  \ge
  1+\bigg[\biint_{Q_{4R}} 
  \frac{|u-(u)_{4R}|^{\sfr}}{(4 R)^{\sfr}} \,\dx\dt
  \bigg]^{\frac{2}{ \boldsymbol \lambda_\sfr}}.
\end{equation}

For $z_o\equiv(x_o,t_o)\in Q_{2R}$, $\rho\in(0,R]$, and $\lambda\ge1$, consider the cylinders $Q_{\rho}^{(\lambda)}(z_o)$ as in \eqref{def-Q}. Since the spatial radius is $\lambda^{\frac{p-2}{2}}\rho$ and $p\le2$, these cylinders are monotone in $\lambda$:
$$
1\le \lambda_1<\lambda_2
\quad\Longrightarrow\quad
Q_{\rho}^{(\lambda_2)}(z_o)\subset Q_{\rho}^{(\lambda_1)}(z_o).
$$
Moreover, 
$$
B^{(\lm)}_{\rho}(x_o)\subset B_{3R}(y_o)\subset B_{4R}(y_o),
\qquad
\Lambda_{\rho}(t_o)\subset\big(\tau_o-(4R)^2,\tau_o+(4R)^2\big);
$$
hence, $Q_{\rho}^{(\lambda)}(z_o)\subset Q_{4R}$ for all $\lambda\ge1$.

\subsection{Construction of an intrinsic, non-uniform cylinder system}\label{sec:cylinders}
The following construction of intrinsic cylinders has its origin in \cite{Gianazza-Schwarzacher, Schwarzacher}. 
Lemma \ref{lem:crazy-cylinders} records the basic properties of the scale parameter $\rho\mapsto\lambda_{z_o;\rho}$ attached to a fixed center $z_o\in Q_{R}$ and depending on the radius $\rho$. It is well defined for $\rho\in(0,R]$, continuous, and non-increasing; it calibrates the intrinsic geometry of $u$ at radius $\rho$, is uniformly controlled from below by the initial level $\lambda_o$, and exhibits the correct growth under changes of scale. In particular, the associated cylinders are nested, satisfy a sub-intrinsic oscillation bound,
and satisfy a quantitative dichotomy: either they become intrinsic at some larger radius
$\widetilde\rho\in[\rho,R/2]$ or a global bound for $\lambda_{z_o;\rho}$ holds.

\begin{lemma}\label{lem:crazy-cylinders}
Let $\lambda_{o}\ge1$, $z_{o}\in Q_{R}$, and $u\in L^{\sfr}(Q_{2R};\mathbb{R}^{k})$. There exists a function $(0,R]\ni\rho\mapsto \lambda_{z_o;\rho}\in[\lambda_{o},\infty)$ with the following properties:
\begin{itemize}
\item[a)]
The function $\rho\mapsto \lambda_{z_o;\rho}$ is continuous and non-increasing.

\item[b)]For every $s\in(\rho,R]$,
$$
\lambda_{z_o;\rho} 
\le
4^{\frac{2\sfr}{\boldsymbol\lambda_\sfr}}
\Big(\frac{s}{\rho}\Big)^{\frac{2}{\boldsymbol\lambda_\sfr}(N+2+\sfr)}\,
\lambda_{z_o;s},
$$
and, moreover,
$$
\lambda_{z_o;R}
\le
4^{\frac{2}{\boldsymbol\lambda_\sfr}(N+2+2\sfr)}\,\lambda_{o}.
$$
\item[c)] The associated scaled cylinders are nested:
$$
Q_{\rho}^{(\lambda_{z_o;\rho})}(z_o)\subset Q_{s}^{(\lambda_{z_o;s})}(z_o)\;\subseteq\;Q_{2R}
\qquad\text{whenever }\,0<\rho<s\le R.
$$
\item[d)] Sub–intrinsic control holds on each scale:
$$
\biint_{Q_{s}^{(\lambda_{z_o;\rho})}(z_o)}
\frac{\big|u-(u)_{z_o;s}^{(\lambda_{z_o;\rho})}\big|^{\sfr}}{s^{\sfr}}\,\mathrm{d}x\mathrm{d}t
\le
2^{\sfr}\lambda_{z_o;\rho}^{\frac{p\sfr}{2}}
\quad\text{for every }s\in[\rho,R].
$$
\item[e)] Either
$$
    \lambda_{z_o;\rho}
    \le
    4^{\frac{2}{\boldsymbol\lambda_\sfr}(N+2+3\sfr)}\lambda_{o}
$$
or there exists $\widetilde\rho\in\big[\rho,\tfrac{R}{2}\big]$ such that
$$
\biint_{Q_{\widetilde\rho}^{(\lambda_{z_o;\widetilde\rho})}(z_o)}
\frac{\big|u-(u)_{z_o;\widetilde\rho}^{(\lambda_{z_o;\widetilde\rho})}\big|^{\sfr}}{\widetilde\rho^{\sfr}}\,\mathrm{d}x\mathrm{d}t
=
\lambda_{z_o;\widetilde\rho}^{\frac{p\sfr}{2}}
=
\lambda_{z_o;\rho}^{\frac{p\sfr}{2}}.
$$
\end{itemize}
\end{lemma}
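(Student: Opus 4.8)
The plan is to define $\rho\mapsto\lambda_{z_o;\rho}$ implicitly as a \emph{stopping scale}. For $\lambda\ge\lambda_o$ and $s\in[\rho,R]$ put
$$
\Psi_{z_o}(s,\lambda):=\biint_{Q_s^{(\lambda)}(z_o)}\frac{\big|u-(u)_{z_o;s}^{(\lambda)}\big|^{\sfr}}{s^{\sfr}}\,\dx\dt ,
$$
which makes sense because $p\le 2$ forces the spatial radius $\lambda^{\frac{p-2}{2}}s\le s$, so $Q_s^{(\lambda)}(z_o)\subseteq Q_{2R}\subseteq Q_{4R}$. I would then set
$$
\lambda_{z_o;\rho}:=\inf\Big\{\lambda\ge\lambda_o:\ \Psi_{z_o}(s,\lambda)\le\lambda^{\frac{p\sfr}{2}}\ \text{for all }s\in[\rho,R]\Big\}.
$$
The first point to settle is that this set is non-empty and the infimum is attained. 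Non-emptiness is precisely where $\boldsymbol\lambda_\sfr>0$ is used: enlarging the cylinder to $Q_{4R}$, applying Lemma~\ref{lem:mean} with $a=(u)_{4R}$, using $|Q_s^{(\lambda)}(z_o)|\simeq\lambda^{\frac{(p-2)N}{2}}s^{N+2}$ and then \eqref{lambda-0}, one gets $\Psi_{z_o}(s,\lambda)\le C(N,\sfr)\,(R/\rho)^{N+2+\sfr}\,\lambda^{\frac{(2-p)N}{2}}\,\lambda_o^{\boldsymbol\lambda_\sfr/2}$; since $\tfrac{p\sfr}{2}-\tfrac{(2-p)N}{2}=\tfrac{\boldsymbol\lambda_\sfr}{2}>0$, every sufficiently large $\lambda$ satisfies all the constraints. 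Joint continuity of $\Psi_{z_o}$ in $(s,\lambda)$ (from $u\in L^{\sfr}$ and continuous dependence of the cylinders on $(s,\lambda)$), together with compactness of $[\rho,R]$, makes the constraint set closed, hence the infimum is a minimum.

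\textbf{Properties (a), (c), (d) and the core estimate (b).} Monotonicity in (a) is immediate, since $[\rho',R]\supseteq[\rho,R]$ when $\rho'\le\rho$; continuity of $\rho\mapsto\lambda_{z_o;\rho}$ follows from the joint continuity of $\Psi_{z_o}$ by a routine argument. Nestedness (c) is then a consequence of $p\le2$, which makes $\lambda\mapsto Q_s^{(\lambda)}(z_o)$ shrinking, combined with $\lambda_{z_o;\rho}\ge\lambda_{z_o;s}$; the containment in $Q_{2R}$ was checked above. Property (d) is built into the definition: $\lambda_{z_o;\rho}$ lies in its own constraint set, so $\Psi_{z_o}(s,\lambda_{z_o;\rho})\le\lambda_{z_o;\rho}^{p\sfr/2}$ for all $s\in[\rho,R]$, and replacing $(u)_{z_o;s}^{(\lambda_{z_o;\rho})}$ by the ball-mean via Lemma~\ref{lem:mean} costs at most the factor $2^{\sfr}$. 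The heart is (b). If $\lambda_{z_o;\rho}=\lambda_o$ there is nothing to prove; otherwise minimality forces an \emph{active constraint}: some $s^\ast\in[\rho,R]$ with $\Psi_{z_o}(s^\ast,\lambda_{z_o;\rho})=\lambda_{z_o;\rho}^{p\sfr/2}$ (were all constraints strict, continuity would allow a smaller $\lambda$). Fix $s\in(\rho,R]$. Using $\lambda_{z_o;\rho}\ge\lambda_{z_o;s}$, one checks that $Q_{s^\ast}^{(\lambda_{z_o;\rho})}(z_o)$ is contained in $Q_{s^\ast}^{(\lambda_{z_o;s})}(z_o)$ if $s^\ast\ge s$ and in $Q_{s}^{(\lambda_{z_o;s})}(z_o)$ if $s^\ast<s$; in both cases Lemma~\ref{lem:mean}, the volume ratio $\big(\lambda_{z_o;\rho}/\lambda_{z_o;s}\big)^{\frac{(2-p)N}{2}}\big(s/s^\ast\big)^{N+2}$, and the sub-intrinsic bound $\Psi_{z_o}(s,\lambda_{z_o;s})\le\lambda_{z_o;s}^{p\sfr/2}$ from (d) yield
$$
\lambda_{z_o;\rho}^{\frac{p\sfr}{2}}\le 2^{\sfr}\Big(\tfrac{\lambda_{z_o;\rho}}{\lambda_{z_o;s}}\Big)^{\frac{(2-p)N}{2}}\Big(\tfrac{s}{s^\ast}\Big)^{N+2+\sfr}\lambda_{z_o;s}^{\frac{p\sfr}{2}} .
$$
Since $\tfrac{p\sfr}{2}-\tfrac{(2-p)N}{2}=\tfrac{\boldsymbol\lambda_\sfr}{2}$ and $s^\ast\ge\rho$, rearranging gives the first inequality in (b); combining it with the $Q_{4R}$-estimate and \eqref{lambda-0} at $s=R$ produces the bound on $\lambda_{z_o;R}$.

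\textbf{Property (e).} This is a stopping-time dichotomy. One may assume $\rho\le R/2$ (if $\rho>R/2$, (b) already bounds $\lambda_{z_o;\rho}$ by a fixed multiple of $\lambda_{z_o;R}$, hence of $\lambda_o$, which is the first alternative). Put $\widetilde\rho:=\sup\{s\in[\rho,R]:\lambda_{z_o;s}=\lambda_{z_o;\rho}\}$; by continuity and monotonicity $\lambda_{z_o;\cdot}\equiv\lambda_{z_o;\rho}$ on $[\rho,\widetilde\rho]$ while $\lambda_{z_o;s}<\lambda_{z_o;\rho}$ for $s>\widetilde\rho$. If $\widetilde\rho>R/2$, then $\lambda_{z_o;\rho}=\lambda_{z_o;R/2}\le C(N,p,\sfr)\lambda_o$ by (b) at $s=R$, the first alternative. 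If $\widetilde\rho\le R/2$, the second alternative holds at $\widetilde\rho$: for each $s\in(\widetilde\rho,R]$, since $\lambda_{z_o;s}<\lambda_{z_o;\widetilde\rho}$ already satisfies every constraint on $[s,R]$, minimality of $\lambda_{z_o;\widetilde\rho}$ forces $\Psi_{z_o}(s'_s,\lambda_{z_o;s})>\lambda_{z_o;s}^{p\sfr/2}$ for some $s'_s\in[\widetilde\rho,s)$; letting $s\downarrow\widetilde\rho$ and using joint continuity of $\Psi_{z_o}$ and of $s\mapsto\lambda_{z_o;s}$ gives $\Psi_{z_o}(\widetilde\rho,\lambda_{z_o;\widetilde\rho})\ge\lambda_{z_o;\widetilde\rho}^{p\sfr/2}$, while ``$\le$'' holds by (d). Hence $\Psi_{z_o}(\widetilde\rho,\lambda_{z_o;\widetilde\rho})=\lambda_{z_o;\widetilde\rho}^{p\sfr/2}=\lambda_{z_o;\rho}^{p\sfr/2}$, which is exactly (e). The main obstacle is not conceptual but bookkeeping: tracking the explicit exponents $\tfrac{2}{\boldsymbol\lambda_\sfr}(N+2+k\sfr)$ through the cylinder comparisons, and justifying the joint continuity of $\Psi_{z_o}$ and of $\rho\mapsto\lambda_{z_o;\rho}$, on which both the attainment of the infimum and the limiting step in (e) rely; the geometric content reduces to comparing nested intrinsic cylinders via Lemma~\ref{lem:mean} and elementary volume ratios, together with the single identity $\tfrac{p\sfr}{2}-\tfrac{(2-p)N}{2}=\tfrac{\boldsymbol\lambda_\sfr}{2}$.
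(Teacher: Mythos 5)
Your construction differs genuinely from the paper's. The paper first defines a \emph{per-scale} stopping parameter
$\widetilde\lambda_{z_o;\rho}$ through a pointwise constraint at the single radius $\rho$, proves its continuity with a careful $\varepsilon$-$\delta$ argument (Step~2), and only then sets $\lambda_{z_o;\rho}:=\max_{r\in[\rho,R]}\widetilde\lambda_{z_o;r}$ — a running maximum of a continuous function, whence continuity, monotonicity, and nestedness are automatic. You instead define $\lambda_{z_o;\rho}$ \emph{directly} as the smallest $\lambda\ge\lambda_o$ satisfying the sub-intrinsic constraint simultaneously for \emph{all} $s\in[\rho,R]$. This simplifies (d) (the constraint is literally built in, without any $2^{\sfr}$-loss) and makes monotonicity in $\rho$ immediate. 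It also gives a possibly different — in general larger — scale function than the paper's, which is fine since the lemma asserts existence. The identity $\tfrac{p\sfr}{2}-\tfrac{(2-p)N}{2}=\tfrac{\boldsymbol\lambda_\sfr}{2}$, the volume-ratio bookkeeping, and the dichotomy structure of (e) are used in essentially the same way as in the paper.

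The genuine gap lies in the sentence ``continuity of $\rho\mapsto\lambda_{z_o;\rho}$ follows from the joint continuity of $\Psi_{z_o}$ by a routine argument.'' This is precisely the hardest step, and with your uniform-constraint definition it is \emph{not} routine. The feasibility set $\{\lambda\ge\lambda_o:\Psi_{z_o}(s,\lambda)\le\lambda^{p\sfr/2}\ \forall s\in[\rho,R]\}$ is closed (so the infimum is attained), but it is not upward closed in $\lambda$: increasing $\lambda$ shrinks $Q_s^{(\lambda)}(z_o)$, and Lemma~\ref{lem:mean} only yields
$\Psi_{z_o}(s,\lambda')\le 2^{\sfr}\big(\lambda'/\lambda\big)^{\frac{(2-p)N}{2}}\Psi_{z_o}(s,\lambda)$ for $\lambda'>\lambda$, which — divided by $(\lambda')^{p\sfr/2}$ — loses a factor $2^{\sfr}$ after cancellation. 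Consequently $\lambda_{z_o;\rho_0}+\varepsilon$ need not be feasible, so one cannot argue that it is feasible on $[\rho,R]$ for $\rho$ slightly below $\rho_0$; right-continuity as $\rho\uparrow\rho_0$ therefore does not follow from joint continuity of $\Psi_{z_o}$ alone. Left-continuity (as $\rho\downarrow\rho_0$) does go through by a limiting argument, but right-continuity needs a separate, more delicate argument that your sketch omits entirely. Since your property~(e) relies on precisely this continuity (both for the existence of $\widetilde\rho$ and for the limiting step producing the equality at $\widetilde\rho$), the gap propagates there. The paper circumvents all of this by putting the continuity work at the per-scale level before taking the envelope; your direct definition forfeits that structure and would need a dedicated, non-trivial continuity proof.

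A minor blemish: the intermediate display in your argument for (b),
$\lambda_{z_o;\rho}^{p\sfr/2}\le 2^{\sfr}\big(\lambda_{z_o;\rho}/\lambda_{z_o;s}\big)^{\frac{(2-p)N}{2}}(s/s^\ast)^{N+2+\sfr}\lambda_{z_o;s}^{p\sfr/2}$,
is stated for both cases $s^\ast\ge s$ and $s^\ast<s$, but the factor $(s/s^\ast)^{N+2+\sfr}$ only appears in the latter: when $s^\ast\ge s$ the correct containment is $Q_{s^\ast}^{(\lambda_{z_o;\rho})}(z_o)\subset Q_{s^\ast}^{(\lambda_{z_o;s})}(z_o)$ (same radius, only the $\lambda$-parameter changes), so the radius ratio is absent, and inserting $(s/s^\ast)^{N+2+\sfr}\le 1$ actually \emph{weakens} the right-hand side, i.e.\ the displayed inequality as written is false in that regime. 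The final conclusion of (b) is nevertheless correct since $(s/s^\ast)\le 1\le s/\rho$ there, but the intermediate estimate should be split into the two cases.
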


\begin{proof}
The proof is divided into several steps.

\emph{Step 1. Definition of $\widetilde\lambda_{z_o;\rho}$.}
For $z_o\in Q_{2R}$, $\rho\in(0,R]$, and $\lambda\ge1 $, consider the cylinders $Q_\rho^{(\lambda)}(z_o)$ from \eqref{def-Q}. Define
$$
	\widetilde\lambda_\rho
	\equiv
	\widetilde\lambda_{z_o;\rho}
	:=
	\inf\bigg\{\lambda\in[\lambda_o,\infty):
	\frac{1}{|Q_\rho|}
	\iint_{Q^{(\lambda)}_\rho(z_o)} 
	\frac{\big|u-(u)_{z_o;\rho}^{(\lambda)}\big|^{\sfr}}{\rho^{\sfr}} \,\dx\dt 
	\le 
	\lambda^{\frac{\boldsymbol{\lambda}_\sfr}{2}} \bigg\}.
$$
This quantity is well defined: the set over which the infimum is taken is nonempty, since as $\lambda\to\infty$ the spatial radius $\lambda^{\frac{p-2}{2}}\rho$ tends to $0$ (since $p\le2$), hence, the left–hand side tends to $0$ by the absolute continuity of the integral, whereas the right–hand side grows like $\lambda^{\frac{\boldsymbol{\lambda}_\sfr}{2}}$. The choice of the exponent is natural:  the condition is equivalent to
$$
  \biint_{Q^{(\lambda)}_\rho(z_o)}
  \frac{\big|u-(u)_{z_o;\rho}^{(\lambda)}\big|^{\sfr}}{\rho^{\sfr}}\,\dx\dt 
  \le 
  \lambda^\frac{p\sfr}{2},
$$
which matches the sub/super–intrinsic couplings used in \S\S~\ref{sec:est-intrinsic-cyl} and~\ref{sec:rev-hoel}.
As an immediate consequence of the definition of
$\widetilde\lambda_\rho$, we obtain the following dichotomy: either
\begin{equation}\label{theta=lambda}
	\widetilde\lambda_\rho=\lambda_o
	\quad\mbox{and}\quad
	\biint_{Q_{\rho}^{(\widetilde\lambda_\rho)}(z_o)}
    \frac{\big|u-(u)_{z_o;\rho}^{(\widetilde\lambda_\rho)}\big|^{\sfr}}{\rho^\sfr} \,\dx\dt
	\le
	\widetilde\lambda_\rho^\frac{p\sfr}{2}
	=
	\lambda_o^\frac{p\sfr}{2}
\end{equation}
or
\begin{equation}\label{theta>lambda}
	\widetilde\lambda_\rho>\lambda_o
	\quad\mbox{and}\quad
	\biint_{Q_{\rho}^{(\widetilde\lambda_\rho)}(z_o)} 
	\frac{\big|u-(u)_{z_o;\rho}^{(\widetilde\lambda_\rho)}\big|^{\sfr}}{\rho^\sfr}\,\dx\dt
	=
	\widetilde\lambda_\rho^\frac{p\sfr}{2}.
\end{equation}
Also in the case $\rho=R$ we have $\widetilde\lambda_{R}\ge\lambda_{o}
\ge1$. Moreover, if $\widetilde\lambda_{R}>\lambda_{o}$, then by \eqref{theta>lambda}, Lemma~\ref{lem:mean}, the inclusion $Q_{R}^{(\widetilde\lambda_{R})}(z_o)\subset Q_{4R}$, and \eqref{lambda-0}, we find
\begin{align*}
	\widetilde\lambda_{R}^{\frac{\boldsymbol\lambda_\sfr}{2}}
	&=
	\frac{1}{|Q_{R}|}
	\iint_{Q_{R}^{(\widetilde\lambda_{R})}(z_o)}
	\frac{\big|u-(u)_{z_o;R}^{(\widetilde\lambda_R)}\big|^\sfr}{R^{\sfr}} \,\dx\dt\\ 
	&\le
	\frac{2^\sfr}{|Q_{R}|}
	\iint_{Q_{R}^{(\widetilde\lambda_{R})}(z_o)}
	\frac{|u-(u)_{4R}|^\sfr}{R^{\sfr}} \,\dx\dt\\ 
	&\le
	\frac{8^{\sfr}}{|Q_{R}|}
	\iint_{Q_{4R}} \frac{|u-(u)_{4R}|^{\sfr}}{(4R)^{\sfr}} \,\dx\dt\\
	&\le 
	4^{N+2+2\sfr}\lambda_o^{\frac{\boldsymbol \lambda_\sfr}{2}}.
\end{align*}
Therefore,
\begin{equation}\label{bound-theta-R}
    \widetilde\lambda_{R}
    \le
    4^{\frac{2}{\boldsymbol\lambda_\sfr}(N+2+2\sfr)}\,\lambda_{o}.
\end{equation}

\textit{Step 2. Continuity of $(0,R]\ni\rho\mapsto
\widetilde\lambda_\rho$.}
Our next goal is to show that the map $(0,R]\ni\rho\mapsto\widetilde\lambda_\rho$ is continuous. Fix $\rho\in(0,R]$ and $\varepsilon>0$, and set $\lambda_{+}:=\widetilde\lambda_\rho+\varepsilon$. We claim that there exists $\delta=\delta(\varepsilon,\rho)>0$ such that for all $r\in(0,R]$ with $|r-\rho|<\delta$,
$$
  \frac{1}{|Q_r|}
  \iint_{Q_{r}^{(\lambda_{+})}(z_o)}
  \frac{\big|u-(u)_{z_o;r}^{(\lambda_{+})}\big|^{\sfr}}{r^{\sfr}}\,\mathrm{d}x\mathrm{d}t
  <
  \lambda_{+}^{\frac{\boldsymbol\lambda_\sfr}{2}}.
$$
Indeed, for $r=\rho$ this is immediate from the definition of $\widetilde\lambda_\rho$, since $\lambda_{+}>\widetilde\lambda_\rho$. By the absolute continuity of the integral and the continuity of the involved averages with respect to the radius, the strict inequality persists for $|r-\rho|$ sufficiently small. By the definition of $\widetilde\lambda_r$ it follows that $\widetilde\lambda_r<\lambda_{+}=\widetilde\lambda_\rho+\varepsilon$ whenever $|r-\rho|<\delta$. In particular,
$$
\limsup_{r\to\rho}\,\widetilde\lambda_r\;\le\;\widetilde\lambda_\rho.
$$
For the reverse inequality, set $\lambda_{-}:=\max\{\lambda_{o},\,\widetilde\lambda_\rho-\varepsilon\}$. If $\widetilde\lambda_\rho=\lambda_{o}$, then $\widetilde\lambda_r\ge\lambda_{o}\ge\widetilde\lambda_\rho-\varepsilon$ for all $r$, and there is nothing to prove. If $\widetilde\lambda_\rho>\lambda_{o}$, then, by the defining equality at the infimum,
$$
\frac{1}{|Q_\rho|}
\iint_{Q_{\rho}^{(\widetilde\lambda_\rho)}(z_o)}
\frac{\big|u-(u)_{z_o;\rho}^{(\widetilde\lambda_\rho)}\big|^{\sfr}}{\rho^{\sfr}}\,\mathrm{d}x\mathrm{d}t
=
\widetilde\lambda_\rho^{\frac{\boldsymbol\lambda_\sfr}{2}}
>
\lambda_-^{\frac{\boldsymbol\lambda_\sfr}{2}}.
$$
By continuity in the radius (again using absolute continuity), there exists $\delta'=\delta'(\varepsilon,\rho)>0$ such that for all $r$ with $|r-\rho|<\delta'$,
$$
\frac{1}{|Q_r|}
\iint_{Q_{r}^{(\lambda_-)}(z_o)}
\frac{\big|u-(u)_{z_o;r}^{(\lambda_-)}\big|^{\sfr}}{r^{\sfr}}\,\mathrm{d}x\mathrm{d}t
>
\lambda_-^{\frac{\boldsymbol\lambda_\sfr}{2}}.
$$
Hence, by the definition of $\widetilde\lambda_r$, one must have $\widetilde\lambda_r\ge\lambda_- \ge \widetilde\lambda_\rho-\varepsilon$ for $|r-\rho|<\delta'$. Therefore,
$$
\liminf_{r\to\rho}\,\widetilde\lambda_r\;\ge\;\widetilde\lambda_\rho-\varepsilon.
$$
Since $\varepsilon>0$ is arbitrary, we conclude that $\widetilde\lambda_r\to\widetilde\lambda_\rho$ as $r\to\rho$, i.e., the map $\rho\mapsto\widetilde\lambda_\rho$ is continuous on $(0,R]$.

\textit{Step 3. Definition of $\lambda_{z_o;\rho}$.}
Since the map $(0,R]\ni\rho\mapsto\widetilde\lambda_\rho$ need not be decreasing, introduce its decreasing envelope
$$
	\lambda_\rho
	\equiv
	\lambda_{z_o;\rho}
	:=
	\max_{r\in[\rho,R]} \widetilde\lambda_{z_o;r}\,. 
$$
By construction, $\lambda_\rho$ is monotone with respect to $\rho$: if $\rho_1<\rho_2$, then $[\rho_2,R]\subset[\rho_1,R]$ and hence $\lambda_{\rho_2}\le\lambda_{\rho_1}$. Moreover, since $\widetilde\lambda_{z_o;r}$ is continuous on every compact interval and maxima are attained, the map $\rho\mapsto\lambda_\rho=\max_{r\in[\rho,R]}\widetilde\lambda_{r;z_o}$ is continuous on $(0,R]$. In particular, property a) holds. Finally, $\lambda_\rho\ge\widetilde
\lambda_{\rho,z_o}\ge\lambda_o\ge1$, so property c) is also satisfied.

\textit{Step 4. Sub-intrinsic property.}
In general, the cylinders $Q_{\rho}^{(\lambda_\rho)}(z_o)$ cannot 
be expected to be intrinsic in the sense of \eqref{theta>lambda}.  
However, we can show that the cylinders $Q_{s}^{(\lambda_\rho)}(z_o)$
are sub-intrinsic for all radii $s\in[\rho,R]$ as stated in d). 
For the proof of this property, we use Lemma~\ref{lem:mean}, the chain of inequalities 
$\widetilde\lambda_s\le \lambda_{s}\le \lambda_{\rho}$, which implies
$Q_{s}^{(\lambda_{\rho})}(z_o)\subseteq
Q_{s}^{(\widetilde\lambda_{s})}(z_o)$, and the fact that the latter
cylinder is sub-intrinsic, i.e. it satisfies either \eqref{theta=lambda} or \eqref{theta>lambda}. Subsequently, we recall that $\frac{pr}{2} - \frac{N(2-p)}{2}=\frac{\boldsymbol\lambda_r}{2}>0$ and again use $\widetilde\lambda_s\le \lambda_{\rho}$.
In this way, we deduce 
\begin{align*}
	\biint_{Q_{s}^{(\lambda_{\rho})}(z_o)} 
	\frac{\big|u-(u)_{z_o;s}^{(\lambda_\rho)}\big|^{\sfr}}{s^\sfr} \,\dx\dt 
	&\le 
    2^\sfr\biint_{Q_{s}^{(\lambda_{\rho})}(z_o)} 
	\frac{\big|u-(u)_{z_o;s}^{(\widetilde\lambda_s)}\big|^{\sfr}}{s^\sfr} \,\dx\dt \\
	&\le
	2^\sfr\Big(\frac{\lambda_{\rho}}{\widetilde\lambda_{s}}\Big)^{\frac{N(2-p)}{2}}
	\biint_{Q_{s}^{(\widetilde\lambda_{s})}(z_o)}
    \frac{\big|u-(u)_{z_o;s}^{(\widetilde\lambda_s)}\big|^{\sfr}}{s^{\sfr}} \,\dx\dt \\
	&\le 
	2^\sfr\Big(\frac{\lambda_{\rho}}{\widetilde\lambda_{s}}\Big)^{\frac{N(2-p)}{2}}
	\widetilde\lambda_{s}^\frac{p\sfr}{2}
	=
	2^\sfr\lambda_\rho^{\frac{N(2-p)}{2}}\,\widetilde\lambda_s^{\frac{p\sfr}{2} - \frac{N(2-p)}{2}} \\
	&\le 
	2^\sfr\lambda_\rho^\frac{p\sfr}{2},
\end{align*}
which is exactly assertion d). 

\textit{Step 5. Upper bound for $\lambda_{z_o;\rho}$.}
Define
\begin{equation*}
	\widetilde\rho
	:=
	\left\{
	\begin{array}{cl}
	R, &
	\quad\mbox{if $\lambda_\rho=\lambda_o$,} \\[5pt]
	\min\big\{s\in[\rho, R]: \lambda_s=\widetilde \lambda_s \big\}, &
	\quad\mbox{if $\lambda_\rho>\lambda_o$.}
	\end{array}
	\right.
\end{equation*}
By construction, $\lambda_s=\widetilde\lambda_{\widetilde\rho}$ for every $s\in[\rho,\widetilde\rho]$, in particular $\lambda_{\widetilde\rho}=\widetilde\lambda_{\widetilde\rho}=\lambda_\rho$. The aim is to prove
\begin{align}\label{bound-theta}
  \lambda_\rho 
  \le
  4^\frac{2\sfr}{\boldsymbol\lambda_\sfr}
  \Big(\frac{s}{\rho}\Big)^{\frac{2}{\boldsymbol\lambda_\sfr}(N+2+\sfr)}
  \lambda_{s}  
  \quad\mbox{for any $s\in(\rho,R]$.}
\end{align}
If $\lambda_\rho=\lambda_o$, the bound is immediate since $\lambda_s\ge\lambda_o$. If $s\in(\rho,\widetilde\rho]$, then $\lambda_s=\widetilde\lambda_{\widetilde\rho}=\lambda_\rho$ and the inequality holds as well. It remains to consider the case $\lambda_\rho>\lambda_o$ and $s\in(\widetilde\rho,R]$.
Fix such an $s$. 
Recall that $\widetilde\lambda_{\widetilde\rho}=\lambda_\rho>\lambda_o$. Hence, thanks to \eqref{theta>lambda} applied at $\widetilde\rho$, and subsequently using Lemma~\ref{lem:mean}, we obtain
\begin{align*}    
    \lambda_\rho^{\frac{\boldsymbol\lambda_\sfr}{2}}
    =
    \widetilde\lambda_{\widetilde\rho}^{\frac{\boldsymbol\lambda_\sfr}{2}}
    &=
    \frac{1}{|Q_{\widetilde\rho}|}
    \iint_{Q_{\widetilde\rho}^{(\widetilde\lambda_{\widetilde\rho})}(z_o)}
    \frac{\big|u-(u)_{z_o;\widetilde\rho}^{(\widetilde\lambda_{\widetilde\rho})}
    \big|^{\sfr}}{\widetilde\rho^{\sfr}}\,\mathrm{d}x\mathrm{d}t \\
    &\le
    \frac{2^{\sfr}}{|Q_{\widetilde\rho}|}
    \iint_{Q_{\widetilde\rho}^{(\widetilde\lambda_{\widetilde\rho})}(z_o)}
    \frac{\big|u-(u)_{z_o;s}^{(\lambda_s)}
    \big|^{\sfr}}{\widetilde\rho^{\sfr}}\,\mathrm{d}x\mathrm{d}t .
\end{align*}
Next, we notice that $Q_{\widetilde\rho}^{(\widetilde\lambda_{\widetilde\rho})}(z_o)\subset Q_{s}^{(\lambda_s)}(z_o)$, since $\widetilde\rho<s$ as well as $\widetilde\lambda_{\widetilde\rho}\ge\lm_s$. This allows us to increase the domain of integration from $Q_{\widetilde\rho}^{(\widetilde\lambda_{\widetilde\rho})}(z_o)$ to $Q_{s}^{(\lambda_s)}(z_o)$. Applying subsequently the sub-intrinsic property d), we find that 
\begin{align*}    
    \lambda_\rho^{\frac{\boldsymbol\lambda_\sfr}{2}}
    \le
    2^\sfr\Big(\frac{s}{\widetilde\rho}\Big)^{N+2+\sfr}
    \frac{1}{|Q_{s}|}
    \iint_{Q_{s}^{(\lambda_s)}(z_o)}
    \frac{\big|u-(u)_{z_o;s}^{(\lambda_s)}\big|^{\sfr}}{s^{\sfr}}\,\mathrm{d}x\mathrm{d}t 
    \le
    4^\sfr\Big(\frac{s}{\widetilde\rho}\Big)^{N+2+\sfr}
    \lambda_s^{\frac{\boldsymbol\lambda_\sfr}{2}}.
\end{align*}
In turn, we obtain
\begin{equation*}
    \widetilde\lambda_{\widetilde\rho}
    \le
    4^{\frac{2\sfr}{\boldsymbol\lambda_\sfr}}\Big(\frac{s}{\widetilde\rho}\Big)^{\frac{2}{\boldsymbol\lambda_\sfr}(N+2+\sfr)}\,\lambda_s.
\end{equation*}
Recalling that $\rho\le \widetilde\rho\le s$, we arrive at
\begin{equation*}
    \lambda_\rho
    \le
    4^{\frac{2\sfr}{\boldsymbol\lambda_\sfr}}\Big(\frac{s}{\rho}\Big)^{\frac{2}{\boldsymbol\lambda_\sfr}(N+2+\sfr)}\,\lambda_s,
\end{equation*}
which is \eqref{bound-theta}. 
This proves the first assertion in b). The second one follows directly from \eqref{bound-theta-R} together with $\lambda_{R}=\widetilde\lambda_{R}$.

\textit{Step 6. Proof of e).} 
If $\widetilde\rho\in (\frac{R}{2},R]$, then by b) with $s=R$,
$$
    \lambda_{\rho}
    =
    \lambda_{\widetilde\rho}
    \le 
    4^\frac{2\sfr}{\boldsymbol\lambda_\sfr}\Big(\frac{R}{\widetilde\rho}\Big)^{\frac{2}{\boldsymbol\lambda_\sfr}(N+2+\sfr)}
    \lambda_{R}.
$$
Invoking \eqref{bound-theta-R} gives
$$
   \lambda_{\rho}
   \le
   4^\frac{2\sfr}{\boldsymbol\lambda_\sfr}4^{\frac{2}{\boldsymbol\lambda_\sfr}(N+2+2\sfr)}
    \lambda_o 
    \le 
    4^{\frac{2}{\boldsymbol\lambda_\sfr}(N+2+3\sfr)}
    \lambda_o.
$$
If instead $\widetilde\rho\in [\rho,\frac{R}{2}]$, then by definition $\lambda_{\widetilde\rho}=\lambda_\rho$,
and the cylinder $Q_{\widetilde\rho}^{\lambda_{\widetilde\rho}}(z_o)$ satisfies \eqref{theta>lambda}. This establishes e). 
\end{proof}

\subsection{Intrinsic Vitali covering lemma}
The next step is a Vitali–type covering property adapted to the intrinsic geometry encoded by $\rho\mapsto\lambda_{z;\rho}$. Given any family of cylinders $Q_{4\rho}^{(\lambda_{z;\rho})}(z)$ as in Lemma \ref{lem:crazy-cylinders} with small radii $\rho<R/\hat c$, one can extract a countable subfamily of pairwise disjoint cylinders whose $\tfrac{\hat c}{4}$-enlargements cover the whole family, for a structural constant $\hat c=\hat c(N,p,\sfr)\ge 20$. The selection follows the classical Vitali scheme but hinges on the monotonicity, scale control, and sub–intrinsic bounds from Lemma \ref{lem:crazy-cylinders}, together with a quantitative inclusion of overlapping cylinders that compensates for the intrinsic anisotropy. This covering will be the backbone of the reverse Hölder improvement and higher–integrability arguments.

\begin{lemma} 
\label{lem:vitali}
There exists a constant $\hat c=\hat c(N,p,\sfr)\ge 20$ with the following property. If $\mathcal F$ is any collection of cylinders $Q_{4\rho}^{(\lambda_{z;\rho})}(z)$, where each $Q_{\rho}^{(\lambda_{z;\rho})}(z)$ is a cylinder as in Lemma~$\ref{lem:crazy-cylinders}$ with radius $\rho\in\bigl(0,\tfrac{R}{\hat c}\bigr)$, then there exists a countable subfamily $\mathcal G\subset\mathcal F$ of pairwise disjoint cylinders such that
\begin{equation}\label{covering}
    \bigcup_{Q\in\mathcal F} Q \subseteq
\bigcup_{Q\in\mathcal G} \widehat Q,
\end{equation}
where $\widehat Q$ denotes the $\tfrac{\hat c}{4}$-fold enlargement of $Q$ (same center and parameter $\lambda$): if $Q=Q_{4\rho}^{(\lambda_{z;\rho})}(z)$, then $\widehat Q=Q_{\hat c\,\rho}^{(\lambda_{z;\rho})}(z)$.
\end{lemma}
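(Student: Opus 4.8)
The plan is to run a Vitali-type selection adapted to the intrinsic, radius-dependent scale $\rho\mapsto\lambda_{z;\rho}$ from Lemma~\ref{lem:crazy-cylinders}. First I would group $\mathcal F$ into generations: with $\hat c$ still to be fixed, set $\mathcal F_j:=\{Q_{4\rho}^{(\lambda_{z;\rho})}(z)\in\mathcal F:\ \rho\in(2^{-j-1}R/\hat c,\,2^{-j}R/\hat c]\}$ for $j\in\N_0$, which partitions $\mathcal F$ since every radius lies in $(0,R/\hat c)$. Then select greedily: let $\mathcal G_0$ be a maximal (by inclusion) pairwise disjoint subfamily of $\mathcal F_0$ and, inductively, $\mathcal G_{j+1}$ a maximal pairwise disjoint subfamily of those $Q\in\mathcal F_{j+1}$ with $Q\cap Q'=\emptyset$ for every $Q'\in\bigcup_{i\le j}\mathcal G_i$; put $\mathcal G:=\bigcup_{j\ge0}\mathcal G_j$. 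Such maximal families exist by Zorn's lemma, and each $\mathcal G_j$, being a pairwise disjoint family of nonempty open subsets of the separable space $\R^{N+1}$, is countable, hence so is $\mathcal G$; by construction $\mathcal G$ is pairwise disjoint.

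For \eqref{covering}, fix $Q=Q_{4\rho}^{(\lambda_{z;\rho})}(z)\in\mathcal F_j$. Either $Q\in\mathcal G_j$ (then $Q\subset\widehat Q$ since $\hat c\ge4$), or by maximality $Q$ meets some $Q'=Q_{4\rho'}^{(\lambda_{z';\rho'})}(z')\in\mathcal G_i$ with $i\le j$; the generation structure then forces $\rho'>2^{-j-1}R/\hat c\ge\rho/2$, i.e. $\rho\le2\rho'$, and one must show $Q\subset\widehat{Q'}=Q_{\hat c\rho'}^{(\lambda_{z';\rho'})}(z')$. The temporal inclusion is elementary: the overlap gives $|t_o-t_o'|\le16\rho^2+16(\rho')^2\le 80(\rho')^2$, so $|t_o-t_o'|+(4\rho)^2\le(12\rho')^2$ and $\Lambda_{4\rho}(t_o)\subset\Lambda_{\hat c\rho'}(t_o')$ for $\hat c\ge20$. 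For the spatial inclusion, set $\mu:=\lambda_{z;\rho}$, $\mu':=\lambda_{z';\rho'}$; bounding $|x_o-x_o'|\le4\rho\,\mu^{\frac{p-2}{2}}+4\rho'(\mu')^{\frac{p-2}{2}}$ from the overlap and using $\rho\le2\rho'$, the inclusion reduces to $16\,\mu^{\frac{p-2}{2}}\le(\hat c-4)(\mu')^{\frac{p-2}{2}}$. Since $\frac{p-2}{2}<0$, this is immediate with $\hat c\ge20$ when $\mu\ge\mu'$; the entire difficulty is a \emph{structural} lower bound $\mu\ge c_\ast\,\mu'$ for overlapping cylinders with $\rho\le2\rho'$, after which $\hat c$ is chosen so large that also $\big(16/(\hat c-4)\big)^{2/(2-p)}\le c_\ast$.

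To prove $\mu\ge c_\ast\mu'$ I would assume $\mu<\mu'$ and apply the dichotomy Lemma~\ref{lem:crazy-cylinders}(e) at $(z',\rho')$. If $\mu'\le 4^{\frac{2}{\boldsymbol\lambda_\sfr}(N+2+3\sfr)}\lambda_o$, then $\mu\ge\lambda_o\ge c_\ast\mu'$. Otherwise there is an intrinsic radius $\widetilde s\in[\rho',R/2]$ with $\lambda_{z';\widetilde s}=\mu'$ at which
\[
  \biint_{Q_{\widetilde s}^{(\mu')}(z')}\frac{\big|u-(u)_{z';\widetilde s}^{(\mu')}\big|^{\sfr}}{\widetilde s^{\,\sfr}}\,\dx\dt=(\mu')^{\frac{p\sfr}{2}}.
\]
If $\widetilde s\ge R/13$, the scale control Lemma~\ref{lem:crazy-cylinders}(b) (between $\widetilde s$ and $R$, then bounding $\lambda_{z';R}$ by $\lambda_o$) again gives $\mu'\le C\lambda_o\le C\mu$. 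If $\widetilde s<R/13$, then $13\widetilde s\in[\rho,R]$, and since $\mu<\mu'$ makes the $z$-cylinders spatially fatter one checks (from the overlap and $\rho\le2\rho'\le2\widetilde s$) that $Q_{\widetilde s}^{(\mu')}(z')\subset Q_{13\widetilde s}^{(\mu)}(z)$. I would then insert the sub-intrinsic control Lemma~\ref{lem:crazy-cylinders}(d) at $z$ on scale $13\widetilde s$, replace the mean $(u)_{z';\widetilde s}^{(\mu')}$ by $(u)_{z;13\widetilde s}^{(\mu)}$ via the quasi-minimality Lemma~\ref{lem:mean}, enlarge the domain of integration, and account for the volume ratio $|Q_{13\widetilde s}^{(\mu)}(z)|/|Q_{\widetilde s}^{(\mu')}(z')|\simeq(\mu'/\mu)^{N(2-p)/2}$; the displayed equality then becomes $(\mu')^{p\sfr/2}\le C\,(\mu'/\mu)^{N(2-p)/2}\mu^{p\sfr/2}$, i.e. $(\mu')^{\boldsymbol\lambda_\sfr/2}\le C\,\mu^{\boldsymbol\lambda_\sfr/2}$ because $\frac{p\sfr}{2}-\frac{N(2-p)}{2}=\frac12\boldsymbol\lambda_\sfr>0$, whence $\mu\ge c_\ast\mu'$. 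Taking $c_\ast$ to be the minimum of the three structural constants so obtained and fixing $\hat c$ accordingly (with $\hat c\ge20$), the spatial inclusion holds in every case and \eqref{covering} follows.

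The step I expect to be the main obstacle is exactly this comparison of the intrinsic scale parameters of two overlapping cylinders. Because the cylinders $Q_\rho^{(\lambda)}$ are anisotropic and $\lambda$ is radius-dependent, overlapping cylinders — whose radii may in fact differ by a large factor — carry no a priori comparable scales, and the intrinsic radius $\widetilde s$ supplied by the dichotomy can be as large as $R/2$, so that the intrinsic cylinder at $z'$ need not fit into any $z$-centered cylinder of admissible radius. It is the case split on the size of $\widetilde s$ — invoking the scale control of Lemma~\ref{lem:crazy-cylinders}(b) when $\widetilde s$ is comparable to $R$, and the explicit containment together with the sub-intrinsic bound of Lemma~\ref{lem:crazy-cylinders}(d) when it is small — that forces the argument through, the only genuinely delicate computation being the bookkeeping of the volume ratios and of the exponent $\boldsymbol\lambda_\sfr$.
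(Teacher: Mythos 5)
Your proposal is correct and follows essentially the same architecture as the paper's proof: a dyadic partition of $\mathcal F$ into generations, a greedy selection of maximal pairwise disjoint subfamilies, and the key structural comparison $\lambda_{z;\rho}\ge c_\ast\,\lambda_{z';\rho'}$ for overlapping cylinders obtained from the dichotomy of Lemma~\ref{lem:crazy-cylinders}(e), with a sub-split on the size of the intrinsic radius $\widetilde s$ that invokes the scale control (b) when $\widetilde s$ is comparable to $R$ and the sub-intrinsic bound (d) together with a geometric inclusion when $\widetilde s$ is small. Your bookkeeping of the inclusion constant (using $13\widetilde s$ and the threshold $R/13$) is in fact a touch more careful than the paper's, which writes $9$ where the overlap estimate with $\rho\le 2\rho_\ast$ really gives $13$.
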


\begin{proof}
Fix $\hat c\ge20$. For $j\in\mathbb N$ subdivide $\mathcal F$ into
$$
  \mathcal F_j
  :=
  \Big\{\,Q_{4\rho}^{(\lambda_{z;\rho})}(z)\in \mathcal F:
  \tfrac{R}{2^j\hat c}<\rho\le \tfrac{R}{2^{j-1}\hat c}\,\Big\}.
$$
Choose $\mathcal G_1\subset\mathcal F_1$ as an arbitrary maximal subfamily of pairwise disjoint cylinders. By Lemma~\ref{lem:crazy-cylinders} b) and the definition of $\mathcal F_1$, each element of $\mathcal G_1$ has a uniform lower volume bound; hence $\mathcal G_1$ is finite.
Assume that $\mathcal G_1,\mathcal G_2,\dots,\mathcal G_{k-1}$ have been constructed for some integer $k\ge 2$. Define $\mathcal G_k$ to be any maximal subfamily of $\mathcal F_k$ consisting of cylinders that are pairwise disjoint and disjoint from all previously selected cylinders:
$$
\mathcal G_k\subset\Big\{\,Q\in\mathcal F_k:\ Q\cap Q^\ast=\varnothing\ \text{for every}\ Q^\ast\in\bigcup_{j=1}^{k-1}\mathcal G_j\,\Big\}.
$$
As in the case $k=1$, each element of $\mathcal F_k$ enjoys a uniform lower bound on its volume (by Lemma \ref{lem:crazy-cylinders} b) and the definition of $\mathcal F_k$), hence $\mathcal G_k$ is finite. Consequently,
$$
\mathcal G:=\bigcup_{j=1}^{\infty}\mathcal G_j\subseteq\mathcal F
$$
is a countable family of pairwise disjoint cylinders. It remains to show that for every 
$Q\in\mathcal F$ there exists 
$Q^\ast\in\mathcal
G$ with  $Q\subset \widehat {Q}^\ast$.
To this end, fix a cylinder
$Q=Q_{4\rho}^{(\lambda_{z;\rho})}(z)\in\mathcal F$, and choose
$j\in\N$ with $Q\in\mathcal F_j$.
If $Q\in \mathcal G_j$, then $Q\subset \widehat Q$ is trivial.
Suppose $Q\not\in \mathcal G_j$.
By maximality of the families $\mathcal G_i$
up to level $j$, there exists a cylinder
\begin{equation*}
    Q^\ast
    =Q_{4\rho_\ast}^{(\lambda_{z_\ast;\rho_\ast})}(z_\ast)\in
\bigcup_{i=1}^{j} \mathcal G_i
\quad\mbox{with}\quad Q\cap Q^\ast\not=\varnothing.
\end{equation*}
In the sequel we show that  $Q\subset \widehat {Q}^\ast$.
First, we observe that the properties $\rho\le\tfrac{R}{2^{j-1}\hat c}$
and $\rho_\ast>\tfrac{R}{2^j\hat c}$ imply $\rho\le 2\rho_\ast$.
Since $Q\cap Q^\ast\neq\varnothing$,   $\Lambda_{4\rho}(t)\cap \Lambda_{4\rho_\ast}(t_\ast)\not=\varnothing$. Hence
$$
|t-t_\ast|\le 16\rho^2+16\rho_\ast^2 .
$$
For any $\tau\in\Lambda_{4\rho}(t)$,
$$
|\tau-t_\ast|\le |\tau-t|+|t-t_\ast|
\le 16\rho^2 + (16\rho^2+16\rho_\ast^2)
=32\rho^2+16\rho_\ast^2
\le 144\,\rho_\ast^2,
$$
where we used $\rho\le 2\rho_\ast$. Since $\hat c\ge20$, one has $144\,\rho_\ast^2\le (\hat c\rho_\ast)^2$, and therefore
$
\Lambda_{4\rho}(t)\subseteq \Lambda_{\hat c\rho_\ast}(t_\ast)
$, which ensures
$\Lambda_{4\rho}(t)\subseteq \Lambda_{\hat c\rho_\ast}(t_\ast)$ since $\hat c\ge20$.
To prove the corresponding spatial inclusion $B_{4\rho}^{(\lambda_{z;\rho})}(x)\subseteq B_{\hat c\,\rho_\ast}^{(\lambda_{z_\ast;\rho_\ast})}(x_\ast)$, we first establish the comparison estimate
\begin{equation}\label{control-lambda-1}
  \lambda_{z_\ast;\rho_\ast}
  \le
  72^{\frac{2}{ \boldsymbol\lambda_\sfr}(N+2+\sfr)}\,
  \lambda_{z;\rho}\,.
\end{equation}
To this aim we use Lemma \ref{lem:crazy-cylinders} e), which consists of two alternatives. In the first alternative we have $\lambda_{z_\ast;\rho_\ast}\le 4^{\frac{2}{\boldsymbol\lambda_\sfr}(N+2+3\sfr)}\lambda_{o}$, which in view of $\lambda_{z;\rho}\ge\lambda_o$ immediately implies~\eqref{control-lambda-1}. In the second alternative of Lemma \ref{lem:crazy-cylinders} e) there exists a radius $\widetilde\rho_\ast\in[\rho_\ast,R]$ such that the cylinder $Q_{\widetilde\rho_\ast}^{(\lambda_{z_\ast;\rho_\ast})}(z_\ast)$ is intrinsic, i.e.
\begin{align}\label{control-lambda-2}
	\lambda_{z_\ast;\rho_\ast}^{\frac{ \boldsymbol\lambda_\sfr}{2}}
	=
	\frac{1}{|Q_{\widetilde \rho_\ast}|}
	\iint_{Q_{\widetilde \rho_\ast}^{(\lambda_{z_\ast;\rho_\ast})}(z_\ast)}
	\frac{\big|u-(u)_{z_\ast,\widetilde \rho_\ast}^{(\lambda_{z_\ast;\rho_\ast})}\big|^{\sfr}}{\widetilde \rho_\ast^{\sfr}} \,\dx\dt.
\end{align}
We distinguish between the cases $\widetilde \rho_\ast\le \frac{R}{9}$ and $\widetilde \rho_\ast> \frac{R}{9}$. We begin with the latter. Using \eqref{control-lambda-2}, Lemma~\ref{lem:mean}, the definitions of $\lambda_o$ and $\lambda_{z;\rho}\ge\lambda_o$, we estimate
\begin{align*}
	\lambda_{z_\ast;\rho_\ast}^{\frac{ \boldsymbol\lambda_\sfr}{2}}
	&\le
    2^\sfr
	\Big(\frac{4R}{\widetilde \rho_\ast}\Big)^{\sfr} \frac{|Q_{4R}|}{|Q_{\widetilde \rho_\ast}|}
	\biint_{Q_{4R}}
	\frac{\big| u-(u)_{4R}\big|^{\sfr}}{(4R)^{\sfr}} \,\dx\dt \\
	&\le
    2^\sfr
	\Big(\frac{4 R}{\widetilde \rho_\ast}\Big)^{N+2+\sfr} 
	\lambda_o^{\frac{ \boldsymbol\lambda_\sfr}{2}}\\
	&\le
    2^\sfr
	36^{N+2+\sfr} \lambda_{z;\rho}^{\frac{ \boldsymbol\lambda_\sfr}{2}},
\end{align*}
which can be rewritten as
\begin{align*}
	\lambda_{z_\ast;\rho_\ast}
	\le
    2^\frac{2\sfr}{\boldsymbol\lambda_\sfr}
	36^{\frac{2}{\boldsymbol\lambda_\sfr}(N+2+\sfr)}\,\lambda_{z;\rho}\,.
\end{align*}
This yields \eqref{control-lambda-1} in the present case. 

It remains to treat the case $\widetilde \rho_\ast\le \frac{R}{9}$. 
For that, we claim the inclusion
\begin{equation}\label{incl-cyl}
  Q_{\widetilde \rho_\ast}^{(\lambda_{z_\ast;\rho_\ast})}(z_\ast)
  \subseteq 
  Q_{9\widetilde \rho_\ast}^{(\lambda_{z;9\widetilde \rho_\ast})}(z).
\end{equation}
First, since $\widetilde\rho_\ast\ge\rho_\ast$ and $|t-t_\ast|<(4\rho)^2+(4\rho_\ast)^2\le80\,\rho_\ast^2\le80\,\widetilde\rho_\ast^2$, we have
$
    \Lambda_{\widetilde\rho_\ast}(t_\ast)\subseteq \Lambda_{\mu\,\widetilde\rho_\ast}(t)$.
Moreover,
\begin{equation}\label{x-x_ast}
	|x-x_\ast|
    \le 
    4\lambda_{z;\rho}^{\frac{p-2}{2}}\rho
	+
	4\lambda_{z_\ast;\rho_\ast}^{\frac{p-2}{2}}\rho_\ast.
\end{equation}
We may assume $\lambda_{z;\rho}\le \lambda_{z_\ast;\rho_\ast}$, since the comparison estimate \eqref{control-lambda-1} is trivial otherwise. Using the monotonicity of $\rho\mapsto \lambda_{z;\rho}$ and the chain $\rho\le 2\rho_\ast\le 2\widetilde\rho_\ast\le 9\widetilde\rho_\ast$, we obtain
$$
\lambda_{z_\ast;\rho_\ast}\ge \lambda_{z;\rho}\ge \lambda_{z;9\widetilde\rho_\ast}.
$$
Combining the last two displays gives
$$
    \lambda_{z_\ast;\rho_\ast}^{\frac{p-2}{2}}\widetilde\rho_\ast + |x-x_\ast|
    \le 
    5\lambda_{z_\ast;\rho_\ast}^{\frac{p-2}{2}} \widetilde\rho_\ast
    +
    4\lambda_{z;\rho}^{\frac{p-2}{2}}\rho
    \le
    9\lambda_{z;9\widetilde\rho_\ast}^{\frac{p-2}{2}}\widetilde\rho_\ast,
$$
and hence
$$
    B_{\widetilde\rho_\ast}^{(\lambda_{z_\ast;\rho_\ast})}(x_\ast)
  \subseteq 
  B_{9\widetilde\rho_\ast}^{(\lambda_{z;9\widetilde\rho_\ast})}(x).
$$
Together with the time–interval inclusion this proves the cylinder inclusion \eqref{incl-cyl}.
Using \eqref{control-lambda-2}, Lemma \ref{lem:mean}, the inclusion \eqref{incl-cyl}, and Lemma \ref{lem:crazy-cylinders} d) with
$\rho=9\rho_\ast$ and $s=9\widetilde \rho_\ast$, we obtain 
\begin{align*}
    \lambda_{z_\ast;\rho_\ast}^{\frac{ \boldsymbol\lambda_\sfr}{2}}
	&=
	\frac{1}{|Q_{\widetilde \rho_\ast}|}
	\iint_{Q_{\widetilde \rho_\ast}^{(\lambda_{z_\ast;\rho_\ast})}(z_\ast)}
	\frac{\big|u-(u)_{z_\ast,\widetilde \rho_\ast}^{(\lambda_{z_\ast;\rho_\ast})}\big|^{\sfr}}{\widetilde \rho_\ast^{\sfr}} \,\dx\dt\\
    &\le
    \frac{18^\sfr}{|Q_{\widetilde \rho_\ast}|}
    \iint_{Q_{9\widetilde \rho_\ast}^{(\lambda_{z;
    9\rho_\ast})}(z)}
    \frac{\big|u-(u)_{z,9\widetilde \rho_\ast}^{(\lambda_{z;9\rho_\ast})}\big|^{\sfr}}{(9\widetilde \rho_\ast)^{\sfr}} \,\dx\dt\\
    &\le
    36^\sfr \frac{\big|Q_{9\widetilde \rho_\ast}^{(\lambda_{z;
    9\rho_\ast})}\big|}{|Q_{\widetilde \rho_\ast}|}
    \lambda_{z;9\rho_\ast}^\frac{p\sfr}{2}
    =
    36^\sfr9^{N+2} \lambda_{z;9\rho_\ast}^\frac{\boldsymbol{\lambda}_\sfr}{2}.
\end{align*}
Hence
\begin{equation*}
    \lambda_{z_\ast;\rho_\ast}
    \le
    36^\frac{2\sfr}{\boldsymbol{\lambda}_\sfr}
    9^{\frac{2}{\boldsymbol{\lambda}_\sfr}(N+2)}
    \lambda_{z;9\rho_\ast},
\end{equation*}
which in view of the inequality $\lambda_{z;9\rho_\ast}\le \lambda_{z;\rho}$ that follows from the monotonicity of $\rho\mapsto\lambda_{z;\rho}$ proves \eqref{control-lambda-1} also in the remaining case.

The preliminary bound~\eqref{control-lambda-1} will now allow us to show the inclusion
$$
Q=Q_{4\rho}^{(\lambda_{z;\rho})}(z)\;\subseteq\;
Q_{\hat c\,\rho_\ast}^{(\lambda_{z_\ast;\rho_\ast})}(z_\ast)=\widehat Q^\ast,
$$
which yields \eqref{covering}. The time–interval inclusion $\Lambda_{4\rho}(t)\subseteq\Lambda_{\hat c\rho_\ast}(t_\ast)$ has already been shown, so it suffices to treat the spatial part. From \eqref{x-x_ast}, \eqref{control-lambda-1}, and $\rho\le2\rho_\ast$, we conclude
\begin{align*}
    \lambda_{z;\rho}^{\frac{p-2}{2}}\,4\rho + |x-x_\ast|
    &\le
    2\,\lambda_{z;\rho}^{\frac{p-2}{2}}\,4\rho
    \;+\;
    \lambda_{z_\ast;\rho_\ast}^{\frac{p-2}{2}}\,4\rho_\ast\\
    &\le
    4\Big[\,4\cdot 72^{\frac{2-p}{\boldsymbol\lambda_\sfr}(N+2+\sfr)}+1\,\Big]\,
    \lambda_{z_\ast;\rho_\ast}^{\frac{p-2}{2}}\,\rho_\ast\\
    &\le
    \lambda_{z_\ast;\rho_\ast}^{\frac{p-2}{2}}\,\hat c\,\rho_\ast,
\end{align*}
for a structural constant $\hat c=\hat c(N,p,\sfr)$. Hence
$$
B_{4\rho}^{(\lambda_{z;\rho})}(x)\;\subseteq\;B_{\hat c\rho_\ast}^{(\lambda_{z_\ast;\rho_\ast})}(x_\ast),
$$
and together with the time inclusion we obtain $Q\subset \widehat Q^\ast$. This proves \eqref{covering} and completes the argument.
\end{proof}

\subsection{Selection of critical intrinsic radii}\label{SS:selection-radii}
We localize the super-level sets of $|Du|$ by a stopping–time selection on the intrinsic scales $\lambda_{z_o;s}$: for a fixed threshold $\lambda>\lambda_{o}$, each Lebesgue point $z_{o}$ corresponds to a maximal radius at which the normalized energy first reaches $\lambda^{p}$. The Vitali covering of these critical cylinders furnishes the geometric backbone for the ensuing good–$\lambda$ estimate and higher–integrability. 

Fix $R>0$ and set
\begin{align}\label{Eq:def-lm0-new}
 	\lambda_o
 	&:=
 	1+
    \bigg[\biint_{Q_{4R}} 
 	\frac{|u-(u)_{4R}|^{\sfr}}{(4 R)^{\sfr}} \,\dx\dt \bigg]^{\frac{2}{\boldsymbol\lambda_\sfr}} \nonumber\\
    &\phantom{:=\,1\,}+
    \Bigg[\biint_{Q_{4R}}
    |Du|^p \,\dx\dt +
 	\bigg[ \biint_{Q_{4R}} \Big[|\sfF|^{qp}
    +|R\,\sff|^{qp'}\Big]\,\dx\dt \bigg]^{\frac1q} 
    \Bigg]^{\frac{\sfr}{\boldsymbol\lambda_\sfr}}.
\end{align}
For $\lambda>\lambda_{o}$ and $\rho\in(0,2R]$ define the \emph{super-level set} of $|Du|$ by
\[
	\sfE(\rho,\lambda)
	:=
	\Big\{z\in Q_{\rho}:
	\mbox{$z$ is a Lebesgue point of $|Du|$ and 
	$|Du|(z) > \lambda$}\Big\}.
\]
Lebesgue points here are understood with respect to the intrinsic cylinders constructed in Lemma $\ref{lem:crazy-cylinders}$. By the Vitali–type covering Lemma $\ref{lem:vitali}$ and the standard differentiation theorem (see, e.g., \cite[2.9.1]{Federer}), $\mathcal L^{N+1}$-a.e.~point is a Lebesgue point with respect to this family.
Fix radii $R\le R_{1}<R_{2}\le 2R$. For any $z_{o}\in Q_{R_{1}}$, $\kappa\ge1$, and $\rho\in(0,\,R_{2}-R_{1}]$ we have
\begin{equation*}
  	Q_\rho^{(\kappa)}(z_o)
	\subseteq 
	Q_{R_2}
	\subseteq 
	Q_{2R}.
\end{equation*}
In what follows the level parameter $\lambda$ will be restricted by 
\begin{equation}\label{choice_lambda-new}
	\lambda
	>
	\lambda_o^{\frac\sfr2},
	\quad\mbox{where}
	\quad
	\sfB
	:=
	\Big(\frac{32\hat c R}{R_2-R_1}\Big)^{\frac{2\sfr(N+p)}{p\boldsymbol\lambda_\sfr}}
	>1,
\end{equation}
where $\hat c=\hat c(N,p,\sfr)$ is the structural constant from the Vitali–type covering Lemma \ref{lem:vitali}. 
Fix $z_{o}\in \sfE(R_{1},\lambda)$ and, for $s\in(0,R]$, abbreviate $\lambda_{s}:=\lambda_{z_{o};s}$ as in Lemma $\ref{lem:crazy-cylinders}$. By the definition of $\sfE
(R_{1},\lambda)$ (and the Lebesgue differentiation property with respect to intrinsic cylinders),
\begin{align}\label{larger-lambda-new}
	\liminf_{s\downarrow 0} &
	\Bigg[ 
    \biint_{Q_{s}^{(\lambda_{s})}(z_o)} 
	|Du|^p \,\dx\dt + 
    \bigg[ \biint_{Q_{s}^{(\lambda_{s})}(z_o)} \Big[|\sfF|^{qp}
    +\big|\lambda_s^{\frac{p-2}{2}}s\,\sff\big|^{qp'}\Big] \,\dx\dt\bigg]^{\frac1q} \Bigg] \nonumber \\
	&\ge
	|Du|^p(z_o) 
	>
	\lambda^p.
\end{align}
Fix $s$ with
\begin{align}\label{radius-s-new}
	\frac{R_2-R_1}{\hat c}\le s\le R.
\end{align}
By the definition of $\lambda_o$, Lemma~\ref{lem:crazy-cylinders}\,b), and assumption \eqref{radius-s-new}, we estimate
\begin{align}\label{smaller-lambda-new}
	&\biint_{Q_{s}^{(\lambda_{s})}(z_o)} 
	|Du|^p \,\dx\dt + 
    \bigg[ \biint_{Q_{s}^{(\lambda_{s})}(z_o)} \Big[|\sfF|^{qp}
    +\big|\lambda_s^{\frac{p-2}{2}}s\,\sff\big|^{qp'}\Big] \,\dx\dt\bigg]^{\frac1q} \nonumber\\
	&\qquad\le
	\frac{|Q_{4R}|}{|Q_{s}^{(\lambda_s)}|}
	\biint_{Q_{4R}} |Du|^p \,\dx\dt + 
    \bigg[ \frac{|Q_{4R}|}{|Q_{s}^{(\lambda_s)}|}
    \biint_{Q_{4R}} \Big[|\sfF|^{qp}
    +|R\,\sff|^{qp'}\Big] \,\dx\dt\bigg]^{\frac1q} \nonumber\\
	&\qquad\le
	\frac{|Q_{4R}|}{|Q_{s}^{(\lambda_s)}|}
	\Bigg[ 
    \biint_{Q_{4R}} |Du|^p \,\dx\dt + 
    \bigg[ 
    \biint_{Q_{4R}} \Big[|\sfF|^{qp}
    +|R\,\sff|^{qp'}\Big] \,\dx\dt\bigg]^{\frac1q} \Bigg]\nonumber\\
	&\qquad\le
	\frac{|Q_{4R}|}{|Q_{s}|}\,
	\lambda_s^{\frac{N(2-p)}{2}}
	\lambda_o^{\frac{\boldsymbol\lambda_\sfr}{\sfr}}
    \le
    \Big(\frac{4R}{s}\Big)^{N+2}\,
	\lambda_s^{\frac{N(2-p)}{2}}
	\lambda_o^{\frac{\boldsymbol\lambda_\sfr}{2}}
     \nonumber\\
    &\qquad\le
    \Big(\frac{4R}{s}\Big)^{N+2} 
    \bigg[4^\frac{2\sfr}{\boldsymbol\lambda_\sfr}
    \Big(\frac{R}{s}\Big)^{\frac{2}{\boldsymbol\lambda_\sfr}(N+2+\sfr)}
    4^{\frac{2}{\boldsymbol\lambda_\sfr}
    (N+2+2\sfr)}
    \lambda_o
    \bigg]^\frac{N(2-p)}{2}\lambda_o^\frac{\boldsymbol\lambda_\sfr}{2}
    \nonumber\\
    &\qquad=
    4^\frac{4\sfr N(2-p)}{\boldsymbol\lambda_\sfr}
    \Big(\frac{4R}{s}\Big)^{N+2+(N+2+\sfr)\frac{N(2-p)}{\boldsymbol\lambda_\sfr}}\lambda_o^{\frac{N(2-p)}{2}+\frac{\boldsymbol\lambda_\sfr}{2}} \nonumber
    \nonumber\\
    &\qquad=
    8^\frac{2\sfr N(2-p)}{\boldsymbol\lambda_\sfr}
    \Big(\frac{4R}{s}\Big)^\frac{2\sfr (N+p)}{\boldsymbol\lambda_\sfr}\lambda_o^{\frac{p\sfr}{2}}
    \nonumber\\
    &\qquad\le
    \Big(\frac{32\hat cR}{R_2-R_1}\Big)^\frac{2\sfr (N+p)}{\boldsymbol\lambda_\sfr}\lambda_o^{\frac{p\sfr}{2}}
    =
	\big(\sfB \lambda_o^{\frac{\sfr }2}\big)^p
	<
	\lambda^{p},
\end{align}
where we also used the algebraic identity 
\begin{align*}
    & N+2+ (N+2+\sfr)\frac{N(2-p)}{\boldsymbol\lambda_\sfr}\\
    &\qquad=
    \frac{1}{\boldsymbol\lambda_\sfr} 
    \big[(N+2)\big(N(p-2)+p\sfr\big) + N(N+2+\sfr)(2-p)
    \big]\\
    &\qquad=
    \frac{1}{\boldsymbol\lambda_\sfr} 
    \big[(N+2)p\sfr + N\sfr(2-p)
    \big]\\
    &\qquad=
    \frac{2\sfr(N+p)}{\boldsymbol\lambda_\sfr},
\end{align*}
and the choice of $\sfB$ in \eqref{choice_lambda-new}.
By continuity of $s\mapsto\lambda_{s}$ and absolute continuity of the integral, the left–hand side of \eqref{smaller-lambda-new} depends continuously on $s$. Hence, combining \eqref{larger-lambda-new} and \eqref{smaller-lambda-new}, there exists a maximal radius $0<\rho_{z_o}<\tfrac{R_{2}-R_{1}}{\hat c}$ at which equality is attained, i.e.
\begin{equation}\label{=lambda-new}
	\biint_{Q_{\rho_{z_o}}^{(\lambda_{\rho_{z_o}})}(z_o)} 
	|Du|^p \,\dx\dt + \bigg[\biint_{Q_{\rho_{z_o}}^{(\lambda_{\rho_{z_o}})}(z_o)} \Big[|\sfF|^{qp}
    +\big|\lambda_{\rho_{z_o}}^{\frac{p-2}{2}}\rho_{z_o}\,\sff\big|^{qp'}\Big] \dx\dt\bigg]^{\frac1q}
	=
	\lambda^p.
\end{equation}
By the maximality of $\rho_{z_o}$, it follows that for every $s\in(\rho_{z_o},R]$,
\begin{equation*}
	\biint_{Q_{s}^{(\lambda_{s})}(z_o)} 
	|Du|^p \,\dx\dt + 
    \bigg[\biint_{Q_{s}^{(\lambda_{s})}(z_o)} \Big[|\sfF|^{qp}
    +\big|\lambda_{s}^{\frac{p-2}{2}}s\,\sff\big|^{qp'}\Big] \dx\dt\bigg]^{\frac1q} 
	<
	\lambda^p.
\end{equation*}
By the monotonicity of $s\mapsto\lambda_s$
and the scale control in Lemma~\ref{lem:crazy-cylinders} b), we obtain for all $\rho_{z_o}\le s<\sigma\le R$ that 
\begin{align}\label{<lambda-new}
	\biint_{Q_{\sigma}^{(\lambda_{s})}(z_o)} &
	|Du|^p \,\dx\dt + \bigg[\biint_{Q_{\sigma}^{(\lambda_{s})}(z_o)} \Big[|\sfF|^{qp}
    +\big|\lambda_{s}^{\frac{p-2}{2}}\sigma\,\sff\big|^{qp'}\Big] \dx\dt\bigg]^{\frac1q} \nonumber\\
	&\le
	\Big(\frac{\lambda_{s}}{\lambda_\sigma}\Big)^{\frac{N(2-p)}{2}}
    \Bigg[ 
	\biint_{Q_{\sigma}^{(\lambda_{\sigma})}(z_o)} 
	|Du|^p \,\dx\dt\nonumber\\
    &\qquad\qquad\qquad\quad+ 
    \bigg[ \biint_{Q_{\sigma}^{(\lambda_{\sigma})}(z_o)} \Big[|\sfF|^{qp}
    +\big|\lambda_{\sigma}^{\frac{p-2}{2}}\sigma\,\sff\big|^{qp'}\Big] \dx\dt\bigg]^{\frac1q} 
    \Bigg] \nonumber\\
	&<
	4^{\frac{N(2-p)\sfr}{\boldsymbol\lambda_\sfr}}
    \Big(\frac{\sigma}{s}\Big)^{\frac{N(2-p)}{\boldsymbol\lambda_\sfr}(N+2+\sfr)}\,
	\lambda^{p}.
\end{align} 
Finally, recall that the cylinders were chosen so that their $\tfrac{\hat c}{4}$-enlargements remain inside $Q_{R_2}$; in particular,
$$
  Q_{\hat c\,\rho_{z_o}}^{(\lambda_{\rho_{z_o}})}(z_o)
  \subseteq
  Q_{\hat c\,\rho_{z_o}}(z_o)
  \subseteq
  Q_{R_2}.
$$

\subsection{Reverse Hölder inequality on intrinsic cylinders}

A reverse Hölder estimate is now obtained at the intrinsic stopping scale. For $z_{o}\in \sfE (R_{1},\lambda)$ with stopping radius $\rho_{z_{o}}$ and parameter $\lambda_{\rho_{z_{o}}}$, the sub– and super–intrinsic couplings verified above  
allow us to invoke Proposition~\ref{prop:RevH} on $Q_{2\rho_{z_{o}}}^{(\lambda_{\rho_{z_{o}}})}(z_{o})$. This yields a reverse Hölder inequality for $Du$ on intrinsic cylinders, which will serve as the input for Gehring’s self–improvement and the ensuing higher integrability.

Fix $\lambda$ as in \eqref{choice_lambda-new} and let $z_o\in \sfE(R_1,\lambda)$.
Denote by $\rho_{z_o}$ the stopping radius from \eqref{=lambda-new}, and set $\lambda_{\rho_{z_o}}:=\lambda_{z_o;\rho_{z_o}}$
(cf.~Lemma~\ref{lem:crazy-cylinders}).
Our aim is to establish a reverse Hölder inequality for $Du$ on the intrinsic cylinder $Q_{2\rho_{z_o}}^{(\lambda_{\rho_{z_o}})}(z_o)$. To verify the assumptions \eqref{sub-intr-rh} and \eqref{super-intr-rh} of Proposition~\ref{prop:RevH}, we first observe that Lemma~\ref{lem:crazy-cylinders} d) with $s=2\rho_{z_o}$ implies
\begin{equation*}
  \biint_{Q_{2\rho_{z_o}}^{(\lambda_{\rho_{z_o}})}(z_o)} 
  \frac{\big|u-(u)_{z_o;2\rho_{z_o}}^{(\lambda_{\rho_{z_o}})}\big|^{\sfr}}{(2\rho_{z_o})^{\sfr}} \dx\dt
  \le 
  2^{\sfr}\lambda_{\rho_{z_o}}^\frac{p\sfr}{2}.
\end{equation*}
Thus, the sub–intrinsic coupling \eqref{sub-intr-rh} holds on $Q_{2\rho_{z_o}}^{(\lambda_{\rho_{z_o}})}(z_o)$ with $\sfK=2^{\sfr}$. 

To verify \eqref{super-intr-rh}, we first claim that $\lambda_{\rho_{z_o}}$ is bounded in terms of $\lambda$.
Indeed, by Lemma~\ref{lem:crazy-cylinders} e), either an explicit upper bound holds:
$$
    \lambda_{\rho_{z_o}}
    \le 
    4^{\frac{2}{\boldsymbol\lambda_\sfr}(N+2+3\sfr)}\lambda_{o} 
    \le 
    \sfB\lambda_{o}^{\frac\sfr2}
    <
    \lambda,
$$
where in the second inequality we used the definition of the constant $\sfB$ from \eqref{choice_lambda-new}. Alternatively, there exists a radius $\widetilde\rho\in\big[\rho_{z_o},\tfrac{R}{2}\big]$ such that
$$
    \lambda_{\rho_{z_o}}^{\frac{p\sfr}{2}}
    =
    \lambda_{\widetilde\rho}^{\frac{p\sfr}{2}}
    =
    \biint_{Q_{\widetilde\rho}^{(\lambda_{\widetilde\rho})}(z_o)}
    \frac{\big|u-(u)_{z_o;\widetilde\rho}^{(\lambda_{\widetilde\rho})}\big|^{\sfr}}{\widetilde\rho^{\sfr}}\,\dx\dt.
$$
The latter case also yields a bound similar to the former case in view of Lemma~\ref{lem:sub-intr-2} applied on $Q_{\widetilde\rho}^{(\lambda_{\widetilde\rho})}(z_o)$. In fact, the sub–intrinsic condition \eqref{sub-intr} holds with $\sfK=2^{\sfr}$ by Lemma~\ref{lem:crazy-cylinders} d), while the super–intrinsic identity \eqref{super-intr} with $\sfK=1$ follows from the preceding display. Hence, Lemma~\ref{lem:sub-intr-2} gives
\begin{align*}
	\lambda_{\rho_{z_o}}^p
    =
    \lambda_{\widetilde\rho}^p
    &\le
	C \biint_{Q_{\widetilde\rho}^{(\lambda_{\widetilde\rho})}(z_o)} |Du|^p \,\dx\dt +
    C \bigg[\biint_{Q_{2\widetilde\rho}^{(\lambda_{\widetilde\rho})}(z_o)} \Big[| \sfF|^{qp} + \big|\lambda_{\widetilde\rho}^{\frac{p-2}{2}}\widetilde\rho\, \sff\big|^{qp'}\Big] \dx\dt
	\bigg]^{\frac{1}{q}}.
\end{align*}
Moreover, using \eqref{<lambda-new} with $s=\widetilde\rho$ and $\sigma=2\widetilde\rho\in(s,R]$, we obtain 
$$
\lambda_{\rho_{z_o}}^{p}
\le
C2^{N+2+\frac{N(2-p)}{\boldsymbol\lambda_\sfr}(N+2+3\sfr)}\,\lambda^{p}.
$$
In either alternative we thus have $\lambda_{\rho_{z_o}}\le C\lambda$ as claimed.
Combining this with \eqref{=lambda-new} yields
\begin{align*}
  	\lambda_{\rho_{z_o}}^p
  	\le
    C\lambda^p
    &=
  	C \Bigg[\biint_{Q_{\rho_{z_o}}^{(\lambda_{\rho_{z_o}})}(z_o)} 
	|Du|^p \,\dx\dt \\
    &\qquad\quad+ \bigg[\biint_{Q_{\rho_{z_o}}^{(\lambda_{\rho_{z_o}})}(z_o)} \Big[|\sfF|^{qp}
    +\big|\lambda_{\rho_{z_o}}^{\frac{p-2}{2}}\rho_{z_o}\,\sff\big|^{qp'}\Big] \,\dx\dt\bigg]^{\frac1q} \Bigg].
\end{align*}
Since enlarging the cylinders on the right to $Q_{2\rho_{z_o}}^{(\lambda_{\rho_{z_o}})}(z_o)$ only increases the integrals, the super–intrinsic condition \eqref{super-intr-rh} holds on
$Q_{2\rho_{z_o}}^{(\lambda_{\rho_{z_o}})}$ with a  constant $\sfK\equiv \sfK(N,p,C_o,C_1,q,\sfr)$.
In any case, all hypotheses of Proposition~\ref{prop:RevH} are met. Hence the proposition yields the reverse Hölder estimate
\begin{align}\label{rev-hoelder}
	\nonumber\biint_{Q_{2\rho_{z_o}}^{(\lambda_{\rho_{z_o}})}(z_o)}  
	|Du|^p \,\dx\dt 
	&\le
	C\bigg[\biint_{Q_{4\rho_{z_o}}^{(\lambda_{\rho_{z_o}})}(z_o)} 
	|Du|^{\theta} \,\dx\dt \bigg]^{\frac{p}{\theta}}\\\nonumber 
	&\phantom{\le\,}+
    C\bigg[ \biint_{Q_{4\rho_{z_o}}^{(\lambda_{\rho_{z_o}})}(z_o)} \Big[| \sfF|^{qp} + \big|\lambda_{\rho_{z_o}}^{\frac{p-2}{2}}\rho_{z_o} \sff\big|^{qp'}\Big] \,\dx\dt\bigg]^{\frac1q} \\\nonumber
    &\le
	C\bigg[\biint_{Q_{4\rho_{z_o}}^{(\lambda_{\rho_{z_o}})}(z_o)} 
	|Du|^{\theta} \,\dx\dt \bigg]^{\frac{p}{\theta}}\\
    &\phantom{\le\,}+
	C\bigg[ \biint_{Q_{4\rho_{z_o}}^{(\lambda_{\rho_{z_o}})}(z_o)} \Big[| \sfF|^{qp} + \big|R\, \sff\big|^{qp'}\Big] \,\dx\dt\bigg]^{\frac1q},
\end{align}
where $\theta=\frac{Np}{N+2}<1$
and  $C=C(N,p,C_o,C_1,q,\sfr)$.

\subsection{Weak-type estimate for the super-level sets of \texorpdfstring{$|Du|$}{|Du|}}
So far, for every $\lambda$ as in \eqref{choice_lambda-new} and every $z_o\in \sfE(R_1,\lambda)$, we have constructed a cylinder $Q_{\rho_{z_o}}^{(\lambda_{z_o;\rho_{z_o}})}(z_o)$ with $Q_{\hat c\,\rho_{z_o}}^{(\lambda_{z_o;\rho_{z_o}})}(z_o)\subseteq Q_{R_2}$ that satisfies the stopping time identity \eqref{=lambda-new}, the strict decay for larger radii \eqref{<lambda-new}, and the reverse Hölder estimate \eqref{rev-hoelder}. These ingredients permit a Vitali-type covering argument yielding a reverse Hölder (weak-type) bound for the distribution function of $|Du|^{p}$. The argument proceeds as follows.
We define the {\em super-level set of the source terms} by
\begin{align*}
    &\sfG(\rho,\lambda)\\
	&\quad:=
	\Big\{z\in Q_{\rho}: 
	\mbox{$z$ is a Lebesgue point of $| \sfF| + |R\,\sff|^{\frac{1}{p-1}}$ and 
	$\big[| \sfF| + |R\,\sff|^{\frac{1}{p-1}}\big](z)>\lambda$}\Big\}.
\end{align*}
As before, Lebesgue points are understood with respect to the intrinsic cylinders constructed in \S~\ref{sec:cylinders}; by Lemma~\ref{lem:vitali} and the differentiation theorem, $\mathcal L^{N+1}$-a.e. point of $Q_\rho$ is such a Lebesgue point.
Applying \eqref{=lambda-new} and \eqref{rev-hoelder} we obtain, for every $z_o\in \sfE (R_1,\lambda)$ and every $\eta\in (0,1)$,
\begin{align*}
	\lambda^{p}
	&=
	\biint_{Q_{\rho_{z_o}}^{(\lambda_{\rho_{z_o}})}(z_o)} 
	|Du|^p \,\dx\dt + \bigg[ \biint_{Q_{\rho_{z_o}}^{(\lambda_{\rho_{z_o}})}(z_o)} \Big[|\sfF|^{qp}
    +\big|\lambda_{\rho_{z_o}}^{\frac{p-2}{2}}\rho_{z_o}\,\sff\big|^{qp'}\Big] \,\dx\dt\bigg]^{\frac1q} \\
	&\le
	C\bigg[\biint_{Q_{4\rho_{z_o}}^{(\lambda_{\rho_{z_o}})}(z_o)} 
	|Du|^{\theta} \,\dx\dt \bigg]^{\frac{p}{\theta}} +
	C\bigg[ \biint_{Q_{4\rho_{z_o}}^{(\lambda_{\rho_{z_o}})}(z_o)} \Big[| \sfF|^{qp} + |R\,\sff|^{qp'}\Big] \,\dx\dt\bigg]^{\frac1q} \\
	&\le
	C\,\eta^p\lambda^p +
	C\sfI^{\frac{p}{\theta}}+
	C\sfI\sfI^{\frac1q} ,
\end{align*}
for some $C=C(N,p,C_o,C_1,q,\sfr)$, where we again abbreviated $\lambda_{\rho_{z_o}}=\lambda_{z_o;\rho_{z_o}}$ and 
\begin{align*}
    \sfI
    &:=
    \frac{1}{\big|Q_{4\rho_{z_o}}^{(\lambda_{\rho_{z_o}})}(z_o)\big|}
	\iint_{Q_{4\rho_{z_o}}^{(\lambda_{\rho_{z_o}})}(z_o)\cap \sfE(R_2,\eta\lambda)} 
	|Du|^{\theta} \,\dx\dt,
\end{align*}
\begin{align*}
    \sfI\sfI
    &:=
    \frac{1}{\big|Q_{4\rho_{z_o}}^{(\lambda_{\rho_{z_o}})}(z_o)\big|}
	\iint_{Q_{4\rho_{z_o}}^{(\lambda_{\rho_{z_o}})}(z_o)\cap \sfG (R_2,\eta\lambda)} 
	\Big[| \sfF|^{qp} + |R\,\sff|^{qp'}\Big] \,\dx\dt.
\end{align*} 
Moreover, to get the last estimate, we decomposed the averages over $Q_{4\rho_{z_o}}^{(\lambda_{\rho_{z_o}})}$ according to the sets $\sfE (R_{2},\eta\lambda)$ and $Q_{4\rho_{z_o}}^{(\lambda_{\rho_{z_o}})}\setminus \sfE(R_{2},\eta\lambda)$, and analogously for $\sfG$ on the source terms; the sub-level contribution is bounded by $(\eta\lambda)^{p}$ and can be absorbed after choosing $\eta>0$ small enough. 

In order to control $\sfI$, we apply Hölder’s inequality and employ \eqref{<lambda-new} with $s=\rho_{z_o}$ and $\sigma=4\rho_{z_o}$. This yields
\begin{align*}
	\sfI^{\frac{p}{\theta}-1}
	&\le
	\bigg[\biint_{Q_{4\rho_{z_o}}^{(\lambda_{\rho_{z_o}})}(z_o)} 
	|Du|^{p} \dx\dt \bigg]^{1-\frac{\theta}{p}}
	\le
	C\,\lambda^{p-\theta}.
\end{align*}
For $\sfI\sfI$,   we use Young's inequality with conjugate exponents $q'=\frac{q}{q-1}$ and $q$. This gives
\begin{align*}
	C\sfI\sfI^{\frac1q}
    &\le 
    \tfrac14 \lambda^{p} +
    \frac{C\,\lambda^{-p(q-1)}}{\big|Q_{4\rho_{z_o}}^{(\lambda_{\rho_{z_o}})}(z_o)\big|} 
	\iint_{Q_{4\rho_{z_o}}^{(\lambda_{\rho_{z_o}})}(z_o)\cap \sfG (R_2,\eta\lambda)} 
	\Big[| \sfF|^{qp} + |R\,\sff|^{qp'}\Big] \,\dx\dt .
\end{align*}
Choose and fix a parameter
\begin{equation}\label{choice:eta}
\eta=\eta(N,p,C_o,C_1,q,\sfr)\in(0,1)
\quad\text{such that}\quad
\eta^{p}=\frac{1}{4C},
\end{equation}
where $C=C(N,p,C_o,C_1,q,\sfr)$ is the constant from the previous estimate. With this choice, the $\lambda^{p}$-terms are absorbed into the left-hand side. Multiplying the resulting inequality by $\big|Q_{4\rho_{z_o}}^{(\lambda_{\rho_{z_o}})}(z_o)\big|$ yields
\begin{align*}
	\lambda^{p}\big|Q_{4\rho_{z_o}}^{(\lambda_{\rho_{z_o}})}(z_o)\big|
	&\le
	C\iint_{Q_{4\rho_{z_o}}^{(\lambda_{\rho_{z_o}})}(z_o)\cap \sfE (R_2,\eta\lambda)} 
	\lambda^{p-\theta}|Du|^{\theta} \,\dx\dt \\
	&\quad+
	C
	\iint_{Q_{4\rho_{z_o}}^{(\lambda_{\rho_{z_o}})}(z_o)\cap \sfG (R_2,\eta\lambda)} 
	\lambda^{-p(q-1)} \Big[| \sfF|^{qp} + |R\,\sff|^{qp'}\Big] \,\dx\dt,
\end{align*}
with $C=C(N,m,\nu,L,p,r)$. In particular, $\eta$ is fixed once and for all in terms of the structural parameters only, independent of $z_o$, $\lambda$, and $R_1,R_2$. 

Applying \eqref{<lambda-new} with $s=\rho_{z_o}$ and $\sigma=\hat c\,\rho_{z_o}$, and recalling that $\hat c=\hat c(N,p,\sfr)$, we obtain
\begin{align*}
	\lambda^{p}
    \ge
    \frac{1}{C}
	\biint_{Q_{\hat c\rho_{z_o}}^{(\lambda_{\rho_{z_o}})}(z_o)} 
	|Du|^p \,\dx\dt.
\end{align*}
Combining the preceding bounds, we obtain
\begin{align}\label{level-est}
	\iint_{Q_{\hat c\rho_{z_o}}^{(\lambda_{\rho_{z_o}})}(z_o)} 
	|Du|^p \,\dx\dt 
	&\le
    C
    \iint_{Q_{4\rho_{z_o}}^{(\lambda_{\rho_{z_o}})}(z_o)\cap \sfE (R_2,\eta\lambda)} \!\!
	\lambda^{p-\theta}|Du|^{\theta} \dx\dt \nonumber\\
	&\quad +
	C
	\iint_{Q_{4\rho_{z_o}}^{(\lambda_{\rho_{z_o}})}(z_o)\cap \sfG (R_2,\eta\lambda)} \!\!
	\lambda^{-p(q-1)} \Big[| \sfF|^{qp} + |R\,\sff|^{qp'}\Big]\dx\dt
\end{align}
for a constant $C=C(N,p,C_o,C_1,q,\sfr)$.
Since \eqref{level-est} holds for every center \(z_o\in \sfE(R_{1},\lambda)\),  
the super-level set \(\sfE(R_{1},\lambda)\) can be covered by the family of cylinders  
\[
\mathcal{F}
:=
\Big\{
Q_{4\rho_{z_o}}^{(\lambda_{z_o;\rho_{z_o}})}(z_o)
:\ z_o\in \sfE(R_{1},\lambda)
\Big\},
\]
whose elements are contained in \(Q_{R_{2}}\) and on each of which \eqref{level-est} is valid.
By the Vitali-type covering Lemma~\ref{lem:vitali}, there exists a countable disjoint subfamily
$$
\Big\{\,Q_{4\rho_{z_i}}^{(\lambda_{z_i;\rho_{z_i}})}(z_i)\,\Big\}_{i\in\mathbb N}\subseteq \mathcal F
$$
with
$$
\sfE(R_{1},\lambda)
\subseteq
\bigcup_{i=1}^{\infty} Q_{\hat c\,\rho_{z_i}}^{(\lambda_{z_i;\rho_{z_i}})}(z_i)
\subseteq
Q_{R_{2}}.
$$
Applying \eqref{level-est} to each cylinder $Q_{4\rho_{z_i}}^{(\lambda_{z_i;\rho_{z_i}})}(z_i)$ and summing over the disjoint subfamily yields 
\begin{align*}
	\iint_{\sfE (R_1,\lambda)} 
	|Du|^p \,\dx\dt
	&\le
	C\iint_{\sfE (R_2,\eta\lambda)} 
	\lambda^{p-\theta}|Du|^{\theta} \,\dx\dt \\
    &\quad +
	C
    \iint_{\sfG  (R_2,\eta\lambda)} 
	\lambda^{-p(q-1)}\Big[| \sfF|^{qp} + |R\,\sff|^{qp'}\Big] \,\dx\dt,
\end{align*}
with $C=C(N,p,C_o,C_1,q,\sfr)$.
To balance the levels, consider the intermediate layer $\sfE(R_{1},\eta\lambda)\setminus \sfE(R_{1},\lambda)$. Since $R_{1}<R_{2}$, one has $\sfE(R_{1},\eta\lambda)\subset \sfE(R_{2},\eta\lambda)$, and on this layer $ |Du|\le \lambda$. Hence
\begin{align*}
	\iint_{\sfE (R_1,\eta\lambda)\setminus \sfE (R_1,\lambda)} 
	|Du|^p \dx\dt
	&\le
	\iint_{\sfE (R_2,\eta\lambda)} 
	\lambda^{p-\theta}|Du|^{\theta} \dx\dt.
\end{align*}
Adding the two preceding bounds and then replacing $\eta\lambda$ by $\lambda$ (recalling that $\eta\in(0,1)$ is fixed in terms of $N,p,C_o,C_1,q,\sfr$ only) yields: for every $\lambda\ge \lambda_{1}:=\eta \sfB\lambda_{o}^{\frac \sfr2}$, we have
\begin{align}\label{pre-1}
	\iint_{\sfE(R_1,\lambda)} &
	|Du|^p \dx\dt \nonumber\\
	&\le
	C\iint_{\sfE(R_2,\lambda)} \!\!
	\lambda^{p-\theta}|Du|^{\theta} \dx\dt +
	C \iint_{\sfG(R_2,\lambda)}\!\! \lambda^{-p(q-1)}
    \Big[| \sfF|^{qp} + |R\,\sff|^{qp'}\Big]
    \dx\dt,
\end{align}
where $C=C(N,p,C_o,C_1,q,\sfr)$ and $\theta=\tfrac{Np}{N+2}$. This is the desired reverse Hölder–type inequality for the distribution function of $Du$.

\subsection{Proof of  Theorem~\ref{thm:higherint}}
Having established the \emph{reverse Hölder–type level–set inequality} \eqref{pre-1} for \(|Du|^{p}\) in the previous subsection, we now turn to its quantitative consequences. 
The next step is to convert this level–set estimate into an integral gain. 
To this end, we multiply \eqref{pre-1} by \(\lambda^{p\varepsilon-1}\) and integrate with respect to \(\lambda\); using Fubini’s theorem together with Hölder's and Young's inequality to handle the source terms, and finally applying the iteration Lemma~\ref{lem:tech-classical}, we infer that \(|Du|\in L^{p(1+\varepsilon)}(Q_{R})\). 
The lower bound for \(\lambda_{o}\) in \eqref{Eq:def-lm0-new} fixes the constants and yields the stated quantitative estimate.

To proceed with the next step of the argument,
fix $\sfk>\lambda_{1}=\eta\sfB\lm_o^{\frac \sfr2}$ and define the \emph{truncation}
$$
|Du|_{\sfk}:=\min\{|Du|,\sfk\},
$$
and, for $\rho\in(0,2R]$, the corresponding super-level set
$$
\sfE_{\sfk}(\rho,\lambda):=\big\{z\in Q_{\rho}\colon\ |Du|_{\sfk}(z)>\lambda\big\}.
$$
Note that $ |Du|_{\sfk}\le |Du|$ a.e., that $\sfE_{\sfk}(\rho,\lambda)=\varnothing$ if $\sfk\le\lambda$, and that $\sfE_{\sfk}(\rho,\lambda)=\sfE(\rho,\lambda)$ if $\sfk>\lambda$. Hence, by \eqref{pre-1}, for every $\sfk>\lambda\ge\lambda_{1}$, we have
\begin{align*}
	&\iint_{\sfE_\sfk(R_1,\lambda)} 
	|Du|_\sfk^{p-\theta}|Du|^{\theta} \d x\d t\\
	&\qquad\le
	C\iint_{\sfE_\sfk(R_2,\lambda)} 
	\lambda^{p-\theta}|Du|^{\theta} \dx\dt +
	C 
    \iint_{\sfG(R_2,\lambda)}
    \lambda^{-p(q-1)}
    \Big[| \sfF|^{qp} + |R\,\sff|^{qp'}\Big]
 \dx\dt,
\end{align*}
where $C=C(N,p,C_{o},C_{1},q,\sfr)$ and $\theta=\tfrac{Np}{N+2}$.
Since $\sfE_{\sfk}(\rho,\lambda)=\varnothing$ for $\sfk\le\lambda$, the preceding estimate holds for all $\sfk>0$. Multiplying by $\lambda^{p\varepsilon-1}$ with $\varepsilon\in(0,1]$ (to be fixed below) and integrating over $(\lambda_{1},\infty)$ yields
\begin{align*}
	\int_{\lambda_1}^\infty \lambda^{p\epsilon-1} &
	\bigg[\iint_{\sfE_\sfk(R_1,\lambda)} 
	|Du|_\sfk^{p-\theta}
	|Du|^{\theta} \,\dx\dt \bigg] \d\lambda
	\\
	&\le
	C\int_{\lambda_1}^\infty \lambda^{p(1+\epsilon)-\theta-1} 
	\bigg[
	\iint_{\boldsymbol \sfE_\sfk(R_2,\lambda)} 
	|Du|^{\theta} \,\dx\dt\bigg]\d\lambda \\
	&\quad +
	C \int_{\lambda_1}^\infty \lambda^{p(1-q+\epsilon)-1}
	\bigg[
	\iint_{\sfG(R_2,\lambda)}
    \Big[| \sfF|^{qp} + |R\,\sff|^{qp'}\Big]
    \,\dx\dt\bigg]\d\lambda.
\end{align*}
Applying Fubini’s theorem and reordering the integrations gives, for the left–hand side,
\begin{align*}
	\int_{\lambda_1}^\infty &\lambda^{p\epsilon-1} 
	\bigg[
	\iint_{\sfE_\sfk(R_1,\lambda)} 
	|Du|_\sfk^{p-\theta}
	|Du|^{\theta} \,\dx\dt \bigg]\,\d\lambda
	\\
	&=
	\iint_{\sfE_\sfk(R_1,\lambda_1)}
	|Du|_\sfk^{p-\theta}
	|Du|^{\theta}
	\bigg[
	\int_{\lambda_1}^{|Du|_\sfk} 
	\lambda^{p\epsilon-1} \,\d\lambda\bigg]
	 \,\dx\dt \\
	&=
	\tfrac{1}{p\epsilon} \iint_{\sfE_\sfk(R_1,\lambda_1)}
	\Big[|Du|_\sfk^{p(1+\epsilon)-\theta}
	|Du|^{\theta}
	-
	\lambda_1^{p\epsilon} |Du
	|_\sfk^{p-\theta}|Du|^{\theta} \Big]
	\,\dx\dt .
\end{align*}
For the first term on the right–hand side, since $\theta<p$,
\begin{align*}
	\int_{\lambda_1}^\infty &\lambda^{p(1+\epsilon)-\theta -1} 
	\bigg[
	\iint_{\sfE_\sfk(R_2,\lambda)} 
	|Du|^{\theta} \,\dx\dt\bigg]\,\d\lambda \\
	&=
	\iint_{\sfE_\sfk(R_2,\lambda_1)} |Du|^{\theta}
	\bigg[
	\int_{\lambda_1}^{|Du|_\sfk} 
	\lambda^{p(1+\epsilon) -\theta-1} \,\d\lambda\bigg]
	\,\dx\dt \\
	&\le 
	\tfrac{1}{p(1+\epsilon)-\theta} \iint_{\sfE_\sfk(R_2,\lambda_1)} 
	|Du|_\sfk^{p(1+\epsilon)-\theta} 
	|Du|^{\theta}
	\,\dx\dt \\
	&\le 
	\tfrac{1}{p-\theta} \iint_{\sfE_\sfk(R_2,\lambda_1)} 
	|Du|_\sfk^{p(1+\epsilon)-\theta} 
	|Du|^{\theta}
	\,\dx\dt.
\end{align*}
Note that 
$$
1-q+\varepsilon<-\tfrac{N}{p}+\varepsilon\le -\tfrac{N+2}{2}+\varepsilon<-\tfrac{N}{2}<0
$$ because $q>\tfrac{N+p}{p}$ and $p\le \tfrac{2N}{N+2}$. Then, Fubini’s theorem gives
\begin{align*}
	\int_{\lambda_1}^\infty &\lambda^{p(1-q+\epsilon)-1}
	\bigg[
	\iint_{\sfG(R_2,\lambda)}
    \Big[| \sfF|^{qp} + |R\,\sff|^{qp'}\Big]
    \,\dx\dt\bigg]\d\lambda \\
	&=
	\iint_{\sfG (R_2,\lambda_1)} \Big[| \sfF|^{qp} + |R\,\sff|^{qp'}\Big]
	\bigg[\int_{\lambda_1}^{| \sfF|^{qp} + |R\,\sff|^{qp'} } 
    \lambda^{p(1-q+\epsilon)-1}
	\,\d\lambda\bigg]
	 \,\dx\dt \\
     &\le 
     \iint_{\sfG (R_2,\lambda_1)} \Big[| \sfF|^{qp} + |R\,\sff|^{qp'}\Big]
	\bigg[\int_{\lambda_1}^{\infty } 
    \lambda^{p(1-q+\epsilon)-1}
	\,\d\lambda\bigg]
	 \,\dx\dt\\
	&=
	\frac{\lambda_1^{-p(q-1-\epsilon)}}{p(q-1-\epsilon)} \iint_{\sfG(R_2,\lambda_1)}
	\Big[| \sfF|^{qp} + |R\,\sff|^{qp'}\Big] \,\dx\dt \\
	 &\le
	\frac{\lambda_1^{-p(q-1-\epsilon)}}{p}
    \iint_{Q_{2R}}
	\Big[| \sfF|^{qp} + |R\,\sff|^{qp'}\Big] \,\dx\dt. 
\end{align*}
Inserting the preceding bounds and multiplying by $p\varepsilon$ yields
\begin{align*}
	 \iint_{ \sfE_\sfk(R_1,\lambda_1)}&
	|Du|_\sfk^{p(1+\epsilon)-\theta}
	|Du|^{\theta} \,\dx\dt \\
	&\le
	\lambda_1^{p\epsilon} 
	\iint_{\sfE_\sfk(R_1,\lambda_1)}
	|Du|_\sfk^{p-\theta}
	|Du|^{\theta} \,\dx\dt \\
	&\phantom{\le\,}+
	\frac{C\, \epsilon}{p-\theta} \iint_{\sfE_\sfk(R_2,\lambda_1)} 
	|Du|_{\sfk}^{p(1+\epsilon)-\theta} 
	|Du|^{\theta}
	\,\dx\dt \\
	&\phantom{\le\,}+
	\frac{C\,\epsilon}{\lambda_1^{p(q-1-\epsilon)}}
    \iint_{Q_{2R}} \Big[| \sfF|^{qp} + |R\,\sff|^{qp'}\Big] \,\dx\dt ,	
\end{align*}
with $C=C(N,p,C_{o},C_{1},q,\sfr)$ and $\theta=\tfrac{Np}{N+2}$.
On the complement $Q_{R_{1}}\setminus \sfE_{\sfk}(R_{1},\lambda_{1})$ one has $|Du|_{\sfk}\le \lambda_{1}$, hence
\begin{align*}
	\iint_{Q_{R_1}\setminus \sfE_\sfk(R_1,\lambda_1)}&
	|Du|_\sfk^{p(1+\epsilon)-\theta}
	|Du|^{\theta} \,\dx\dt \\
	&\le
	\lambda_1^{p\epsilon} 
	\iint_{Q_{R_1}\setminus \sfE_\sfk(R_1,\lambda_1)}
	|Du|_\sfk^{p-\theta}
	|Du|^{\theta} \,\dx\dt .
\end{align*}
Adding this to the inequality over $\sfE_{\sfk}(R_{1},\lambda_{1})$ and using $|Du|_{\sfk}\le |Du|$ yields
\begin{align*}
	 \iint_{Q_{R_1}}&
	|Du|_\sfk^{p(1+\epsilon)-\theta}
	|Du|^{\theta} \,\dx\dt \\
	&\le
	\frac{C_\ast \epsilon}{p-\theta} \iint_{Q_{R_2}} 
	|Du|_\sfk^{p(1+\epsilon)-\theta} 
	|Du|^{\theta}
	\,\dx\dt +
	\lambda_1^{p\epsilon} 
	\iint_{Q_{2R}} |Du|^{p} \dx\dt\\
	&\phantom{\le\ } +
	\frac{C\,\epsilon}{\lambda_1^{p(q-1-\epsilon)}}
    \iint_{Q_{2R}} \Big[| \sfF|^{qp} + |R\,\sff|^{qp'}\Big] \,\dx\dt,
\end{align*}
where $C_\ast=C_\ast(N,p,C_o,C_1,q,\sfr)\ge 1$.
Choose
$$
0<\varepsilon\le\varepsilon_{o},
\qquad
\varepsilon_{o}:=\frac{p-\theta}{2C_{\ast}}<\tfrac12,
$$
where $\varepsilon_{o}$ depends only on $N,p,C_{o},C_{1},q,\sfr$.
Since $\lambda_{1}=\eta\sfB\lambda_{o}^{\frac\sfr2}$ with $\sfB,\,\lm_o\ge1$ and $\eta\in(0,1]$  
from \eqref{choice:eta},
we have
$$
\lambda_{1}^{\,p\varepsilon}\le \sfB^{\,p\varepsilon}\lambda_{o}^{\,p\varepsilon\frac\sfr2}\le \sfB^p\,\lambda_{o}^{\,p\varepsilon\frac\sfr2},
\qquad
\lambda_{1}^{\,p(q-1-\varepsilon)}\ge \eta^{\,p(q-1)}\lambda_{o}^{\,p \frac\sfr2 (q-1-\varepsilon)}.
$$
Inserting these bounds into the preceding estimate and absorbing the $\eta,\sfB$-factors into the constant (recalling the definition of $\sfB$ in \eqref{choice_lambda-new}) yields: for any radii $R\le R_{1}<R_{2}\le2R$,
\begin{align*}
	& \iint_{Q_{R_1}}
	|Du|_\sfk^{p(1+\epsilon)-\theta} 
	|Du|^{\theta} \,\dx\dt \\
	&\quad\le
	\tfrac{1}{2} \iint_{Q_{R_2}} 
	|Du|_\sfk^{p(1+\epsilon)-\theta} 
	|Du|^{\theta}
	\,\dx\dt+
	C\,\Big(\frac{R}{R_2{-}R_1}\Big)^{\frac{2\sfr(N+p)}{{\boldsymbol\lambda_\sfr}}}
	\lambda_o^{p\epsilon\frac\sfr2} 
	\iint_{Q_{2R}} |Du|^{p} \,\dx\dt \\
    &\qquad +
	\frac{C}{\lambda_o^{p\frac\sfr2(q-1-\epsilon)}}
    \iint_{Q_{2R}} \Big[| \sfF|^{qp} + |R\,\sff|^{qp'}\Big] \,\dx\dt,
\end{align*}
where $C=C(N,p,C_{o},C_{1},q,\sfr)$, and $\boldsymbol\lambda_{\sfr}=N(p-2)+p\sfr$.
Applying the iteration Lemma~\ref{lem:tech-classical} to the preceding inequality yields
\begin{align*}
	\iint_{Q_{R}}&
	|Du|_\sfk^{p(1+\epsilon)-\theta} 
	|Du|^{\theta} \,\dx\dt \\
	&\le
	C\, 
	\lambda_o^{p\epsilon\frac\sfr2} 
	\iint_{Q_{2R}}
	|Du|^{p} \,\dx\dt  +
	\frac{C}{\lambda_o^{p\frac\sfr2(q-1-\epsilon)}}
    \iint_{Q_{2R}} \Big[| \sfF|^{qp} + |R\,\sff|^{qp'}\Big] \,\dx\dt
\end{align*}
with $C=C(N,p,C_{o},C_{1},q,\sfr)$.
Since the right–hand side of the inequality does not depend on $\sfk$, 
and the integrability exponent of $|Du|_\sfk$ satisfies $p(1+\varepsilon)-\theta>0$, 
we may let $\sfk\uparrow\infty$ and apply the monotone convergence theorem 
to obtain the  higher integrability estimate
\begin{align*}
	\biint_{Q_{R}}& 
	|Du|^{p(1+\epsilon)} \,\dx\dt \\
	&\le
	C\, 
	\lambda_o^{p\epsilon\frac\sfr2} 
	\biint_{Q_{2R}}
	|Du|^{p} \,\dx\dt +
	\frac{C}{\lambda_o^{p\frac\sfr2(q-1-\epsilon)}}
    \biint_{Q_{2R}} \Big[| \sfF|^{qp} + |R\,\sff|^{qp'}\Big] \,\dx\dt .
\end{align*}
Invoking the definition from \eqref{Eq:def-lm0-new},
$$
\lambda_{o}\;\ge\;\max\big\{1,\ \sfI^{\,\frac{\sfr}{q\boldsymbol\lambda_{\sfr}}}\big\},
\qquad 
\sfI:=\biint_{Q_{4R}}\Big[|\sfF|^{qp}+|R\,\sff|^{qp'}\Big]\,\mathrm{d}x\mathrm{d}t,
$$
we estimate
\begin{align*}
	\frac{1}{\lambda_o^{p\frac\sfr2(q-1-\epsilon)}}
    \biint_{Q_{2R}}& \Big[| \sfF|^{qp} + |R\,\sff|^{qp'}\Big] \,\dx\dt \\
    &\le
    \frac{C}{\lambda_o^{p\frac\sfr2(q-1-\epsilon)}}
    \sfI ^{\frac{q-1-\epsilon}{q}} 
    \bigg[\biint_{Q_{2R}} \Big[| \sfF|^{qp} + |R\,\sff|^{qp'}\Big] \,\dx\dt\bigg]^{\frac{1+\epsilon}{q}}\\
    &\le 
    \frac{C}{\lambda_o^{(q-1-\epsilon)(p\frac\sfr2-\frac{\boldsymbol\lambda_\sfr}\sfr)}} 
    \bigg[\biint_{Q_{2R}} \Big[| \sfF|^{qp} + |R\,\sff|^{qp'}\Big] \,\dx\dt\bigg]^{\frac{1+\epsilon}{q}} \\
    &\le  
    C\bigg[\biint_{Q_{2R}} \Big[| \sfF|^{qp} + |R\,\sff|^{qp'}\Big] \,\dx\dt \bigg]^{\frac{1+\epsilon}{q}}.
\end{align*}
To obtain the last line, we used that
\[
p\frac{\sfr}{2}-\frac{\boldsymbol\lambda_\sfr}{\sfr}
= p\sfr\Big(\frac{1}{2}-\frac{1}{\sfr}\Big)
+ \frac{N(2-p)}{\sfr} > 0,
\]
and that $\lambda_{o}\ge1$. 
Consequently, we have 
\(
\lambda_o^{(q-1-\varepsilon)
(p\frac{\sfr}{2}-\frac{\boldsymbol\lambda_\sfr}{\sfr})} \ge 1
\).
Substituting the foregoing bound for the source terms contribution 
into the higher–integrability estimate, we obtain
\begin{align*}
	\biint_{Q_{R}} &
	|Du|^{p(1+\epsilon)} \,\dx\dt\\
	&\le
    C\lambda_o^{p\epsilon\frac\sfr2}
	\biint_{Q_{2R}}
	|Du|^{p} \dx\dt+
	C\bigg[ \biint_{Q_{2R}} \Big[| \sfF|^{qp} + |R\,\sff|^{qp'}\Big] \dx\dt \bigg]^{\frac{1+\epsilon}{q}},
\end{align*}
with $C=C(N,p,C_{o},C_{1},q,\sfr)$. 
On the other hand, a direct application of the zero–order energy inequality 
\eqref{est:energy*} on $Q_{4R}$ with $a=(u)_{8R}$ yields
\begin{align*}
&\bigg[\biint_{Q_{4R}}|Du|^p\,\dx\dt
\bigg]^{\frac\sfr{\boldsymbol\lambda_\sfr}}
\\
&\qquad\le C\Bigg[1+\bigg[\biint_{Q_{8R}}\frac{|u-(u)_{8R}|^\sfr}{(8R)^\sfr}\,\dx\dt
\bigg]^{\frac2{\boldsymbol\lambda_\sfr}}+
\bigg[\biint_{Q_{8R}}
\Big[|\sfF|^{qp}+|R\sff|^{qp'}\Big]\,
\dx\dt\bigg]^{\frac\sfr{q\boldsymbol\lambda_\sfr}}\Bigg],
\end{align*}
and consequently,
\[
\lambda_o\le 
C\Bigg[1+\bigg[\biint_{Q_{8R}}\frac{|u-(u)_{8R}|^\sfr}{(8R)^\sfr}\,\dx\dt
\bigg]^{\frac2{\boldsymbol\lambda_\sfr}}+
\bigg[\biint_{Q_{8R}}
\Big[|\sfF|^{qp}+|R\sff|^{qp'}\Big]\,
\dx\dt\bigg]^{\frac\sfr{q\boldsymbol\lambda_\sfr}}\Bigg].
\]
This completes the proof of Theorem~\ref{thm:higherint}.
\qed

\end{document}